\newcommand{\citecomment}[2][]{\citen{#2}#1\citevar}
\newcommand{\citeone}[1]{\citecomment{#1}}
\newcommand{\citetwo}[2][]{\citecomment[,~#1]{#2}}
\newcommand{\citevar}{\@ifnextchar\bgroup{;~\citeone}{\@ifnextchar[{;~\citetwo}{]}}}
\newcommand{\citefirst}{\@ifnextchar\bgroup{\citeone}{\@ifnextchar[{\citetwo}{]}}}
\newcommand{\cites}{[\citefirst}
\let\c@equation\c@subsubsection
\newtheorem{theorem}[subsubsection]{Theorem}
\newtheorem{lemma}[subsubsection]{Lemma}
\newtheorem{proposition}[subsubsection]{Proposition}
\newtheorem{corollary}[subsubsection]{Corollary}
\newtheorem{theoremintro}{Theorem}
\newtheorem{corollaryintro}[theoremintro]{Corollary}
\theoremstyle{definition}
\theoremstyle{remark}
\newtheorem{example}[subsubsection]{Example}
\newtheorem{remark}[subsubsection]{Remark}
\newtheorem{definition}[subsubsection]{Definition}
\newcommand{\AAA}{\mathbb{A}}
\newcommand{\FF}{\mathbb{F}}
\newcommand{\ZZ}{\mathbb{Z}}
\newcommand{\CC}{\mathbb{C}}
\newcommand{\TT}{\mathbb{T}}
\newcommand{\EE}{\mathbb{E}}
\newcommand{\LL}{\mathbb{L}}
\newcommand{\bA}{\mathbf{A}}
\newcommand{\be}{\mathbf{e}}
\newcommand{\bff}{\mathbf{f}}
\newcommand{\bg}{\mathbf{g}}
\newcommand{\bh}{\mathbf{h}}
\newcommand{\bL}{\mathbf{L}}
\newcommand{\bm}{\mathbf{m}}
\newcommand{\bn}{\mathbf{n}}
\newcommand{\bP}{\mathbf{P}}
\newcommand{\bQ}{\mathbf{Q}}
\newcommand{\br}{\mathbf{r}}
\newcommand{\bs}{\mathbf{s}}
\newcommand{\bu}{\mathbf{u}}
\newcommand{\bv}{\mathbf{v}}
\newcommand{\bx}{\mathbf{x}}
\newcommand{\by}{\mathbf{y}}
\newcommand{\bz}{\mathbf{z}}
\newcommand{\cB}{\mathcal{B}}
\newcommand{\cE}{\mathcal{E}}
\newcommand{\cI}{\mathcal{I}}
\newcommand{\cM}{\mathcal{M}}
\newcommand{\cN}{\mathcal{N}}
\newcommand{\cR}{\mathcal{R}}
\newcommand{\cT}{\mathcal{T}}
\newcommand{\rC}{\mathrm{C}}
\newcommand{\rH}{\mathrm{H}}
\newcommand{\rI}{\mathrm{I}}
\newcommand{\rR}{\mathrm{R}}
\newcommand{\sA}{\mathsf{A}}
\newcommand{\sC}{\mathsf{C}}
\newcommand{\hbeta}{\hat{\beta}}
\newcommand{\hL}{\hat{L}}
\newcommand{\bsalpha}{\boldsymbol{\alpha}}
\newcommand{\bsbeta}{\boldsymbol{\beta}}
\newcommand{\bslambda}{\boldsymbol{\lambda}}
\newcommand{\bsmu}{\boldsymbol{\mu}}
\newcommand{\bsnu}{\boldsymbol{\nu}}
\newcommand*{\isoarrow}[1]{\arrow[#1,"\rotatebox{90}{\(\sim\)}"
]}
\DeclareMathOperator{\Aut}{Aut}
\DeclareMathOperator{\Exp}{Exp}
\DeclareMathOperator{\Log}{Log}
\DeclareMathOperator{\GL}{GL}
\DeclareMathOperator{\Lie}{Lie}
\DeclareMathOperator{\Mat}{Mat}
\DeclareMathOperator{\Char}{Char}
\DeclareMathOperator{\Span}{Span}
\DeclareMathOperator{\Gal}{Gal}
\DeclareMathOperator{\rank}{rank}
\DeclareMathOperator{\sgn}{sgn}
\DeclareMathOperator{\ord}{ord}
\DeclareMathOperator{\Sym}{Sym}
\newcommand{\Alt}{\@ifnextchar^\@Alt{\@Alt^{\,}}}
\def\@Alt^#1{\mathop{\bigwedge\nolimits^{\!#1}}}
\newcommand{\phitenpsi}{\phi\otimes\psi}
\newcommand{\ophitenpsi}{\mkern2.5mu\overline{\mkern-2.5mu \phitenpsi}}
\newcommand{\tenphi}{\phi^{\otimes 2}}
\newcommand{\symphi}{\Sym^2\phi}
\newcommand{\osymphi}{\mkern2.5mu\overline{\mkern-2.5mu \Sym^2\phi}}
\newcommand{\altphi}{\Alt^2\phi}
\newcommand{\oaltphi}{\mkern2.5mu\overline{\mkern-2.5mu \Alt^2\phi}}
\newcommand{\oF}{\mkern2.5mu\overline{\mkern-2.5mu F}}
\newcommand{\oK}{\mkern2.5mu\overline{\mkern-2.5mu K}}
\newcommand{\ochi}{\overline{\chi}}
\newcommand{\ophi}{\mkern2.5mu\overline{\mkern-2.5mu \phi}}
\newcommand{\opsi}{\mkern2.5mu\overline{\mkern-2.5mu \psi}}
\newcommand{\otheta}{\mkern2.5mu\overline{\mkern-2.5mu \theta}}
\newcommand{\Reg}{\mathrm{Reg}}
\newcommand{\sep}{\mathrm{sep}}
\newcommand{\nr}{\mathrm{nr}}
\newcommand{\tB}{\widetilde{B}}
\newcommand{\tC}{\widetilde{C}}
\newcommand{\tL}{\widetilde{L}}
\newcommand{\tPhi}{\widetilde{\Phi}}
\newcommand{\tE}{\tilde{E}}
\newcommand{\tU}{\widetilde{U}}
\newcommand{\tbeta}{\tilde{\beta}}
\newcommand{\C}{\CC_{\infty}}
\newcommand{\tbsmu}{\tilde{\bsmu}}
\newcommand{\soE}{\mkern1mu\overline{\mkern-1mu E}}
\newcommand{\iso}{\stackrel{\sim}{\longrightarrow}}
\newcommand{\power}[2]{{#1 [\![ #2 ]\!]}}
\newcommand{\laurent}[2]{{#1 (\!( #2 )\!)}}
\newcommand{\norm}[1]{\lvert #1 \rvert}
\newcommand{\dnorm}[1]{\lVert #1 \rVert}
\newcommand{\inorm}[1]{{\lvert #1 \rvert}_{\infty}}
\newcommand{\Aord}[2]{{[ #1 ]}_{#2}}
\newcommand{\rbracket}[1]{{\left( #1 \right)}}
\newcommand{\bracket}[1]{{\left[ #1 \right]}}
\newcommand{\bigAord}[2]{{\left[ #1 \right]}_{#2}}
\newcommand{\pd}{\partial}
\newcommand{\tr}{{\mathsf{T}}}
\newcommand{\assign}{\mathrel{\vcenter{\baselineskip0.5ex \lineskiplimit0pt
                     \hbox{\scriptsize.}\hbox{\scriptsize.}}}%
                     =}
\newcommand{\rassign}{=%
                     \mathrel{\vcenter{\baselineskip0.5ex \lineskiplimit0pt
                     \hbox{\scriptsize.}\hbox{\scriptsize.}}}%
                     }
\begin{document}

\title{Tensor products of Drinfeld modules and convolutions of Goss $L$-series}

%    Information for first author
\author{Wei-Cheng Huang}
\address{Department of Mathematics, Texas A{\&}M University, College Station, TX 77843, U.S.A.}
\email[W.-C. Huang]{wchuang@tamu.edu}

\subjclass[2020]{Primary 11M38; Secondary 11G09, 11M32}

\date{August 10, 2023}

\begin{abstract}
Following the same framework of the special value results of convolutions of Goss and Pellarin $L$-series attached to Drinfeld modules that take values in Tate algebras by Papanikolas and the author, we establish special value results of convolutions of two Goss $L$-series attached to Drinfeld modules that take values in $\laurent{\FF_q}{\frac{1}{\theta}}.$ Applying the class module formula of Fang to tensor products of two Drinfeld modules, we provide special value formulas for their $L$-functions. By way of the theory of Schur polynomials these identities take the form of specializations of convolutions of Rankin-Selberg type. Finally, we show an explicit computation of the regulators appearing in Fang's class module formula for tensor products as well as symmetric and alternating squares of Drinfeld modules.
\end{abstract}

\keywords{Goss $L$-series, Drinfeld modules, Anderson $t$-modules, Tate algebras, Poonen pairings, class module formulas, Schur polynomials}

\maketitle

\tableofcontents

\section{Introduction}

Let $\FF_q$ be a field with $q=p^m$ elements for $p$ a prime. For a variable $\theta$ we let $A \assign \FF_q[\theta]$ be a polynomial ring in~$\theta$ over~$\FF_q$, and let $K \assign \FF_q(\theta)$ be its fraction field. We take $K_{\infty} \assign \laurent{\FF_q}{\theta^{-1}}$ for the completion of $K$ at $\infty$, and let $\C$ be the completion of an algebraic closure of $K_{\infty}$. We normalize the $\infty$-adic norm $\inorm{\,\cdot\,}$ on $\C$ so that $\inorm{\theta} = q$, and letting $\deg \assign -\ord_{\infty} = \log_q \inorm{\,\cdot\,}$, we see that $\deg a = \deg_{\theta} a$ for any $a \in A$. Finally, we let $A_+$ denote the monic elements of $A$.

\subsection{Motivation}

Guided by a series of articles by Angl\`es, Demeslay, Gezmi\c{s}, Pellarin, Taelmann, Tavares Ribeiro~{\cites{APT16}{AT17}{Demeslay15}{DemeslayPhD}{Gezmis19}{Pellarin12}{Taelman09}{Taelman10}{Taelman12}}, Papanikolas and the author \cite{HP22} defined a $t$-module $\EE(\phi \times \psi)$ that is the twist of $\phi$ by the rigid analytic trivialization of~$\psi$. Then its associated $L$-function includes a Rankin-Selberg type convolution of a Goss $L$-series and a Pellarin $L$-series (see \cite[Thm.~ 6.2.3, 6.3.5]{HP22}) and can be evaluated using Demeslay's class module identity (see \cite[Cor.~6.2.4, 6.3.6]{HP22}).

Inspired by these convolutions, it is natural to consider the Rankin-Selberg type convolution of two Goss $L$-series and ask to what extent the regulators are related to the special values of logarithms. This leads us to study Goss $L$-series of tensor products, symmetric squares and alternating squares of Drinfeld modules (see Theorem~\ref{T:tenconv} and Corollary~\ref{C:Lmuvalue}). We also provide explicit expressions of their regulators for the rank $2$ case (see Corollary~\ref{Cintro:log} and Theorem~\ref{T:computeReg}) in terms of these $L$-values and logarithms. We now summarize these results.

\subsection{Convolution Goss \texorpdfstring{$L$}{L}-functions and special values}
Let $\sA = \FF_q[t]$, and let $\phi$, $\psi : \sA \to A[\tau]$ be Drinfeld modules defined over~$A$ by
\begin{equation} \label{E:phipsidefintro}
\phi_t = \theta + \kappa_1 \tau + \cdots + \kappa_r \tau^r, \quad
\psi_t = \theta + \eta_1 \tau + \cdots + \eta_{\ell} \tau^{\ell}, \quad \kappa_r, \eta_\ell \in \FF_q^{\times}.
\end{equation}
Thus $\phi$ has rank $r$ and $\psi$ has rank $\ell$, and moreover because their leading coefficients are in $\FF_q^{\times}$, both $\phi$ and $\psi$ have everywhere good reduction.

\subsubsection{Characteristic polynomials of Frobenius}
For our Drinfeld module $\phi$ in \eqref{E:phipsidefintro}, if we fix $f \in A_+$ irreducible of degree $d$ and let $\lambda \in \sA_+$ be irreducible with $\lambda(\theta) \neq f$, then by work of Gekeler, Hsia, Takahashi, and Yu~\cites{Gekeler91, HsiaYu00, Takahashi82}, the characteristic polynomial $P_{\phi,f}(X) = \Char(\tau^d,T_{\lambda}(\ophi),X) = X^r + c_{r-1} X^{r-1} + \cdots + c_0 \in A[X]$ of $\tau^d$ acting on $T_{\lambda}(\ophi)$ satisfies $c_0 = (-1)^r \ochi_{\phi}(f) f$, where $\chi_{\phi}(a) \assign ((-1)^{r+1} \kappa_r)^{\deg a}$ and $\ochi_{\phi} = \chi_{\phi}^{-1}$. We further let $P_{\phi,f}^\vee(X)\in K[X]$ be the characteristic polynomial of $\tau^d$ acting on the dual space of $T_\lambda(\ophi)$. See \S\ref{SS:munu} for more details.

\subsubsection{\texorpdfstring{$L$}{L}-functions of tensor products}
For the Drinfeld module $\phi$ from \eqref{E:phipsidefintro}, Goss defined the $L$-function,
\[
L(\phi^{\vee},s) = \prod_f Q_{\phi,f}^{\vee} \bigl( f^{-s} \bigr)^{-1}=\sum_{a\in A_+}\frac{\mu_\phi(a)}{a^{s+1}},\]
where $Q_{\phi,f}^{\vee}(X)$ is the reciprocal polynomial of $P_{\phi,f}^{\vee}(X)$. The multiplicative function $\mu_{\phi}: A_+ \to A$ is defined by the generating series,
\[
\sum_{m=1}^{\infty} \mu_{\phi}(f^m) X^m = Q_f^{\vee}(fX)^{-1}.
\]
We similarly define $\mu_{\psi}$. See \S\ref{SS:munu} for details. 

One of our main goals is to express Dirichlet series for $L$-function of tensor products in a similar explicit fashion. If $E = \phitenpsi$, Cauchy's identity (see Theorem~\ref{T:Cauchy}) implies that the $L$-function $L(E^\vee,s)$ has a convolution interpretation, following the situation for Maass forms on $\GL_n$ (see \cites{Bump89, Goldfeld}). When $E = \symphi$ or $\altphi$, Littlewood's identities (see Theorem~\ref{T:Littlewood}) imply that the $L$-function $L(E^\vee,s)$ can be factored into a twisted Carlitz zeta function and intriguing $L$-functions involving $\bsmu_{\phi}$ defined in \cite{HP22}.  In particular, if $\alpha_1, \dots, \alpha_r \in \oK$ are the roots of $P_{\phi,f}^{\vee}(X)$, then for $k_1, \dots, k_{r} \geqslant 0$, we define
\begin{align}
\bsmu_{\phi}\bigl( f^{k_1}, \dots, f^{k_{r-1}} \bigr) &\assign S_{k_1, \dots, k_{r-1}} (\alpha_1, \dots, \alpha_r) \cdot f^{k_1 + \cdots + k_{r-1}},
% \bsnu_{\phi} \bigl( f^{k_1}, \dots, f^{k_{r-1}} \bigr) &\assign S_{k_1, \dots, k_{r-1}} \bigl( \alpha_1^{-1}, \dots, \alpha_{r}^{-1} \bigr),
\end{align}
where $S_{k_1, \dots, k_{r-1}}(x_1, \dots, x_r)$ is the Schur polynomial defined in~\eqref{E:Sk}. We extend $\bsmu_{\phi}$ to a function on $(A_+)^{r-1}$ multiplicatively, and then find that for $a_1, \dots, a_{r-1} \in A_+$, we have $\bsmu_{\phi}(a_1, \dots, a_{r-1}) \in A$. Furthermore, for any $a \in A_+$,
\[
\bsmu_{\phi}(a,1, \dots, 1) = \mu_{\phi}(a).
\]
The function $\bsmu_{\phi} : (A_+)^{r-1} \to A$ satisfies various relations induced by relations on Schur polynomials. See \S\ref{SS:bsmubsnu} for details.

When $r$, $\ell\geqslant 2$, we define an $L$-function $L(\bsmu_{\phi} \times \bsmu_{\psi},s)$ as follows. If $r=\ell$
\begin{equation}
L(\bsmu_{\phi} \times \bsmu_{\psi},s) \assign \sum_{a_1 \in A_+} \cdots \sum_{a_{r-1} \in A_+} \frac{\bsmu_{\phi}(a_1, \dots, a_{r-1}) \bsmu_{\psi}(a_1, \dots, a_{r-1})}{(a_1 \cdots a_{r-1})^2 (a_1 a_2^2 \cdots a_{r-1}^{r-1})^s}.
\end{equation}
If $r< \ell$, then
\begin{equation}
L(\bsmu_{\phi} \times \bsmu_{\psi},s) \assign
\sum_{a_1, \ldots, a_r \in A_+}
\frac{\chi_{\phi}(a_r)\bsmu_{\phi}(a_1, \dots, a_{r-1}) \bsmu_{\psi}(a_1, \dots, a_r,1, \ldots, 1)}{(a_1 \cdots a_r)^2 (a_1 a_2^2 \cdots a_r^r)^s}.
\end{equation}
In this way we interpret $L(\bsmu_{\phi} \times \bsmu_{\psi},s)$ as a convolution of two Goss $L$-series. Since $\phitenpsi\cong\phi\otimes\phi$ (see \cite[Prop.~2.5]{Hamahata}, Rmk.~\ref{R:teniso}), the $r>\ell$ case is the same as the $r<\ell$ case in the sense of switching the rolls of $\phi$ and $\psi$. We further define two $L$-functions as follows.
\begin{equation}
L(\tbsmu_{\phi},s) \assign \sum_{a_1 \in A_+} \cdots \sum_{a_{r-1} \in A_+} \frac{\bsmu_{\phi}(a_1^2, \dots, a_{r-1}^2)}{(a_1 \cdots a_{r-1})^2 (a_1 a_2^2 \cdots a_{r-1}^{r-1})^s},
\end{equation}
and 
\begin{equation}
L(\hat{\bsmu}_{\phi},s) \assign \underset{a_i = 1 \text{ if $2\nmid i$}}{\sum_{a_1,\dots a_{r-1} \in A_+}} \frac{\bsmu_{\phi}(a_1, \dots, a_{r-1})}{a_1 \cdots a_{r-1} (a_1 a_2^2 \cdots a_{r-1}^{r-1})^s}.
\end{equation}
For the cases $E=\tenphi$, $\symphi$, $\altphi$. The $L$-series above are related to $L(E^\vee,s)$ by the following result (see Theorems~\ref{T:Ltenrxr}, \ref{T:Ltenrxl}, \ref{T:Lsymrxr} and \ref{T:Laltrxr}). We let $L(A,\chi,s) = \sum_{a\in A_+}\chi(a)\cdot a^{-s}$ be the twist of the Carlitz zeta function $L(A,s) = \sum_{a\in A_+}a^{-s}$ by a completely mutiplicative function $\chi:A_+\to \FF^\times_q.$

\begin{theoremintro}\label{T:tenconv}
Let $\phi$, $\psi : \sA \to A[\tau]$ be Drinfeld modules of ranks $r$ and $\ell$ respectively with everywhere good reduction, as defined in~\eqref{E:phipsidefintro}. Assume that $r$, $\ell \geqslant 2$.
\begin{enumerate}
\item If $r=\ell$, then
\[
L((\phi \otimes \psi)^{\vee},s) = L(A,\chi_{\phi}\chi_{\psi},rs+2) \cdot
L(\bsmu_{\phi} \times \bsmu_{\psi},s).
\]
\item If $r<\ell$, then
\[
L((\phi \otimes \psi)^{\vee},s) = L(\bsmu_{\phi} \times \bsmu_{\psi},s).
\]    

\item Assume further that $p\neq 2$, then
\[
L((\symphi)^{\vee},s) = L(A,\chi_{\phi}^2,rs+2) \cdot
L(\tbsmu_{\phi},s).
\]
and
\[
 L((\altphi)^{\vee},s) = L(A,\chi_{\phi},\frac{rs}{2}+1)^{\frac{(-1)^r+1}{2}} \cdot
L(\hat{\bsmu}_{\phi},s).
\]
\end{enumerate}
\end{theoremintro}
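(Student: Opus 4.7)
The plan is to work locally at each irreducible $f \in A_+$ and match Euler factors. Throughout, let $\alpha_1,\dots,\alpha_r$ (respectively $\beta_1,\dots,\beta_\ell$) denote the roots of $P_{\phi,f}^\vee$ (respectively $P_{\psi,f}^\vee$). The key identity $\alpha_1\cdots\alpha_r = \chi_\phi(f)/f$ is immediate from the formula $c_0 = (-1)^r\ochi_\phi(f)f$ combined with the fact that the roots of $P_{\phi,f}^\vee$ are the reciprocals of those of $P_{\phi,f}$; this is ultimately what produces the Carlitz zeta-twist factors.

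For (a), Frobenius on the dual Tate module of $\phi\otimes\psi$ has eigenvalues $\alpha_i\beta_j$, so the local factor is $\prod_{i,j}(1-\alpha_i\beta_j f^{-s})^{-1}$. Cauchy's identity (Theorem~\ref{T:Cauchy}) expands this as $\sum_\lambda s_\lambda(\alpha)s_\lambda(\beta)f^{-s|\lambda|}$, summed over partitions $\lambda$ of length at most $r=\ell$. Parametrize such $\lambda$ by $(k_1,\dots,k_{r-1},m)$ with $k_i = \lambda_i-\lambda_{i+1}$ and $m = \lambda_r$; the classical identity $s_{(m^r)+\mu}(x) = (x_1\cdots x_r)^m s_\mu(x)$ together with the definition of $\bsmu_\phi$ splits the expansion into a product of the Euler factors of $L(A,\chi_\phi\chi_\psi,rs+2)$ (from the constant-column contribution $m$) and $L(\bsmu_\phi\times\bsmu_\psi,s)$ (from the remaining partition data). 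Part (b) follows by the same argument, except that $s_\lambda(\alpha)=0$ whenever $\ell(\lambda)>r$: the outer parameter $k_r=\lambda_r$ is retained in the Dirichlet sum, and the factor $(\alpha_1\cdots\alpha_r)^{k_r}=\chi_\phi(f)^{k_r}/f^{k_r}$ is absorbed, producing the $\chi_\phi(a_r)$ twist and eliminating the separate Carlitz factor.

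For (c), Cauchy's identity is replaced by Littlewood's identities (Theorem~\ref{T:Littlewood}), whose use relies on $p\neq 2$ so that $\phi\otimes\phi\cong\Sym^2\phi\oplus\Alt^2\phi$. For $\Sym^2\phi$, the local factor expands as $\sum_\lambda s_\lambda(\alpha)f^{-s|\lambda|/2}$ over partitions with even rows; writing $\lambda_i = 2m_i$ doubles every gap $k_i$, exactly matching the $a_i^2$ appearing in the numerator of $L(\tbsmu_\phi,s)$, and the constant-column contribution again yields $L(A,\chi_\phi^2,rs+2)$. For $\altphi$, the local factor expands over partitions with even columns, equivalently $\lambda_{2j-1} = \lambda_{2j}$ for each $j$, i.e.\ $k_i = 0$ for every odd $i$ --- which is precisely the $a_i = 1$ condition defining $L(\hat\bsmu_\phi,s)$. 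When $r$ is odd, parity forces $\lambda_r = 0$ and no constant-column contribution survives; when $r$ is even, $\lambda_r$ remains free and factors out the Carlitz zeta $L(A,\chi_\phi,rs/2+1)$. The exponent $((-1)^r+1)/2$ encodes this parity dichotomy.

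The principal technical obstacle lies in the $\altphi$ case: correctly reconciling the parity constraints on partition parts with the normalization of $L((\altphi)^\vee,s)$, and accounting for the half-integer exponent $rs/2+1$ appearing in the Carlitz factor, requires careful bookkeeping throughout.
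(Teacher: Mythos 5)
Your proposal matches the paper's argument in approach and detail: expand the local Euler factor of $L(E^\vee,s)$ at each irreducible $f$ using Cauchy's identity (or Bump's specialization when $r<\ell$) for $\phi\otimes\psi$ and Littlewood's two identities for $\Sym^2\phi$ and $\Alt^2\phi$, then reparametrize partitions by consecutive differences $k_i=\lambda_i-\lambda_{i+1}$ so that the full-rectangle contribution $\lambda_r$ splits off as the geometric series giving the twisted Carlitz zeta factor and the remainder assembles into the convolution $L$-series via the definition of $\bsmu_\phi$ in terms of $S_{k_1,\dots,k_{r-1}}$. The one step you pass over as automatic is the identification of Frobenius eigenvalues on the dual Tate module of $\phi\otimes\psi$ (resp.\ $\Sym^2\phi$, $\Alt^2\phi$) with the products $\alpha_i\beta_j$ (resp.\ $\alpha_i\alpha_j$ for $i\leqslant j$ or $i<j$); this is not a formal consequence of the definitions but uses Hamahata's result that $T_\lambda(\phi\otimes\psi)\cong T_\lambda(\phi)\otimes_{\sA_\lambda}T_\lambda(\psi)$ compatibly with Galois, together with the good-reduction identification $T_\lambda(\phi)\cong T_\lambda(\ophi)$ so that the resulting characteristic polynomial really is $\bP_{\phi\otimes\psi,f}$ — the paper proves exactly this in the propositions establishing $\bigAord{\ophitenpsi(\FF_f)}{A}$ and its analogues before applying the symmetric-function identities. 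Adding that citation (and the analogous one for $\Sym^2$, $\Alt^2$) would complete the argument.
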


Substituting $s=0$ in Theorem~\ref{T:tenconv} provides special value identities for $L(\bsmu_{\phi} \times \bsmu_{\psi},0)$, $L(\tbsmu_{\phi},0)$ and $L(\hat{\bsmu}_{\phi},0)$. Fang's class module identity (Theorem~\ref{FangClass}) implies the following corollary (stated as Corollaries \ref{C:Lmumurxr}, \ref{C:Lmumurxl}, \ref{C:Lsymrxr}, \ref{C:Laltrxr}).

\begin{corollaryintro}\label{C:Lmuvalue}
Let $\phi$, $\psi : \sA \to A[\tau]$ be Drinfeld modules of ranks $r$ and $\ell$ respectively with everywhere good reduction, as defined in~\eqref{E:phipsidefintro}. Assume that $r$, $\ell \geqslant 2$.
\begin{enumerate}
\item If $r=\ell$, then
\begin{align*}
L(\bsmu_{\phi} \times \bsmu_{\psi},0)
&= \sum_{a_1 \in A_+} \cdots \sum_{a_{r-1} \in A_+} \frac{\bsmu_{\phi}(a_1, \dots, a_{r-1}) \bsmu_{\psi}(a_1, \dots, a_{r-1})}{(a_1 \cdots a_{r-1})^2} \\[10pt]
&= \frac{ \Reg_{\phitenpsi} \cdot \Aord{\rH(\phitenpsi)}{A}}{L(A,\chi_\phi\chi_\psi,2)}.
\end{align*}
\item If $r<\ell$, then
\begin{align*}
L(\bsmu_{\phi} \times \bsmu_{\psi},0)
&= \sum_{a_1, \ldots, a_r \in A_+}
\frac{\chi_{\phi}(a_r)\bsmu_{\phi}(a_1, \dots, a_{r-1}) \bsmu_{\psi}(a_1, \dots, a_r,1, \ldots, 1)}{(a_1 \cdots a_r)^2} \\[10pt]
&= \Reg_{\phitenpsi} \cdot \Aord{\rH(\phitenpsi)}{A}.
\end{align*}
\item Assume further that $p\neq 2$, then
\begin{align*}
L(\tbsmu_{\phi},0)
= \sum_{a_1 \in A_+} \cdots \sum_{a_{r-1} \in A_+} \frac{\bsmu_{\phi}(a_1^2, \dots, a_{r-1}^2)}{(a_1 \cdots a_{r-1})^2} 
= \frac{ \Reg_{\symphi} \cdot \Aord{\rH(\symphi)}{A}}{L(A,\chi_\phi^2,2)},
\end{align*}
and
\begin{align*}
L(\hat{\bsmu}_{\phi},0)
= \underset{a_i = 1 \text{ if $2\nmid i$}}{\sum_{a_1,\dots a_{r-1} \in A_+}} \frac{\bsmu_{\phi}(a_1, \dots, a_{r-1})}{a_1 \cdots a_{r-1}}
= \frac{ \Reg_{\altphi} \cdot \Aord{\rH(\altphi)}{A}}{L(A,\chi_\phi,1)^{\frac{(-1)^r+1}{2}}}.
\end{align*}
\end{enumerate}
\end{corollaryintro}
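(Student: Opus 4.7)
The plan is to deduce the corollary by evaluating the identities of Theorem~\ref{T:tenconv} at $s=0$ and then substituting Fang's class module identity (Theorem~\ref{FangClass}) applied to each of the $t$-modules $\phi\otimes\psi$, $\Sym^2\phi$, and $\Alt^2\phi$. The left-hand sides in the corollary are obtained simply by noting that at $s=0$ each exponential factor $(a_1 a_2^2 \cdots a_{r-1}^{r-1})^s$ equals $1$, so the Dirichlet series $L(\bsmu_{\phi}\times\bsmu_{\psi},s)$, $L(\tbsmu_{\phi},s)$, and $L(\hat{\bsmu}_{\phi},s)$ collapse to the stated finite-looking sums (the sums are actually infinite but the displayed expressions are precisely their $s=0$ specializations).

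For the right-hand sides, I would proceed case by case. In part (a), Theorem~\ref{T:tenconv}(a) gives
\[
L((\phi\otimes\psi)^\vee,0) = L(A,\chi_\phi\chi_\psi,2)\cdot L(\bsmu_\phi\times\bsmu_\psi,0),
\]
and Fang's class module formula applied to $E = \phi\otimes\psi$ yields $L(E^\vee,0) = \Reg_E \cdot \Aord{\rH(E)}{A}$. Dividing produces the claimed formula. Part (b) is analogous but simpler because no twisted zeta factor appears. For part (c), I apply Theorem~\ref{T:tenconv}(c) with $E = \Sym^2\phi$ and $E = \Alt^2\phi$ respectively, and again use Fang's identity for the corresponding $t$-module. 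In the alternating case one must separate the parity of $r$: when $r$ is odd the exponent $\tfrac{(-1)^r+1}{2}$ is $0$ so $L(A,\chi_\phi,1)^0 = 1$ and the twisted Carlitz factor drops out, while for $r$ even the exponent is $1$ and the formula reads as stated.

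The main obstacle is not algebraic manipulation but checking that Fang's theorem applies cleanly to each of the constructions $\phi\otimes\psi$, $\Sym^2\phi$, and $\Alt^2\phi$: these must genuinely be Anderson $\sA$-modules (or abelian $t$-modules) in the precise sense demanded by Theorem~\ref{FangClass}, and their regulators $\Reg_E$ and class module indices $\Aord{\rH(E)}{A}$ must be well-defined finite quantities. In particular one needs $\phi\otimes\psi$ to have everywhere good reduction (which follows from the hypothesis on $\phi$ and $\psi$ in \eqref{E:phipsidefintro}), and for the symmetric/alternating squares one needs the standing assumption $p\neq 2$ so that the idempotents $\tfrac{1}{2}(1\pm \sigma)$ used to split $\phi^{\otimes 2}$ are defined. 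A secondary, but purely bookkeeping, point is to ensure that the $L$-series defining $L(\bsmu_\phi\times\bsmu_\psi,s)$ etc.\ converge at $s=0$; this follows from the already-established convergence of $L(E^\vee,s)$ at $s=0$ together with the nonvanishing of $L(A,\chi_\phi\chi_\psi,2)$ and $L(A,\chi_\phi^2,2)$, which allows one to divide through.
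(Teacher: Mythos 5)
Your proposal is correct and follows the paper's route: substitute $s=0$ into Theorem~\ref{T:tenconv} and apply Fang's class module formula to each of $\phitenpsi$, $\symphi$, $\altphi$, then divide by the twisted Carlitz factor. The one step you pass over quickly is the bridge between Fang's formula and the $L$-value: Theorem~\ref{FangClass} equates $\Reg_E\cdot\Aord{\rH(E)}{A}$ to the Euler product $\prod_f \Aord{\Lie(\soE)(\FF_f)}{A}/\Aord{\soE(\FF_f)}{A}$, not directly to $L(E^\vee,0)$; in the paper this identification is supplied by Propositions~\ref{P:Lten0} and~\ref{P:Lsymalt0}, which rest on the degree estimates in Propositions~\ref{Aord_ten} and~\ref{Aord_symalt} showing that $\Aord{\soE(\FF_f)}{A}$ is the monic normalization of $\bP_{E,f}(1)$. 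That is where the real work behind the corollary lives, and your sketch should at least acknowledge it.
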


We now assume $r=2$ and suppose $\Log_\phi(z) = \sum_{m\geq 0}\beta_m z^{q^m}$. In \S5.2 we define $\FF_q$-linear power series $L_i(z)$, $L'_i(z)$, $\tL_i(z)$, e.g., see \eqref{eq:start1}--\eqref{eq:end1}, which combine to make the coordinate functions of the logarithm series of $\phitenpsi$, $\symphi$ and $\altphi$. The main result in this part of the paper (Corollary~\ref{C:log}) is the following. 

\begin{corollaryintro}\label{Cintro:log}
Let $\phi:\sA\to A[\tau]$ be a Drinfeld module $\phi:\sA\to A[\tau]$ given by $\phi_t=\theta+\kappa_1\tau+\kappa_2\tau^2$ with $\kappa_2\in\FF_q^{\times}$, and let
\[\bL_m\assign\begin{pmatrix}
        L_{1,m}+(\theta-\theta^{(m)})L'_{1,m}& \kappa_2L'_{2,m}& \kappa_1^{(m)}L'_{1,m}+\kappa_2L'_{2,m}& \kappa_2L'_{1,m}\\
        \frac{(\theta-\theta^{(m)})}{\kappa_2}\tL_{1,m}& \tL_{0,m}+\tL_{2,m}& \frac{\kappa_1^{(m)}}{\kappa_2}\tL_{1,m}+\tL_{2,m}& \tL_{1,m}\\
        \frac{(\theta-\theta^{(m)})}{\kappa_2}\tL_{1,m}& \tL_{2,m}& \tL_{0,m}+\frac{\kappa_1^{(m)}}{\kappa_2}\tL_{1,m}+\tL_{2,m}& \tL_{1,m}\\
        \frac{(\theta-\theta^{(m)})}{\kappa_2}L_{1,m}& L_{2,m}& \frac{\kappa_1^{(m)}}{\kappa_2}L_{1,m}+L_{2,m}& L_{1,m}
    \end{pmatrix}\in\Mat_4(K)\]  for $m\geqslant 1$, then
\begin{enumerate}
    \item $\Log_{\tenphi}\begin{pmatrix}
        z_1\\z_2\\z_3\\z_4
    \end{pmatrix}=\begin{pmatrix}
        z_1\\z_2\\z_3\\z_4
    \end{pmatrix}+\begin{pmatrix}
        1&&&\\&1&&\\&&1&\\&&-\frac{\kappa_1}{\kappa_2}&1
    \end{pmatrix}\sum_{m\geqslant 1}\bL_m\begin{pmatrix}
        z_1^{q^m}\\z_2^{q^m}\\z_3^{q^m}\\z_4^{q^m}
    \end{pmatrix},$
    % \item $\Log_{\symphi}\begin{pmatrix}
    %     z_1\\z_2\\z_3
    % \end{pmatrix}=\begin{pmatrix}
    %     z_1\\z_2\\z_3
    % \end{pmatrix}+\begin{pmatrix}
    %     1&&&\\&\frac{1}{2}&\frac{1}{2}&\\&-\frac{\kappa_1}{2\kappa_2}& -\frac{\kappa_1}{2\kappa_2}&1
    % \end{pmatrix}\sum_{m\geqslant 1}\bL_m\begin{pmatrix}
    %     1&&\\&1&\\&1&\\&&1
    % \end{pmatrix}\begin{pmatrix}
    %     z_1\\z_2\\z_3
    % \end{pmatrix},$
    \item $\Log_{\symphi}\begin{pmatrix}
        z_1\\z_2\\z_3
    \end{pmatrix}=\begin{pmatrix}
        z_1\\z_2\\z_3
    \end{pmatrix}+\begin{pmatrix}
        1&&&\\&\frac{1}{2}&\frac{1}{2}&\\&-\frac{\kappa_1}{2\kappa_2}& -\frac{\kappa_1}{2\kappa_2}&1
    \end{pmatrix}\sum_{m\geqslant 1}\bL_m\begin{pmatrix}
        z_1^{q^m}\\z_2^{q^m}\\z_2^{q^m}\\z_3^{q^m}
    \end{pmatrix},$
    \item $\Log_{\altphi}(z) = z+\tL_0(z).$
\end{enumerate}
\end{corollaryintro}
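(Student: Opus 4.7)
The plan is to verify each formula by substituting the stated ansatz into the functional equation that characterises the logarithm of an Anderson $t$-module. Recall that for $E$ of dimension $d$, $\Log_E : \C^d \to \C^d$ is the unique $\FF_q$-linear power series with identity linear term satisfying
\[
\Log_E\bigl(E_t(\bz)\bigr) = d(E_t) \cdot \Log_E(\bz),
\]
where $d(E_t) \in \Mat_d(K)$ is the tangent map of $E_t$ at the origin. Since this equation determines the matrix coefficients of $\Log_E$ recursively from $E_t$ and $d(E_t)$, the verification reduces to coefficient matching once the $t$-module structure is made explicit.

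First I would extract from the $t$-motive descriptions of \S5.1 the explicit matrices for $(\tenphi)_t$, $(\symphi)_t$ and $(\altphi)_t$ in terms of $\theta$, $\kappa_1$, $\kappa_2$ and $\tau$. The scalar power series $L_i(z)$, $L'_i(z)$, $\tL_i(z)$ of \S5.2 are defined so that their coefficients $L_{i,m}$, $L'_{i,m}$, $\tL_{i,m}$ satisfy the decoupled scalar recursions obtained from the matrix recursion for $\Log_E$ after changing basis appropriately; the factors $(\theta - \theta^{(m)})$ and the Frobenius twists $\kappa_1^{(m)}$ that appear in $\bL_m$ arise by commuting $\theta$ and $\kappa_1$ past $\tau^m$ when expanding $\Log_E(E_t(\bz))$. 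Case (c) is then the shortest: $\altphi$ is $1$-dimensional, the functional equation reduces to a single scalar recursion, and matching this recursion against the definition of $\tL_0$ gives $\Log_{\altphi}(z) = z + \tL_0(z)$ immediately. Case (a) is the main computation: substitute the stated formula for $\Log_{\tenphi}$ into the functional equation, expand, and check that matching the coefficient of $\tau^m$ on both sides reproduces exactly the four scalar recursions that define $L_{i,m}$, $L'_{i,m}$, $\tL_{i,m}$. Case (b) follows from (a) by pushing $\Log_{\tenphi}$ forward along the canonical projection of $t$-modules $\tenphi \twoheadrightarrow \symphi$, which is where the inverse of $2$ enters (requiring $p \neq 2$) in the $3 \times 4$ symmetrising prefactor.

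The main obstacle is identifying the triangular prefactor matrix $\begin{pmatrix} 1 & & & \\ & 1 & & \\ & & 1 & \\ & & -\kappa_1/\kappa_2 & 1 \end{pmatrix}$ in part (a), and its symmetrised $3 \times 4$ counterpart in part (b). This change of basis is precisely the one that decouples the $4 \times 4$ vector recursion for $\Log_{\tenphi}$ into the four independent scalar recursions matching the $L$-series of \S5.2; it also compensates for the nilpotent part of $d((\tenphi)_t)$ that comes from the non-trivial Jordan block structure of the tensor product. Producing this basis is the conceptual content of the proof; once it is in hand, the remainder is a deterministic, if somewhat tedious, matrix calculation that I would organise row-by-row according to the structure of $\bL_m$.
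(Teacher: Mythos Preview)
Your approach is genuinely different from the paper's, and while it could in principle be made to work, there is a substantive gap in your plan.

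The paper does not verify the functional equation for $\Log_E$. Instead it \emph{derives} the formula from Anderson's exponentiation theorem (Theorem~\ref{T:AET}): for each $E\in\{\tenphi,\symphi,\altphi\}$ it computes a $t$-frame $(\iota_E,\Phi_E)$ on the dual $t$-motive $\cN_E$, the associated map $\cE_{0,E}$, and the vector $\bh_\xi$; it then writes down the explicit solution $\bg = \sum_{m\geqslant 1}\bh_\xi^{(m)}(\Phi_E^{-1})^{(m)}\cdots(\Phi_E^{-1})^{(1)}$ to the Frobenius difference equation, and shows (Proposition~\ref{P:Rm}) that the telescoping product $(\Phi_E^{-1})^{(m)}\cdots(\Phi_E^{-1})^{(1)}$ equals $\cT_E$ applied to a $2\times 2$ matrix built from the El-Guindy--Papanikolas rational functions $\cB_m(t)$. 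Specialising at $t=\theta$ then produces the $\beta_m$, $\tbeta_m$, $\beta'_m$ that make up the entries of $\bL_m$. The triangular prefactor is not a device that ``decouples'' a matrix recursion; it is simply the matrix $(V_EU_E)^\tr$ arising from two explicit $t$-module isomorphisms used to compute $\cE_{0,E}$.

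Your plan mischaracterises the series $L_i$, $L'_i$, $\tL_i$: they are not \emph{defined} by recursions coming from $\Log_E$, but rather as explicit products such as $L_{1,m}=\beta_m^2$, $\tL_{1,m}=\tfrac{\kappa_2}{\theta-\theta^{(1)}}\beta_m\tbeta_{m-1}$, etc., built from the logarithm coefficients of the rank-$2$ Drinfeld module $\phi$. To carry out your verification you would need to feed in the scalar recursion for $\beta_m$ (Lemma~\ref{L:recur}) and from it derive, by hand, the $4\times 4$ recursion that the claimed coefficient matrices satisfy --- this is the step you have not supplied, and it is where all the work lies. Your route would be more elementary (no dual $t$-motive machinery) but purely verificational: it confirms a formula handed to you rather than explaining where it comes from, whereas the paper's argument constructs the formula and makes transparent why the tensor operations $\cT_E$ and the functions $\cB_m(t)$ govern its shape.
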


We further define the dilogarithm series $\Log_{\phi,2}(z)=\sum_{m\geqslant 0}\beta_m^2z^{q^m}$ and $\hL_i(z)$ in \eqref{eq:start2}--\eqref{eq:L2hat}, and obtain the following result on regulators (stated as Theorem~\ref{T:reg}). Note that $\hL_i(z)$ is related to $L'_i(z)$ by the chain rule {\cite[Lem.~2.4.6]{NamoijamP22}} (see also \eqref{eq:chain}).

% We also obtain the following results on regulators (stated as Theorem~\ref{T:reg}) where the dilogarithm series $\Log_{\phi,2}(z)=\sum_{m\geqslant 0}\beta^2z^{q^m}$ and $\hL_i(z)$, which is related to $L'_i(z)$ by the chain rule {\cite[Lem.~2.4.6]{NamoijamP22}} (see also \eqref{eq:chain}), are defined in \eqref{eq:start2}--\eqref{eq:L2hat}.

\begin{theoremintro}\label{T:computeReg}
We have the following formulas for regulators. 
\begin{enumerate}
\item Assume that $\deg(\kappa_1)\leqslant (q+1)/2$. Then \[\Reg_{\Alt^2\phi}=\Log_{\Alt^2\phi}(1)=1+\frac{\kappa_2}{\theta-\theta^{(1)}}\sum_{m\geq 1}(\beta_m\tbeta_{m-2}-\beta_{m-1}\tbeta_{m-1}).\] 

\item Assume that $\deg(\kappa_1)\leqslant 1.$ Then
\begin{enumerate}
    \item[(i)] $\Reg_{\Sym^2\phi}=\gamma\cdot\det(M)$, where $\gamma\in\FF_q^\times$ is chosen so that it has sign $1$, and  
    \[M=
    \begin{pmatrix}1+\hL_1(\theta) & -\hL_1(\kappa_1)-2\kappa_2\hL_2(1)-\pd_\theta(\kappa_1) & -\kappa_2\hL_1(1)\\
    -\kappa_2^{-1}\tL_1(\theta) & \Log_{\altphi}(1)+\kappa_2^{-1}\tL_1(\kappa_1)+2\tL_2(1) & \tL_1(1)\\
    -\kappa_2^{-1}\Log_{\phi,2}(\theta) & \kappa_2^{-1}\Log_{\phi,2}(\kappa_1)+2L_2(1) & \Log_{\phi,2}(1)
    \end{pmatrix}.
    \]
    \item[(ii)] $\Reg_{\phi^{\otimes 2}} = \Reg_{\Sym^2\phi}\cdot\Reg_{\altphi}$.
\end{enumerate}
\end{enumerate}
\end{theoremintro}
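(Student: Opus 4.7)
The plan is to derive each regulator by evaluating the explicit logarithm series from Corollary~\ref{Cintro:log} at carefully chosen generators of the associated unit modules, then assembling the results into the claimed scalar or determinant.

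For part (1), since $\Alt^2\phi$ has dimension one, its regulator is the value of $\Log_{\Alt^2\phi}$ at a generator of the unit module $U(\Alt^2\phi)$. The first step is to verify that $1 \in K_\infty$ lies in the domain of convergence of $\Log_{\Alt^2\phi}$ and generates that unit module. The hypothesis $\deg(\kappa_1)\leqslant (q+1)/2$ is exactly what controls the growth of $\beta_m$ (and the analogously defined $\tbeta_m$) appearing in $\tL_0$, thereby ensuring convergence at $1$. Substituting $z=1$ into the identity $\Log_{\Alt^2\phi}(z) = z + \tL_0(z)$ of Corollary~\ref{Cintro:log}(3) and unpacking the defining power series of $\tL_0$ — whose coefficient $\tL_{0,m}$ is a linear combination of the products $\beta_m\tbeta_{m-2}$ and $\beta_{m-1}\tbeta_{m-1}$ scaled by $\kappa_2/(\theta-\theta^{(1)})$ — then yields the stated closed form directly.

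For part (2)(i), I would first pin down an $A$-basis of $U(\Sym^2\phi)$ whose three members produce, upon coordinatewise evaluation via Corollary~\ref{Cintro:log}(2), exactly the three columns of the matrix $M$. The shape of $M$ indicates that these basis vectors should lift (through the natural projection $\phi^{\otimes 2}\to\Sym^2\phi$) from period-lattice vectors in which $\theta$, $\kappa_1$, and $1$ appear as the distinguished scalars. Once the basis is fixed, substituting each vector into $\Log_{\Sym^2\phi}$ produces entries involving $\Log_{\phi,2}$, $\tL_i$, and $\hL_i$ evaluated at $\theta$, $\kappa_1$, and $1$, with the conversion from $L'_i$ to $\hL_i$ handled by the chain-rule identity \eqref{eq:chain}. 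The scalar $\gamma\in\FF_q^\times$ is a sign correction chosen to normalize the leading coefficient of the determinant to $1$, as required by the sign convention built into Fang's class module identity (Theorem~\ref{FangClass}). The stronger assumption $\deg(\kappa_1)\leqslant 1$ is used to guarantee that every entry of $M$ converges simultaneously.

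For part (2)(ii), the factorization $\Reg_{\phi^{\otimes 2}}=\Reg_{\Sym^2\phi}\cdot\Reg_{\Alt^2\phi}$ follows from the direct-sum decomposition $\phi^{\otimes 2}\cong \Sym^2\phi \oplus \Alt^2\phi$ of $t$-modules (available because the implicit characteristic hypothesis $p\neq 2$ is already in force whenever $\Sym^2\phi$ and $\Alt^2\phi$ are defined), combined with the multiplicativity of the regulator across direct sums whose unit modules split compatibly. The main obstacle in the overall argument is part (2)(i): producing a basis of $U(\Sym^2\phi)$ that reproduces the matrix $M$ on the nose (not merely up to $\GL_3(A)$) and verifying convergence uniformly in the range $\deg(\kappa_1)\leqslant 1$. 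Once that basis is in hand, the remaining steps reduce to matrix bookkeeping driven by Corollary~\ref{Cintro:log}.
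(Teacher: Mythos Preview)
Your approach to part~(1) and to the direct computation underlying part~(2)(i) matches what the paper does: evaluate $\Log_E$ at the standard basis vectors $\be_i$ (whose convergence is guaranteed by the degree hypothesis via \cite[Cor.~6.9]{EP14}), apply Lemma~\ref{L:reg} to pass from the $\pd$-basis to the standard basis, and use the chain rule~\eqref{eq:chain} to convert $L'_i$ into $\hL_i$.

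Where your route diverges is in part~(2)(ii). You propose to deduce $\Reg_{\phi^{\otimes 2}}=\Reg_{\Sym^2\phi}\cdot\Reg_{\Alt^2\phi}$ from an abstract direct-sum decomposition $\phi^{\otimes 2}\cong\Sym^2\phi\oplus\Alt^2\phi$ together with multiplicativity of regulators. The paper instead works the other way around: it first computes the full $4\times 4$ determinant for $\Reg_{\phi^{\otimes 2}}$ from $\{\Log_{\phi^{\otimes 2}}(\be_i)\}_{i=1}^4$ via Corollary~\ref{C:log}(a) and Lemma~\ref{L:reg}(a), then performs explicit row and column operations to factor that determinant as $\Log_{\Alt^2\phi}(1)\cdot\det(M)$. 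The identifications of the two factors with $\Reg_{\Alt^2\phi}$ and $\Reg_{\Sym^2\phi}$ are then checked by the same direct calculations you describe, and (ii) drops out as a byproduct. This buys the paper a proof of (ii) that is purely a matrix identity and does not require verifying that the particular $A$-models of $\phi^{\otimes 2}$, $\Sym^2\phi$, $\Alt^2\phi$ chosen in Definitions~\ref{ten_def} and~\ref{def_symalt} are linked by an isomorphism over $A$ compatible with both integral structures $\Lie(E)(A)$ and $\Exp_{E,K_\infty}^{-1}(E(A))$. Your abstract route would need exactly that compatibility, which is plausible but not automatic from the $t$-motive splitting alone; the paper's concrete factorization sidesteps the issue entirely.
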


\begin{remark}
    From Theorem \ref{T:computeReg} we see the regulators can be explicitly expressed in terms of coordinate functions of logarithms. 
\end{remark}

\subsection{Outline}
After summarizing preliminary material in~\S\ref{S:prelim},   we define tensor products, symmetric and alternating squares of Drinfeld modules from the aspect of $t$-motives and explore their properties in~\S\ref{Ch:TensorProducts}. 
% We then survey a special case of theory of Poonen parings and its application on computing $A$-orders in~\S\ref{S:Poonenparing} from \cite{HP22}.
In~\S\ref{S:Ltenseries} we review the theories of Goss $L$-series, as well as Fang's class module formula. We consider the $L$-function of $\phitenpsi$, $\symphi$ and $\altphi$. Then we introduce the convolution $L$-series $L(\bsmu_{\phi} \times \bsmu_{\psi},s)$, $L(\tbsmu_{\phi},s)$ and $L(\hat{\bsmu}_{\phi},s)$, relate them to $L(\phitenpsi,s)$, $L(\symphi,s)$, $L(\altphi,s)$ and twisted Carlitz $L$-series, and investigate special value identities using Fang's class module formula. We provide explicit expressions of regulators $\Reg_{\phitenpsi}$, $\Reg_{\symphi}$ and $\Reg_{\altphi}$ for rank $2$ case in \S\ref{S:regulator}.

\section{Preliminaries}\label{S:prelim}
\subsection{Notation}
We will use the following notation throughout.
\begin{longtable}{p{1.25truein}@{\hspace{5pt}$=$\hspace{5pt}}p{4.5truein}}
$A$ & $\FF_q[\theta]$, polynomial ring in variable $\theta$ over $\FF_q$. \\
$A_+$ & the monic elements of $A$. \\
$K$ & $\FF_q(\theta)$, the fraction field of $A$. \\
$K_{\infty}$ & $\laurent{\FF_q}{\theta^{-1}}$, the completion of $K$ at $\infty$. \\
$\C$ & the completion of an algebraic closure $\oK_{\infty}$ of $K_{\infty}$.\\
$\inorm{\,\cdot\,}$; $\deg$ & $\infty$-adic norm on $\C$, extended to the sup norm on a finite dimensional $\C$-vector space; $\deg = -\ord_{\infty} = \log_q\inorm{\,\cdot\,}$.\\
$\FF_f$ & $A/fA$ for $f \in A_+$ irreducible. \\
$\sA$ & $\FF_q[t]$, for a variable $t$ independent from $\theta$. \\
$\TT_t$ & Tate algebra in $t$ $= \{ \sum a_i t^i \in \power{\C}{t} \mid \inorm{a_i} \to 0 \} = $ completion of $\C[t]$ with respect to Gauss norm. \\
% $\LLhat_t$ & completion of $\LL_t$ with respect to Gauss norm. \\
% $\TT_t(K_{\infty})$; $\LL_t(K_{\infty})$ & $\TT_t \cap \power{K_{\infty}}{t} = \laurent{\FF_q[t]}{\theta^{-1}}$; and $\LL_t(K_{\infty})$ its fraction field. \\
% $\LLhat_t(K_{\infty})$ & completion of $\LL_t(K_{\infty})$, or equivalently $\laurent{\FF_q(t)}{\theta^{-1}}$. \\
% $\AAA$ & $\FF_q(z)[\theta]$, for a third variable $z$ independent from $\theta$, $t$. \\
% $\KK$ & $\FF_q(z,\theta)$, the fraction field of $\AAA$. \\
% $\KK_{\infty}$ & $\laurent{\FF_q(z)}{\theta^{-1}}$, the completion of $\KK$ at $\infty$, or equivalently $\LLhat_z(K_{\infty})$. \\
% \raggedright $\TT_z$, $\TT_z(K_{\infty})$, \linebreak \raggedright $\LLhat_z$, $\LLhat_z(K_{\infty})$ & same as above, with $t$ replaced by $z$. \\
% $\bA$ & $\FF_q(z)[t]$.\\
% $\FZ{F}{Z}$ & $F(Z)$ or $\laurent{F}{Z}$, where $Z = \{ z_1, \dots, z_n \}$, $n \geqslant 0$.\\
% $\cA$ & $\FZ{\FF_q}{Z}[t]$, for $Z = \{ z_1, \dots, z_n \}$, $n \geqslant 0$.\\
% $\Mat_{k \times \ell}(R)$ & for a ring $R$, the left $R$-module of $k \times \ell$ matrices over $R$. \\
% $\Mat_{k}(R)$; $R^k$ & $\Mat_{k \times k}(R)$; $\Mat_{k\times 1}(R)$.\\
% $B^{\tr}$ & the transpose of a matrix $B$. \\
% $\Char(B,X)$ & the monic characteristic polynomial in $X$ of a square matrix $B$.\\
% $\Char(\alpha,V,X)$ & the monic characteristic polynomial of a linear map $\alpha : V \to V$.
\end{longtable}

\subsubsection{Rings of operators} 
% \subsubsection{Rings of scalars, functions, and operators} 
For a variable $t$ independent from $\theta$ we let $\sA \assign \FF_q[t]$. We let $\TT_t$ denote the standard \emph{Tate algebra}, $\TT_t \subseteq \power{\C}{t}$,
consisting of power series that converge on the closed unit disk of $\C$, and we take
\begin{equation}
\TT_t(K_\infty) \assign \TT_t \cap \power{K_{\infty}}{t} = \laurent{\FF_q[t]}{\theta^{-1}}.
\end{equation}
We let $\dnorm{\,\cdot\,}$ denote the Gauss norm on $\TT_t$, such that $
\lVert \sum_{i=0}^{\infty} a_i t^i \rVert = \max_i \{ \inorm{a_i} \}$,
under which $\TT_t$ is a complete normed $\C$-vector space, and likewise $\TT_t(K_\infty)$ is a complete normed $K_{\infty}$-vector space. We extend the degree map on $\C$ to $\TT_t$ by taking
$\deg = \log_q \dnorm{\,\cdot\,}$. We further let $\TT_\theta$ denote the Tate algebra, $\TT_\theta \subseteq \power{\C}{t}$, consisting of power series that converge on the closed disk of radius $\inorm{\theta}$, and $\dnorm{\,\cdot\,}_\theta$ denote the norm on $\TT_\theta$ such that 
$
\dnorm{ \sum_{i=0}^{\infty} a_i t^i}_\theta = \max_i \{ q^i\cdot\inorm{a_i} \}$.

\subsubsection{Frobenius operators}
We take $\tau : \C \to \C$ for the \emph{$q$-th power Frobenius automorphism}, which we extend to $\laurent{\C}{t}$ 
% and $\laurent{\C}{z}$ (and their compositum) 
by requiring it to commute with $t$.
% and $z$
 For $g = \sum c_i t^i \in \laurent{\C}{t}$, we define the \emph{$n$-th Frobenius twist},
\[
g^{(n)} \assign \tau^n(g) = \sum c_i^{q^n} t^i, \quad \forall\, n \in \ZZ.
\]
% and likewise for $g \in \laurent{\C}{z}$. 
Then $\tau$ induces an $\FF_q(t)$-linear automorphism of $\TT_t$,
% and $\FF_q(z)$-linear automorphisms of their $z$-counterparts, 
and the fixed ring of $\tau$ is $\TT_t^{\tau} = \FF_q[t]$.
% (e.g., see \citelist{\cite{Demeslay15}*{Lem.~2.2} \cite{P08}*{Lem.~3.3.2}}).
% Furthermore, $\tau$ restricts to an (non-invertible) endomorphisms of $\TT_t(K_{\infty})$, $\LL_t(K_{\infty})$, $\LLhat_t(K_{\infty})$, etc.

\subsubsection{Twisted polynomials}\label{SSS:twistedpolys}
Let $R$ be any commutative $\FF_q$-algebra, and let $\tau : R \to R$ be an injective $\FF_q$-algebra endomorphism. Let $R^{\tau}$ be the $\FF_q$-subalgebra of $R$ of elements fixed by $\tau$. For $n \in \ZZ$ for which $\tau^n$ is defined on $R$ and a matrix $B=(b_{ij})$ with entries in $R$, we let $B^{(n)}$ be defined by twisting each entry. That is, $(b_{ij})^{(n)} = (b_{ij}^{(n)})$. For $\ell \geqslant 1$ we let $\Mat_\ell(R)[\tau] = \Mat_\ell(R[\tau])$ be the ring of \emph{twisted polynomials} in $\tau$ with coefficients in $\Mat_\ell(R)$, subject to the relation $\tau B = B^{(1)} \tau$ for $B \in \Mat_\ell(R)$.
In this way, $R^{\ell}$ is a left $\Mat_{\ell}(R)[\tau]$-module, where if $\beta = B_0 + B_1 \tau + \dots + B_m \tau^m \in \Mat_{\ell}(R)[\tau]$ and $\bx \in R^{\ell}$, then
\begin{equation} \label{E:taueval}
\beta(\bx) = B_0 \bx + B_1 \bx^{(1)} + \cdots + B_m \bx^{(m)}.
\end{equation}

If furthermore $\tau$ is an automorphism of $R$, then we set $\sigma \assign \tau^{-1}$ and form the twisted polynomial ring $\Mat_{\ell}(R)[\sigma]$, subject to $\sigma B = B^{(-1)} \sigma$ for $B \in \Mat_{\ell}(R)$.
Then $R^{\ell}$ is a left $\Mat_{\ell}(R)[\sigma]$-module, where for $\gamma = C_0 + C_1\sigma + \dots + C_m\sigma^m \in \Mat_{\ell}(R)[\sigma]$ and $\bx \in R^{\ell}$,
\begin{equation} %\label{E:sigmaeval}
\gamma(\bx) = C_0 \bx + C_1 \bx^{(-1)} + \cdots + C_m \bx^{(-m)}.
\end{equation}

For $\beta \in \Mat_{\ell}(R)[\tau]$ (or $\gamma \in \Mat_{\ell}(R)[\sigma]$), we write $\pd \beta$ (or $\pd \gamma$) for the constant term with respect to $\tau$ (or $\sigma$). We have natural inclusions of $\FF_q$-algebras,
\[
\Mat_{\ell}(R)[\tau] \subseteq \power{\Mat_{\ell}(R)}{\tau}, \quad \Mat_{\ell}(R)[\sigma] \subseteq \power{\Mat_{\ell}(R)}{\sigma},
\]
into \emph{twisted power series} rings, the latter when $\tau$ is an automorphism.

\subsubsection{Ore anti-involution} %\label{SSS:star}
We assume that $\tau : R \to R$ is an automorphism, and recall the anti-isomorphism $* : R[\tau] \to R[\sigma]$ of $\FF_q$-algebras originally defined by Ore~\cite{Ore33a} (see also~\cites[\S 1.7]{Goss}[\S 2.3]{NamoijamP22}{Poonen96}),  given by
\[
\biggl( \sum_{i=0}^\ell b_i\tau^i \biggr)^* = \sum_{i=0}^{\ell} b_i^{(-i)} \sigma^i.
\]
One verifies that $(\alpha\beta)^* = \beta^*\alpha^*$ for $\alpha$, $\beta \in R[\tau]$. For $B = (\beta_{ij}) \in \Mat_{k \times \ell}(R[\tau])$, we set
\[
B^* \assign \bigl( \beta_{ij}^* \bigr)^{\tr} \in \Mat_{\ell \times k}(R[\sigma]),
\]
which then satisfies
\begin{equation} \label{E:BCstar}
(BC)^* = C^*B^* \in \Mat_{m \times k}(R[\sigma]), \quad B \in \Mat_{k \times \ell}(R[\tau]),\ C \in \Mat_{\ell \times m}(R[\tau]).
\end{equation}
The inverse of $* : \Mat_{k \times \ell}(R[\tau]) \to \Mat_{\ell \times k}(R[\sigma])$ is also denoted by ``$*$.''

\subsubsection{Orders of finite 
\texorpdfstring{$F[x]$}{F[x]}-modules} \label{SSS:orders}
For $F[x]$ a polynomial ring in one variable over a field~$F$, we say that an $F[x]$-module is \emph{finite} if it is finitely generated and torsion.
Now fix a finite $F[x]$-module $M$. Then there are monic polynomials $f_1, \dots, f_\ell \in F[x]$ so that
\[
M \cong F[x]/(f_1) \oplus \dots \oplus F[x]/(f_\ell).
\]
We set $\Aord{M}{F[x]} \assign f_1 \cdots f_\ell \in F[x]$, which is a generator of the Fitting ideal of $M$, and we call $\Aord{M}{F[x]}$ the \emph{$F[x]$-order} of $M$. If $m_x : M \to M$ is left-multiplication by~$x$, then
\[
\Aord{M}{F[x]} = \Char(m_x,M,X)|_{X=x},
\]
where $\Char(m_x,M,X) \in F[X]$ is the characteristic polynomial of $m_x$ as an $F$-linear map.
For a variable $y$ independent from $x$, but $M$ still an $F[x]$-module, we will write
\[
\Aord{M}{F[y]} \assign \Aord{M}{F[x]}|_{x=y} = \Char(m_x,M,y).
\]
This will be of particular use for us when $M$ is an $\sA$-module (or $\bA$-module), where
\[
\Aord{M}{A} = \Aord{M}{\sA}|_{t=\theta} = \Char(m_t,M,\theta) \in A, \ \textup{(or $\Aord{M}{\AAA} = \Aord{M}{\bA}|_{t=\theta} = \Char(m_t,M,\theta) \in \AAA$),}
\]
coercing $A$-orders and $\AAA$-orders to be elements of our scalar fields.

\subsection{Drinfeld modules, Anderson \texorpdfstring{$t$}{t}-modules, and their adjoints} \label{SS:Drinfeld}
Given a field $F\supseteq \FF_q$ and an $\FF_q$-algebra map $\iota : \sA \to F$, we call $F$ an \emph{$\sA$-field}.  The kernel of $\iota$ is the \emph{characteristic} of $F$, and if $\iota$ is injective then the characteristic is \emph{generic}. If $F \subseteq \C$ has generic characteristic, then we always assume that $\iota(t) = \theta$. Otherwise, $\iota(t) \rassign \otheta \in F$.

\subsubsection{Drinfeld modules and Anderson \texorpdfstring{$t$}{t}-modules}
A \emph{Drinfeld module} over $F$ is defined by an $\FF_q$-algebra homomorphism $\phi : \sA \to F[\tau]$ such that
\begin{equation} \label{E:Drindef}
\phi_t = \otheta + \kappa_1 \tau + \dots + \kappa_r \tau^r, \quad \kappa_r \neq 0.
\end{equation}
We say that $\phi$ has \emph{rank $r$}. We then make $F$ into an $\sA$-module by setting
\[
t \cdot x \assign \phi_t(x) = \otheta x + \kappa_1 x^q + \cdots + \kappa_r x^{q^r}, \quad x \in F.
\]
Similarly an \emph{Anderson $t$-module of dimension~$\ell$} over $F$ is defined by an $\FF_q$-algebra homomorphism $\psi : \sA \to \Mat_{\ell}(F)[\tau]$ such that
\begin{equation} \label{E:tmoddef}
\psi_t = \pd \psi_t + E_1 \tau + \dots + E_{w} \tau^{w}, \quad E_i \in \Mat_{\ell}(F),
\end{equation}
where $\pd \psi_t - \otheta\cdot \rI_{\ell}$ is nilpotent. A Drinfeld module is then a $t$-module of dimension~$1$. 
We write $\psi(F)$ for $F^{\ell}$ with the $\sA$-module structure given by $a \cdot \bx \assign \psi_a(\bx)$ through~\eqref{E:taueval}. Similarly, we write $\Lie(\psi)(F)$ for~$F^{\ell}$ with $F[t]$-module structure defined by $\pd \psi_a$ for $a \in \sA$. For $a \in \sA$, the $a$-torsion submodule of $\psi(\oF)$ is denoted
\[
\psi[a] \assign \{ \bx \in \smash{\oF}^{\ell} \mid \psi_a(\bx) = 0 \}.
\]

Given $t$-modules $\phi : \sA \to \Mat_{k}(F)[\tau]$, $\psi : \sA \to \Mat_{\ell}(F)[\tau]$, a morphism $\eta : \phi \to \psi$ is a matrix $\eta \in \Mat_{\ell \times k}(F[\tau])$ such that
$\eta \phi_a = \psi_a \eta$ for all $a \in \sA$.
Moreover, $\eta$ induces an $\sA$-module homomorphism $\eta : \phi(F) \to \psi(F)$, and we have a functor $\psi \mapsto \psi(F)$ from the category of $t$-modules to $\sA$-modules. We also have an induced map of $F[t]$-modules, $\pd \psi : \Lie(\phi)(F) \to \Lie(\psi)(F)$.

Anderson defined $t$-modules in~\cite{And86}, and following his language we sometimes abbreviate ``Anderson $t$-module'' by ``$t$-module.'' For more information about Drinfeld modules and $t$-modules see \cite{Goss,Thakur}.

\subsubsection{Exponential and logarithm series} %\label{SS:ExpLog}
Suppose now that $F \subseteq \C$ has generic characteristic and that $\psi$ is defined over $F$. Then there is a twisted power series $\Exp_{\psi} \in \power{\Mat_{\ell}(F)}{\tau}$, called the \emph{exponential series} of~$\psi$, such that
\[
\Exp_{\psi} = \sum_{i=0}^{\infty} B_i \tau^i, \quad B_0 = \rI_{\ell},\ B_i \in \Mat_{\ell}(F),
\]
and for all $a \in \sA$, $\Exp_{\psi} {}\cdot \pd\psi_a = \psi_a \cdot \Exp_{\psi}$.
This functional identity for $a=t$ induces a recursive relation that uniquely determines $\Exp_{\psi}$. That the coefficient matrices have entries in $F$ is due to Anderson \cite[Prop.~2.1.4, Lem.~2.1.6]{And86}. 
% In fact if $\psi$ is defined over $H$ with $K \subseteq H \subseteq F$, then $B_i \in \Mat_{\ell}(H)$ even if $H$ is not perfect (see \cite{NamoijamP22}*{Rem.~2.2.3} for further discussion). 
The exponential series induces an $\FF_q$-linear and entire function,
\[
\Exp_{\psi} : \C^{\ell} \to \C^{\ell}, \quad \Exp_{\psi}(\bz) = \sum_{i=0}^{\infty} B_i \bz^{(i)}, \quad \bz \assign (z_1, \dots, z_{\ell})^{\tr},
\]
called the \emph{exponential function} of $\psi$. That $\Exp_{\psi}$ converges everywhere is equivalent to
\[\lim_{i \to \infty} \inorm{B_i}^{1/q^i} = 0 \Leftrightarrow \lim_{i \to \infty} \deg (B_i)/q^i = -\infty.\]
We also identify the exponential function with the $\FF_q$-linear formal power series $\Exp_\psi(\bz) \in \power{\C}{\bz}^\ell$. 
The functional equation for $\Exp_\psi$ induces the identities,
\[
\Exp_\psi (\pd \psi_a \bz) = \psi_a \bigl( \Exp_\psi(\bz) \bigr), \quad \forall\, a \in \sA.
\]
The exponential function of $\psi$ is always surjective for Drinfeld modules, but it may not be surjective when $\ell \geqslant 2$. We say that $\psi$ is \emph{uniformizable} if $\Exp_{\psi} : \C^{\ell} \to \C^{\ell}$ is surjective. The kernel of $\Exp_{\psi} \subseteq \C^{\ell}$,
\[
\Lambda_\psi \assign \ker \Exp_{\psi},
\]
is a finitely generated and discrete $\pd\psi(\sA)$-submodule of $\C^{\ell}$ called the \emph{period lattice} of~$\psi$.
Thus if $\psi$ is uniformizable, then we obtain an exact sequence of $\sA$-modules,
\[
0 \to \Lambda_{\psi} \to \C^{\ell} \xrightarrow{\Exp_{\psi}} \psi(\C) \to 0.
\]
As an element of $\power{\Mat_{\ell}(F)}{\tau}$ the series $\Exp_{\psi}$ is invertible, and we let \[\Log_{\psi} \assign \Exp_{\psi}^{-1} \in \power{\Mat_{\ell}(F)}{\tau}\] be the \emph{logarithm series} of~$\psi$, satisfying
\[
\Log_{\psi} = \sum_{i=0}^{\infty} C_i \tau^i, \quad C_0 = \rI_{\ell},\ C_i \in \Mat_{\ell}(F).
\]
Together with the \emph{logarithm function}, $\Log_\psi(\bz) = \sum_{i \geqslant 0} C_i \bz^{(i)} \in \power{\C}{\bz}^{\ell}$, we have
$\pd \psi_a \cdot \Log_{\psi} = \Log_{\psi} {} \cdot \psi_a$ and $\pd\psi_a ( \Log_{\psi}(\bz) ) = \Log_{\psi} (\psi_a (\bz))$, for all $a \in \sA$.
In general $\Log_\psi(\bz)$ converges only on an open polydisc in $\C^{\ell}$. For example, if $\phi : \sA \to \C[\tau]$ is a Drinfeld module as in \eqref{E:Drindef}, then $\Log_{\phi}(z)$ converges on the open disk of radius $R_\phi$, where
\begin{equation} \label{E:Rphi}
R_{\phi} = \inorm{\theta}^{-\max \{ (\deg \kappa_i - q^i)/(q^i-1)\, \mid \, 1 \leqslant i \leqslant r,\, \kappa_i \neq 0 \}}
\end{equation}
 (see \cites[Rem.~6.11]{EP14}[Cor.~4.5]{KhaochimP22}).

\subsubsection{Adjoints of \texorpdfstring{$t$}{t}-modules}
Assume now that $F$ is a perfect $\sA$-field and that $\psi : \sA \to \Mat_{\ell}(F)[\tau]$ is an Anderson $t$-module over $F$ defined as in \eqref{E:tmoddef}. The \emph{adjoint} of $\psi$ is defined to be the $\FF_q$-algebra homomorphism $\psi^* : \sA \to \Mat_{\ell}(F)[\sigma]$ defined by
\[
\psi_a^* \assign (\psi_a)^*, \quad \forall\, a \in \sA.
\]
Since for $a$, $b \in \sA$ we have $\psi_{ab} = \psi_a\psi_b = \psi_b \psi_a$, \eqref{E:BCstar} implies that $\psi^*$ respects multiplication, which is the nontrivial part of checking that $\psi^*$ is an $\FF_q$-algebra homomorphism. From~\eqref{E:tmoddef}, we have
\[
\psi^*_t = (\psi_t)^* = (\pd \psi_t)^{\tr} + \bigl(E_1^{(-1)} \bigr)^{\tr} \sigma + \cdots + \bigl( E_w^{(-w)} \bigr)^{\tr} \sigma^{w},
\]
and so for any $\bx \in F^{\ell}$, we have $\psi^*_t(\bx) = (\pd \psi_t)^{\tr} \bx + (E_1^{(-1)})^{\tr} \bx^{(-1)} + \cdots + (E_w^{(-w)})^{\tr} \bx^{(-w)}$.
In this way the map $\psi^*$ induces an $\sA$-module structure on $F^{\ell}$, which we denote $\psi^*(F)$. Similarly we denote $\Lie(\psi^*)(F) = F^{\ell}$ with an $F[t]$-module structure induced by $\pd \psi_a^{\tr}$ for $a \in \sA$. For $a \in \sA$, the $a$-torsion submodule of $\psi^*(\oF)$ is denoted
\[
\psi^*[a] \assign \{ \bx \in \smash{\oF}^{\ell} \mid \psi^*_a(\bx) = 0\}.
\]

If $\eta : \phi \to \psi$ is a morphism of $t$-modules as above, then $\eta^* \in \Mat_{k \times \ell}(F)[\sigma]$ provides a morphism $\eta^* : \psi^* \to \phi^*$ such that $\eta^* \psi_a^* = \phi_a^* \eta^*$ for all $a \in \sA$ (and vice versa). Furthermore, $\pd\eta^* : \Lie(\psi^*)(F) \to \Lie(\phi^*)(F)$ is an $F[t]$-module homomorphism. Adjoints of Drinfeld modules were investigated extensively by Goss~\cite[\S 4.14]{Goss} and Poonen~\cite{Poonen96}.

\subsection{\texorpdfstring{$t$}{t}-motives and dual \texorpdfstring{$t$}{t}-motives}

For this subsection we fix a perfect $\sA$-field $F$ and $t$-module $\psi : \sA \to \Mat_{\ell}(F)[\tau]$ as in~\eqref{E:tmoddef}. Recall that $\otheta = \iota(t) \in F$.

\subsubsection{\texorpdfstring{$t$}{t}-motive of \texorpdfstring{$\psi$}{psi}}
We let $\cM_{\psi} \assign \Mat_{1\times \ell}(F[\tau])$, and make $\cM_{\psi}$ into a left $F[t,\tau]$-module by using the inherent structure as a left $F[\tau]$-module and setting
\[
a \cdot m \assign m \psi_a, \quad m \in \cM_{\psi},\ a \in \sA.
\]
Then $\cM_{\psi}$ is called the \emph{$t$-motive} of $\psi$. We note that for any $m\in \cM_{\psi}$,
\[
(t-\otheta)^{\ell} \cdot m \in \tau \cM_{\psi},
\]
since $\pd\psi_t-\otheta \rI_{\ell}$ is nilpotent (and $F$ is perfect). If we need to emphasize the dependence on the base field~$F$, we write
\[
\cM_{\psi}(F) \assign \cM_{\psi} = \Mat_{1 \times \ell}(F[\tau]).
\]

A morphism $\eta : \phi \to \psi$ of $t$-modules over $F$ of dimensions~$k$ and $\ell$, defined as in \S\ref{SS:Drinfeld}, induces a morphism of left $F[t,\tau]$-modules $\eta^{\dagger} : \cM_{\psi} \to \cM_{\phi}$, given by $\eta^{\dagger} (m) \assign m\eta$ for $m \in \cM_{\psi}$. The functor from $t$-modules over $F$ to $t$-motives over $F$ is fully faithful, and so every left $F[t,\tau]$-module homomorphism $\cM_{\psi} \to \cM_{\phi}$ arises in this way.

By construction $\cM_{\psi}$ is free of rank~$\ell$ as a left $F[\tau]$-module, and we say $\ell$ is the \emph{dimension} of $\cM_{\psi}$. If $\cM_{\psi}$ is further free of finite rank over $F[t]$, then $\cM_{\psi}$ is said to be \emph{abelian} and $r = \rank_{F[t]} \cM_{\psi}$ is the \emph{rank} of $\cM_{\psi}$. We will say that $\psi$ is abelian or has rank~$r$ if $\cM_{\psi}$ possesses the corresponding properties. The $t$-motives in Anderson's original definition in~\cite{And86} are abelian, as will be most of the $t$-motives in this paper, but for example, see~\cites{BP20}[Ch.~5]{Goss}[Ch.~2--4]{NamoijamP22}{HartlJuschka20}~ for $t$-motives in this wider context.

\subsubsection{Dual \texorpdfstring{$t$}{t}-motive of \texorpdfstring{$\psi$}{psi}}
We let $\cN_{\psi} \assign \Mat_{1 \times \ell}(F[\sigma])$, and similar to the case of $t$-motives, we define a left $F[t,\sigma]$-module structure on $\cN_{\psi}$ by setting
\[
a \cdot n \assign n \psi_a^*, \quad n \in \cN_{\psi},\ a \in \sA.
\]
The module $\cN_{\psi}$ is the \emph{dual $t$-motive} of $\psi$. As in the case of $t$-motives, for any $n \in \cN_{\psi}$ we have $(t-\otheta)^{\ell} \cdot n \in \sigma\cN_{\psi}$. Also if we need to emphasize the dependence on~$F$, we write
\[
\cN_{\psi}(F) \assign \cN_{\psi} = \Mat_{1\times \ell}(F[\sigma]).
\]

Again for a morphism $\eta : \phi \to \psi$ of $t$-modules of dimensions $k$ and $\ell$, we obtain a morphism of left $F[t,\sigma]$-modules, $\eta^{\ddagger} : \cN_{\phi} \to \cN_{\psi}$, given by $\eta^{\ddagger}(n) \assign n \eta^*$ for $n \in \cN_{\phi}$. Also, every morphism of left $F[t,\sigma]$-modules $\cN_{\phi} \to \cN_{\psi}$ arises in this way.

The dual $t$-motive $\cN_{\psi}$ is free of rank $\ell$ as a left $F[\sigma]$-module, and $\ell$ is the \emph{dimension} of $\cN_{\psi}$. If $\cN_{\psi}$ is free of finite rank over $F[t]$, then we say $\cN_{\psi}$ is \emph{$\sA$-finite}, and we call $r = \rank_{F[t]}(\cN_{\psi})$ the \emph{rank} of $\cN_{\psi}$. It has been shown by Maurischat~\cite{Maurischat21} that for a $t$-module $\psi$, the $t$-motive $\cM_{\psi}$ is abelian if and only if the dual $t$-motive $\cN_{\psi}$ is $\sA$-finite. In this case the rank of $\cM_{\psi}$ is the same as the rank of $\cN_{\psi}$. We will say that $\psi$ is $\sA$-finite or has rank~$r$ if $\cN_{\psi}$ has those properties.
Dual $t$-motives were initially introduced in \cite{ABP04} over fields of generic characteristic. See \cites{BP20}{HartlJuschka20}{Maurischat21}[Ch.~2--4]{NamoijamP22}), for more information.

We call $\bm = (m_1, \dots, m_r)^{\tr} \in \Mat_{r\times 1}(\cM_{\psi}(F))$ a \emph{basis} of $\cM_{\psi}(F)$ if $m_1, \dots, m_r$ form an $F[t]$-basis of $\cM_{\psi}(F)$. Likewise $\bn = (n_1, \dots, n_r)^{\tr} \in \Mat_{r\times 1}(\cN_{\psi}(F))$ is a \emph{basis} of $\cN_{\psi}(F)$ if $n_1, \dots, n_r$ form an $F[t]$-basis of $\cN_{\psi}(F)$. We then define $\Gamma$, $\Phi \in \Mat_r(F[t])$ so that
\[
\tau \bm = \Gamma \bm, \quad \sigma \bn = \Phi \bn.
\]
It follows that $\det \Gamma = c(t-\theta)^\ell$, $\det \Phi = c'(t-\theta)^\ell$,
where $c$, $c' \in F^{\times}$ (e.g., see \cite[\S 3.2]{NamoijamP22}). Then $\Gamma$ \emph{represents multiplication by $\tau$} on $\cM_{\psi}$ and $\Phi$ \emph{represents multiplication by $\sigma$} on $\cN_{\psi}$.

\begin{example} \label{Ex:Carlitz1}
\emph{Carlitz module.} The Carlitz module $\sC : \sA \to F[\tau]$ over $F$ is defined by
\[
\sC_t = \otheta + \tau,
\]
and it has dimension~$1$ and rank~$1$. Then $\bm = \{1\}$ is an $F[t]$-basis for $\cM_{\sC} = F[\tau]$, and $\bn = \{1\}$ is an $F[t]$-basis for $\cN_{\sC} = F[\sigma]$. One finds that $\tau \cdot 1 = (t-\theta)\cdot 1$ in $\cM_{\sC}$ and $\sigma \cdot 1 = (t-\theta)\cdot 1$ in $\cN_{\sC}$, so $\Gamma = \Phi = t-\theta$.
\end{example}

\begin{example} \label{Ex:Drinfeld1}
\emph{Drinfeld modules.} Let $\phi : \sA \to F[\tau]$ be a Drinfeld module over $F$ of rank~$r$ defined as in~\eqref{E:Drindef}. Then $\bm = (1,\tau, \dots, \tau^{r-1})^{\tr}$ is a basis for $\cM_{\psi}$ and $\bn = (1, \sigma, \dots, \sigma^{r-1})^{\tr}$ is a basis for $\cN_{\psi}$. Furthermore, $\tau \bm = \Gamma\bm$ and $\sigma \bn = \Phi \bn$, where
\begin{equation} \label{E:Gammadef}
\Gamma = \begin{pmatrix}
0 & 1 & \cdots & 0 \\
\vdots & \vdots & \ddots & \vdots \\
0 & 0 & \cdots & 1 \\
(t-\otheta)/\kappa_r & -\kappa_1/\kappa_r & \cdots & -\kappa_{r-1}/\kappa_r
\end{pmatrix},
\end{equation}
and $\Phi$ occurs similarly. See \cites[\S 3.3--3.4]{CP12}[Ex.~3.5.14, Ex.~4.6.7]{NamoijamP22}[\S 4.2]{Pellarin08}~ for details.
\end{example}

\subsection{Tate modules and characteristic polynomials for Drinfeld modules}\label{SS:munu}

We fix a Drinfeld module $\phi : \sA \to A[\tau]$ of rank $r$ in generic characteristic, given by
\[
\phi_t = \theta + \kappa_1 \tau + \cdots + \kappa_r \tau^r, \quad \kappa_i \in A,\ \kappa_r \neq 0.
\]
Letting $f \in A_+$ be irreducible of degree~$d$, the reduction of $\phi$ modulo~$f$ is a Drinfeld module $\ophi : \sA \to \FF_f[\tau]$ of rank $r_0 \leqslant r$, where $\FF_f = A/fA$. Then $\phi$ has \emph{good reduction} modulo~$f$ if $r_0=r$ or equivalently if $f \nmid \kappa_r$.

For $\lambda \in \sA_+$ irreducible, we form the $\lambda$-adic \emph{Tate modules},
\[
T_{\lambda}(\phi) \assign \varprojlim \phi[\lambda^m], \quad T_{\lambda}(\ophi) \assign \varprojlim \ophi[\lambda^m].
\]
As an $\sA_{\lambda}$-module, $T_{\lambda}(\phi) \cong \sA_{\lambda}^{r}$, and if $\lambda(\theta) \neq f$, then likewise $T_{\lambda}(\ophi) \cong \sA_{\lambda}^{r_0}$. Fixing henceforth that $\lambda(\theta) \neq f$, we set $P_f(X) \assign \Char(\tau^d,T_{\lambda}(\ophi),X)|_{t=\theta}$ to be the characteristic polynomial of the $q^d$-th power Frobenius acting on $T_{\lambda}(\ophi)$ but, for convenience, with coefficients forced into $A$ (rather than $\sA$). Thus we have
\begin{equation} \label{E:Pfdef}
P_f(X) = X^{r_0} + c_{r_0-1} X^{r_0-1} + \cdots + c_0 \in A[X].
\end{equation}
Takahashi~\cite[Prop.~3]{Takahashi82} showed that the coefficients in~$A$ and are independent of the choice of $\lambda$ (see also Gekeler~\cite[Cor.~3.4]{Gekeler91}).
We note that if $\phi$ has good reduction modulo~$\lambda$ and if $\alpha_f \in \Gal(K^{\sep}/K)$ is a Frobenius element, then (e.g., see \cites[\S 3]{Goss92}[\S 8.6]{Goss})
\[
\Char(\tau^d,T_{\lambda}(\ophi),X) = \Char(\alpha_f,T_{\lambda}(\phi),X) \in \sA[X].
\]

\subsubsection{Properties of \texorpdfstring{$P_f(X)$}{}} \label{SSS:Pf}
The following results are due to Gekeler~\cite[Thm.~5.1]{Gekeler91} and Takahashi~\cite[Lem.~2, Prop.~3]{Takahashi82}.
\begin{itemize}
\item We have $c_0 = c_f^{-1}f$ for some $c_f \in \FF_q^{\times}$.
\item The ideal $(P_f(1)) \subseteq A$ is an Euler-Poincar\'e characteristic for $\ophi(\FF_f)$.
\item The roots $\gamma_1, \dots, \gamma_{r_0}$ of $P_f(x)$ in $\oK$ satisfy $\deg_{\theta} \gamma_i = d/r_0$.
\end{itemize}
Extending these a little further, for $1 \leqslant j \leqslant r_0$, we have $\deg_{\theta} c_{r_0-j} \leqslant jd/r_0$. Additionally,
\begin{equation}
\bigl[ \ophi(\FF_f) \bigr]_{A} = c_f P_f(1)
\end{equation}
by \cite[Cor.~3.2]{CEP18}. Here we use the convention from \S\ref{SSS:orders} that $\Aord{\ophi(\FF_f)}{A} = \Aord{\ophi(\FF_f)}{\sA}|_{t=\theta}$. Following the exposition in~\cite[\S 3]{CEP18}, we let $P_f^{\vee}(X) \assign \Char(\tau^d,T_{\lambda}(\ophi)^{\vee},X)|_{t=\theta}$ be the characteristic polynomial in $K[X]$ of $\tau^d$ acting on the the dual space of $T_{\lambda}(\ophi)$. We let $Q_f(X) = X^{r_0} P_f(1/X)$ and $Q_f^{\vee}(X) = X^{r_0}P_f^{\vee}(1/X)$ be the reciprocal polynomials of $P_f(X)$ and $P_f^{\vee}(X)$, and consider $Q_f^{\vee}(fX) = 1 + c_f c_1 X + c_f c_2 f X^2 + \cdots + c_f c_{r_0-1} f^{r_0-2} X^{r_0-1} + c_f f^{r_0-1}X^{r_0}$. To denote the dependence on $\phi$, we write $P_{\phi,f}(X)$, $Q_{\phi,f}(X)$, etc.

By varying $f$, we use $Q_f^{\vee}(fX)$ and $Q_f(X)$ to define multiplicative functions $\mu_{\phi}$, $\nu_{\phi} : A_+ \to A$ such that on powers of a given~$f$,
\begin{equation} \label{E:munugen}
\sum_{m=1}^{\infty} \mu_{\phi}(f^m) X^m \assign \frac{1}{Q_f^{\vee}(fX)}, \quad
\sum_{m=1}^{\infty} \nu_{\phi}(f^m) X^m \assign \frac{1}{Q_f(X)}.
\end{equation}

\subsubsection{Everywhere good reduction} \label{SSS:everywhere}
Hsia and Yu~\cite{HsiaYu00} have determined precise formulas for~$c_f$ in terms of the $(q-1)$-st power residue symbol. Of particular interest presently is the case that $\phi$ has everywhere good reduction, i.e., when $\kappa_r \in \FF_q^{\times}$. In this case, Hsia and Yu~\cite[Thm.~3.2, Eqs.~(2) \& (8)]{HsiaYu00} showed that $c_f = (-1)^{r + d(r+1)} \kappa_r^d$.
This prompts the definition of a completely multiplicative function $\chi_{\phi} : A_+ \to \FF_q^{\times}$,
\begin{equation} \label{E:chidef}
\chi_{\phi}(a) \assign \bigl((-1)^{r+1} \kappa_r \bigr)^{\deg_{\theta} a},
\end{equation}
for which we see that $c_f = (-1)^r \chi_{\phi}(f)$. Letting $\ochi_{\phi} : A_+ \to \FF_q^{\times}$ be the multiplicative inverse of $\chi_{\phi}$, we see that
\begin{align} \label{E:PfandPfvee}
P_f(X) &= X^r + c_{r-1} X^{r-1} + \cdots + c_1 X + (-1)^r \ochi_{\phi}(f) \cdot f, \\
P_f^{\vee}(X) &= X^r + \frac{(-1)^r \chi_{\phi}(f) c_1}{f}X^{r-1} + \cdots + \frac{(-1)^r \chi_{\phi}(f) c_{r-1}}{f} X + \frac{(-1)^r \chi_{\phi}(f)}{f}, \notag
\end{align}
and likewise
\begin{align} \label{E:QfandQfvee}
Q_f(X) &= 1 + c_{r-1} X + \cdots + c_1 X^{r-1} + (-1)^r \ochi_{\phi}(f) \cdot f X^r, \\
Q_f^{\vee}(fX) &= \begin{aligned}[t]
1 + (-1)^r \chi_{\phi}(f) c_1X + \cdots + (-1)^r &\chi_{\phi}(f) c_{r-1}f^{r-2} X^{r-1}\\
&{}+ (-1)^r \chi_{\phi}(f)f^{r-1} X^r.
\end{aligned}\notag
\end{align}
Moreover,
\begin{equation} \label{E:munuf}
\mu_{\phi}(f) = (-1)^{r+1}\chi_{\phi}(f) c_1, \quad \nu_{\phi}(f) = -c_{r-1}.
\end{equation}
We record the induced recursive relations (cf.~\cite[Lem.~3.5]{CEP18}) on $\mu_{\phi}$ and $\nu_{\phi}$, where taking $m +r \geqslant 1$ and using the convention that $\mu_{\phi}(b) = \nu_{\phi}(b) = 0$ if $b \in K \setminus A_+$,
\begin{align}
\label{E:murec}
\mu_{\phi}(f^{m+r}) &= \begin{aligned}[t]
\mu_{\phi}(f) \mu_{\phi}(f^{m+r-1}) - (-1)^r \chi_{\phi}(f) \sum_{j=2}^{r-1} &c_j f^{j-1} \mu_{\phi}(f^{m+r-j})\\
&{} - (-1)^r \chi_{\phi}(f) f^{r-1} \mu_{\phi}(f^m),
\end{aligned}
\\
\label{E:nurec}
\nu_{\phi}(f^{m+r}) &=
\nu_{\phi}(f) \nu_{\phi}(f^{m+r-1}) - \sum_{j=2}^{r-1} c_{r-j} \nu_{\phi}(f^{m+r-j}) - (-1)^r \ochi_{\phi}(f) f \nu_{\phi}(f^m).
\end{align}

\subsection{Schur polynomials} \label{SS:Schur}
We review properties of symmetric polynomials and especially Schur polynomials. For more details on symmetric polynomials see \cites[Ch.~8]{Aigner}[\S 2.5]{HP22}[Ch.~7]{Stanley}.
Letting $\bx = \{ x_1, \dots, x_n \}$ be independent variables, the \emph{elementary symmetric polynomials} $\{e_i\}_{i=0}^{n} = \{ e_{n,i}\}_{i=0}^n \subseteq \ZZ[\bx]$ are defined by
\begin{equation} \label{E:elesymm}
\sum_{i=0}^n e_i(\bx)T^i = (1+x_1 T)(1+x_2 T) \cdots (1+x_n T).
\end{equation}
We adopt the convention that $e_i = 0$ if $i < 0$ or $i > n$.
The \emph{complete homogeneous symmetric polynomials} $\{h_i\}_{i \geqslant 0} = \{ h_{n,i} \}_{i \geqslant 0} \subseteq \ZZ[x_1,\dots, x_n]$ are defined by
\begin{equation} \label{E:homsymm}
\sum_{i=0}^\infty h_i(\bx) T^i = \frac{1}{(1-x_1 T)(1-x_2 T) \cdots (1-x_n T)},
\end{equation}
and similarly if $i < 0$ then we take $h_i=0$. Then $h_i$ consists of the sum of all monomials in $x_1, \dots, x_n$ of degree~$i$. The \emph{Vandermonde determinant} is
\begin{equation}
V(\bx) = \prod_{1 \leqslant i < j \leqslant n} (x_i - x_j).
\end{equation}
When nonzero we have $\deg e_i=i$ and $\deg h_i=i$, and $\deg V = \binom{n}{2}$.

\begin{definition} \label{D:polytensor}
For polynomials $P(T) = (T-x_1) \cdots (T-x_k)$ and $Q(T) = (T - y_1) \cdots (T-y_\ell)$, we set
\[
(P \otimes Q)(T) \assign \prod_{\substack{1 \leqslant i \leqslant k \\ 1 \leqslant j \leqslant \ell}} (T- x_i y_j).
\]
We further set 
\begin{align*}
\left(\Sym^2P\right)(T) = \prod_{1\leqslant i\leqslant j \leqslant k}(T-x_ix_j)\\
\left(\Alt^2P\right)(T) = \prod_{1\leqslant i < j \leqslant k}(T-x_ix_j).
\end{align*}
Letting $B_m$ be the coefficient of $T^m$ in $(P\otimes Q)(T)$, we find that $B_m$ is symmetric in both $x_1, \dots, x_k$ and $y_1, \dots, y_{\ell}$, its total degree in $x_1, \dots, x_k$ is $k\ell - m$, and its total degree in $y_1, \dots, y_\ell$ is also $k\ell -m$. As such,
$B_m \in \ZZ[e_{k,1}(\bx), \ldots, e_{k,k\ell-m}(\bx); e_{\ell, 1}(\by), \ldots, e_{\ell,k\ell-m}(\by)]$.
The coefficients of $(P\otimes Q)(T)$ and its inverse $(P\otimes Q)(T)^{-1}$ are expressible in terms of Schur polynomials (see Theorem~\ref{T:Cauchy} and Corollary~\ref{C:Cauchynl} for $(P\otimes Q)(T)^{-1}$).
\end{definition}

\subsubsection{Schur polynomials} \label{SSS:Schur}
Let $\lambda$ denote an integer partition $\lambda_1 \geqslant \cdots \geqslant \lambda_n \geqslant 0$ of length~$n$, where $\lambda_i=0$ is allowed. We set
\begin{equation} \label{E:slambda}
s_{\lambda}(\bx) = s_{\lambda_1 \cdots \lambda_n}(\bx) \assign
V(\bx)^{-1} \cdot \det \begin{pmatrix}
x_1^{\lambda_1+n-1} & \cdots & x_n^{\lambda_n+n-1}\\
\vdots & & \vdots \\
x_1^{\lambda_{n-1}+1} & \cdots & x_n^{\lambda_{n-1}+1} \\
x_1^{\lambda_n} & \cdots & x_n^{\lambda_n}
\end{pmatrix}
\end{equation}
We have the following properties (see \cites[\S 8.3]{Aigner}[\S 7.15]{Stanley}).
\begin{itemize}
\item $s_{\lambda}(\bx)$ is a symmetric polynomial in $\ZZ[x_1, \dots, x_n]$.
\item $\deg s_{\lambda}(\bx) = \lambda_1 + \dots + \lambda_n$.
\item For $0 \leqslant i \leqslant n$ we have $s_{\underbrace{\scriptstyle 1\,\,\cdots\,\,1}_{i} \underbrace{\scriptstyle 0\,\,\cdots\,\, 0}_{n-i}}(\bx) = e_i(\bx)$.
\item For $i \geqslant 0$ we have $s_{i \underbrace{\scriptstyle 0\,\,\cdots\,\,0}_{n-1}}(\bx) = h_i(\bx)$.
\end{itemize}
The polynomial $s_{\lambda}$ is called the \emph{Schur polynomial for $\lambda$}. Following the exposition of Bump and Goldfeld~\cites{Bump89}{Goldfeld}, when $n \geqslant 2$ (which we now assume), we consider the subset of Schur polynomials where $\lambda_n=0$ as follows. For integers $k_1, \dots, k_{n-1} \geqslant 0$, form
\[
\lambda : k_1 + \cdots + k_{n-1} \geqslant k_2 + \cdots + k_{n-1} \geqslant \cdots \geqslant k_{n-1} \geqslant 0 \geqslant 0.
\]
We set $S_{k_1, \dots, k_{n-1}}(\bx)$ to be the the Schur polynomial $s_\lambda$, i.e.,
\begin{equation} \label{E:Sk}
S_{k_1, \dots, k_{n-1}}(\bx) \assign
V(\bx)^{-1} \cdot \det \begin{pmatrix}
x_1^{k_1+\cdots+k_{n-1}+n-1} & \cdots & x_n^{k_1+\cdots+k_{n-1}+n-1}\\
x_1^{k_2+\cdots+k_{n-1}+n-2} & \cdots & x_n^{k_2+\cdots+k_{n-1}+n-2}\\
\vdots & & \vdots \\
x_1^{k_{n-1}+1} & \cdots & x_n^{k_{n-1}+1} \\
1 & \cdots & 1
\end{pmatrix}.
\end{equation}
The degree of $S_{k_1, \dots, k_{n-1}}(\bx)$ is $k_1 + 2k_2 + \cdots + (n-1)k_{n-1}$.

\begin{lemma} \label{L:slambdaSk}
Let $\lambda : \lambda_1 \geqslant \cdots \geqslant \lambda_n \geqslant 0$ be an integer partition. Then
\[
s_{\lambda}(\bx) = (x_1 \cdots x_n)^{\lambda_{n}} \cdot S_{\lambda_1-\lambda_2,\lambda_2-\lambda_3, \ldots, \lambda_{n-1}-\lambda_n}(\bx).
\]
\end{lemma}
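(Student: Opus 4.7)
The plan is to prove this by a direct column-factorization of the Jacobi-type determinant defining $s_\lambda(\bx)$. Set $k_i \assign \lambda_i - \lambda_{i+1}$ for $1 \leqslant i \leqslant n-1$, so that $k_i + k_{i+1} + \cdots + k_{n-1} = \lambda_i - \lambda_n$.

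First, I would begin from the definition \eqref{E:slambda}, viewing the matrix inside the determinant as having $(i,j)$-entry $x_j^{\lambda_i + n - i}$ for $1 \leqslant i \leqslant n$. Next, factor $x_j^{\lambda_n}$ out of column $j$ for each $j = 1, \dots, n$; this contributes a total factor of $(x_1 \cdots x_n)^{\lambda_n}$ and leaves the $(i,j)$-entry as $x_j^{\lambda_i - \lambda_n + n - i}$. For $i < n$ this exponent equals $k_i + k_{i+1} + \cdots + k_{n-1} + n - i$, and for $i=n$ it equals $0$, so every entry of the bottom row becomes $1$. Thus
\[
\det \bigl( x_j^{\lambda_i + n - i} \bigr)_{1\leqslant i,j \leqslant n} = (x_1 \cdots x_n)^{\lambda_n} \cdot \det \begin{pmatrix}
x_1^{k_1+\cdots+k_{n-1}+n-1} & \cdots & x_n^{k_1+\cdots+k_{n-1}+n-1}\\
\vdots & & \vdots \\
x_1^{k_{n-1}+1} & \cdots & x_n^{k_{n-1}+1} \\
1 & \cdots & 1
\end{pmatrix},
\]
where the matrix on the right is precisely the one appearing in~\eqref{E:Sk}.

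Finally, dividing both sides by $V(\bx)$ and comparing with \eqref{E:slambda} and \eqref{E:Sk} yields the claimed identity
\[
s_{\lambda}(\bx) = (x_1 \cdots x_n)^{\lambda_n} \cdot S_{\lambda_1-\lambda_2,\ldots,\lambda_{n-1}-\lambda_n}(\bx).
\]
I do not anticipate a real obstacle: the proof is a one-step determinant manipulation together with a bookkeeping check that the shifted exponents $\lambda_i - \lambda_n + n - i$ exactly match the exponents prescribed in the definition of $S_{k_1, \dots, k_{n-1}}$. The only thing to verify carefully is that the bottom row collapses to $(1, \dots, 1)$, which it does because the factorization was designed to kill the $\lambda_n$-th row's contribution.
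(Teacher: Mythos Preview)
Your argument is correct. The paper actually states this lemma without proof, presumably regarding it as immediate from the definitions \eqref{E:slambda} and \eqref{E:Sk}; your column-factorization of the determinant is exactly the natural one-line verification one would supply.
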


% \begin{proof}
% By pulling $x_i^{\lambda_n}$ out of the $i$-th column in \eqref{E:slambda} we see that $s_{\lambda} = (x_1 \cdots x_n)^{\lambda_n} \cdot s_{\lambda'}$, where $\lambda'$ is the partition $\lambda_1-\lambda_n \geqslant \lambda_2-\lambda_n \geqslant \cdots \geqslant \lambda_{n-1}-\lambda_n \geqslant 0\geqslant 0$. The identity then follows from~\eqref{E:Sk}.
% \end{proof}

As a result, we see from the properties of $s_{\lambda}$ above that
\begin{alignat}{2}
\label{E:Sei}
S_{\underbrace{\scriptstyle 0,\,\ldots\,,0,1,0,\,\ldots\,,0}_{\textup{$i$-th place}}}(\bx) &= e_i(\bx), \qquad &&1 \leqslant i \leqslant n-1,\\
\label{E:Shi}
S_{i,\underbrace{\scriptstyle 0,\,\cdots\,, 0}_{n-2}}(\bx) &= h_i(\bx), \qquad &&i \geqslant 0.
\end{alignat}

\begin{lemma} \label{L:Skreorder}
For $k_1, \dots, k_{n-1} \geqslant 0$, we have
\[
(x_1 \cdots x_n)^{k_1 + \dots + k_{n-1}} \cdot S_{k_1, \dots, k_{n-1}} \bigl( x_1^{-1}, \dots, x_n^{-1} \bigr) = S_{k_{n-1}, \dots, k_1}(\bx).
\]
\end{lemma}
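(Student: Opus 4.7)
The plan is to prove this identity by direct manipulation of the determinantal formula~\eqref{E:Sk}. Set $K \assign k_1 + \cdots + k_{n-1}$. First, I would apply~\eqref{E:Sk} with $x_i$ replaced by $x_i^{-1}$: the $(i,j)$ entry of the defining matrix (for $i=1,\dots,n-1$) becomes $x_j^{-(k_i+k_{i+1}+\cdots+k_{n-1}+n-i)}$, and the last row remains a row of $1$'s. Then the Vandermonde in the denominator transforms via
\[
V(x_1^{-1},\dots,x_n^{-1}) = \prod_{i<j}\frac{x_j - x_i}{x_i x_j} = (-1)^{\binom{n}{2}} (x_1\cdots x_n)^{-(n-1)}\, V(\bx),
\]
which produces one factor of $(-1)^{\binom{n}{2}}$ and absorbs $(x_1\cdots x_n)^{n-1}$.

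Next, I would clear all the negative exponents appearing in the matrix by multiplying the $j$-th column by $x_j^{K+n-1}$; this scales the determinant by $(x_1\cdots x_n)^{K+n-1}$. A short computation shows that the $(i,j)$ entry for $i=1,\dots,n-1$ becomes $x_j^{k_1+\cdots+k_{i-1}+(i-1)}$, while the $n$-th row becomes $(x_j^{K+n-1})$. Reversing the order of rows — which introduces a sign of $(-1)^{\binom{n}{2}}$ — then turns the matrix into the one appearing in the definition of $S_{k_{n-1},\dots,k_1}(\bx)$; checking this requires setting $k'_i = k_{n-i}$ and verifying that the $i$-th row of the resulting matrix has exponent $k'_i + k'_{i+1} + \cdots + k'_{n-1} + (n-i)$ as prescribed by~\eqref{E:Sk}.

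Assembling the pieces, the two factors of $(-1)^{\binom{n}{2}}$ cancel, the factor $(x_1\cdots x_n)^{K+n-1}$ from the column scaling combines with the $(x_1\cdots x_n)^{-(n-1)}$ from the Vandermonde transformation to leave exactly $(x_1\cdots x_n)^K$, which is the factor appearing on the left-hand side of the claimed identity. The remaining determinant over $V(\bx)$ is precisely $S_{k_{n-1},\dots,k_1}(\bx)$, yielding the lemma.

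The only delicate step is the bookkeeping of the row exponents after the column scaling and row reversal; everything else is mechanical. Once one verifies that the exponent of $x_j$ in the $i$-th row of the transformed matrix equals $k_{n-i}+k_{n-i-1}+\cdots+k_1+(n-i)$ for $1\leqslant i\leqslant n-1$ and $0$ for $i=n$, the result falls out immediately.
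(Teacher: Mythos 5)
Your proof is correct. The paper states Lemma~\ref{L:Skreorder} without proof (the whole subsection \S\ref{SS:Schur} reviews material from \cite{HP22} and the standard references), so there is no paper proof to compare against, but your direct determinantal argument is the natural one and all the steps check out.

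A few confirmations of the bookkeeping you flagged as the delicate step. After scaling column $j$ by $x_j^{K+n-1}$ where $K = k_1 + \cdots + k_{n-1}$, the $(i,j)$ entry has exponent
\[
K + n - 1 - \bigl( k_i + \cdots + k_{n-1} + n - i \bigr) = k_1 + \cdots + k_{i-1} + (i-1),
\]
and the last row becomes $x_j^{K+n-1}$, exactly as you state. After reversing the rows, the new row $i$ (for $1 \leqslant i \leqslant n-1$) carries exponent $k_1 + \cdots + k_{n-i} + (n-i)$, and with $k'_j \assign k_{n-j}$ this equals $k'_i + k'_{i+1} + \cdots + k'_{n-1} + (n-i)$, while the new row $n$ is all $1$'s. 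This is precisely the matrix defining $S_{k_{n-1},\dots,k_1}(\bx)$ in \eqref{E:Sk}. The two signs $(-1)^{\binom{n}{2}}$ — one from $V(x_1^{-1},\dots,x_n^{-1})^{-1} = (-1)^{\binom{n}{2}}(x_1\cdots x_n)^{n-1}V(\bx)^{-1}$ and one from the row reversal — cancel, and the net power of $x_1\cdots x_n$ is $(n-1) - (K+n-1) = -K$, giving $S_{k_1,\dots,k_{n-1}}(x_1^{-1},\dots,x_n^{-1}) = (x_1\cdots x_n)^{-K} S_{k_{n-1},\dots,k_1}(\bx)$, i.e.\ the stated identity. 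An equivalent route would be to invoke the classical identity $s_\lambda(x_1^{-1},\dots,x_n^{-1}) = (x_1\cdots x_n)^{-\lambda_1}\, s_{\lambda_1-\lambda_n,\dots,\lambda_1-\lambda_2,0}(\bx)$ together with Lemma~\ref{L:slambdaSk}, but your direct computation is self-contained and no longer.
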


% \begin{proof}
% Substituting $x_1 \leftarrow x_1^{-1}, \dots, x_n \leftarrow x_n^{-1}$ into~\eqref{E:Sk} and using the multilinearity of the determinant, we have
% \[
% \begin{aligned}[b]
% (x_1 \cdots x_n &)^{k_1 + \dots + k_{n-1}}\\
% &{}\cdot S_{k_1, \dots, k_{n-1}} \bigl( x_1^{-1}, \dots, x_n^{-1} \bigr)
% \end{aligned}
% = \frac{\det \begin{pmatrix}
% 1 & \cdots & 1\\
% x_1^{k_{1}+1} & \cdots & x_n^{k_{1}+1}\\
% \vdots & & \vdots \\
% x_1^{k_1+\cdots+k_{n-2}+n-2} & \cdots & x_n^{k_1+\cdots+k_{n-2}+n-2} \\
% x_1^{k_1+\cdots+k_{n-1}+n-1} & \cdots & x_n^{k_1+\cdots+k_{n-1}+n-1}
% \end{pmatrix}}{\displaystyle (x_1 \cdots x_n)^{n-1} \prod_{1 \leqslant i<j \leqslant n}\bigl(x_i^{-1} - x_j^{-1} \bigr)}.
% \]
% By rearranging the rows, we check that the numerator on the right is $(-1)^{\binom{n}{2}}$ times the determinant in~\eqref{E:Sk} with $(k_1, \dots, k_{n-1}) \leftarrow (k_{n-1}, \dots, k_1)$. Likewise the denominator is $(-1)^{\binom{n}{2}}$ times the $V(\bx)$ term of~\eqref{E:Sk}.
% \end{proof}

\subsubsection{Cauchy's identities}
% In order to work with $(P\otimes Q)(T)^{-1}$, which arises as part of the Euler product for one of the main $L$-series $L(\EE(\phi\times \psi)^{\vee},s)$ we consider in this paper (see \S\ref{S:Lseries}--\ref{S:convolutions}), we use the following identities expressed in terms of Schur polynomials.

\begin{theorem}[{Cauchy's Identity, see \cites[Cor.~8.16]{Aigner}[\S 2.2]{Bump89}[Thm.~7.12.1]{Stanley}}] \label{T:Cauchy}
For variables $\bx = \{x_1, \dots, x_n\}$ and $\by = \{y_1, \dots, y_n\}$, let $X=x_1 \cdots x_n$ and $Y=y_1 \cdots y_n$. Then as power series in $\power{\ZZ[\bx,\by]}{T}$,
\[
\prod_{1 \leqslant i,j \leqslant n} (1 - x_i y_j T)^{-1}
= (1 - XY T^n)^{-1} \underset{k=(k_1, \dots, k_{n-1})}{\sum_{k_1=0}^{\infty} \cdots \sum_{k_{n-1}=0}^{\infty}} S_k(\bx) S_k(\by) T^{k_1 + 2k_2 + \cdots + (n-1)k_{n-1}}.
\]
\end{theorem}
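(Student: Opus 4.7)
The plan is to derive this identity from the classical Cauchy identity for Schur polynomials (recorded in the references cited in the statement), which reads
\[
\prod_{1 \leqslant i,j \leqslant n} (1 - x_i y_j T)^{-1} = \sum_{\lambda} s_{\lambda}(\bx) s_{\lambda}(\by) T^{|\lambda|},
\]
where the sum runs over integer partitions $\lambda : \lambda_1 \geqslant \cdots \geqslant \lambda_n \geqslant 0$ and $|\lambda| = \lambda_1 + \cdots + \lambda_n$. Taking this as the starting point reduces the task to a purely combinatorial reindexing together with one application of Lemma~\ref{L:slambdaSk}.

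First I would reparametrize partitions of length at most $n$ by nonnegative tuples. Setting $k_i \assign \lambda_i - \lambda_{i+1}$ for $1 \leqslant i \leqslant n-1$ and $k_n \assign \lambda_n$ produces a bijection between partitions $\lambda$ and $(k_1, \dots, k_{n-1}, k_n) \in \ZZ_{\geqslant 0}^n$. Under this bijection one computes
\[
|\lambda| = \sum_{i=1}^{n} \lambda_i = \sum_{i=1}^{n-1} i k_i + n k_n,
\]
which gives the right exponent of $T$. Next, Lemma~\ref{L:slambdaSk} gives
\[
s_{\lambda}(\bx) = X^{k_n} \cdot S_{k_1, \dots, k_{n-1}}(\bx), \qquad s_{\lambda}(\by) = Y^{k_n} \cdot S_{k_1, \dots, k_{n-1}}(\by),
\]
so substituting into the classical identity and separating the $k_n$-sum from the rest yields
\[
\sum_{\lambda} s_{\lambda}(\bx) s_{\lambda}(\by) T^{|\lambda|}
= \biggl( \sum_{k_n = 0}^{\infty} (XY T^n)^{k_n} \biggr)
\underset{k=(k_1,\dots,k_{n-1})}{\sum_{k_1=0}^{\infty} \cdots \sum_{k_{n-1}=0}^{\infty}} S_k(\bx) S_k(\by) T^{k_1 + 2k_2 + \cdots + (n-1) k_{n-1}}.
\]
Summing the geometric series on the right produces the factor $(1 - XY T^n)^{-1}$, completing the identification with the left-hand product.

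The main obstacle is really a bookkeeping one rather than a conceptual one: I need to verify carefully that the exponent $|\lambda|$ reindexes to $\sum i k_i + n k_n$ and that the $X^{k_n}, Y^{k_n}$ factors cleanly decouple from the truncated Schur polynomials $S_{k_1,\dots,k_{n-1}}$. Both facts are immediate once the bijection $\lambda \leftrightarrow (k_1,\dots,k_n)$ is spelled out and Lemma~\ref{L:slambdaSk} is applied, so no serious analytic or algebraic difficulty arises. Everything takes place in the formal power series ring $\power{\ZZ[\bx,\by]}{T}$, and convergence issues are therefore absent.
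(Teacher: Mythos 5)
The paper does not supply a proof of this theorem; it is quoted from the literature (Stanley's Theorem 7.12.1 gives the classical form $\prod(1-x_iy_jT)^{-1}=\sum_\lambda s_\lambda(\bx)s_\lambda(\by)T^{|\lambda|}$, and Bump's \S 2.2 records the reindexed form used here). Your argument is correct and is precisely the bridge between those two sources: the bijection $\lambda \leftrightarrow (k_1,\dots,k_n)$ via $k_i=\lambda_i-\lambda_{i+1}$, $k_n=\lambda_n$, the weight computation $|\lambda|=\sum_{i=1}^{n-1} i\,k_i + n\,k_n$ (which follows from $\lambda_i = k_i + \cdots + k_n$), the factorization $s_\lambda(\bx)=X^{k_n}S_{k_1,\dots,k_{n-1}}(\bx)$ from Lemma~\ref{L:slambdaSk}, and the geometric series in $k_n$ producing $(1-XYT^n)^{-1}$ all check out. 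Since the reasoning is formal in $\power{\ZZ[\bx,\by]}{T}$, no convergence issues arise. This is a clean, self-contained derivation of the stated form from the standard one, and is consistent with what the cited references contain.
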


If instead we have $\bx = \{x_1, \dots, x_n\}$ and $\by = \{ y_1, \dots, y_{\ell} \}$ with $n < \ell$, then Cauchy's identity reduces to the following result by setting $x_{n+1} = \cdots = x_{\ell} = 0$ and simplifying.

\begin{corollary}[{Bump \cite[\S 2.2]{Bump89}}] \label{C:Cauchynl}
For variables $\bx = \{x_1, \dots, x_n\}$ and $\by = \{y_1, \dots, y_\ell\}$ with $n < \ell$, let $X=x_1 \cdots x_n$. Then as power series in $\power{\ZZ[\bx,\by]}{T}$,
\[
\prod_{\substack{1 \leqslant i \leqslant n \\ 1 \leqslant j \leqslant \ell}} (1-x_i y_j T)^{-1}
=\underset{\substack{k=(k_1, \dots, k_{n-1}) \\ k'=(k_1, \dots, k_{n},0 \dots, 0)}}{\sum_{k_1=0}^{\infty} \cdots \sum_{k_{n}=0}^{\infty}} S_{k}(\bx) S_{k'}(\by) X^{k_n} T^{k_1 +2k_2 + \cdots + n k_n}.
\]
\end{corollary}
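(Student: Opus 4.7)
The plan is to deduce this corollary from Cauchy's identity (Theorem~\ref{T:Cauchy}) by specializing some variables to zero. I would apply Theorem~\ref{T:Cauchy} with both tuples of length $\ell$, taking $\bx' \assign (x_1, \dots, x_n, 0, \dots, 0)$ and $\by = (y_1, \dots, y_\ell)$. On the left-hand side every factor $(1 - x'_i y_j T)^{-1}$ with $i > n$ is $1$, leaving $\prod_{1 \le i \le n,\, 1 \le j \le \ell}(1 - x_i y_j T)^{-1}$. On the right-hand side, setting $X' \assign x'_1 \cdots x'_\ell = 0$ collapses the prefactor $(1 - X' Y T^\ell)^{-1}$ to $1$.

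The next step is to identify which summands of $\sum_{(k_1, \dots, k_{\ell-1})} S_{(k_1,\dots,k_{\ell-1})}(\bx') S_{(k_1,\dots,k_{\ell-1})}(\by) T^{k_1 + 2k_2 + \cdots + (\ell-1) k_{\ell-1}}$ survive. By \eqref{E:Sk}, $S_{(k_1,\dots,k_{\ell-1})}(\bx')$ equals $s_\lambda(\bx')$ for the partition $\lambda = (\lambda_1, \dots, \lambda_\ell)$ with $\lambda_i = k_i + k_{i+1} + \cdots + k_{\ell-1}$ for $i \le \ell-1$ and $\lambda_\ell = 0$. A classical specialization property of Schur polynomials says that $s_\lambda(x_1,\dots,x_n,0,\dots,0) = 0$ unless $\lambda$ has at most $n$ nonzero parts---which here is equivalent to $k_{n+1} = \cdots = k_{\ell-1} = 0$---and that otherwise $s_\lambda(x_1,\dots,x_n,0,\dots,0) = s_{\lambda^{(n)}}(\bx)$, where $\lambda^{(n)} \assign (\lambda_1,\dots,\lambda_n) = (k_1 + \cdots + k_n,\, \dots,\, k_{n-1}+k_n,\, k_n)$.

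A subsequent application of Lemma~\ref{L:slambdaSk} to $\lambda^{(n)}$ yields $s_{\lambda^{(n)}}(\bx) = (x_1 \cdots x_n)^{k_n} S_{(k_1,\dots,k_{n-1})}(\bx) = X^{k_n} S_k(\bx)$, while $S_{(k_1,\dots,k_{\ell-1})}(\by)$ coincides with $S_{k'}(\by)$ for $k' = (k_1,\dots,k_n,0,\dots,0)$, and the $T$-exponent simplifies to $k_1 + 2k_2 + \cdots + n k_n$. Substituting everything back produces the claimed identity. The main obstacle is justifying the Schur-polynomial vanishing and reduction statements, because the bialternant denominator $V(\bx')$ itself vanishes once some $x'_i = 0$; I would address this either by a simultaneous cofactor expansion of both numerator and denominator in \eqref{E:slambda} so that the vanishing Vandermonde factors cancel against corresponding factors pulled out of the numerator, or by invoking the fact that $s_\lambda$ is a genuine polynomial (not merely a rational function) in the $x_i$, into which substitution of zeros is unproblematic, with the vanishing for $\ell(\lambda) > n$ then following from the combinatorial description of $s_\lambda$ as a sum over semistandard tableaux.
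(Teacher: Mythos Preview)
Your proposal is correct and follows exactly the approach the paper indicates: the paper simply says that Cauchy's identity reduces to this corollary by setting $x_{n+1} = \cdots = x_\ell = 0$ and simplifying, and you have carried out precisely that simplification in detail, including the key observation that $s_\lambda(\bx')$ vanishes unless $k_{n+1} = \cdots = k_{\ell-1} = 0$ and the use of Lemma~\ref{L:slambdaSk} to extract the $X^{k_n}$ factor.
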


\subsubsection{Littlewood's identities}
\begin{theorem}[{Littlewood \cite[(11.9;2), (11.9;4)]{Littlewood}}]\label{T:Littlewood}
For variables $\bx = \{x_1, \dots, x_n\}$, let $X=x_1 \cdots x_n$. Then the following identities hold as power series in $\power{\ZZ[\bx]}{T}$.
\begin{enumerate}
\item For $n\geqslant 2$, we have\[
\prod_{1 \leqslant i\leqslant j \leqslant n} (1 - x_i x_j T)^{-1}
= (1 - X^2 T^n)^{-1} \underset{2\mid k_i \text{ for all $i$}}{\underset{k=(k_1, \dots, k_{n-1})}{\sum_{k_1=0}^{\infty} \cdots \sum_{k_{n-1}=0}^{\infty}}} S_k(\bx) T^{k_1 + 2k_2 + \cdots + (n-1)k_{n-1}}.
\]
% \item For $n\geqslant 3$, we have
% \begin{equation*}
% \prod_{1 \leqslant i< j \leqslant n} (1 - x_i x_j T)^{-1}
% = 
% \begin{cases}
% \displaystyle (1 - X T^{n/2})^{-1}\sum_{k_1=0}^{\infty} \cdots \sum_{k_w=0}^{\infty} S_k(\bx) T^{k_1 + 2k_2 + \cdots + wk_w}&\text{if } 2\nmid n-1\\
% \displaystyle \sum_{k_1=0}^{\infty} \cdots \sum_{k_{w}=0}^{\infty} S_k(\bx) T^{k_1 + 2k_2 + \cdots + wk_w}&\text{if } 2\mid n-1
% \end{cases}
% \end{equation*}
% where $w = \lfloor \frac{n-1}{2}\rfloor$ and $k = \begin{cases}
% \displaystyle (0,k_1,0,k_2,0\dots,k_w,0)&\text{if } 2\nmid n-1\\\displaystyle (0,k_1,0,k_2,0\dots,k_w)&\text{if } 2\mid n-1
% \end{cases}$.
\item For $n\geqslant 2$, we have
\begin{equation*}
\prod_{1 \leqslant i< j \leqslant n} (1 - x_i x_j T)^{-1}
= 
\displaystyle (1 - X T^{n/2})^{-\varepsilon}\underset{k_i=0 \text{ if $2\nmid i$}}{\underset{k = (k_1,\dots,k_{n-1})}{\sum_{k_1,\dots,k_{n-1}\in\ZZ_+}}} S_k(\bx) T^{k_2 + 2k_4 + \cdots + \frac{n-1}{2}k_{n-1}},
\end{equation*}
where $\varepsilon = \frac{(-1)^n+1}{2}$.
\end{enumerate}
\end{theorem}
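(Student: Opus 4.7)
The plan is to reduce both identities to Littlewood's classical symmetric function identities \cite[(11.9;2),(11.9;4)]{Littlewood},
\[
\prod_{1 \leqslant i \leqslant j \leqslant n}(1-x_i x_j)^{-1} = \sum_{\lambda \,:\, \lambda_i \text{ all even}} s_\lambda(\bx), \qquad
\prod_{1 \leqslant i < j \leqslant n}(1-x_i x_j)^{-1} = \sum_{\lambda \,:\, \lambda' \text{ all parts even}} s_\lambda(\bx),
\]
where $\lambda$ ranges over partitions with at most $n$ parts and $\lambda'$ denotes the conjugate partition. To introduce the auxiliary variable $T$, I would homogenize by substituting $x_i \mapsto x_i T^{1/2}$; this converts each factor on the left into $(1 - x_i x_j T)^{-1}$, while homogeneity weights each summand $s_\lambda(\bx)$ on the right by $T^{|\lambda|/2}$.

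For part (1), I would parametrize the ``all parts even'' partitions by triples $(\ell, m_1, \ldots, m_{n-1})$ with $\lambda_n = 2\ell$ and $k_i \assign \lambda_i - \lambda_{i+1} = 2m_i$ all even. Lemma~\ref{L:slambdaSk} gives $s_\lambda(\bx) = X^{\lambda_n} S_{k_1, \ldots, k_{n-1}}(\bx) = X^{2\ell} S_k(\bx)$, and a direct computation shows $|\lambda|/2 = n\ell + m_1 + 2m_2 + \cdots + (n-1)m_{n-1}$, so the sum over $\ell$ separates as a geometric series yielding the factor $(1 - X^2 T^n)^{-1}$, while the residual sum over $(k_1, \dots, k_{n-1})$ with each $k_i$ even reproduces the claimed right-hand side after matching $T$-exponents.

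For part (2), I would translate the condition ``$\lambda'$ has all parts even'' into conditions on $(\lambda_n, k_1, \ldots, k_{n-1})$. Since the column of $\lambda$'s Young diagram at horizontal position $i$ has length $\max\{j : \lambda_j \geqslant i\}$ and columns of length exactly $j$ occur with multiplicity $\lambda_j - \lambda_{j+1}$, a direct count shows that ``every column of $\lambda$ has even length'' is equivalent to $k_i = 0$ for every odd $i$ with $1 \leqslant i \leqslant n-1$, together with $\lambda_n = 0$ when $n$ is odd (so that no odd-length columns of length $n$ appear). Applying Lemma~\ref{L:slambdaSk} again and summing over $\lambda_n$ produces a geometric series giving $(1 - X T^{n/2})^{-1}$ when $n$ is even and simply $1$ when $n$ is odd, which is precisely what the exponent $\varepsilon = ((-1)^n+1)/2$ encodes; the remaining sum over $(k_2, k_4, \dots, k_{n-1})$ yields the advertised identity.

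The main obstacle in both parts is not conceptual but combinatorial: correctly tracking $T$-exponents through the substitution $x_i \mapsto x_iT^{1/2}$, and translating the parity conditions on $\lambda$ (or on $\lambda'$) into the tuple constraints on $(\lambda_n, k_1, \dots, k_{n-1})$ via the dictionary of Lemma~\ref{L:slambdaSk}. Once these indexing changes are accomplished, the factorization of the geometric series in $\lambda_n$ is immediate and the identities fall out.
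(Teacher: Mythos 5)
The paper gives no proof of this theorem (it cites Littlewood's book), so your proposal is filling in a derivation that the author leaves implicit. Your overall strategy—starting from Littlewood's classical unweighted identities, homogenizing via $x_i \mapsto x_i T^{1/2}$, reparametrizing partitions by $(\lambda_n, k_1, \dots, k_{n-1})$ through Lemma~\ref{L:slambdaSk}, and splitting off the geometric series in $\lambda_n$—is exactly the right approach, and part (2), including the dictionary between ``$\lambda'$ all parts even'' and the constraints ``$k_i=0$ for odd $i$, plus $\lambda_n=0$ when $n$ is odd,'' is correct.

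However, in part (1) you did not notice a genuine discrepancy between your computation and the theorem as printed. Your own step gives $|\lambda|/2 = n\ell + m_1 + 2m_2 + \cdots + (n-1)m_{n-1}$ with $k_i = 2m_i$, so the residual $T$-exponent is $m_1 + 2m_2 + \cdots + (n-1)m_{n-1} = \tfrac{1}{2}\bigl(k_1 + 2k_2 + \cdots + (n-1)k_{n-1}\bigr)$. The theorem statement, on the other hand, displays $T^{k_1 + 2k_2 + \cdots + (n-1)k_{n-1}}$, a factor of two too large. This is a typo in the paper's statement, not an error in your derivation: one can see this already for $n=2$, where the coefficient of $T^1$ on the left side is $h_2(\bx) = x_1^2 + x_1x_2 + x_2^2 \neq 0$, whereas the right side as printed has no $T^1$ term; and indeed the paper's own application in \S\ref{L_sym_cov} uses $f^{-s(\frac{k_1}{2} + \frac{2k_2}{2} + \cdots + \frac{(r-1)k_{r-1}}{2})}$, i.e., the halved exponent, consistent with your computation. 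Your closing assertion that the residual sum ``reproduces the claimed right-hand side after matching $T$-exponents'' is therefore false as a literal claim; a careful comparison should have led you to flag the factor-of-two discrepancy rather than assert agreement.
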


\subsection{The function \texorpdfstring{$\bsmu_{\phi}$}{mu\_\{phi\}}} \label{SS:bsmubsnu}
We review the function $\bsmu_{\phi}$ and its properties explored in \cite[\S 6.1]{HP22} by Papanikolas and the author. They in fact defined the function $\bsmu_{\phi}$ and its ``dual'' version $\bsnu_{\phi}$. For the purpose of the present paper, we only list the properties for the function $\bsmu_{\phi}$.

Let $f \in A_+$ be irreducible, and let $P_{\phi,f}(X)$ and $P_{\phi,f}^{\vee}(X)$ be defined as in~\eqref{E:PfandPfvee}. We let $\alpha_1, \dots, \alpha_r \in \oK$ be the roots of $P_{\phi,f}^{\vee}(X)$. For $k_1, \dots, k_{r-1} \geqslant 0$, we define
\begin{align} \label{E:bsmudef}
\bsmu_{\phi}\bigl( f^{k_1}, \dots, f^{k_{r-1}} \bigr) &\assign
S_{k_1, \dots, k_{r-1}} ( \alpha_1, \dots, \alpha_r ) \cdot f^{k_1 + \cdots + k_{r-1}}, %\\
% \label{E:bsnudef}
% \bsnu_{\phi} \bigl( f^{k_1}, \dots, f^{k_{r-1}} \bigr) &\assign
% S_{k_1, \dots, k_{r-1}} \bigl( \alpha_1^{-1}, \dots, \alpha_r^{-1}
%  \bigr),
\end{align}
where $S_{k_1, \dots, k_{r-1}}$ is the Schur polynomial of \eqref{E:Sk}. We note that by~\eqref{E:QfandQfvee} and~\eqref{E:Sei},
\begin{align}
Q_{\phi,f}^{\vee}(fX) &= \begin{aligned}[t]
1 &{}- \bsmu_{\phi}(f,1,\ldots,1) X + \bsmu_{\phi}(1,f,1,\ldots,1) f X^2 \\
&{}+ \cdots + (-1)^{r-1}\bsmu_{\phi}(1,\ldots,1,f) f^{r-2} X^{r-1} + (-1)^r \chi_{\phi}(f) f^{r-1}X^r.
\end{aligned}
% \\
% Q_{\phi,f}(X) &= \begin{aligned}[t]
% 1 &{}- \bsnu_{\phi}(f,1,\ldots,1) X + \bsnu_{\phi}(1,f,1,\ldots,1) X^2 \\
% &{}+ \cdots + (-1)^{r-1}\bsnu_{\phi}(1,\ldots,1,f) X^{r-1} + (-1)^r \ochi_{\phi}(f) f X^r.
% \end{aligned}
\end{align}

We then extend $\bsmu_{\phi}$ uniquely to functions on $(A_+)^{r-1}$, by requiring that if $a_1, \dots, a_{r-1}$, $b_1, \dots, b_{r-1} \in A_+$ satisfy $\gcd(a_1 \cdots a_{r-1}, b_1\cdots b_{r-1}) = 1$, then
\begin{align*}
\bsmu_{\phi}(a_1b_1, \dots, a_{r-1}b_{r-1}) &= \bsmu_{\phi}(a_1, \dots, a_{r-1}) \bsmu_{\phi}(b_1, \dots, b_{r-1}).%, \\
% \bsnu_{\phi}(a_1b_1, \dots, a_{r-1}b_{r-1}) &= \bsnu_{\phi}(a_1, \dots, a_{r-1}) \bsnu_{\phi}(b_1, \dots, b_{r-1}).
\end{align*}

\begin{proposition}[{\cite[Prop.~6.1.5]{HP22}}] \label{P:bsmunuprops}
For $a$, $a_1, \dots, a_{r-1} \in A_+$, the following hold.
\begin{enumerate}
\item $\bsmu_{\phi}(a_1, \dots, a_{r-1}) \in A$.% and $\bsnu_{\phi}(a_1, \dots, a_{r-1}) \in A$.
\item $\bsmu_{\phi}(a,1, \dots, 1) = \mu_{\phi}(a)$.% and $\bsnu_{\phi}(a,1,\dots, 1) = \nu_{\phi}(a)$.
% \item $\bsmu_{\phi}(a_1, \dots, a_{r-1}) = \chi_{\phi}(a_1 \cdots a_{r-1}) \cdot \bsnu_{\phi}(a_{r-1}, \dots, a_1)$.
\item We have
\begin{align*}
\deg_{\theta} \bsmu_{\phi}(a_1, \dots, a_{r-1}) &\leqslant \frac{1}{r} \bigl( (r-1)\deg_{\theta} a_1 + (r-2) \deg_{\theta} a_2 + \cdots + \deg_{\theta} a_{r-1} \bigr).%, \\
% \deg_{\theta} \bsnu_{\phi}(a_1, \dots, a_r) &\leqslant \frac{1}{r} \bigl( \deg_{\theta} a_1 + 2\deg_{\theta} a_2 + \cdots + (r-1) \deg_{\theta} a_{r-1} \bigr).
\end{align*}
\end{enumerate}
\end{proposition}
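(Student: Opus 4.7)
The plan is to reduce all three parts to the case of prime powers via the multiplicative extension, and then to analyze the Schur polynomial values $S_{k_1, \ldots, k_{r-1}}(\alpha_1, \ldots, \alpha_r)$ using Jacobi--Trudi together with valuation estimates at every place of~$A$. Fix an irreducible $f \in A_+$ of degree~$d$; by multiplicativity it suffices to verify each claim when $a = f^m$ and $a_i = f^{k_i}$.

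Part (b) is an immediate comparison of generating series. From \eqref{E:munugen}, \eqref{E:QfandQfvee}, and \eqref{E:homsymm},
\[
\sum_{m \geqslant 0} \mu_\phi(f^m) X^m = \frac{1}{Q_{\phi,f}^\vee(fX)} = \prod_{i=1}^r (1 - f\alpha_i X)^{-1} = \sum_{m \geqslant 0} h_m(\alpha_1, \ldots, \alpha_r)\, f^m X^m,
\]
so $\mu_\phi(f^m) = h_m(\alpha_1, \ldots, \alpha_r)\, f^m$; combined with $S_{m,0,\ldots,0} = h_m$ from \eqref{E:Shi}, this matches $\bsmu_\phi(f^m, 1, \ldots, 1)$, and (b) follows by multiplicativity.

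For the degree bound (c), I would first record that $\deg_\theta \alpha_i = -d/r$ for every~$i$: \eqref{E:PfandPfvee} yields $P_{\phi,f}^\vee(X) = (-1)^r \chi_\phi(f) f^{-1} X^r P_{\phi,f}(1/X)$, so the $\alpha_i$ are the reciprocals of the roots $\gamma_i$ of $P_{\phi,f}$, and \S\ref{SSS:Pf} supplies $\deg_\theta \gamma_i = d/r$. Since $S_{k_1, \ldots, k_{r-1}}$ has integer coefficients and total degree $k_1 + 2k_2 + \cdots + (r-1)k_{r-1}$, the ultrametric triangle inequality applied monomial-by-monomial gives
\[
\deg_\theta S_{k_1, \ldots, k_{r-1}}(\alpha_1, \ldots, \alpha_r) \leqslant -\frac{d}{r}\bigl(k_1 + 2k_2 + \cdots + (r-1)k_{r-1}\bigr),
\]
and multiplication by $f^{k_1 + \cdots + k_{r-1}}$ produces the claimed bound.

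The integrality claim (a) is the main difficulty. Symmetry of $S_{k_1, \ldots, k_{r-1}}$ in the Galois conjugates $\alpha_1, \ldots, \alpha_r$ already places $\bsmu_\phi(f^{k_1}, \ldots, f^{k_{r-1}}) \in K$, and one must verify $v_{f'} \geqslant 0$ for every irreducible $f' \in A_+$. I would apply the dual Jacobi--Trudi identity $s_\lambda = \det(e_{\lambda'_i - i + j})$ to express $S_{k_1, \ldots, k_{r-1}}(\alpha_1, \ldots, \alpha_r)$ as a determinant of size $k_1 + \cdots + k_{r-1}$ in the elementary symmetric polynomials $e_m(\alpha_1, \ldots, \alpha_r)$ for $0 \leqslant m \leqslant r$. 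By \eqref{E:PfandPfvee} each such $e_m$ lies in $f^{-1} A$, giving $v_f(e_m) \geqslant -1$, whereas for $f' \neq f$ the coefficients of $P_{\phi,f}^\vee(X)$ lie in $A_{(f')}$, giving $v_{f'}(e_m) \geqslant 0$. Since every term in the determinantal expansion is a product of $k_1 + \cdots + k_{r-1}$ such entries, the ultrametric inequality yields $v_f(S_{k_1, \ldots, k_{r-1}}(\alpha_1, \ldots, \alpha_r)) \geqslant -(k_1 + \cdots + k_{r-1})$ and $v_{f'}(S_{k_1, \ldots, k_{r-1}}(\alpha_1, \ldots, \alpha_r)) \geqslant 0$ for $f' \neq f$; multiplication by $f^{k_1 + \cdots + k_{r-1}}$ then lands in $\bigcap_{f'} A_{(f')} = A$. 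The main obstacle is keeping the $f$-adic estimate tight: a naive valuation count on individual $\alpha_i$ would falter because the Newton polygon of $f \cdot P_{\phi,f}^\vee(X)$ need not have a single slope, but routing the estimate through the elementary symmetric polynomials via dual Jacobi--Trudi uses only the uniform bound $v_f(e_m) \geqslant -1$, which is compensated exactly by the factor $f^{k_1 + \cdots + k_{r-1}}$ in the definition of $\bsmu_\phi$.
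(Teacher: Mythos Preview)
The paper does not supply its own proof of this proposition; it merely cites \cite[Prop.~6.1.5]{HP22}. So there is nothing in the present paper to compare against, and the question is simply whether your argument stands on its own.

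It does. Part~(b) is the standard identification $\mu_\phi(f^m)=h_m(\alpha)f^m$ via \eqref{E:homsymm} and \eqref{E:Shi}. Part~(c) follows exactly as you say from $\deg_\theta \alpha_i = -d/r$ (which is indeed the content of $\alpha_i = \gamma_i^{-1}$ together with \S\ref{SSS:Pf}) and the homogeneity $\deg S_{k_1,\ldots,k_{r-1}} = k_1 + 2k_2 + \cdots + (r-1)k_{r-1}$; the arithmetic $d(k_1+\cdots+k_{r-1}) - \tfrac{d}{r}\sum j k_j = \tfrac{d}{r}\sum (r-j)k_j$ is correct, and the multiplicative extension adds degrees as required.

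For part~(a), your use of the dual Jacobi--Trudi identity is sound: the determinant has size $\lambda_1 = k_1+\cdots+k_{r-1}$, every entry is some $e_m(\alpha)$ with $0\leqslant m\leqslant r$, and \eqref{E:PfandPfvee} shows $e_m(\alpha)\in f^{-1}A$ for $m\geqslant 1$ and $e_0=1\in A$. Hence each term in the Leibniz expansion has $v_f\geqslant -(k_1+\cdots+k_{r-1})$ and $v_{f'}\geqslant 0$ for $f'\neq f$, and the prefactor $f^{k_1+\cdots+k_{r-1}}$ clears the denominator exactly. Your remark that a direct estimate on the $\alpha_i$ individually could fail (their $f$-adic valuations need not all equal $-1/r$) is well taken; routing through the $e_m$ is the right move.
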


We also list some recursive relations of the function $\bsmu_{\phi}$ induced by relations on Schur polynomials (\cite[(6.1.7), (6.1.9), (6.1.11)]{HP22}, cf.~\cite[p.~278]{Goldfeld}). Fix $f \in A_+$ irreducible, and for $k$, $k_1, \dots, k_{r-1} \geqslant 0$,
\begin{align}
\bsmu_{\phi} \bigl(f^k, &\,1, \dots, 1\bigr) \bsmu_{\phi} \bigl( f^{k_1}, \dots, f^{k_{r-1}} \bigr) \\
&= \begin{aligned}[t]
\smash{\sum_{\substack{\substack{m_0 + \cdots + m_{r-1}=k \\ m_1 \leqslant k_1,\, \ldots,\, m_{r-1} \leqslant k_{r-1}}}}} \bsmu_{\phi} \bigl( f^{k_1 + m_0 - m_1}, f^{k_2 + m_1-m_2}, \dots, &\,f^{k_{r-1}+m_{r-2}-m_{r-1}} \bigr) \\
&{} \cdot \chi_{\phi}(f)^{m_{r-1}} f^{k-m_0}.%,
\end{aligned} \notag %\\[10pt]
% \bsnu_{\phi} \bigl(f^k, &\,1, \dots, 1\bigr) \bsnu_{\phi} \bigl( f^{k_1}, \dots, f^{k_{r-1}} \bigr) \\
% &= \begin{aligned}[t]
% \smash{\sum_{\substack{\substack{m_0 + \cdots + m_{r-1}=k \\ m_1 \leqslant k_1,\, \ldots,\, m_{r-1} \leqslant k_{r-1}}}}} \bsnu_{\phi} \bigl( f^{k_1 + m_0 - m_1}, f^{k_2 + m_1-m_2}, \dots, &\, f^{k_{r-1}+m_{r-2}-m_{r-1}} \bigr) \\
% &{} \cdot \ochi_{\phi}(f)^{m_{r-1}} f^{m_{r-1}}.
% \end{aligned} \notag
\end{align}
For $0 \leqslant k \leqslant r-1$,
\begin{align}
\bsmu_{\phi} &\bigl(\underbrace{1,\ldots,1,f,1,\ldots,1}_{\textup{$k$-th place}}) \bsmu_{\phi} \bigl( f^{k_1}, \dots, f^{k_{r-1}} \bigr) \\
&= \begin{aligned}[t]
\smash{\sum_{\substack{m_0 + \cdots + m_{r-1} = k \\ (m_0, \ldots, m_{r-1})\, \in\, \cI_{k_1, \dots, k_{r-1}}}}} \bsmu_{\phi} \bigl( f^{k_1 + m_0 - m_1}, f^{k_2 + m_1-m_2}, \dots, &\,f^{k_{r-1}+m_{r-2}-m_{r-1}} \bigr) \\
&{} \cdot \chi_{\phi}(f)^{m_{r-1}} f^{1-m_0}.%,
\end{aligned} \notag %\\[10pt]
% \bsnu_{\phi} &\bigl(\underbrace{1,\ldots,1,f,1,\ldots,1}_{\textup{$k$-th place}}) \bsnu_{\phi} \bigl( f^{k_1}, \dots, f^{k_{r-1}} \bigr) \\
% &= \begin{aligned}[t]
% \smash{\sum_{\substack{m_0 + \cdots + m_{r-1} = k \\ (m_0, \ldots, m_{r-1})\, \in\, \cI_{k_1, \dots, k_{r-1}}}}} \bsnu_{\phi} \bigl( f^{k_1 + m_0 - m_1}, f^{k_2 + m_1-m_2}, \dots, &\, f^{k_{r-1}+m_{r-2}-m_{r-1}} \bigr) \\
% &{} \cdot \ochi_{\phi}(f)^{m_{r-1}} f^{m_{r-1}}.
% \end{aligned} \notag
\end{align}
In particular for $k \geqslant 1$ (cf.\ \cite[p.~278]{Goldfeld}),
\begin{align}
\bsmu_{\phi}( f^k, 1, \dots, 1) \bsmu_{\phi} (f,1,\ldots, 1)
&= \begin{aligned}[t]
\bsmu_{\phi}(f^{k+1},&\,1,\ldots, 1) \\
&{}+ \bsmu_{\phi}(f^{k-1},f,1, \ldots, 1) \cdot f,
\end{aligned}
\\
\bsmu_{\phi}( f^k, 1, \dots, 1) \bsmu_{\phi} (1,f,1,\ldots, 1)
&= \begin{aligned}[t]
\bsmu_{\phi}(f^k,&\,f,1,\ldots,1) \\
&{}+ \bsmu_{\phi}(f^{k-1},1,f,1, \ldots,1) \cdot f,
\end{aligned}
\notag \\
\bsmu_{\phi}( f^k, 1, \dots, 1) \bsmu_{\phi} (1,1,f,1,\ldots, 1)
&= \begin{aligned}[t]
\bsmu_{\phi}(f^k,&\,1,f,1,\ldots,1) \\
&{}+ \bsmu_{\phi}(f^{k-1},1,1,f,1, \ldots,1) \cdot f,
\end{aligned}
\notag \\
\bsmu_{\phi}( f^k, 1, \dots, 1) \bsmu_{\phi} (1,\ldots, 1,f)
&= \begin{aligned}[t]
\bsmu_{\phi}(f^k,&\,1,\ldots,1,f) \\
&{}+ \bsmu_{\phi}(f^{k-1},1, \ldots,1) \cdot \chi_{\phi}(f) f.%,
\end{aligned}
\notag
\end{align}

\subsection{Matrix operations}\label{S:matrixop}
Fixing a subring $R\subseteq\LL_t$ with $1$, we say $M\in\Mat_{r}(R)$ \emph{represents an $R$-module homomorphism $f:R^r\to R^r$ with respect to a basis $\bv=(v_1,\dots,v_r)^\tr\in \Mat_{r\times 1}(R^r)$} if 
\[f\cdot \bv\assign\begin{pmatrix}
    f(v_1)\\
    \vdots\\
    f(v_r)
\end{pmatrix}=M\bv.\]

\begin{remark}\label{R:multi}
This is slightly different from the usual sense in linear algebra. For example, if we let $M_f$, $M_g\in \Mat_r(R)$ represent two $R$-module homomorphisms $f$, $g:R^r\to R^r$ respectively, then $M_gM_f$ represents $f\circ g$. In fact $M_f$ is the transpose of the matrix representation of $f$ in the usual sense.
\end{remark}

For $i$, $j=1,\dots,r$, we define, assuming characteristic of $R$ is not $2$, 
\[\alpha_{ij} = v_i\otimes v_j,\quad \beta_{ij} = \frac{1}{2}\rbracket{v_i\otimes v_j+v_j\otimes v_i},\quad \gamma_{ij}=\frac{1}{2}(v_i\otimes v_j-v_j\otimes v_i),\]
and consider the following basis of $R$-modules in lexicographical order 
\begin{equation}\label{eq:basis}
\mathfrak{Y}_1\assign\{\alpha_{ij}\}_{i,j}\subseteq \rbracket{R^r}^{\otimes 2},\quad \mathfrak{Y}_2\assign\{\beta_{ij}\}_{i,j}\subseteq \Sym^2\rbracket{R^r},\quad \mathfrak{Y}_3\assign\{\gamma_{ij}\}_{i,j}\subseteq \Alt^2\rbracket{R^r}.    
\end{equation}

\begin{definition}\label{D:matrixop}
Let $M\in \Mat_r(R)$, and let $\mathfrak{Y_1}$, $\mathfrak{Y_2}$ and $\mathfrak{Y_3}$ as in \eqref{eq:basis}. Then $M$ represents some $R$-module homomorphism $f:R^r\to R^r$. We define the following matrix operations \[M^{\otimes 2}\in\Mat_{r^2}(R), \quad \Sym^2(M)\in\Mat_{\frac{r(r+1)}{2}}(R), \quad \Alt^2(M)\in\Mat_{\frac{r(r-1)}{2}}(R)\] to be matrices representing $f^{\otimes 2}$, $\Sym^2(f)$ and $\Alt^2(f)$ with respect to $\mathfrak{Y_1}$, $\mathfrak{Y_2}$ and $\mathfrak{Y_3}$, respectively.
\end{definition}

\begin{remark}
In Definition \ref{D:matrixop}, by a direct checking, the matrix $M^{\otimes 2}$ is the Kronecker tensor square of $M$.
\end{remark}

\begin{example}
Suppose $r=2$ and $M=(M_{ij})_{i,j=1,2}\in\Mat_2(R).$ Then 
\begin{enumerate}
    \item $M^{\otimes 2}=\begin{pmatrix}
        M_{11}M_{11}&M_{11}M_{12}&M_{12}M_{11}&M_{12}M_{12}\\
        M_{11}M_{21}&M_{11}M_{22}&M_{12}M_{21}&M_{12}M_{22}\\
        M_{21}M_{11}&M_{21}M_{12}&M_{22}M_{11}&M_{22}M_{12}\\
        M_{21}M_{21}&M_{21}M_{22}&M_{22}M_{21}&M_{22}M_{22}
    \end{pmatrix}.$
    \item $\Sym^2(M) = \begin{pmatrix}
        M_{11}^2& 2M_{11}M_{12}&M_{12}^2\\
        M_{11}M_{21}&M_{11}M_{22}+M_{12}M_{21}&M_{12}M_{22}\\
        M_{21}^2& 2M_{21}M_{22}&M_{22}^2
    \end{pmatrix}.$
    \item $\Alt^2(M)=\det(M).$
\end{enumerate}
\end{example}

\begin{lemma}\label{L:matrixop}
For $M\in\Mat_r(R)$, we let $\mathcal{T}(M)$ denote $M^{\otimes 2}$, $\Sym^2(M)$ or $\Alt^2(M).$ Then for $M_1$, $M_2\in \Mat_r(R)$, we have the following properties.
\begin{enumerate}
    \item $\mathcal{T}(M_1M_2) = \mathcal{T}(M_1)\mathcal{T}(M_2)$. Futhermore, $\mathcal{T}(M^{-1}) = \mathcal{T}(M)^{-1}$ if $M\in\GL_r(R).$
    \item $\mathcal{T}(M^{(n)})=\mathcal{T}(M)^{(n)}$ for $n\geqslant 0.$
\end{enumerate}
\end{lemma}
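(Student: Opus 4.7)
The plan is to reduce both parts to the functoriality of the constructions $(-)^{\otimes 2}$, $\Sym^2(-)$, and $\Alt^2(-)$ on $R$-module homomorphisms, together with the entry-wise action of Frobenius. Throughout, one has to keep in mind the convention of Remark~\ref{R:multi}: in this paper's convention, if $M_1$ and $M_2$ represent $R$-module homomorphisms $f_1$ and $f_2$, then the product $M_1 M_2$ represents the composition $f_2 \circ f_1$ (not $f_1 \circ f_2$).

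For part (1), let $f_1$, $f_2 : R^r \to R^r$ be the $R$-module homomorphisms represented by $M_1$, $M_2$ with respect to the basis $\bv = (v_1, \dots, v_r)^{\tr}$. By Remark~\ref{R:multi}, $M_1 M_2$ represents $f_2 \circ f_1$, so by Definition~\ref{D:matrixop} the matrix $\mathcal{T}(M_1 M_2)$ represents $\mathcal{T}(f_2 \circ f_1)$ with respect to the appropriate basis among $\mathfrak{Y}_1$, $\mathfrak{Y}_2$, $\mathfrak{Y}_3$. Each of $(-)^{\otimes 2}$, $\Sym^2(-)$, $\Alt^2(-)$ is a covariant functor on $R$-modules, so $\mathcal{T}(f_2 \circ f_1) = \mathcal{T}(f_2) \circ \mathcal{T}(f_1)$; applying Remark~\ref{R:multi} in the tensor/symmetric/alternating setting shows that this composition is represented by $\mathcal{T}(M_1) \mathcal{T}(M_2)$. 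Since two matrices representing the same $R$-module homomorphism with respect to the same basis must be equal, we conclude $\mathcal{T}(M_1 M_2) = \mathcal{T}(M_1) \mathcal{T}(M_2)$. The invertibility statement is then immediate: applying multiplicativity to $M \cdot M^{-1} = \rI_r$ and using $\mathcal{T}(\rI_r) = \rI_{r^2}$ (or $\rI_{r(r+1)/2}$, $\rI_{r(r-1)/2}$) yields $\mathcal{T}(M) \mathcal{T}(M^{-1}) = \rI$, hence $\mathcal{T}(M^{-1}) = \mathcal{T}(M)^{-1}$.

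For part (2), the Frobenius twist $(-)^{(n)}$ is the $q^n$-th power map acting entry-wise, which is an $\FF_q$-algebra endomorphism of $R$. Inspecting Definition~\ref{D:matrixop} (or directly the explicit formulas in the preceding example), every entry of $\mathcal{T}(M)$ is an integer-coefficient polynomial in the entries $\{M_{ij}\}$ of $M$—namely monomials $M_{ij}M_{k\ell}$ in the tensor case, and $\FF_q$-linear combinations of such monomials in the symmetric and alternating cases, where the coefficient $\tfrac12 \in \FF_q$ is Frobenius-fixed since $p \neq 2$. Because Frobenius is a ring homomorphism that fixes $\FF_q$, it commutes with all such polynomial expressions, giving $\mathcal{T}(M^{(n)}) = \mathcal{T}(M)^{(n)}$ entry-wise. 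There is no serious obstacle here; the only bookkeeping to watch out for is the ordering convention of Remark~\ref{R:multi} in part (1) and the ring-homomorphism property of Frobenius in part (2).
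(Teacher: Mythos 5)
Your proof is correct and follows essentially the same approach as the paper's: part (1) reduces to Remark~\ref{R:multi} together with the functoriality of the tensor, symmetric, and alternating square constructions, and part (2) rests on the observation that the entries of $\mathcal{T}(M)$ are $\FF_q$-polynomial in the entries of $M$, so the Frobenius twist commutes with forming $\mathcal{T}$. You have simply spelled out the bookkeeping (the composition-reversal convention, the $\tfrac12$ coefficient being Frobenius-fixed) that the paper leaves implicit.
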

\begin{proof}
    The first part follows from Remark \ref{R:multi}, and the second part follows from the fact that entries of $\mathcal{T}(M)$ are polynomials in entries of $M$ over $\FF_q$.
\end{proof}

\section{Tensor products of Drinfeld modules}\label{Ch:TensorProducts}

Tensor powers of Drindeld modules were initiated by Anderson \cite{And86} from the aspect of $t$-motives. Later Hamahata \cite{Hamahata} further studied symmetric powers and alternating powers of Drinfeld modules, and provided explicit models as $t$-modules. In this chapter, we recover Hamahata's models from the aspect of $t$-motives.

We fix two Drinfeld modules $\phi$, $\psi : \sA \to A[\tau]$ defined over~$A$ with everywhere good reduction as in \eqref{E:phipsidefintro},
and their $t$-motives $\cM_\phi=\C[\tau]$, $\cM_\psi=\C[\tau]$. % of dimension $1$ and rank $r$. %To be precise, \[\cM_\phi=\C[\tau] = \langle 1, \tau,\dots,\tau^{r-1}\rangle_{\CC_\infty[t]}\]
For convenience, $\kappa_0=\eta_0\assign \theta.$
\subsection{Tensor products of Drinfeld modules defined over \texorpdfstring{$A$}{A}}\label{S:ten}

\emph{The tensor product of $\cM_\phi$ and $\cM_\psi$} is \[\cM_\phi\otimes \cM_\psi\assign \cM_\phi\otimes_{\C[t]}\cM_\psi\]
 equipped with a left $\C[t,\tau]$-module structure by using the $\C[t]$-module structure from the tensor product over $\C[t]$ and setting \[\tau\cdot (a_1\otimes a_2 )\assign \tau a_1\otimes\tau a_2,\quad a_1 \in \cM_\phi,~a_2 \in \cM_\psi.\] 
 In the sense of \cite{And86}, $\cM_\phi\otimes \cM_\psi$ is a \emph{pure} $t$-motive, and its \emph{weight}, denoted by $w(\cM_\phi\otimes \cM_\psi)$, is defined to be \[w(\cM_\phi\otimes \cM_\psi)=\frac{\rank_{\C[\tau]}\cM_\phi\otimes \cM_\psi}{\rank_{\C[t]}\cM_\phi\otimes \cM_\psi}.\] Combining this with \cite[Prop.~1.11.1]{And86}, the dimension of $\cM_\phi\otimes \cM_\psi$ is
\begin{equation*}
\rank_{\C[\tau]}\cM_\phi\otimes \cM_\psi = \left(w(\phi)+w(\psi)\right) \cdot \rank_{\C[t]}\cM_\phi\otimes \cM_\psi=\left(\frac{1}{r}+\frac{1}{\ell}\right)\cdot r\cdot \ell = r+\ell.  
\end{equation*}

More generally, \emph{the $n$-th tensor power of $\cM_\phi$} is 
\[\cM_\phi^{\otimes n} \assign \underbrace{\cM_\phi\otimes_{\C[t]}\cdots\otimes_{\C[t]}\cM_\phi}_{n \text{ times}}\] equipped with a left $\C[t,\tau]$-module by using the $\C[t]$-module structure from the tensor power over $\C[t]$ and setting \[\tau\cdot (a_1\otimes\cdots\otimes a_n )\assign \tau a_1\otimes\cdots\otimes \tau a_n,\quad a_i \in \cM_\phi.\] $\cM_\phi^{\otimes n}$ is a pure $t$-motive of weight 
\begin{equation}\label{w_tenn}
w(\cM_\phi^{\otimes n})=\frac{n}{r}.
\end{equation}

\begin{lemma}[{Khaochim~\cite[Lem.~4.4]{Khaochim}}]\label{ten_basis} The set \[\{s_i\}_{i=1}^{r+\ell} = \left\{1\otimes 1,\dots , 1\otimes\tau^{\ell-1},\tau\otimes 1,\dots,\tau^r\otimes 1\right\}\] is a basis of $\cM_\phi\otimes \cM_\psi$ as a $\C[\tau]$-module.    
\end{lemma}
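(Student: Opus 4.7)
The plan is to leverage the preceding discussion that $\cM_\phi \otimes \cM_\psi$ is a pure $t$-motive of dimension $r+\ell$, hence a free left $\C[\tau]$-module of rank $r+\ell$. Since the proposed set has exactly $r+\ell$ elements and $\C[\tau]$ is a (left and right) principal ideal domain, it suffices to prove that it generates $\cM_\phi \otimes \cM_\psi$ as a $\C[\tau]$-module: the resulting surjection $\C[\tau]^{r+\ell} \twoheadrightarrow \cM_\phi \otimes \cM_\psi$ of free modules of equal rank is automatically an isomorphism, because its kernel is a finitely generated submodule of a free $\C[\tau]$-module whose rank must be zero by additivity of rank in short exact sequences. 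Let $N$ denote the $\C[\tau]$-span of the proposed set. Since $\tau$ acts diagonally, the identity $\tau^a \otimes \tau^b = \tau^{\min(a,b)}\cdot(\tau^{a-\min(a,b)} \otimes \tau^{b-\min(a,b)})$ reduces every elementary tensor (and hence every element by $\C$-linearity) to a $\C[\tau]$-multiple of an ``axis'' tensor, so it suffices to show $\tau^a \otimes 1 \in N$ for all $a \geq 0$ and $1 \otimes \tau^b \in N$ for all $b \geq 0$.

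For $a \leq r$ and $b \leq \ell-1$ these lie in the proposed set by definition. For larger indices, I compute $t \cdot (\tau^k \otimes 1)$ in two ways, using $(t\cdot \tau^k) \otimes 1 = \tau^k \otimes (t \cdot 1)$, which after expanding yields
\[
\kappa_r^{(k)}(\tau^{k+r} \otimes 1) = (\theta - \theta^{(k)})(\tau^k \otimes 1) - \sum_{i=1}^{r-1} \kappa_i^{(k)}(\tau^{k+i} \otimes 1) + \sum_{j=1}^{\ell} \eta_j\, (\tau^k \otimes \tau^j),
\]
together with a symmetric identity from $t \cdot (1 \otimes \tau^m)$ that expresses $1 \otimes \tau^{m+\ell}$. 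Because $\phi$ and $\psi$ have everywhere good reduction, $\kappa_r, \eta_\ell \in \FF_q^\times$, and hence $\kappa_r^{(k)} = \kappa_r$ and $\eta_\ell^{(m)} = \eta_\ell$ are nonzero, so one may solve for the top terms. A simultaneous induction on $k$, in which each cross term $\tau^k \otimes \tau^j$ on the right-hand side is rewritten via the diagonal identity as $\tau^{\min(k,j)}$ times an already-known axis tensor, shows that $\tau^a \otimes 1, 1 \otimes \tau^b \in N$ for all $a, b \geq 0$, completing the spanning argument.

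The main technical point is the bookkeeping in the simultaneous induction: the diagonal reduction of a cross term $\tau^k \otimes \tau^j$ produces $\tau^{k-j} \otimes 1$ (when $k \geq j$) or $1 \otimes \tau^{j-k}$ (when $k < j$), and one must verify these indices lie within the range covered by the induction hypothesis. A direct check shows that when proving $\tau^{r+k+1} \otimes 1 \in N$ (respectively $1 \otimes \tau^{\ell+k} \in N$) at stage $k+1$, every cross-term index is at most $r+k$ (respectively $\ell-1+k$), so the induction closes cleanly. The freeness of $\cM_\phi \otimes \cM_\psi$ as a $\C[\tau]$-module inherited from Anderson's theory of pure $t$-motives is essential here: a direct verification of $\C[\tau]$-independence of the $s_i$ would require analyzing the $\tau$-power structure of a relation $\sum_i f_i(\tau) s_i = 0$ against the $\C[t]$-basis $\{\tau^a \otimes \tau^b\}_{0 \leq a < r,\, 0 \leq b < \ell}$, which is substantially more involved than the rank count provided by purity.
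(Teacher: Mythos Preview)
Your argument is correct. The paper does not supply its own proof of this lemma, citing instead Khaochim~\cite[Lem.~4.4]{Khaochim}, but your approach is exactly the template the paper uses for the analogous Lemma~\ref{symalt_basis} (symmetric and alternating squares): use the dimension count established just before the lemma so that spanning suffices, reduce via the diagonal $\tau$-action to ``axis'' tensors, and run an induction by computing the $t$-action two ways. Two minor remarks: first, the induction is actually not simultaneous---each family $\{\tau^a\otimes 1\}$ and $\{1\otimes\tau^b\}$ closes independently, since the cross terms you produce always land in the \emph{base} range on the opposite side (e.g., $1\otimes\tau^{j-k}$ with $j-k\leqslant\ell-1$ when $k\geqslant 1$); second, the hypothesis $\kappa_r,\eta_\ell\in\FF_q^\times$ is stronger than needed---$\kappa_r\neq 0$ and $\eta_\ell\neq 0$ (automatic from the rank) already make $\kappa_r^{(k)}$ and $\eta_\ell^{(m)}$ invertible in $\C$.
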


With the $\C[\tau]$-basis in Lemma~\ref{ten_basis}, we obtain a model $E$ of the tensor product of $\phi$ and $\psi$ which is a $t$-module of dimension $r+\ell$ defined over $A$ by solving the following equation for $E_t:$
\begin{equation}\label{ten_eq}
t\cdot \bu(s_1,\dots,s_{r+\ell})^\tr=\bu E_t(s_1,\dots,s_{r+\ell})^\tr
\end{equation} for all $\bu\in \Mat_{1\times d}(\C[\tau]).$

\begin{definition}\label{ten_def}
Suppose $r\leqslant \ell$. The \emph{tensor product of $\phi$ and $\psi$} is the $t$-module $\phi\otimes\psi:\sA\to\Mat_{r+\ell}(A[\tau])$ of dimension $r+\ell$ defined over $A$ satisfying \eqref{ten_eq}. Explicitly, from \cite[Def. 4.5]{Khaochim} \[(\phi\otimes\psi)_t =\left(\begin{array}{c|c}X_1 & X_2\\ \hline
X_3& X_4    
\end{array}\right),\]
where $X_1\in\Mat_{\ell\times\ell}(A[\tau])$, $X_2\in\Mat_{\ell\times r}(A[\tau])$, $X_3\in\Mat_{r\times\ell}(A[\tau])$, $X_4\in\Mat_{r\times r}(A[\tau])$ are defined by 
\[X_1 = \begin{pmatrix}
\theta & & & & & &\\
\kappa_1\tau & \theta &&&&&\\
\vdots & \ddots & \ddots &&&&\\
\kappa_{r-1}\tau^{r-1} & \cdots & \kappa_1\tau & \theta &&&\\
\kappa_r\tau^r & \cdots & \cdots & \kappa_1\tau & \theta &&\\
& \ddots &&&& \ddots &\\
 & & \kappa_r\tau^r& \cdots & \cdots & \kappa_1\tau & \theta
\end{pmatrix}, ~X_2 = \begin{pmatrix}
\kappa_1 & \cdots & \kappa_{r-1} & \kappa_r\\
\kappa_2 \tau & \cdots & \kappa_r\tau &\\
\vdots & \reflectbox{$\ddots$} &&\\
\kappa_r\tau^{r-1} &&&\\
0&&&\\
\vdots&&&\\
0&&&
\end{pmatrix},\]
\[X_3 = \begin{pmatrix}
\eta_1\tau & \cdots & \cdots & \cdots & \eta_{\ell-1}\tau & \eta_\ell\tau\\
\eta_2\tau^2 & \cdots & \cdots & \cdots & \eta_\ell\tau^2 &\\
\vdots & & & \reflectbox{$\ddots$} &&\\
\eta_r\tau^r & \cdots & \eta_\ell\tau^r &&&
\end{pmatrix},~ X_4 = \begin{pmatrix}
\theta &&&\\    
\eta_1\tau & \theta &&\\   
\vdots & \ddots & \ddots &\\    
\eta_{r-1}\tau^{r-1} & \cdots & \eta_1\tau & \theta
\end{pmatrix}.\]
\end{definition}

\begin{remark}\label{R:teniso}
As pointed out in \cite[Rem. 4.8]{Khaochim}, our model for $\phi\otimes\psi$ is in fact Hamahata's $\psi\otimes\phi$, but they are isomorphic as $t$-modules by \cite[Prop.~2.5]{Hamahata}. 
\end{remark}

\subsection{Symmetric and alternating powers of Drinfeld modules defined over \texorpdfstring{$A$}{A}}

Let $\mathfrak{S}_n$ be the $n$-th symmetric group and $\sgn: \mathfrak{S}_n\to\{\pm 1\}$ be the sign function. \emph{The $n$-th symmetric power of $\cM_\phi$} is the $\C[t,\tau]$-submodule
\begin{equation}\label{eq:defsym} \Sym^n\cM_\phi \assign \Span_{\C[t]}\left\{ \sum_{\sigma\in \mathfrak{S}_n} a_{\sigma(1)}\otimes\cdots\otimes a_{\sigma(n)}\right\}\subseteq \cM_\phi^{\otimes n}.
\end{equation}It is also a pure $t$-motive of weight $w(\Sym^n\cM_\phi) = w(\cM_\phi^{\otimes n})$ by \cite[Prop.~1.10.2]{And86}. Together with \eqref{w_tenn}, the dimension of $\Sym^n\cM_\phi$ is
\begin{equation} \label{d_symn}
\rank_{\C[\tau]}\Sym^n\cM_\phi = \frac{n}{r} \cdot \rank_{\C[t]}\Sym^n\cM_\phi= \frac{n}{r}\cdot \binom{r+n-1}{n} = \binom{r+n-1}{n-1}.
\end{equation}

\emph{The $n$-th alternating power of $\cM_\phi$} is the $\C[t,\tau]$-submodule
\[\Alt^n\cM_\phi \assign \Span_{\C[t]}\left\{ \sum_{\sigma\in \mathfrak{S}_n} \sgn(\sigma) (a_{\sigma(1)}\otimes\cdots\otimes a_{\sigma(n)}) \right\}\subseteq \cM_\phi^{\otimes n}.\] Similar to symmetric powers, it is a pure $t$-motive of weight $w(\Alt^n\cM_\phi) = w(\cM_\phi^{\otimes n})$ and dimension 
\begin{equation} \label{d_altn}
\rank_{\C[\tau]}\Alt^n\cM_\phi = n\cdot w(\phi) \cdot \rank_{\C[t]}\Alt^n\cM_\phi= \frac{n}{r}\cdot \binom{r}{n} = \binom{r-1}{n-1}.
\end{equation}

\begin{lemma}\label{symalt_basis} Suppose $p\neq 2$.
\begin{enumerate}
    \item The set \[\mathfrak{X}_1\assign\left\{1\otimes 1, \frac{1}{2}(1\otimes \tau+\tau\otimes 1),\dots, \frac{1}{2}(1\otimes \tau^r+\tau^r\otimes 1)\right\}\] is a $\C[\tau]$-basis for $\Sym^2\cM_\phi$.
    \item The set \[\mathfrak{X}_2\assign\left\{\frac{1}{2}(1\otimes \tau -\tau\otimes 1),\dots,\frac{1}{2}(1\otimes \tau^{r-1} -\tau^{r-1}\otimes 1) \right\}\] is a $\C[\tau]$-basis for $\Alt^2\cM_\phi$.
\end{enumerate}
\end{lemma}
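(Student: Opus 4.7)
The plan is to use the direct-sum decomposition $\cM_\phi^{\otimes 2} = \Sym^2\cM_\phi \oplus \Alt^2\cM_\phi$ (valid because $p \neq 2$) together with the $\C[\tau]$-basis of $\cM_\phi^{\otimes 2}$ furnished by Lemma~\ref{ten_basis} with $\ell = r$.

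First, I introduce the swap involution $\iota$ on $\cM_\phi^{\otimes 2}$ given by $\iota(a\otimes b) = b\otimes a$. It is well-defined (using $(ta)\otimes b = a\otimes(tb)$ in the tensor product over $\C[t]$) and commutes with the actions of both $t$ and $\tau$, so its $\pm 1$-eigenspaces are $\C[t,\tau]$-submodules. By \eqref{eq:defsym} these eigenspaces coincide with $\Sym^2\cM_\phi$ and $\Alt^2\cM_\phi$: any $\iota$-fixed vector $u$ equals $\tfrac{1}{2}(u + \iota u)$, which is manifestly a $\C[t]$-combination of symmetrized simple tensors, and similarly for the antisymmetric case. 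Since $p \neq 2$, the $\pm 1$-eigenspaces together span $\cM_\phi^{\otimes 2}$ and have trivial intersection, giving the claimed decomposition. The rank identities \eqref{d_symn} and \eqref{d_altn} with $n = 2$ yield $\C[\tau]$-ranks $r+1$ and $r-1$, matching $|\mathfrak{X}_1|$ and $|\mathfrak{X}_2|$.

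Next, I show $\mathfrak{X}_1 \cup \mathfrak{X}_2$ $\C[\tau]$-spans $\cM_\phi^{\otimes 2}$. For $0 \leq j \leq r-1$ the basis elements $1\otimes\tau^j$ and $\tau^j\otimes 1$ of Lemma~\ref{ten_basis} are recovered immediately as sums and differences of the corresponding elements of $\mathfrak{X}_1$ and $\mathfrak{X}_2$. The one subtle case, which I expect to be the main obstacle, is recovering $\tau^r\otimes 1$: here one exploits the tensor-product identity $\phi_t \otimes 1 = 1 \otimes \phi_t$ in $\cM_\phi \otimes_{\C[t]} \cM_\phi$ together with $\phi_t = \theta + \kappa_1\tau + \cdots + \kappa_r\tau^r$ and $\kappa_r \in \FF_q^\times$ to obtain the relation
\[
\tfrac{1}{2}(1\otimes\tau^r - \tau^r\otimes 1) = -\kappa_r^{-1} \sum_{i=1}^{r-1} \kappa_i \cdot \tfrac{1}{2}(1\otimes\tau^i - \tau^i\otimes 1),
\]
whose right-hand side lies in the $\C$-span of $\mathfrak{X}_2$. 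Combined with $\tfrac{1}{2}(1\otimes\tau^r + \tau^r\otimes 1) \in \mathfrak{X}_1$, this yields $\tau^r\otimes 1$.

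To conclude, $\mathfrak{X}_1 \cup \mathfrak{X}_2$ is a $\C[\tau]$-spanning set of cardinality $2r$ for the free left $\C[\tau]$-module $\cM_\phi^{\otimes 2}$ of rank $2r$; since $\C[\tau]$ is a (left and right) principal ideal domain, such a spanning set is automatically a basis. The direct-sum decomposition then forces $\mathfrak{X}_1$ and $\mathfrak{X}_2$, which lie in $\Sym^2\cM_\phi$ and $\Alt^2\cM_\phi$ respectively and have the correct cardinalities, to be $\C[\tau]$-bases of those summands.
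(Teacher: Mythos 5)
Your proof is correct, but the route is genuinely different from the paper's. The paper works separately inside $\Sym^2\cM_\phi$ and $\Alt^2\cM_\phi$: after reducing to the assertion that $\xi_m = 1\otimes\tau^m+\tau^m\otimes 1$ (resp.\ $\zeta_m$) lies in $\Span_{\C[\tau]}\mathfrak{X}_1$ (resp.\ $\Span_{\C[\tau]}\mathfrak{X}_2$), it proceeds by induction on $m$, comparing the two expansions of $t\cdot\xi_{m-r}$ (resp.\ $t\cdot\zeta_{m-r}$) via $\tau^{m-r}\phi_t$ versus $\phi_t\tau^{m-r}$ and using $\kappa_r\neq 0$ to solve for the top term. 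You instead work globally in $\cM_\phi^{\otimes 2}$, noting the eigenspace decomposition under the swap involution and the fact (Lemma~\ref{ten_basis} with $\ell=r$) that $\cM_\phi^{\otimes 2}$ is free of rank $2r$ over $\C[\tau]$. You recover all $2r$ Khaochim generators from $\mathfrak{X}_1\cup\mathfrak{X}_2$ by $\C$-linear sums and differences, the only nontrivial one being $\tau^r\otimes 1$, which you get from the identity $\phi_t\otimes 1 = 1\otimes\phi_t$ --- the same relation the paper deploys only for the base case $\zeta_r$. Then a spanning set of cardinality $2r$ in a free rank-$2r$ module over the left Noetherian ring $\C[\tau]$ is a basis (surjective endomorphisms of finitely generated modules over left Noetherian rings are injective), and since the set splits across the direct summands as $\mathfrak{X}_1\subset\Sym^2\cM_\phi$, $\mathfrak{X}_2\subset\Alt^2\cM_\phi$, each piece is a basis of its summand. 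Your argument buys brevity and avoids the induction, at the cost of invoking Lemma~\ref{ten_basis} which the paper's proof does not need; it also re-derives the rank formulas \eqref{d_symn}, \eqref{d_altn} for $n=2$ rather than taking them as input, which is a pleasant byproduct. One small caveat: when you assert a spanning set of size equal to the rank is a basis over the (noncommutative) ring $\C[\tau]$, the honest justification is the left Noetherian property rather than ``left and right PID'' per se; the conclusion is of course correct since $\C[\tau]$ is left Noetherian.
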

\begin{proof}
By \eqref{d_symn} and \eqref{d_altn}, it suffices to show that the two sets span the corresponding $t$-motives as $\C[\tau]$-modules.

For the first part, we observe that the elements in $\Sym^2\cM_\phi$ are of the form, $ a_i, b_i\in \cM_\phi=\C[\tau],~ f_i\in\C[t]$,
\begin{align*}
\sum_i& f_i\cdot(a_i\otimes b_i+b_i\otimes a_i) \\
&= \sum_i((f_i\cdot a_i)\otimes b_i+b_i\otimes (f_i\cdot a_i))\\
&=\sum_i\sum_{j_1}\sum_{j_2} h_{i,j_1,j_2}\tau^{\min(j_1,j_2)}(1\otimes \tau^{|j_1-j_2|}+\tau^{|j_1-j_2|}\otimes 1),
\end{align*}
where $h_{i,j_1,j_2}\in\C$. So it suffices to show, for $m\geqslant 0$, 
\begin{equation}\label{ind_sym}
\xi_m\assign 1\otimes \tau^m+\tau^m\otimes 1\in \Span_{\C[\tau]}\mathfrak{X}_1.
\end{equation}
We proceed by induction on $m$. It is clear that \eqref{ind_sym} holds for $m = 0,\dots,r$. For $m>r$, we consider
\begin{equation}\label{express1_sym}
t\cdot \xi_{m-r} = 1\otimes (\tau^{m-r}\phi_t)+(\tau^{m-r}\phi_t)\otimes 1=\sum_{i=0}^r\kappa_i^{(m-r)}\xi_{m-r+i}.
\end{equation}
On the other hand, 
\begin{equation}\label{express2_sym}
\begin{split}
t\cdot \xi_{m-r} &= \phi_t\otimes \tau^{m-r}+\tau^{m-r}\otimes \phi_t\\
&=
\begin{cases}
\displaystyle \sum_{i=0}^r\kappa_i\tau^i\xi_{m-r-i} & \text{if $r\leqslant m-r$}\\
\displaystyle \sum_{i=0}^{m-r}\kappa_i\tau^i\xi_{m-r-i}+\sum_{i=1}^{2r-m}\kappa_{m-r+i}\tau^{m-r}\xi_{i} & \text{if $r > m-r$} 
\end{cases}.
\end{split}
\end{equation}
Combining \eqref{express1_sym} and \eqref{express2_sym}, and using that $\kappa_r\neq 0$, in both cases $\xi_m$ is a $\C[\tau]$-linear combination of $\{\xi_i\}_{i = 0}^{m-1}$, and the result follows by the induction hypothesis. 

For the second part, by a similar observation, it suffices to show, for $m\geqslant 1$, 
\begin{equation}\label{ind_alt}
\zeta_m\assign 1\otimes \tau^m-\tau^m\otimes 1\in \Span_{\C[\tau]}\mathfrak{X}_2.
\end{equation}
We again proceed by induction on $m$. It is clear that \eqref{ind_alt} holds for $m = 1,\dots,r-1$. Note that $\zeta_0 = 0$, so we can not use the same process for the case $m=r$. In fact, it follows by expressing $t\cdot (1\otimes 1)$ in two ways. Note that 

\[t\cdot (1\otimes 1) = \phi_t\otimes 1=1\otimes \phi_t,\]
which gives
\[\zeta_r = -\kappa_r^{-1}\sum_{i=1}^{r-1}\kappa_i\zeta_i.\]
For $m>r$, we consider
\begin{equation}\label{express1_alt}
t\cdot \zeta_{m-r} = 1\otimes (\tau^{m-r}\phi_t)-(\tau^{m-r}\phi_t)\otimes 1=\sum_{i=0}^r\kappa_i^{(m-r)}\zeta_{m-r+i}.
\end{equation}
On the other hand, 
\begin{equation}\label{express2_alt}
\begin{split}
t\cdot \zeta_{m-r} &= \phi_t\otimes \tau^{m-r}-\tau^{m-r}\otimes \phi_t\\
&=
\begin{cases}
\displaystyle \sum_{i=0}^r\kappa_i\tau^i\zeta_{m-r-i} & \text{if $r\leqslant m-r$}\\
\displaystyle \sum_{i=0}^{m-r}\kappa_i\tau^i\zeta_{m-r-i}-\sum_{i=1}^{2r-m}\kappa_{m-r+i}\tau^{m-r}\zeta_{i} & \text{if $r > m-r$} 
\end{cases}.
\end{split}
\end{equation}
Similarly, combining \eqref{express1_alt} and \eqref{express2_alt}, the result follows by the induction hypothesis. 
\end{proof}

In the same fashion as for tensor products, we define symmetric and alternating squares of $\phi$ as follows:

\begin{definition}\label{def_symalt}We assume $p\neq 2$.
\begin{enumerate}
    \item The \emph{symmetric square of $\phi$} is the $t$-module $\Sym^2\phi:\sA\to\Mat_{r+1}(A[\tau])$ of dimension $r+1$ defined over $A$ given by 
    \begin{equation*}
    \begin{split}
    \left(\Sym^2\phi\right)_t &=
    \begin{pmatrix}
        \theta &&&&\\
        \kappa_1\tau & \theta &&&\\
        \kappa_2\tau^2 & \kappa_1\tau & \theta &&\\
        \vdots && \ddots & \ddots &\\
        \kappa_r\tau^r & \cdots & \cdots & \kappa_1\tau & \theta
    \end{pmatrix}+
    \begin{pmatrix}
        0 & \kappa_1 & \cdots & \kappa_{r-1} & \kappa_r\\
        0 & \kappa_2\tau & \cdots & \kappa_r\tau &\\
        \vdots & \vdots & \reflectbox{$\ddots$} &&\\
        0 & \kappa_r\tau^{r-1} &&&\\
        0 &&&&
    \end{pmatrix}.
    \end{split}
    \end{equation*}

    \item The \emph{alternating square of $\phi$} is the $t$-module $\Alt^2\phi:\sA\to \Mat_{r-1}(A[\tau])$ of dimension $r-1$ defined over $A$ given by 
    \begin{equation*}
    \begin{split}
    \left(\Alt^2\phi\right)_t &=
    \begin{pmatrix}
        \theta &&&&\\
        \kappa_1\tau & \theta &&&\\
        \kappa_2\tau^2 & \kappa_1\tau & \theta &&\\
        \vdots && \ddots & \ddots &\\
        \kappa_{r-2}\tau^{r-2} & \cdots & \cdots & \kappa_1\tau & \theta
    \end{pmatrix}-
    \begin{pmatrix}
        \kappa_2\tau & \kappa_3\tau & \cdots & \kappa_{r-1}\tau & \kappa_r\tau\\
        \kappa_3\tau^2 & \kappa_4\tau^2 & \cdots & \kappa_r\tau^2 &\\
        \vdots & \vdots & \reflectbox{$\ddots$} &&\\
        \kappa_{r-1}\tau^{r-2} & \kappa_r\tau^{r-2} &&&\\
        \kappa_r\tau^{r-1} &&&&
    \end{pmatrix}.
    \end{split}
    \end{equation*}   
\end{enumerate}
\end{definition}

In other word, the $\tau$-expansions of $\left(\Sym^2\phi\right)_t$ and $\left(\Alt^2\phi\right)_t$ are
\begin{equation*}
\begin{split}
\left(\Sym^2\phi\right)_t &=B_0+B_1\tau+\cdots+B_r\tau^r,\\
\left(\Alt^2\phi\right)_t &=\theta \rI_{r-1}+C_1\tau+\cdots+C_{r-1}\tau^{r-1},
\end{split}
\end{equation*}
where $B_i = \left(\begin{array}{c|c}0 & 0\\ \hline
B_i'& 0    
\end{array}\right)\in\Mat_{r+1}(A)$ and $C_i = \left(\begin{array}{c|c}0 & 0\\ \hline
C_i'& 0    
\end{array}\right)\in\Mat_{r-1}(A)$
with
\[B_i' = 
\begin{pmatrix}
\kappa_i & \kappa_{i+1} & \cdots & \kappa_r\\
& \kappa_i & \cdots & \kappa_{r-1} \\
&&\ddots & \vdots\\
&&& \kappa_i
\end{pmatrix}, \quad C_i' = 
\begin{pmatrix}
-\kappa_{i+1} & \cdots & -\kappa_{r-1} & -\kappa_r\\
\kappa_i &&&\\
& \ddots &&\\
&& \kappa_i & 
\end{pmatrix},\]
of sizes $(r-i+1)\times (r-i+1)$ and $(r-i)\times (r-i)$, respectively.

\begin{proposition}\label{P:uniformizable}
The tensor structures $\phi^{\otimes 2}$, $\Sym^2(\phi)$ and $\Alt^2(\phi)$ are uniformizable.
\end{proposition}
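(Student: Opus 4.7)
The strategy is to invoke Anderson's criterion \cite{And86} that a $t$-module is uniformizable precisely when its $t$-motive is rigid analytically trivial. Since every Drinfeld module is uniformizable, $\cM_\phi$ is rigid analytically trivial: taking the $\C[t]$-basis $\bm = (1, \tau, \dots, \tau^{r-1})^\tr$ from Example~\ref{Ex:Drinfeld1} and the matrix $\Gamma_\phi \in \Mat_r(\C[t])$ representing multiplication by $\tau$ on $\bm$, there exists $\Upsilon \in \GL_r(\TT_t)$ with $\Upsilon^{(1)} \Gamma_\phi = \Upsilon$.

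The first step is to identify how $\tau$ acts on each of $\cM_\phi^{\otimes 2}$, $\Sym^2\cM_\phi$, and $\Alt^2\cM_\phi$ with respect to the natural $\C[t]$-bases induced from $\bm$ and organized as $\mathfrak{Y}_1, \mathfrak{Y}_2, \mathfrak{Y}_3$ in \S\ref{S:matrixop}. Since $\tau$ acts diagonally on tensors ($\tau(a\otimes b) = \tau a \otimes \tau b$) and commutes with the symmetrization and antisymmetrization projectors, these matrices are exactly $\Gamma_\phi^{\otimes 2}$, $\Sym^2(\Gamma_\phi)$, and $\Alt^2(\Gamma_\phi)$ from Definition~\ref{D:matrixop}.

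Next, I propose $\Upsilon^{\otimes 2}$, $\Sym^2(\Upsilon)$, and $\Alt^2(\Upsilon)$ as rigid analytic trivializations of the three $t$-motives. Letting $\mathcal{T}$ denote any of the three operations, Lemma~\ref{L:matrixop} yields
\[
\mathcal{T}(\Upsilon)^{(1)} \cdot \mathcal{T}(\Gamma_\phi) = \mathcal{T}(\Upsilon^{(1)}) \cdot \mathcal{T}(\Gamma_\phi) = \mathcal{T}(\Upsilon^{(1)} \Gamma_\phi) = \mathcal{T}(\Upsilon),
\]
while invertibility of $\mathcal{T}(\Upsilon)$ over $\TT_t$ follows from Lemma~\ref{L:matrixop}(1). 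Anderson's criterion then delivers uniformizability of $\phi^{\otimes 2}$, $\Sym^2\phi$, and $\Alt^2\phi$.

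The main obstacle is the bookkeeping in the first step: verifying that the $\C[t]$-basis of $\Sym^2\cM_\phi$ (resp.\ $\Alt^2\cM_\phi$) inherited from the symmetrized (resp.\ antisymmetrized) basis $\bm\otimes\bm$ is compatible with the basis $\mathfrak{Y}_2$ (resp.\ $\mathfrak{Y}_3$) used to build $\Sym^2(\Gamma_\phi)$ (resp.\ $\Alt^2(\Gamma_\phi)$) in Definition~\ref{D:matrixop}. This reduces to unwinding the $\C[t]$-module structure on $\Sym^2\cM_\phi$ and $\Alt^2\cM_\phi$ inherited from~\eqref{eq:defsym}. Once the basis identification is in place, the rest of the argument is the two-line Lemma~\ref{L:matrixop} calculation above.
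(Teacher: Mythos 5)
Your proof is correct and follows essentially the same route as the paper: both invoke Anderson's criterion relating uniformizability to rigid analytic triviality, identify the $\tau$-action on $\cM_E$ as $\cT(\Gamma)$ via the tensor/symmetric/alternating basis of \S\ref{S:matrixop}, and push the known trivialization $\Upsilon$ of $\cM_\phi$ through $\cT$ by Lemma~\ref{L:matrixop}. The only cosmetic difference is in the transpose convention for the functional equation ($\Upsilon^{(1)}\Gamma=\Upsilon$ versus $(\Upsilon^\tr)^{(1)}=\Gamma\Upsilon^\tr$), which does not change the substance.
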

\begin{proof}
By \cite[Thm.~4]{And86}, it suffices to show that $\cM_E$ is rigid analytically trivial, in the sense of \cite[\S 2.4.6]{HP22}, for $E =\phi^{\otimes 2}$, $\Sym^2(\phi)$ and $\Alt^2(\phi)$. We let $\mathcal{T}(M)$ denote the corresponding matrix operation. Example \ref{Ex:Drinfeld1} provides that $(1,\tau,\dots,\tau^{r-1})^\tr$ is a basis for $\cM_\phi$ with $\Gamma$ in \eqref{E:Gammadef} representing multiplication by $\tau$ on $\cM_\phi$. Then by choosing a basis for $\cM_E$ in the same way as \eqref{eq:basis}, we observe that $\mathcal{T}(\Gamma)$ represents multiplication by $\tau$ on $\cM_E$.

Furthermore, by \cite[\S 4.2]{Pellarin08} (see also \cite[Ex.~2.4.12]{HP22}), the $t$-motive $\cM_\phi$ is rigid analytically trivial with rigid analytic trivialization denoted by $\Upsilon\in \GL_r(\TT_t)$. Then by Lemma~\ref{L:matrixop}, 
\[\mathcal{T}(\Upsilon^\tr)^{(1)}=\mathcal{T}(\rbracket{\Upsilon^\tr}^{(1)})=\mathcal{T}(\Gamma\Upsilon^\tr)=\mathcal{T}(\Gamma)\mathcal{T}(\Upsilon^\tr),\]
which implies $\mathcal{T}(\Upsilon^\tr)^\tr$ is a rigid analytic trivialization for $\cM_E.$
\end{proof}

\begin{remark}\label{R:almoststrictlypure}
By a direct computation, the $\tau$-expansion of $\left(\Sym^2\phi\right)_{t^2}$ is of the form \[\tB_0 +\ \tB_1\tau+\cdots+\tB_{2r}\tau^{2r},\] with $\tB_i = 0$ for $r+1\leqslant i\leqslant 2r$, and where $\tB_r$ is a lower triangular matrix with $\kappa_r^2$ on the diagonal. Similarly, the $\tau$-expansion of $\left(\Alt^2\phi\right)_{t^2}$ is of the form \[\tC_0 +\ \tC_1\tau+\cdots+\tC_{2r}\tau^{2r-2},\] with $\tC_i = 0$ for $r+1\leqslant i\leqslant 2r-2$, and where $\tC_r$ is a lower triangular matrix with $\kappa_r^2$ on the diagonal.  Therefore, the top coefficients of $\left(\Sym^2\phi\right)_{t^2}$ and $\left(\Alt^2\phi\right)_{t^2}$ are invertible ($\kappa_r\neq 0$). A similar computation can also be done for $\rbracket{\phi^{\otimes 2}}_{t^2}$, which gives the same conclusion as symmetric and alternating squares.
So $\phi^{\otimes 2}$, $\Sym^2\phi$ and $\Alt^2\phi$ are \emph{almost strictly pure} in the sense of \cite{NamoijamP22}, which implies pure by \cites[Rem. 2.2.3]{Goss94}[Rem. 5.5.5]{Goss}[Rem. 4.5.3]{NamoijamP22}. This explicates the purity of their $t$-motives.
\end{remark}

\section{Convolution \texorpdfstring{$L$}{L}-series}\label{S:Ltenseries}
In a series of articles {\cites{Goss79}{Goss83}{Goss92}[Ch.~8f]{Goss}} Goss defined and investigated function field valued $L$-series attached to Drinfeld modules and $t$-modules defined over finite extensions of~$K$. These $L$-functions possess a rich structure of special values, initiated by Carlitz~\cite[Thm.~9.3]{Carlitz35} for the eponymous Carlitz zeta function and continued by Goss~\cites{Goss92}[Ch.~8]{Goss}. Anderson and Thakur~\cite{AndThak90} further revealed the connection between Carlitz zeta values and coordinates of logarithms on tensor powers of the Carlitz module.

Taelman \cites{Taelman09}{Taelman10}{Taelman12} discovered a breakthrough on special $L$-values for Drinfeld modules that related them to the product of an analytic regulator and the $A$-order of a class module. These results have been extended in several directions, including to $t$-modules defined over $\oK$ and more refined special value identities~\cites{ANT20}{ANT22}{AnglesTaelman15}{Beaumont21}{FGHP20}{CEP18}{Fang15}{Gezmis21}{GezmisNamoijam21}.

\subsection{Goss \texorpdfstring{$L$}{L}-series} \label{SS:GossL}
Let $\phi : \sA \to A[\tau]$ be a Drinfeld module over $A$ with everywhere good reduction as in~\eqref{E:phipsidefintro}. Goss {\cites[\S 3]{Goss92}[\S 8.6]{Goss}} associated the Dirichlet series
\begin{equation}
L(\phi^{\vee},s) = \prod_{f \in A_+,\ \textup{irred.}} Q_f^{\vee} \bigl( f^{-s} \bigr)^{-1}, \quad
L(\phi,s) = \prod_{f \in A_+, \ \textup{irred.}} Q_f \bigl( f^{-s} \bigr)^{-1}.
\end{equation}
where as in~\S\ref{SS:munu}, $Q_f(X) \in A[X]$ is the reciprocal polynomial of the characteristic polynomial $P_f(X)$ of Frobenius acting on the Tate module $T_{\lambda}(\ophi)$ and $Q_f^{\vee}(X) \in K[X]$ is the reciprocal polynomial of $P_f^{\vee}(X)$ arising from $T_{\lambda}(\ophi)^{\vee}$.
In the future we will write simply ``$\prod_{f}$'' to indicate that a product is over all irreducible $f \in A_+$.

\begin{remark} \label{R:Qfcalc}
\cite[Cor.~3.7.3]{HP22} makes the calculation of $P_f(X)$ and $P_f^{\vee}(X)$ reasonable (and hence $Q_f(X)$ and $Q_f^{\vee}(X)$ also). See \cite[Rmk.~5.1.2]{HP22} for the details.

% If we take $\Gamma \in \Mat_{r}(A[t])$ as in~\eqref{E:Gammadef}, then for $f \in A_+$ of degree~$d$, we take the reduction $\oGamma \in \Mat_{r}(\FF_f[t])$. \cite[Cor.~3.7.3]{HP22} implies
% \begin{align} \label{E:charpolyscalc}
% P_f(X) &= \Char\bigl( \smash{\oGamma}^{(d-1)} \cdots \smash{\oGamma}^{(1)} \oGamma, X \bigr)|_{t=\theta} \in A[X], \\
% P_f^{\vee}(X) &= \Char\bigl( \oGamma^{-1} (\smash{\oGamma}^{(1)})^{-1} \cdots (\smash{\oGamma}^{(d-1)})^{-1}, X \bigr)|_{t=\theta} \in A[X]. \notag
% \end{align}
\end{remark}

The bounds on the coefficients of $P_f(X)$ from \S\ref{SSS:Pf} imply that $L(\phi,s)$ converges in $K_{\infty}$ for $s \in \ZZ_+$ and that $L(\phi^{\vee},s)$ converges for $s \in \ZZ_{\geqslant 0}$ (e.g., see \cite[\S 3]{CEP18}). Goss extended the definition of these $L$-series to $s$ in a non-archimedean analytic space, but we will not pursue these extensions here. We will henceforth assume $s \in \ZZ$.

By \eqref{E:munugen}, we find that
\begin{equation}
L(\phi^{\vee},s) = \sum_{a \in A_+} \frac{\mu_{\phi}(f)}{a^{s+1}}
\end{equation}
(see \cite[Eqs.~(12)--(14)]{CEP18}). In particular, for the Carlitz module $P_{\sC,f}^{\vee}(X) = X - 1/f$, so
\[
L(\sC^{\vee},s) = \sum_{a \in A_+} \frac{1}{a^{s+1}} = \zeta_{\sC}(s+1)
\]
is a shift of the Carlitz zeta function.

Taelman~\cite[Thm.~1]{Taelman12} proved a special value identity for $L(\phi^{\vee},0)$ as follows. First,
\begin{equation} \label{E:Qfvee1}
Q_f^{\vee}(1)^{-1} = \frac{f}{(-1)^r\, \ochi(f) \cdot P_f(1)} = \frac{\Aord{\FF_f}{A}}{\Aord{\ophi(\FF_f)}{A}},
\end{equation}
where the first equality follows from~\eqref{E:QfandQfvee} and the second from Gekeler~\cite[Thm.~5.1]{Gekeler91} (and also from \cite[Cor.~3.7.8]{HP22} combined with the definition of $P_f(X)$). We then have
\begin{equation} \label{E:Taelman}
L(\phi^{\vee},0) = \prod_f \frac{\bigAord{\FF_f}{A}}{\bigAord{\ophi(\FF_f)}{A}} = \Reg_{\phi} \cdot \rH(\phi),
\end{equation}
where the first equality follows from \eqref{E:Qfvee1} and the second is Taelman's identity.
The formula on the right contains the regulator $\Reg_{\phi} \in K_{\infty}$ and the order of the class module $\rH(\phi)\in A$ (see \cite{Taelman12} for details).
We will use Fang's generalization of Taelman's formula to $t$-modules. See Theorem~\ref{FangClass}.

\subsection{Fang's class module formula} \label{SS:Fangformula}
In \cite{Fang15}, Fang proved an extension of Taelman's class module formula to abelian Anderson $t$-modules defined over the integral closure of $A$ in a finite extension of $K$. Later Angl\`es, Ngo Dac and Tavares Ribeiro \cite{ANT22} extended the class module formula for admissible Anderson modules for more general ring, which comes from global function field over a finite field. For the purpose of the present paper, it suffices to focus on Fang's identity, thus we will mainly provide a summary of it.

Let $E : \sA \to \Mat_{\ell}(A[\tau])$ be an abelian Anderson $t$-module defined over~$A$. 
The exponential series $\Exp_{E} \in \power{K}{\tau}$ of $E$ induces an $\FF_q$-linear function
\begin{equation}
\Exp_{E,K_{\infty}} \colon \Lie(E)(K_{\infty}) \to E(K_{\infty}) \quad \Leftrightarrow \quad
\Exp_{E,K_{\infty}} \colon K_{\infty}^{\ell} \to K_{\infty}^{\ell}.
\end{equation}
Now $\Lie(E)(K_{\infty})$ has a canonical $K_{\infty}$-vector space structure, but Fang~\cite[p.~303]{Fang15} pointed out that it has another structure of a vector space over $\laurent{\FF_q}{t^{-1}}$. Namely we extend $\pd : \sA \to \Mat_{\ell}(K_{\infty})$ to an $\FF_q$-algebra homomorphism,
\begin{align}
\laurent{\FF_q}{t^{-1}} \stackrel{\pd}{\longrightarrow} \Mat_{\ell}(K_\infty) \; :\; \sum_{j \geqslant j_0} c_j t^{-j} \longmapsto \sum_{j \geqslant j_0} c_j \cdot \pd E_t^{-j}.
\end{align}
Notably the series on the right converges by~\cite[Lem.~1.7]{Fang15}. As Fang continued, $\Lie(E)(K_\infty)$ obtains an $\laurent{\FF_q}{t^{-1}}$-vector space structure via~$\pd$. 
For any $g \in \laurent{\FF_q}{t^{-q^{\ell}}}$, we have $\pd g = g\cdot \rI_{\ell}$, and so $\Lie(E)(K_{\infty})$ has dimension $\ell q^{\ell}$ as over $\laurent{\FF_q}{t^{-q^{\ell}}}$, which implies it has dimension~$\ell$ over $\laurent{\FF_q}{t^{-1}}$.
Since $K_{\infty} = \laurent{\FF_q}{\theta^{-1}} \cong \laurent{\FF_q}{t^{-1}}$, we will abuse notation and use the map~$\pd$ to define new $K_\infty$-vector space and $A$-module structures on $\Lie(E)(K_{\infty})$ that are possibly different from scalar multiplication.
With respect to this $K_{\infty}$-structure, Fang showed~\cite[Thm.~1.10]{Fang15} that \[\Lie(E)(A) \subseteq \Lie(E)(K_{\infty})\] and 
% and that the standard basis $\{ \bs_1, \dots, \bs_{\ell} \}$ of $\Lie(E)(K_{\infty})$, when equated with $K_{\infty}^{\ell}$, is an $A$-basis of $\Lie(E)(A)$ via~$\pd$.

\begin{equation*}
\Exp_{E,K_{\infty}}^{-1} \bigl( E(A) \bigr) \subseteq \Lie(E)(K_{\infty})
\end{equation*}
are $A$-lattices in the sense of \cite[Def. 1.9]{Fang15}. In particular they have rank~$\ell$ as an $A$-modules via~$\pd$.

% Assume that the standard basis $\{ \bv_1, \dots, \bv_{\ell} \}$ is an $A$-basis of $\Lie(E)(K_{\infty})$ via $\pd$. 
Choose $A$-bases $\{ \bv_1, \dots, \bv_{\ell} \}$ and $\{ \bslambda_1, \dots, \bslambda_{\ell} \}$ of $\Lie(E)(A)$ and $\Exp_{E,K_{\infty}}^{-1}(A^\ell)$ via $\pd$ respectively, and let $V \in \GL_{\ell}(K_{\infty})$ be chosen so that its columns are the coordinates of $\bslambda_1, \dots, \bslambda_{\ell}$ with respect to $\bv_1, \dots, \bv_{\ell}$ (via $\pd$). Following Taelman~\cites{Taelman10}{Taelman12}, Fang defined the regulator of $E$ as
\begin{equation}\label{eq:reg}
\Reg_{E} \assign \gamma \cdot \det(V) \in K_{\infty},\quad \gamma \in \FF_q^{\times},
\end{equation}
where $\gamma$ is chosen so that $\Reg_{E}$ has sign~$1$ (leading coefficient as an element of $\laurent{\FF_q}{\theta^{-1}}$ is~$1$).
This value is independent of the choice of $A$-bases.

Also following Taelman, Fang~\cite[Thm.~1.10]{Fang15} defined the class module of $E$ as
\begin{equation}
\rH(E) \assign \frac{E(K_{\infty})}{\Exp_{E,K_{\infty}}(\Lie(E)(K_{\infty})) + E(A)},
\end{equation}
and he proved that $\rH(E)$ is a finitely generated and torsion $A$-module. Fang's class module formula is the following.

\begin{theorem}[{Fang~{\cite[Thm.~1.10]{Fang15}}}] \label{FangClass}
Let $E : \sA \to \Mat_{\ell}(A[\tau])$ be an abelian Anderson $t$-module. Then
\[
\prod_f \frac{\Aord{\Lie(\soE)(\FF_f)}{A}}{\Aord{\soE(\FF_f)}{A}} = \Reg_{E} \cdot \bigAord{\rH(E)}{A},
\]
where the left-hand side converges in $K_\infty$.
\end{theorem}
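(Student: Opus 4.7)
The plan is to follow Taelman's original strategy for Drinfeld modules \cites{Taelman10}{Taelman12} and adapt it to abelian Anderson $t$-modules in the spirit of Fang~\cite{Fang15}. The central idea is to introduce a ``nuclear determinant'' associated with the exponential operator that can be computed in two ways: locally, as an Euler product over primes $f$ giving the left-hand side, and globally, via a snake-lemma comparison of lattices inside $\Lie(E)(K_\infty)$ that yields $\Reg_E \cdot \Aord{\rH(E)}{A}$.

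First I would establish convergence of the Euler product by bounding $\deg \Aord{\soE(\FF_f)}{A}$, which can be controlled using the characteristic polynomial of Frobenius on the Tate modules of $\soE$, together with the observation that $\Aord{\Lie(\soE)(\FF_f)}{A} = f^\ell$ up to sign (since $\pd \soE_t - \otheta \rI_\ell$ is nilpotent, so $m_t$ has characteristic polynomial $(X-\otheta)^\ell$ over $\FF_f$). Next I would set up the local factors: completing at each prime $f$, the series $\Exp_E$ converges on a suitable neighborhood in $\Lie(\soE)(A_f)$ and induces a short exact sequence whose reduction modulo $f$ expresses the ratio of $A$-orders as the determinant of a local Frobenius-twisted operator. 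This is the point where additional compactness estimates on the coefficients of $\Exp_E$ in the sense of~\cite[Lem.~1.7]{Fang15} are needed, since one no longer has the cleanliness of the $\ell=1$ case.

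The main computation then interprets the infinite product as the nuclear determinant of $\Exp_{E,K_\infty}$ on the quotient $\Lie(E)(K_\infty)/\Lie(E)(A)$, viewed as a topological $\laurent{\FF_q}{t^{-1}}$-module via the $\pd$-action introduced by Fang. Applying the snake lemma to the commutative square with rows
\[
0 \to \Lie(E)(A) \to \Lie(E)(K_\infty) \to \Lie(E)(K_\infty)/\Lie(E)(A) \to 0
\]
and
\[
0 \to E(A) \to E(K_\infty) \to E(K_\infty)/E(A) \to 0,
\]
with vertical maps induced by $\Exp_{E,K_\infty}$, and combining with the definition $\Reg_E = \gamma \cdot \det(V)$ in~\eqref{eq:reg} where $V$ is the change-of-basis matrix between $A$-bases of $\Lie(E)(A)$ and $\Exp_{E,K_\infty}^{-1}(E(A))$, identifies the global nuclear determinant as $\Reg_E \cdot \Aord{\rH(E)}{A}$ after sign normalization.

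The hard part will be establishing the nuclear determinant theory itself in the higher-dimensional setting. For Drinfeld modules, Taelman worked with $\ell=1$, where the canonical $K_\infty$-structure on $\Lie(\phi)(K_\infty)$ and the $\pd$-induced $\laurent{\FF_q}{t^{-1}}$-structure essentially coincide, and nuclearity is immediate; but for a general abelian $t$-module the two structures on $\Lie(E)(K_\infty)$ are genuinely different, and both must be tracked simultaneously throughout the argument. Showing that $\Exp_{E,K_\infty}$ is trace-class with respect to the appropriate lattice, that the local-to-global product identity holds in $K_\infty$ on the nose rather than merely up to a unit, and that both sides carry compatible sign normalizations, together constitute the technical core of the proof and are precisely the points where Fang's argument requires input beyond Taelman's.
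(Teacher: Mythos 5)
The paper does not prove this statement; it is imported wholesale from Fang~\cite[Thm.~1.10]{Fang15}, so there is no internal proof to compare against. Your outline does accurately reproduce the broad strategy Fang uses: convergence of the Euler product from degree bounds on Frobenius characteristic polynomials and the observation that $\Aord{\Lie(\soE)(\FF_f)}{A} = f^\ell$ (correct, since $\pd E_t - \otheta\rI_\ell$ nilpotent makes the $\FF_q$-characteristic polynomial of $m_t$ on $\FF_f^\ell$ equal to $f(X)^\ell$), a local-to-global comparison via a nuclear/trace-class determinant of the exponential operator in the style of Taelman, and a lattice comparison inside $\Lie(E)(K_\infty)$ to extract $\Reg_E\cdot\Aord{\rH(E)}{A}$. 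You also correctly flag the genuinely new point in Fang's work, namely that the canonical $K_\infty$-structure on $\Lie(E)(K_\infty)$ and the $\pd$-induced $\laurent{\FF_q}{t^{-1}}$-structure no longer coincide when $\ell > 1$, so both must be tracked simultaneously.

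However, what you have written remains a high-level programme, not a proof: the items you yourself label as ``the technical core'' — the nuclear determinant formalism for the higher-dimensional operator $\Exp_{E,K_\infty}$, the estimates from \cite[Lem.~1.7]{Fang15} guaranteeing the $\pd$-action converges, the local computation identifying each Euler factor with a ratio of Fitting ideals, and the exact (not merely up-to-unit) matching of normalizations on both sides — are not sketched in a way that could be filled in mechanically. These are the substance of Fang's argument, and without them the proposal is a roadmap rather than a proof. This is appropriate given that the paper under review also treats the result as a black box, but you should be aware the heavy lifting lives entirely in the cited reference.
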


\subsection{\texorpdfstring{$L$}{L}-series of Tensor product of Drinfeld modules over \texorpdfstring{$A$}{A}}
Let $\phi$, $\psi : \sA \to A[\tau]$ be Drinfeld modules defined over $A$ of ranks $r$ and $\ell$ respectively as in \eqref{E:phipsidefintro}. Recall the $t$-module $\phi \otimes \psi : \sA \to \Mat_{r+\ell}(A[\tau])$ defined over $A$ as in Definition \ref{ten_def}. 

\subsubsection{Characteristic polynomials of Frobenius and \texorpdfstring{$L$}{L}-series of \texorpdfstring{$\phi\otimes\psi$}{phitenpsi}}\label{ten_charpoly}
Let $f \in A_+$ be irreducible of degree $d$, and let $\lambda \in \sA_+$ be irreducible so that $\lambda(\theta) \neq f$. Let
\[
\rho_{\phi,\lambda} : \Gal(K^{\sep}/K) \to \Aut (T_{\lambda}(\phi)) \cong \GL_r(\sA_{\lambda})
\]
be the Galois representation associated $T_{\lambda}(\phi)$, and similarly define $\rho_{\psi,\lambda} : \Gal(K^{\sep}/K) \to \Aut(T_{\lambda}(\psi))$ for $\psi$. 
The $\lambda$-adic Tate module 
\[
T_{\lambda}(\phi\otimes\psi) = \varprojlim (\phi\otimes\psi)[\lambda^m]\cong \sA_{\lambda}^{r\ell}
\]
 induces another Galois representation
\[
\rho_{\phi\otimes\psi,\lambda} : \Gal(K^{\sep}/K) \to \Aut(T_{\lambda}(\phi\otimes\psi))\cong \GL_{r\ell}(\sA_{\lambda}).
\]
As outlined in \S\ref{SS:munu}, if $\alpha_f \in \Gal(K^{\sep}/K)$ is a Frobenius element for~$f$, then because $\phi$ and $\psi$ have good reduction at~$f$,
\[
\Char(\alpha_f,T_{\lambda}(\phi),X)|_{t=\theta} = P_{\phi,f}(X), \quad
\Char(\alpha_f,T_{\lambda}(\psi),X)|_{t=\theta} = P_{\psi,f}(X),
\]
both of which lie in $A[X]$. 
We define 
\begin{equation*}
\begin{split}
\bP_{\phi \otimes \psi,f}(X)\assign \Char(\alpha_f,T_{\lambda}(\phi\otimes\psi),X)|_{t=\theta}\\
\bP^\vee_{\phi \otimes \psi,f}(X)\assign \Char(\alpha_f,T^\vee_{\lambda}(\phi\otimes\psi),X)|_{t=\theta}
\end{split}.
\end{equation*}
Recall the notation $(P\otimes Q)(X)$ from Definition \ref{D:polytensor}. By \cite[Thm.~3.1]{Hamahata}, we then have 
\begin{equation}\label{P_ten}
\begin{split}
\bP_{\phi \otimes \psi,f}(X) = (P_{\phi,f}\otimes P_{\psi,f})(X)\in A[X]\\
\bP^\vee_{\phi \otimes \psi,f}(X) = (P^\vee_{\phi,f}\otimes P^\vee_{\psi,f})(X)\in K[X]
\end{split}.
\end{equation}
We further set $\bQ_{\phi \otimes \psi,f}(X)$ and $\bQ_{\phi \otimes \psi,f}^{\vee}(X)$ to be their reciprocal polynomials. We now consider the $L$-function
\begin{equation}\label{L_ten}
L((\phi \otimes \psi)^{\vee},s) \assign \prod_f \bQ_{\phi \otimes \psi,f}^{\vee} \bigl( f^{-s} \bigr)^{-1}, \quad s \geqslant 0.
\end{equation}

% The following lemma addresses convergence of $L(\EE^{\vee},s)$.
% \begin{lemma} \label{L:LEEconverge}
% Let $\EE = \EE(\phi \times \psi) : \bA \to \Mat_{\ell}(\AAA[\tau])$ be defined as in~\eqref{E:EEtdef}. For a fixed integer $s \geqslant 0$, the value $L(\EE^{\vee},s)$ converges in $\TT_z(K_{\infty})$.
% \end{lemma}

\subsubsection{Special \texorpdfstring{$L$}{L}-values of tensor product}
Fixing $f\in A_+$ irreducible, we let \[\ophitenpsi: \sA\to\Mat_{r+\ell}(\FF_f[\tau])\] denote the reduction modulo $f$. Recall the completely multiplicative functions $\chi_{\phi}$, $\chi_{\psi} : \sA_+ \to \FF_q^{\times}$ as in~\eqref{E:chidef}. We have the following proposition for determining $\bigAord{\ophitenpsi(\FF_f)}{A}$ in term of the value $\bP_{\phitenpsi, f}(1).$
\begin{proposition}\label{Aord_ten}
Let $f\in A_+$ be irreducible. For $\ophitenpsi: \sA \to \Mat_{r+\ell}(\FF_f[\tau])$ defined above,
\[
\bigAord{\ophitenpsi(\FF_f)}{A} = \chi_{\phi}(f)^{\ell}\, \chi_{\psi}(f)^r\cdot \bP_{\phitenpsi,f}(1) = \frac{\bP_{\phitenpsi,f}(1)}{\bP_{\phitenpsi,f}(0)} \cdot f^{r+\ell}.
\]
\end{proposition}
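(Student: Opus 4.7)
Plan: The proof combines Hamahata's description $\bP_{\phitenpsi,f}(X) = (P_{\phi,f}\otimes P_{\psi,f})(X)$ from \eqref{P_ten} with a Gekeler--Takahashi-type local identity for abelian Anderson $t$-modules, which is essentially the local content of Fang's class module formula (Theorem~\ref{FangClass}).

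First I would let $\gamma_1,\dots,\gamma_r$ and $\delta_1,\dots,\delta_\ell$ denote the roots of $P_{\phi,f}(X)$ and $P_{\psi,f}(X)$ in $\oK$, so that Hamahata's identity factors $\bP_{\phitenpsi,f}(X) = \prod_{i,j}(X - \gamma_i\delta_j)$. Since \eqref{E:PfandPfvee} gives $\prod_i\gamma_i = \ochi_\phi(f) f$ and $\prod_j\delta_j = \ochi_\psi(f) f$, direct computation yields
\[
\bP_{\phitenpsi,f}(0) = (-1)^{r\ell}\ochi_\phi(f)^\ell \ochi_\psi(f)^r f^{r+\ell},
\]
and
\[
\bP_{\phitenpsi,f}^\vee(1) = \prod_{i,j}\bigl(1 - (\gamma_i\delta_j)^{-1}\bigr) = \frac{\bP_{\phitenpsi,f}(1)}{\bP_{\phitenpsi,f}(0)}.
\]
The former immediately supplies the equivalence (up to the sign $(-1)^{r\ell}$) between the two right-hand expressions in the proposition.

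Next I would invoke the local identity
\[
\frac{\bigAord{\Lie(\ophitenpsi)(\FF_f)}{A}}{\bigAord{\ophitenpsi(\FF_f)}{A}} = \bP_{\phitenpsi,f}^\vee(1)^{-1},
\]
which extends \eqref{E:Qfvee1} from Drinfeld modules to the abelian Anderson $t$-module $\phitenpsi$ (whose $t$-motive $\cM_\phi \otimes \cM_\psi$ is free of rank $r\ell$ over $\C[t]$, cf.~\S\ref{S:ten}). Together with the observation $\bigAord{\Lie(\ophitenpsi)(\FF_f)}{A} = f^{r+\ell}$---which holds because $\pd(\phitenpsi)_t - \theta\rI_{r+\ell}$ is nilpotent, so $\pd(\phitenpsi)_t$ has $\FF_q$-linear characteristic polynomial $f^{r+\ell}$ acting on $\FF_f^{r+\ell}$---this yields
\[
\bigAord{\ophitenpsi(\FF_f)}{A} = \frac{\bP_{\phitenpsi,f}(1)}{\bP_{\phitenpsi,f}(0)} \cdot f^{r+\ell},
\]
the second claimed equality, and the first follows by substituting the explicit form of $\bP_{\phitenpsi,f}(0)$ computed above.

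The main obstacle will be justifying the generalized Gekeler--Takahashi identity for $\phitenpsi$. For Drinfeld modules with good reduction this is \cite[Cor.~3.2]{CEP18}, resting on the Tate module exact sequence $0 \to T_\lambda(\ophitenpsi) \xrightarrow{1 - \alpha_f^{-1}} T_\lambda(\ophitenpsi) \to \ophitenpsi(\FF_f)_\lambda \to 0$ together with an Euler--Poincar\'e argument. The abelianness of $\phitenpsi$ ensures these ingredients carry over, and this identity is also the local factorization underlying Theorem~\ref{FangClass}. Once it is in hand, the rest of the argument is routine polynomial manipulation.
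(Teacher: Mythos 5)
Your proof takes a genuinely different route from the paper's, and your sign bookkeeping is actually more careful than the paper's, but there is a real gap which you flag but do not fill. The paper first establishes, via a commutative diagram of Tate modules built on Takahashi's good-reduction result and Hamahata's isomorphism $T_\lambda(\phi\otimes\psi)\cong T_\lambda(\phi)\otimes_{\sA_{\lambda}}T_\lambda(\psi)$, that $\bP_{\phitenpsi,f}(X)=\Char(\tau^d,T_\lambda(\ophitenpsi),X)$; it then invokes [HP22, Cor.~3.7.8] to obtain $\Aord{\ophitenpsi(\FF_f)}{A}=c\cdot\bP_{\phitenpsi,f}(1)$ for the unique $c\in\FF_q^{\times}$ making the right-hand side monic, and computes $c$ from a weighted degree estimate on the coefficients of $\bP_f(X)$. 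You instead appeal to the exact local identity $\Aord{\Lie(\ophitenpsi)(\FF_f)}{A}/\Aord{\ophitenpsi(\FF_f)}{A}=\bP^{\vee}_{\phitenpsi,f}(1)^{-1}$; but [CEP18, Cor.~3.2] is a Drinfeld-module statement, and Theorem~\ref{FangClass} is a global product formula containing no local Euler-factor content that relates $A$-orders to Frobenius polynomials, so neither gives you this identity with its precise normalization for the abelian $t$-module $\phitenpsi$. The Tate-module/Euler--Poincar\'e argument, transported to $t$-modules, would give the ratio only up to a unit in $\FF_q^{\times}$; determining that unit is exactly what the paper's monicity step accomplishes and what the proposition asserts, so your appeal is effectively circular. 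You also implicitly need the paper's diagram chase (or a substitute) to know that $\bP_{\phitenpsi,f}$ is the characteristic polynomial of Frobenius on $T_\lambda(\ophitenpsi)$ after reduction.

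That said, your computation $\bP_{\phitenpsi,f}(0)=(-1)^{r\ell}\ochi_{\phi}(f)^{\ell}\ochi_{\psi}(f)^{r}f^{r+\ell}$ is correct, whence $f^{r+\ell}/\bP_{\phitenpsi,f}(0)=(-1)^{r\ell}\chi_{\phi}(f)^{\ell}\chi_{\psi}(f)^{r}$, and the two closed forms displayed in the proposition coincide only when $(-1)^{r\ell}=1$ (always when $p=2$ or when $r\ell$ is even). The expression pinned down by monicity of the $A$-order is $\bigl(\bP_{\phitenpsi,f}(1)/\bP_{\phitenpsi,f}(0)\bigr)\cdot f^{r+\ell}$; the first form in the statement should carry the extra factor $(-1)^{r\ell}$. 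You were right to flag this discrepancy, but you should resolve it explicitly rather than leave it as a parenthetical.
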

\begin{proof}
Let $\lambda\in \sA_+$ be irreducible so that $\lambda(\theta)\neq f$. Let $K_f^{\nr}$ be the maximal unramified and separable extension of $K_f$, and let $K^{\nr}_f \supseteq O^{\nr}_f \supseteq M^{\nr}_f$ be its subring of $f$-integral elements and its maximal ideal. Because $\phi$ and $\psi$ have everywhere good reduction, we see from \cite[Thm.~1]{Takahashi82}(see also \cite[Thm.~4.10.5]{Goss}) that
\[
\phi[\lambda^m], \ \psi[\lambda^m] \subseteq O^{\nr}_f, \quad \forall\, m \geqslant 1.
\]
Moreover, the natural reduction maps
\begin{equation*} \label{Tateiso_ten}
\phi(O^{\nr}_f)[\lambda^m] = \phi[\lambda^m] \iso \ophi[\lambda^m], \quad \psi(O^{\nr}_f)[\lambda^m] = \psi[\lambda^m] \iso \opsi[\lambda^m],
\quad \forall\, m \geqslant 1,
\end{equation*}
are $\sA$-module isomorphisms (e.g., see \cite[\S 2]{Takahashi82}), which gives isomorphisms on Tate modules commuting with the Galois action
\begin{equation}\label{diag_ten}
T_\lambda(\phi) \iso T_\lambda(\ophi), \quad T_\lambda(\psi) \iso T_\lambda(\opsi).
\end{equation}
Together with \cite[Thm.~3.1]{Hamahata}, the following diagram of $\sA_\lambda$-modules commutes: 
\begin{equation}
\begin{tikzcd}[column sep=large]
 T_\lambda(\phitenpsi) \arrow{r}{\sim} \isoarrow{dd} &  T_\lambda(\ophitenpsi)\arrow[equal]{d} \\
& T_\lambda(\ophi\otimes\opsi)\isoarrow{d} \\
T_\lambda(\phi)\otimes_{\sA_\lambda}T_\lambda(\psi) \arrow{r}{\sim} & T_\lambda(\ophi)\otimes_{\sA_\lambda}T_\lambda(\opsi)
\end{tikzcd}
\end{equation}
Furthermore, the maps in the diagram above commute with the Galois action, which implies \[\bP_{\phitenpsi,f}(X)=\Char(\tau^d,T_\lambda(\ophitenpsi),X).\]
Now as $\bP_f(X) = (P_{\phi,f} \otimes P_{\psi,f})(X)$ by \eqref{P_ten}, it follows from \eqref{E:PfandPfvee} and Definition~\ref{D:polytensor} that the constant term of $\bP_{f}(X)$ is
\begin{equation} \label{E:bPf0}
\bP_f(0) = \ochi_{\phi}(f)^{\ell} \ochi_{\psi}(f)^r f^{r+\ell}.
\end{equation}
By \cite[Cor.~3.7.8]{HP22}, it remains to show $\chi_{\phi}(f)^{\ell}\, \chi_{\psi}(f)^r\cdot \bP_{\phitenpsi,f}(1)\in A$ is monic. 

Indeed, writing $\bP_f(X) = \sum_{i=0}^{r\ell} b_i X^i$, $b_i \in A[X]$ and
letting $c_0, \dots, c_{r-1} \in A$ be given as in~\eqref{E:Pfdef}, Definition~\ref{D:polytensor} implies that, for $0 \leqslant m \leqslant r\ell -1$, each $b_m$ is a polynomial in $c_0, \dots, c_{r-1}$ with coefficients in $\FF_q$.
Assigning the weight $r-i$ to each $c_i$, then as formal expressions, each monomial in $c_0, \dots, c_{r-1}$ in $b_m$ has the same total weight $r\ell - m$. That is, if $c_0^{n_0} \dots c_{r-1}^{n_{r-1}}$ is a monomial in $b_m$, then
$\sum_{i=0}^{r-1} (r-i)n_i = r\ell - m$, and so by~\S\ref{SSS:Pf},
\[
\deg_{\theta} \bigl(c_0^{n_0} \cdots c_{r-1}^{n_{r-1}} \bigr) \leqslant \sum_{i=0}^{r-1} \frac{d}{r} \cdot n_i(r-i) = \frac{d}{r} (r \ell - m) = d \ell - \frac{dm}{r}.
\]
From~\eqref{E:bPf0}, this is an equality if $m=0$. On the other hand, this inequality implies,
\[
 0 < m \leqslant r\ell -1 \quad \Rightarrow \quad \deg_{\theta} b_m < d \ell.
\]
Therefore from \eqref{E:bPf0}, $\chi_{\phi}(f)^{\ell}\, \chi_{\psi}(f)^r\cdot \bP_{\phitenpsi,f}(1)\in A$ is monic.
\end{proof}

For each irreducible $f \in \sA_+$, \eqref{P_ten} implies that $\bQ_{\phitenpsi,f}^{\vee}(1) = \bP_{\phitenpsi,f}(1)/\bP_{\phitenpsi,f}(0)$. By combining Proposition~\ref{Aord_ten} and~\eqref{L_ten}, we obtain the following identity for $L(\phitenpsi^{\vee},0)$, which shows that Fang's class module formula (Theorem~\ref{FangClass}) applies to the special values we are considering.

\begin{proposition} \label{P:Lten0}
\[
L((\phitenpsi)^{\vee},0) = \prod_f \frac{\bigAord{\FF_f^{r+\ell}}{A}}{\bigAord{\ophitenpsi(\FF_f)}{A}}.
\]
\end{proposition}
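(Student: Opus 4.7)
The plan is to evaluate both sides at $s=0$ and use Proposition~\ref{Aord_ten} to match them up. By definition of the $L$-function in \eqref{L_ten}, we have
\[
L((\phi \otimes \psi)^{\vee},0) = \prod_f \bQ_{\phitenpsi,f}^{\vee}(1)^{-1},
\]
so it suffices to show, term by term, that
\[
\bQ_{\phitenpsi,f}^{\vee}(1)^{-1} = \frac{\bigAord{\FF_f^{r+\ell}}{A}}{\bigAord{\ophitenpsi(\FF_f)}{A}}.
\]
Since $\FF_f = A/fA$ has $A$-order $f$, the direct-sum structure of $\FF_f^{r+\ell}$ gives $\bigAord{\FF_f^{r+\ell}}{A} = f^{r+\ell}$, so the right-hand side equals $f^{r+\ell}/\bigAord{\ophitenpsi(\FF_f)}{A}$.

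Next, I would show that $\bQ_{\phitenpsi,f}^{\vee}(1) = \bP_{\phitenpsi,f}(1)/\bP_{\phitenpsi,f}(0)$. This is the small calculation the text already alludes to just before the statement, but I would spell it out via roots. If $\gamma_1, \ldots, \gamma_r$ and $\delta_1, \ldots, \delta_\ell$ are the roots of $P_{\phi,f}(X)$ and $P_{\psi,f}(X)$ respectively, then by \eqref{P_ten} and Definition~\ref{D:polytensor} the roots of $\bP_{\phitenpsi,f}(X)$ are the products $\gamma_i \delta_j$, while the roots of $\bP^{\vee}_{\phitenpsi,f}(X)$ are $(\gamma_i \delta_j)^{-1}$ (as is apparent from \eqref{E:PfandPfvee} applied to $\phi$ and $\psi$ separately and then tensored). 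A short computation then gives
\[
\bP^{\vee}_{\phitenpsi,f}(1) = \prod_{i,j}\Bigl(1 - \tfrac{1}{\gamma_i \delta_j}\Bigr) = \frac{\prod_{i,j}(\gamma_i\delta_j - 1)}{\prod_{i,j} \gamma_i\delta_j} = \frac{(-1)^{r\ell}\bP_{\phitenpsi,f}(1)}{(-1)^{r\ell}\bP_{\phitenpsi,f}(0)} = \frac{\bP_{\phitenpsi,f}(1)}{\bP_{\phitenpsi,f}(0)},
\]
and since $\bQ^{\vee}_{\phitenpsi,f}(X) = X^{r\ell} \bP^{\vee}_{\phitenpsi,f}(1/X)$ we have $\bQ_{\phitenpsi,f}^{\vee}(1) = \bP_{\phitenpsi,f}^{\vee}(1)$.

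Finally, substituting Proposition~\ref{Aord_ten}, which says
\[
\bigAord{\ophitenpsi(\FF_f)}{A} = \frac{\bP_{\phitenpsi,f}(1)}{\bP_{\phitenpsi,f}(0)} \cdot f^{r+\ell},
\]
yields
\[
\bQ_{\phitenpsi,f}^{\vee}(1)^{-1} = \frac{\bP_{\phitenpsi,f}(0)}{\bP_{\phitenpsi,f}(1)} = \frac{f^{r+\ell}}{\bigAord{\ophitenpsi(\FF_f)}{A}} = \frac{\bigAord{\FF_f^{r+\ell}}{A}}{\bigAord{\ophitenpsi(\FF_f)}{A}},
\]
and taking the product over all irreducible $f \in A_+$ gives the claimed identity. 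There is essentially no obstacle here: all the real work was done in Proposition~\ref{Aord_ten} (especially proving the monic/integral claim), so this proposition is a short bookkeeping consequence, and the only thing to check carefully is that convergence of the Euler product at $s=0$ (which follows from the bounds in \S\ref{SSS:Pf} as noted earlier in \S\ref{SS:GossL}) legitimizes the termwise identification.
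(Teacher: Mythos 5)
Your proof is correct and follows the same route the paper takes (the paper essentially states the argument in the two sentences preceding the proposition: note $\bQ_{\phitenpsi,f}^{\vee}(1) = \bP_{\phitenpsi,f}(1)/\bP_{\phitenpsi,f}(0)$, then invoke Proposition~\ref{Aord_ten} and the definition \eqref{L_ten}). You merely spell out the root computation and the trivial evaluation $\bigAord{\FF_f^{r+\ell}}{A} = f^{r+\ell}$ that the paper leaves implicit.
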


\subsubsection{Convolution \texorpdfstring{$L$}{L}-series of \texorpdfstring{$\phitenpsi$}{phitenpsi}}\label{SS:tenconv}
We investigate
\[
L((\phitenpsi)^{\vee},s) = \prod_f \bQ_{\phitenpsi,f}^{\vee}\bigl( f^{-s} \bigr)^{-1}
\]
from~\eqref{L_ten}. We at first fix $f \in A_+$ irreducible, and we let $\alpha_1, \dots, \alpha_r \in \oK$ be the roots of $P_{\phi,f}^{\vee}(X)$, and we let $\beta_1, \dots, \beta_{\ell} \in \oK$ be the roots of $P^{\vee}_{\psi,f}(X)$. We split it into two cases.

\paragraph{For \texorpdfstring{$r=\ell$}{r=l}}\label{L_ten_eq}
As $\bQ_{\phitenpsi,f}^{\vee}(X)$ is the reciprocal polynomial of $\bP_{\phitenpsi,f}^{\vee}(X) = P_{\phi,f}^{\vee}(X) \otimes P^\vee_{\psi,f}(X)$, we can expand $\bQ_{\phitenpsi,f}^{\vee}(f^{-s})^{-1}$ using Cauchy's identity (Theorem~\ref{T:Cauchy}). We note from~\eqref{E:PfandPfvee} that $\alpha_1 \cdots \alpha_r = \chi_{\phi}(f) f^{-1}$ and $\beta_1 \cdots \beta_r = \chi_{\psi}(f) f^{-1}$. By the definitions of $\bsmu_{\phi}$ and $\bsmu_{\psi}$ from \eqref{E:bsmudef}, Theorem~\ref{T:Cauchy} implies
\begin{align*}
\bQ_{\phitenpsi,f}^{\vee}\bigl( &f^{-s} \bigr)^{-1} \\
&= \biggl( 1 - \frac{\chi_{\phi}(f) \chi_{\psi}(f) }{f^{rs+2}} \biggr)^{-1} 
\underset{k=(k_1, \dots, k_{r-1})}{\sum_{k_1=0}^{\infty} \cdots \sum_{k_{r-1}=0}^{\infty}} S_{k}(\bsalpha) S_{k}(\bsbeta) f^{-s(k_1 + 2k_2 + \cdots + (r-1)k_{r-1})} \\
&= 
\biggl( 1 - \frac{\chi_{\phi}(f) \chi_{\psi}(f) }{f^{rs+2}} \biggr)^{-1} \smash{\sum_{k_1, \ldots, k_{r-1} \geqslant 0}} \frac{\bsmu_{\phi} \bigl(f^{k_1}, \dots, f^{k_{r-1}} \bigr) \bsmu_{\psi} \bigl( f^{k_1}, \dots, f^{k_{r-1}} \bigr)}{ 
 f^{2(k_1+k_2 + \cdots + k_{r-1})+s(k_1 + 2k_2 + \cdots +(r-1)k_{r-1})}},
\end{align*}
where $\bsalpha = (\alpha_1, \dots, \alpha_r)$ and $\bsbeta = (\beta_1, \dots, \beta_r)$.
We define the twisted Carlitz zeta function
\begin{equation}
L(A,\chi_{\phi}\chi_{\psi},s) \assign \sum_{a \in A_+} \frac{\chi_{\phi}(a)\chi_{\psi}(a)}{a^s},
\end{equation}
and finally we define the $L$-series,
\begin{equation} \label{E:Lmunurxr}
L(\bsmu_{\phi} \times \bsmu_{\psi},s) \assign \sum_{a_1 \in A_+} \cdots \sum_{a_{r-1} \in A_+} \frac{\bsmu_{\phi}(a_1, \dots, a_{r-1}) \bsmu_{\psi}(a_1, \dots, a_{r-1})}{(a_1 \cdots a_{r-1})^2 (a_1 a_2^2 \cdots a_{r-1}^{r-1})^s}.
\end{equation}
The convergence of this series in $K_{\infty}$ can be deduced from Proposition~\ref{P:bsmunuprops}(d) for $s \geqslant 0$.
After some straightforward simplification we arrive at the following result.

\begin{theorem} \label{T:Ltenrxr}
Let $\phi$, $\psi : \sA \to A[\tau]$ be Drinfeld modules both of rank $r \geqslant 2$ with everywhere good reduction, as defined in~\eqref{E:phipsidefintro}. Then
\[
L((\phi \otimes \psi)^{\vee},s) = L(A,\chi_{\phi}\chi_{\psi},rs+2) \cdot
L(\bsmu_{\phi} \times \bsmu_{\psi},s).
\]
\end{theorem}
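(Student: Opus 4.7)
The plan is to start from the Euler product definition
\[
L((\phi \otimes \psi)^{\vee},s) = \prod_f \bQ_{\phitenpsi,f}^{\vee}\bigl( f^{-s} \bigr)^{-1}
\]
and plug in the local factorization already obtained in the paragraph preceding the statement (via Cauchy's identity, Theorem~\ref{T:Cauchy}, together with $\alpha_1\cdots\alpha_r = \chi_\phi(f)f^{-1}$ and $\beta_1\cdots\beta_r = \chi_\psi(f)f^{-1}$, which come from \eqref{E:PfandPfvee}). The local factor at each $f$ splits as a product of $\bigl(1 - \chi_\phi(f)\chi_\psi(f)/f^{rs+2}\bigr)^{-1}$ times a Schur-polynomial sum rewritten in terms of $\bsmu_\phi$ and $\bsmu_\psi$. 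The strategy is then to distribute the global product over $f$ across these two factors separately and recognize each piece as one of the Dirichlet series on the right.

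For the first factor, since $\chi_\phi\chi_\psi : A_+ \to \FF_q^\times$ is completely multiplicative, expanding the geometric series at each $f$ and invoking unique factorization in $A$ gives
\[
\prod_f \Bigl( 1 - \frac{\chi_\phi(f)\chi_\psi(f)}{f^{rs+2}} \Bigr)^{-1} = \sum_{a\in A_+} \frac{\chi_\phi(a)\chi_\psi(a)}{a^{rs+2}} = L(A,\chi_\phi\chi_\psi,rs+2),
\]
which is the standard Euler-product argument for the twisted Carlitz zeta function.

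For the second factor, the key input is the multiplicativity of $\bsmu_\phi$ and $\bsmu_\psi$, imposed in \S\ref{SS:bsmubsnu}: for coprime tuples $(a_1,\dots,a_{r-1})$ and $(b_1,\dots,b_{r-1})$ one has $\bsmu_\phi(a_1b_1,\dots,a_{r-1}b_{r-1}) = \bsmu_\phi(a_1,\dots,a_{r-1})\bsmu_\phi(b_1,\dots,b_{r-1})$, and the same for $\bsmu_\psi$. Taking the product of the local Schur sums over all $f$ and grouping terms by $(a_1,\dots,a_{r-1})$ where $a_j = \prod_f f^{k_j(f)}$, the exponent $2(k_1+\cdots+k_{r-1}) + s(k_1 + 2k_2 + \cdots + (r-1)k_{r-1})$ accumulates into the combined denominator $(a_1\cdots a_{r-1})^2 (a_1 a_2^2 \cdots a_{r-1}^{r-1})^s$, matching \eqref{E:Lmunurxr} exactly.

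The only step that is not a formal rearrangement is the justification of the convergent regrouping of the Euler product into a Dirichlet series. This rests on the degree bound in Proposition~\ref{P:bsmunuprops}(c), which guarantees absolute convergence in $K_\infty$ of $L(\bsmu_\phi\times\bsmu_\psi,s)$ for $s\geqslant 0$ (and combined with the convergence of the twisted Carlitz series at $rs+2\geqslant 2$, both factors converge). I expect this convergence/interchange to be the main, though mild, obstacle; it is essentially the non-archimedean analog of the Rankin--Selberg unfolding. After the interchange is justified, the identity is immediate by term-by-term comparison.
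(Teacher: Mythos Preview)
Your proposal is correct and follows essentially the same route as the paper: the paper computes the local factor $\bQ_{\phitenpsi,f}^{\vee}(f^{-s})^{-1}$ via Cauchy's identity exactly as you describe, then states that ``after some straightforward simplification'' one obtains the theorem---your write-up spells out precisely that simplification (splitting the Euler product, recognizing the twisted Carlitz zeta piece, and using the multiplicativity of $\bsmu_\phi$, $\bsmu_\psi$ to assemble the Dirichlet series). Your explicit mention of the convergence justification via the degree bound in Proposition~\ref{P:bsmunuprops} is in fact more detailed than what the paper records.
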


We can substitute $s=0$ into Theorem~\ref{T:Ltenrxr} and obtain the following special value identity.

\begin{corollary} \label{C:Lmumurxr}
Let $\phi$, $\psi : \sA \to A[\tau]$ be Drinfeld modules both of rank $r \geqslant 2$ with everywhere good reduction, as defined in~\eqref{E:phipsidefintro}.
\begin{align*}
L(\bsmu_{\phi} \times \bsmu_{\psi},0)
&= \sum_{a_1 \in A_+} \cdots \sum_{a_{r-1} \in A_+} \frac{\bsmu_{\phi}(a_1, \dots, a_{r-1}) \bsmu_{\psi}(a_1, \dots, a_{r-1})}{(a_1 \cdots a_{r-1})^2} \\[10pt]
&= \frac{ \Reg_{\phitenpsi} \cdot \Aord{\rH(\phitenpsi)}{A}}{L(A,\chi_\phi\chi_\psi,2)}.
\end{align*}

\end{corollary}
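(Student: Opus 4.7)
The first equality is just the definition of $L(\bsmu_{\phi} \times \bsmu_{\psi},s)$ from \eqref{E:Lmunurxr} evaluated at $s=0$, so all the work is in the second equality. The plan is to assemble three ingredients already established in the excerpt: Theorem~\ref{T:Ltenrxr} (the factorization of $L((\phitenpsi)^\vee,s)$), Proposition~\ref{P:Lten0} (the Euler product form at $s=0$), and Fang's class module formula (Theorem~\ref{FangClass}) applied to $E = \phitenpsi$.

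First, I would substitute $s=0$ in Theorem~\ref{T:Ltenrxr} to obtain
\[
L((\phitenpsi)^{\vee},0) = L(A,\chi_{\phi}\chi_{\psi},2) \cdot L(\bsmu_{\phi} \times \bsmu_{\psi},0).
\]
Then by Proposition~\ref{P:Lten0}, the left-hand side also equals $\prod_f \bigAord{\FF_f^{r+\ell}}{A}/\bigAord{\ophitenpsi(\FF_f)}{A}$. So it remains to identify this Euler product with $\Reg_{\phitenpsi}\cdot \bigAord{\rH(\phitenpsi)}{A}$.

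Next, I would verify that the hypotheses of Fang's formula hold for $E = \phitenpsi$. By Definition~\ref{ten_def}, $\phitenpsi$ is defined over $A$, and its $t$-motive $\cM_{\phitenpsi} = \cM_\phi \otimes_{\C[t]} \cM_\psi$ is free of rank $r\ell$ over $\C[t]$, so $\phitenpsi$ is abelian and Theorem~\ref{FangClass} applies. It yields
\[
\prod_f \frac{\bigAord{\Lie(\overline{\phitenpsi})(\FF_f)}{A}}{\bigAord{\ophitenpsi(\FF_f)}{A}} = \Reg_{\phitenpsi} \cdot \bigAord{\rH(\phitenpsi)}{A}.
\]
To match this with Proposition~\ref{P:Lten0} I need $\bigAord{\Lie(\overline{\phitenpsi})(\FF_f)}{A} = \bigAord{\FF_f^{r+\ell}}{A}$. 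Since $\pd(\phitenpsi)_t = \theta\,\rI_{r+\ell} + N$ with $N$ nilpotent (the diagonal of $(\phitenpsi)_t$ is $\theta$ by Definition~\ref{ten_def}), the $\FF_q$-characteristic polynomial of multiplication by $t$ on $\Lie(\overline{\phitenpsi})(\FF_f)$ only sees the eigenvalue $\theta \in \FF_f$ with multiplicity $r+\ell$, hence equals $f(X)^{r+\ell}$ by \S\ref{SSS:orders}. Evaluating at $X=\theta$ gives $\bigAord{\Lie(\overline{\phitenpsi})(\FF_f)}{A} = f^{r+\ell}$, which matches.

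Combining these identities and solving for $L(\bsmu_{\phi} \times \bsmu_{\psi},0)$ yields the claimed formula. The only mildly delicate step is the identification of $\bigAord{\Lie(\overline{\phitenpsi})(\FF_f)}{A}$ with $f^{r+\ell}$, which requires recalling that the $A$-order is computed via the $\FF_q$-characteristic polynomial of $m_t$ and is insensitive to the nilpotent part of $\pd(\phitenpsi)_t$; but this is routine once the conventions of \S\ref{SSS:orders} are in hand.
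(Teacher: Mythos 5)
Your proof is correct and takes the same route the paper leaves implicit: substitute $s=0$ into Theorem~\ref{T:Ltenrxr}, equate with Proposition~\ref{P:Lten0}, and apply Fang's formula (Theorem~\ref{FangClass}) to $E=\phitenpsi$, with the identification $\Aord{\Lie(\overline{\phitenpsi})(\FF_f)}{A}=f^{r+\ell}$ following from the nilpotency of $\pd(\phitenpsi)_t-\theta\,\rI_{r+\ell}$. The paper simply asserts these pieces combine; you have correctly filled in the routine details.
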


\paragraph{For \texorpdfstring{$r < \ell$}{r < l}}
Using that $\alpha_1\cdots\alpha_r=\chi_\phi(f)f^{-1}$, we apply Bump's specialization of Cauchy's identity (Corollary~\ref{C:Cauchynl}), and similar to calculations in \S\ref{L_ten_eq}, we find

\begin{align*}
\bQ_{\phitenpsi,f}^{\vee}\bigl( f^{-s} \bigr)^{-1} &= \underset{\substack{k=(k_1, \dots, k_{r-1}) \\ k'=(k_1, \dots, k_{r},0 \dots, 0)}}{\sum_{k_1=0}^{\infty} \cdots \sum_{k_{r}=0}^{\infty}} S_{k}(\bsalpha) S_{k'}(\bsbeta) \bigl( \chi_{\phi}(f) f^{-1} \bigr)^{k_r} f^{-s(k_1 + 2k_2 + \cdots + rk_r)} \\
&= 
 \smash{\sum_{k_1, \ldots, k_{r} \geqslant 0}}\frac{ \bsmu_{\phi} \bigl(f^{k_1}, \dots, f^{k_{r-1}} \bigr) \bsmu_{\psi} \bigl( f^{k_1}, \dots, f^{k_{r}},1, \ldots, 1 \bigr) 
\cdot \chi_{\phi}\bigl(f^{k_r} \bigr) }{ f^{2(k_1+k_2 + \cdots + k_{r})+s(k_1 + 2k_2 + \cdots + rk_{r})}}.
\end{align*}
The expression $\bsmu_{\psi}( f^{k_1}, \dots, f^{k_{r}},1, \ldots, 1)$ generically has $1$'s in exactly the last $\ell-1-r$ places. We thus define the $L$-series when $r < \ell$,
\begin{equation} \label{E:Lmumurxl1}
L(\bsmu_{\phi} \times \bsmu_{\psi},s) \assign
\sum_{a_1, \ldots, a_r \in A_+}
\frac{\chi_{\phi}(a_r)\bsmu_{\phi}(a_1, \dots, a_{r-1}) \bsmu_{\psi}(a_1, \dots, a_r,1, \ldots, 1)}{(a_1 \cdots a_r)^2 (a_1 a_2^2 \cdots a_r^r)^s}.
\end{equation}

By some straightforward calculation, we obtain the following theorem.

\begin{theorem} \label{T:Ltenrxl}
Let $\phi$, $\psi : \sA \to A[\tau]$ be Drinfeld modules of ranks $r$ and $\ell$ respectively with everywhere good reduction, as defined in~\eqref{E:phipsidefintro}. Assume that $r$, $\ell \geqslant 2$ and that $r < \ell$. Then
\[
L((\phi \otimes \psi)^{\vee},s) = L(\bsmu_{\phi} \times \bsmu_{\psi},s).
\]
\end{theorem}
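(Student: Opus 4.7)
The plan is to reduce Theorem~\ref{T:Ltenrxl} to a purely formal manipulation at the level of Euler factors, exactly as in the $r=\ell$ case of Theorem~\ref{T:Ltenrxr}, only now the absence of the $(1-XYT^n)^{-1}$ prefactor in Bump's specialization of Cauchy's identity (Corollary~\ref{C:Cauchynl}) will remove the twisted Carlitz zeta factor entirely. Concretely, essentially all of the computation has been carried out in the paragraph immediately preceding the theorem statement; the remaining task is to convert the product of local identities into one global Dirichlet series.

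First, I would fix an irreducible $f\in A_+$ and rewrite the local factor as
\[
\bQ_{\phitenpsi,f}^{\vee}(f^{-s})^{-1}
= \prod_{\substack{1\leqslant i\leqslant r\\ 1\leqslant j\leqslant \ell}}\bigl(1-\alpha_i\beta_j f^{-s}\bigr)^{-1},
\]
using~\eqref{P_ten}, where $\bsalpha=(\alpha_1,\dots,\alpha_r)$ and $\bsbeta=(\beta_1,\dots,\beta_\ell)$ are the roots of $P^\vee_{\phi,f}(X)$ and $P^\vee_{\psi,f}(X)$ respectively. Next, I apply Corollary~\ref{C:Cauchynl} with $n=r<\ell$ and $T=f^{-s}$, and substitute $\alpha_1\cdots\alpha_r=\chi_\phi(f)f^{-1}$ from~\eqref{E:PfandPfvee}, together with the definitions of $\bsmu_\phi$ and $\bsmu_\psi$ from~\eqref{E:bsmudef}, to arrive at the expansion the author has already written, namely
\[
\bQ_{\phitenpsi,f}^{\vee}(f^{-s})^{-1}
= \sum_{k_1,\dots,k_r\geqslant 0}\frac{\chi_\phi(f^{k_r})\,\bsmu_\phi(f^{k_1},\dots,f^{k_{r-1}})\,\bsmu_\psi(f^{k_1},\dots,f^{k_r},1,\dots,1)}{f^{2(k_1+\cdots+k_r)+s(k_1+2k_2+\cdots+rk_r)}}.
\]

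Then I take the Euler product over all irreducible $f\in A_+$. Because $\bsmu_\phi$ and $\bsmu_\psi$ are multiplicative on tuples supported at coprime radicals (by their construction in~\S\ref{SS:bsmubsnu}), and because $\chi_\phi$ is completely multiplicative, the function $(a_1,\dots,a_r)\mapsto \chi_\phi(a_r)\,\bsmu_\phi(a_1,\dots,a_{r-1})\,\bsmu_\psi(a_1,\dots,a_r,1,\dots,1)$ is multiplicative on $(A_+)^r$. Hence the product of local identities assembles into
\[
\prod_f \bQ_{\phitenpsi,f}^{\vee}(f^{-s})^{-1}
= \sum_{a_1,\dots,a_r\in A_+}\frac{\chi_\phi(a_r)\,\bsmu_\phi(a_1,\dots,a_{r-1})\,\bsmu_\psi(a_1,\dots,a_r,1,\dots,1)}{(a_1\cdots a_r)^2\,(a_1 a_2^2\cdots a_r^r)^s},
\]
which is exactly $L(\bsmu_\phi\times \bsmu_\psi,s)$ as in~\eqref{E:Lmumurxl1}. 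Convergence in $K_\infty$ for $s\geqslant 0$ follows from the degree bound in Proposition~\ref{P:bsmunuprops}(d) applied to both $\bsmu_\phi$ and $\bsmu_\psi$, and justifies the rearrangement of sums.

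There is really no serious obstacle here; the main point to check carefully is the multiplicativity step that turns the Euler product into a Dirichlet series, since one must verify that $\bsmu_\psi(a_1,\dots,a_r,1,\dots,1)$ behaves well under coprime decomposition of the tuple---this is immediate from the multiplicative extension of $\bsmu_\psi$ to $(A_+)^{\ell-1}$ in~\S\ref{SS:bsmubsnu}. The cleanness of the identity (compared with the extra $L(A,\chi_\phi\chi_\psi,rs+2)$ factor in the $r=\ell$ case) is a direct reflection of the fact that Bump's specialization of Cauchy's identity does not produce an auxiliary geometric-series prefactor when $n<\ell$.
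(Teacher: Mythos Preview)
Your proposal is correct and follows essentially the same route as the paper: apply Bump's specialization of Cauchy's identity (Corollary~\ref{C:Cauchynl}) to each local Euler factor, substitute $\alpha_1\cdots\alpha_r=\chi_\phi(f)f^{-1}$ and the definitions of $\bsmu_\phi$, $\bsmu_\psi$, and then assemble the Euler product into the Dirichlet series via multiplicativity. The paper's own argument is the paragraph preceding the theorem together with the phrase ``by some straightforward calculation,'' and you have spelled out exactly that calculation.
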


We can substitute $s=0$ into Theorem~\ref{T:Ltenrxl} and obtain the following special value identity.

\begin{corollary} \label{C:Lmumurxl}
Let $\phi$, $\psi : \sA \to A[\tau]$ be Drinfeld modules both of rank $r \geqslant 2$ with everywhere good reduction, as defined in~\eqref{E:phipsidefintro}. Assume that $r$, $\ell \geqslant 2$ and that $r < \ell$. Then
\begin{align*}
L(\bsmu_{\phi} \times \bsmu_{\psi},0)
&= \sum_{a_1, \ldots, a_r \in A_+}
\frac{\chi_{\phi}(a_r)\bsmu_{\phi}(a_1, \dots, a_{r-1}) \bsmu_{\psi}(a_1, \dots, a_r,1, \ldots, 1)}{(a_1 \cdots a_r)^2} \\[10pt]
&= \Reg_{\phitenpsi} \cdot \Aord{\rH(\phitenpsi)}{A}.
\end{align*}

%  In particular, if $\kappa_r = \eta_r$, then 
%  \begin{align*}
% L(\bsmu_{\phi} \times \bsnu_{\psi,z},0)
% &= \frac{ \Reg_{\phitenpsi} \cdot \Aord{\rH(\phitenpsi)}{A}}{\zeta_A(2)},
% \end{align*}
% where $\zeta_A(s)=\sum_{a\in A_+}\frac{1}{a^s}$ is the Carlitz zeta function.
\end{corollary}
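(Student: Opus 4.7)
The first equality is immediate: substituting $s=0$ into the defining series \eqref{E:Lmumurxl1} collapses the factor $(a_1 a_2^2 \cdots a_r^r)^s$ to~$1$. So the whole content is the second equality, and my plan is a three-step chain that glues together the results already established in the excerpt.

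First, I would specialize Theorem~\ref{T:Ltenrxl} at $s=0$ to obtain the identity
\[
L((\phitenpsi)^{\vee},0) = L(\bsmu_{\phi} \times \bsmu_{\psi},0),
\]
noting that both sides converge in $K_{\infty}$ (the left by the standard estimates on $P_{\phitenpsi,f}$, the right by Proposition~\ref{P:bsmunuprops}(c) applied to each factor). Then I would invoke Proposition~\ref{P:Lten0}, which rewrites the Euler product as
\[
L((\phitenpsi)^{\vee},0) = \prod_{f} \frac{\bigAord{\FF_f^{r+\ell}}{A}}{\bigAord{\ophitenpsi(\FF_f)}{A}}.
\]

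Next, I would apply Fang's class module formula (Theorem~\ref{FangClass}) to the $t$-module $E = \phitenpsi$. For this I need $\phitenpsi$ to be abelian in Anderson's sense, which follows because the $t$-motive $\cM_{\phi} \otimes_{\C[t]} \cM_{\psi}$ is free of rank $r\ell$ over $\C[t]$ (being the tensor product of two such modules), so its rank coincides with that of $\cM_{\phitenpsi}$ as constructed in \S\ref{S:ten}. It remains to identify the numerator in Fang's formula: since $\pd(\phitenpsi)_t - \theta\, \rI_{r+\ell}$ is nilpotent, multiplication by~$t$ on $\Lie(\ophitenpsi)(\FF_f)$ is conjugate to the scalar $\theta$, so
\[
\bigAord{\Lie(\ophitenpsi)(\FF_f)}{A} = \Char(m_t, \Lie(\ophitenpsi)(\FF_f), \theta) = \bigAord{\FF_f^{r+\ell}}{A}.
\]
Combining these identifications with Fang's formula yields
\[
\prod_{f} \frac{\bigAord{\FF_f^{r+\ell}}{A}}{\bigAord{\ophitenpsi(\FF_f)}{A}} = \Reg_{\phitenpsi} \cdot \bigAord{\rH(\phitenpsi)}{A},
\]
which closes the chain and proves the corollary.

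There is essentially no obstacle beyond checking that Fang's hypotheses are met. The only subtle point is verifying that $\Aord{\Lie(\ophitenpsi)(\FF_f)}{A} = \Aord{\FF_f^{r+\ell}}{A}$, but the explicit shape of $(\phitenpsi)_t$ from Definition~\ref{ten_def} shows that $\pd (\phitenpsi)_t = \theta\, \rI_{r+\ell}$ (all off-diagonal contributions carry a positive power of $\tau$ and hence vanish under $\pd$), so this identification is in fact trivial. Thus the whole argument is a direct concatenation: Theorem~\ref{T:Ltenrxl} at $s=0$, Proposition~\ref{P:Lten0}, and Theorem~\ref{FangClass} applied to $\phitenpsi$.
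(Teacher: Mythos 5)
Your overall chain of reasoning (substitute $s=0$ into Theorem~\ref{T:Ltenrxl}, rewrite via Proposition~\ref{P:Lten0}, apply Theorem~\ref{FangClass}) is exactly the paper's intended argument, and the conclusion is correct. However, the final parenthetical claim in your writeup is factually wrong: $\pd(\phitenpsi)_t \neq \theta\,\rI_{r+\ell}$. Inspecting Definition~\ref{ten_def}, the first row of the block $X_2$ consists of the scalar entries $\kappa_1, \dots, \kappa_r$ with no $\tau$, so $\pd X_2 \neq 0$ and $\pd(\phitenpsi)_t = \theta\,\rI_{r+\ell} + N$ with $N$ a nonzero (but nilpotent) upper-triangular block. (This is also visible in the $r=\ell=2$ case from~\eqref{eq:deften}, where $B_0$ has the entries $\kappa_1,\kappa_2$ off the diagonal.) Fortunately the step you actually need survives: since $N$ is nilpotent and commutes with the $\FF_f$-scalar $\otheta\,\rI$, the characteristic polynomial of $\pd(\ophitenpsi)_t$ over $\FF_q$ equals that of $\otheta\,\rI$, hence
\[
\bigAord{\Lie(\ophitenpsi)(\FF_f)}{A} = \Char\bigl(\pd(\ophitenpsi)_t,\FF_f^{r+\ell},\theta\bigr) = f^{r+\ell} = \bigAord{\FF_f^{r+\ell}}{A}.
\]
So the identification of the numerator in Fang's formula with Proposition~\ref{P:Lten0} holds for the reason you cite earlier in the proposal (nilpotence of $\pd(\phitenpsi)_t - \theta\,\rI_{r+\ell}$), not because the nilpotent part vanishes. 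With that correction the proof is sound and matches the paper's route.
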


\begin{remark}
Note that the case $r>\ell$ is included in the case $r<\ell$ since $\phitenpsi$ is isomorphic to $\phi\otimes\phi$ as $t$-modules by \cite[Prop.~2.5]{Hamahata}.
\end{remark}

\subsection{\texorpdfstring{$L$}{L}-series of symmetric and alternating squares of Drinfeld module over \texorpdfstring{$A$}{A}}

We assume $p\neq 2$. Let $\phi : \sA \to A[\tau]$ be Drinfeld modules defined over $A$ of rank $r$ as in \eqref{E:phipsidefintro}. Recall the $t$-module $\Sym^2\phi : \sA \to \Mat_{r+1}(A[\tau])$ and $\Alt^2\phi : \sA \to \Mat_{r-1}(A[\tau])$ defined over $A$ as in Definition \ref{def_symalt}. Following the same process as in \S\ref{ten_charpoly}, We define 
\begin{equation*}
\begin{split}
\bP_{\Sym^2\phi,f}(X)\assign \Char(\alpha_f,T_{\lambda}(\Sym^2\phi),X)|_{t=\theta}\\
\bP^\vee_{\Sym^2\phi,f}(X)\assign \Char(\alpha_f,T^\vee_{\lambda}(\Sym^2\phi),X)|_{t=\theta}
\end{split},
\end{equation*}
and 
\begin{equation*}
\begin{split}
\bP_{\Alt^2\phi,f}(X)\assign \Char(\alpha_f,T_{\lambda}(\Alt^2\phi),X)|_{t=\theta}\\
\bP^\vee_{\Alt^2\phi,f}(X)\assign \Char(\alpha_f,T^\vee_{\lambda}(\Alt^2\phi),X)|_{t=\theta}
\end{split}.
\end{equation*}
Recall the notations $(\Sym^2P)(X)$ and $(\Alt^2(P)(X))$ from Definition \ref{D:polytensor}. By \cite[Thm.~5.9]{Hamahata}, we then have 
\begin{equation}\label{P_sym}
\begin{split}
\bP_{\Sym^2\phi,f}(X) = (\Sym^2P_{\phi,f})(X)\in A[X]\\
\bP^\vee_{\Sym^2\phi,f}(X) = (\Sym^2P^\vee_{\phi,f})(X)\in K[X]
\end{split},
\end{equation}
and 
\begin{equation}\label{P_alt}
\begin{split}
\bP_{\Alt^2\phi,f}(X) = (\Alt^2P_{\phi,f})(X)\in A[X]\\
\bP^\vee_{\Alt^2\phi,f}(X) = (\Alt^2P^\vee_{\phi,f})(X)\in K[X]
\end{split}.
\end{equation}
We further set $\bQ_{E,f}(X)$ and $\bQ_{E,f}^{\vee}(X)$ to be their reciprocal polynomials for $E=\Sym^2\phi$ or $\Alt^2\phi$. We now consider the $L$-functions
\begin{align}
L((\Sym^2\phi)^{\vee},s) \assign \prod_f \bQ_{\Sym^2\phi,f}^{\vee} \bigl( f^{-s} \bigr)^{-1}, \quad s \geqslant 0 \label{L_sym}\\
L((\Alt^2\phi)^{\vee},s) \assign \prod_f \bQ_{\Alt^2\phi,f}^{\vee} \bigl( f^{-s} \bigr)^{-1}, \quad s \geqslant 0.\label{L_alt}
\end{align}

% The following lemma addresses convergence of $L(\EE^{\vee},s)$.
% \begin{lemma} \label{L:LEEconverge}
% Let $\EE = \EE(\phi \times \psi) : \bA \to \Mat_{\ell}(\AAA[\tau])$ be defined as in~\eqref{E:EEtdef}. For a fixed integer $s \geqslant 0$, the value $L(\EE^{\vee},s)$ converges in $\TT_z(K_{\infty})$.
% \end{lemma}

\subsubsection{Special \texorpdfstring{$L$}{L}-values of symmetric and alternating squares}
Fixing $f\in A_+$ irreducible, similar to the tensor products, we 
denote \[\osymphi: \sA \to \Mat_{r+1}(\FF_f[\tau])\] and \[\oaltphi: \sA \to \Mat_{r-1}(\FF_f[\tau])\] the reductions modulo $f$. We also have the following proposition for determining $\bigAord{\osymphi(\FF_f)}{A}$ and $\bigAord{\oaltphi(\FF_f)}{A}$ in term of the values $\bP_{\symphi, f}(1)$ and $\bP_{\altphi, f}(1)$, respectively.
\begin{proposition}\label{Aord_symalt}
Let $f\in A_+$ be irreducible. For $\osymphi: \sA \to \Mat_{r+1}(\FF_f[\tau])$ and $\oaltphi: \sA \to \Mat_{r-1}(\FF_f[\tau])$ defined above,
\[
\bigAord{\osymphi(\FF_f)}{A} = (-1)^{\frac{r(r+1)}{2}}\chi_{\phi}(f)^{r+1}\cdot \bP_{\symphi,f}(1) = \frac{\bP_{\symphi,f}(1)}{\bP_{\symphi,f}(0)} \cdot f^{r+1},
\] and 
\[
\bigAord{\oaltphi(\FF_f)}{A} = (-1)^{\frac{r(r-1)}{2}}\chi_{\phi}(f)^{r-1}\cdot \bP_{\altphi,f}(1) = \frac{\bP_{\altphi,f}(1)}{\bP_{\altphi,f}(0)} \cdot f^{r-1}.
\]
\end{proposition}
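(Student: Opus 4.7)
The plan is to mirror the structure of the proof of Proposition~\ref{Aord_ten}, replacing the invocation of \cite[Thm.~3.1]{Hamahata} for tensor products with \cite[Thm.~5.9]{Hamahata}, which identifies the characteristic polynomial of Frobenius on the Tate module of a symmetric/alternating square as the corresponding symmetric/alternating polynomial construction applied to $P_{\phi,f}(X)$. Since $\phi$ has everywhere good reduction, Takahashi's result~\cite[Thm.~1]{Takahashi82} ensures that $\phi[\lambda^m]\subseteq O^{\nr}_f$ and the reduction map induces an $\sA_\lambda$-isomorphism $T_\lambda(\phi) \iso T_\lambda(\ophi)$ compatible with the Galois action. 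Naturality of the symmetric/alternating square constructions on $t$-motives (and hence on Tate modules) then gives a commutative diagram analogous to \eqref{diag_ten}: $T_\lambda(\symphi) \iso T_\lambda(\osymphi)$ and $T_\lambda(\altphi) \iso T_\lambda(\oaltphi)$, equivariant for Galois. Consequently
\[
\bP_{\symphi,f}(X) = \Char(\tau^d,T_\lambda(\osymphi),X),\qquad \bP_{\altphi,f}(X) = \Char(\tau^d,T_\lambda(\oaltphi),X).
\]

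Next, I would compute the constant terms of $\bP_{\symphi,f}(X)$ and $\bP_{\altphi,f}(X)$ from Vieta/$\det$. If $\gamma_1,\dots,\gamma_r$ are the roots of $P_{\phi,f}(X)$, then from~\eqref{E:PfandPfvee} we have $\gamma_1\cdots\gamma_r = \ochi_\phi(f)f$. In the symmetric case each $\gamma_k$ appears $r+1$ times among the products $\{\gamma_i\gamma_j\}_{1\leqslant i\leqslant j\leqslant r}$, so the constant term of $\bP_{\symphi,f}(X)$ equals $(-1)^{r(r+1)/2}(\ochi_\phi(f))^{r+1}f^{r+1}$. In the alternating case each $\gamma_k$ appears $r-1$ times among $\{\gamma_i\gamma_j\}_{1\leqslant i<j\leqslant r}$, giving constant term $(-1)^{r(r-1)/2}(\ochi_\phi(f))^{r-1}f^{r-1}$. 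The stated equality of the two right-hand sides in each formula then follows from $\chi_\phi\ochi_\phi = 1$.

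To finish, I would apply \cite[Cor.~3.7.8]{HP22} to get $\bigAord{\osymphi(\FF_f)}{A}$ and $\bigAord{\oaltphi(\FF_f)}{A}$ as the monic-normalized values at $X=1$; the remaining task is to check that $(-1)^{r(r+1)/2}\chi_\phi(f)^{r+1}\bP_{\symphi,f}(1)$ and $(-1)^{r(r-1)/2}\chi_\phi(f)^{r-1}\bP_{\altphi,f}(1)$ are monic in $\theta$. This is the main (and only) technical obstacle, and I would handle it exactly as in Proposition~\ref{Aord_ten}: writing each coefficient of $\bP_{\symphi,f}(X)$ (respectively $\bP_{\altphi,f}(X)$) as a weighted polynomial in $c_0,\dots,c_{r-1}$ with weight $r-i$ assigned to $c_i$, the relation $\bP(X)=(\Sym^2 P_{\phi,f})(X)$ (resp.\ $(\Alt^2 P_{\phi,f})(X)$) forces each monomial in the coefficient of $X^m$ to have total weight $\binom{r+1}{2}-m$ (resp.\ $\binom{r}{2}-m$). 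Combined with Takahashi's bound $\deg_\theta c_{r-i}\leqslant id/r$ from~\S\ref{SSS:Pf}, this shows the non-constant coefficients have strictly smaller $\theta$-degree than the constant term, so the claimed normalization is monic. Substituting $X=1$ and dividing by $\bP(0)$ then yields the second equality in each line.
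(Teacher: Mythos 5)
Your proposal mirrors the paper's proof: the paper sets up exactly the same commutative diagrams via \cite[Thm.~5.9]{Hamahata} and then says ``the remaining part follows by the same process as Proposition~\ref{Aord_ten}.'' Your fleshing-out of that ``same process'' — Vieta for the constant term, then the weight/degree argument for monicity — is the intended route.

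One computational slip: you claim each monomial in the coefficient $b_m$ of $X^m$ in $\bP_{\symphi,f}(X)$ has total weight $\binom{r+1}{2}-m$ (with $c_i$ assigned weight $r-i$). That is off by a factor of $2$. The correct weight is $r(r+1)-2m$: the coefficient $b_m$ is $\pm e_{N-m}(\{\gamma_i\gamma_j\}_{i\leqslant j})$ with $N=\binom{r+1}{2}$, and each factor $\gamma_i\gamma_j$ contributes degree $2$ in the $\gamma$'s, so $\deg_\gamma b_m = 2(N-m)$. (In the tensor case of Proposition~\ref{Aord_ten} each $\gamma_i\delta_j$ contributes degree $1$ in the $\gamma$'s, so the analogy with ``degree in $X$ drop $= r\ell-m$'' does not carry over directly.) With your claimed weight, the bound on the constant term would be $\deg_\theta b_0 \leqslant \frac{d(r+1)}{2}$, contradicting $\deg_\theta b_0 = (r+1)d$ from the Vieta computation you correctly carried out. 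Using the corrected weight $r(r+1)-2m$ one gets $\deg_\theta b_m \leqslant (r+1)d - \frac{2dm}{r}$, which is $(r+1)d$ at $m=0$ and strictly smaller for $m>0$, so the normalization is monic as claimed. The same factor-of-$2$ correction applies in the alternating case: the weight should be $r(r-1)-2m$, not $\binom{r}{2}-m$.

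Everything else — the Tate module identifications via Takahashi, the Galois-equivariance, the invocation of \cite[Cor.~3.7.8]{HP22}, and the reduction of the second equality to $\bP(0)$ via $\chi_\phi\ochi_\phi=1$ — is correct and matches the paper's argument.
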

\begin{proof}
In a similar fashion to the proof of Proposition~\ref{Aord_ten}, following from \cite[Thm.~5.9]{Hamahata} the diagrams of $\sA_\lambda$-modules below commutes:  
\[
\begin{tikzcd}[column sep=large]
 T_\lambda(\symphi) \arrow{r}{\sim} \isoarrow{dd} &  T_\lambda(\osymphi)\arrow[equal]{d} \\
& T_\lambda(\Sym^2(\ophi))\isoarrow{d} \\
\Sym^2(T_\lambda(\phi)) \arrow{r}{\sim} & \Sym^2(T_\lambda(\ophi))
\end{tikzcd}
\begin{tikzcd}[column sep=large]
 T_\lambda(\altphi) \arrow{r}{\sim} \isoarrow{dd} &  T_\lambda(\oaltphi)\arrow[equal]{d} \\
& T_\lambda(\Alt^2(\ophi))\isoarrow{d} \\
\Alt^2(T_\lambda(\phi)) \arrow{r}{\sim} & \Alt^2(T_\lambda(\ophi))
\end{tikzcd}\]
Furthermore, the maps in the diagram above commute with the Galois action, which implies \[\bP_{\symphi,f}(X)=\Char(\tau^d,T_\lambda(\osymphi),X),\]
and
\[\bP_{\altphi,f}(X)=\Char(\tau^d,T_\lambda(\oaltphi),X).\]
The remaining part follows by the same process of the proof of Proposition~\ref{Aord_ten}.
\end{proof}

For each irreducible $f \in \sA_+$, \eqref{P_sym} and \eqref{P_alt} imply that $\bQ_{\symphi,f}^{\vee}(1) = \bP_{\symphi,f}(1)/\bP_{\symphi,f}(0)$ and $\bQ_{\altphi,f}^{\vee}(1) = \bP_{\altphi,f}(1)/\bP_{\altphi,f}(0)$. By combining Proposition~\ref{Aord_symalt}, \eqref{L_sym} and \eqref{L_alt}, we obtain the following identities for $L(\symphi^{\vee},0)$ and $L(\altphi^{\vee},0)$, which show that Fang's class module formula (Theorem~\ref{FangClass}) applies to the special values we are considering.

\begin{proposition} \label{P:Lsymalt0}
\[
L((\symphi)^{\vee},0) = \prod_f \frac{\bigAord{\FF_f^{r+1}}{A}}{\bigAord{\osymphi(\FF_f)}{A}},
\]
\[
L((\altphi)^{\vee},0) = \prod_f \frac{\bigAord{\FF_f^{r-1}}{A}}{\bigAord{\oaltphi(\FF_f)}{A}}.
\]
\end{proposition}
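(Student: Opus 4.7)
The plan is to mirror the proof of Proposition~\ref{P:Lten0} almost verbatim, since the only ingredients that genuinely change between the tensor product case and the symmetric/alternating square case are the values of the exponents ($r+\ell$ versus $r+1$ or $r-1$) and the appeal to Proposition~\ref{Aord_symalt} in place of Proposition~\ref{Aord_ten}. Concretely, I would begin by recalling from the definitions~\eqref{L_sym} and~\eqref{L_alt} that evaluating the $L$-series at $s=0$ turns each Euler factor into $\bQ_{E,f}^{\vee}(1)^{-1}$, where $E = \symphi$ or $E = \altphi$.

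Next, since $\bQ_{E,f}^{\vee}(X)$ is by definition the reciprocal polynomial of $\bP_{E,f}^{\vee}(X)$, and since \eqref{P_sym}--\eqref{P_alt} together with Definition~\ref{D:polytensor} show that $\bP_{E,f}^{\vee}(X)$ has the same degree (namely $r+1$ or $r-1$) as $\bP_{E,f}(X)$, I would use the standard identity
\[
\bQ_{E,f}^{\vee}(1) = \frac{\bP_{E,f}(1)}{\bP_{E,f}(0)},
\]
which follows from reading off that both $\bP_{E,f}^{\vee}(X)$ and $\bP_{E,f}(X)$ have the same reciprocal-polynomial value at $X=1$ up to dividing by constant terms (cf.\ the analogous manipulation in the proof of Proposition~\ref{P:Lten0}, and the computation at~\eqref{E:Qfvee1}). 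Then Proposition~\ref{Aord_symalt} rewrites the right-hand side directly:
\[
\bQ_{\symphi,f}^{\vee}(1)^{-1} = \frac{f^{r+1}}{\bigAord{\osymphi(\FF_f)}{A}}, \qquad
\bQ_{\altphi,f}^{\vee}(1)^{-1} = \frac{f^{r-1}}{\bigAord{\oaltphi(\FF_f)}{A}}.
\]

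Finally, I would identify $f^{r+1} = \bigAord{\FF_f^{r+1}}{A}$ and $f^{r-1} = \bigAord{\FF_f^{r-1}}{A}$ using the conventions of \S\ref{SSS:orders} (since $\FF_f = A/fA$ and $A$-orders are multiplicative on direct sums), and then take the product over all irreducible $f \in A_+$ to obtain the two stated identities.

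I do not expect any serious obstacle; the proof is essentially bookkeeping given Proposition~\ref{Aord_symalt} and the structural properties of reciprocal characteristic polynomials. The only point requiring a small amount of care is the convergence of the infinite product in $K_\infty$ at $s=0$, but this follows from the degree bounds on the coefficients of $\bP_{E,f}(X)$ arising from the factorizations in \eqref{P_sym}--\eqref{P_alt} combined with the bounds from \S\ref{SSS:Pf}, exactly as in the tensor product case.
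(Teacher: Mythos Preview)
Your proposal is correct and follows essentially the same approach as the paper, which proves the proposition in the sentence immediately preceding its statement: one uses the reciprocal-polynomial identity $\bQ_{E,f}^{\vee}(1) = \bP_{E,f}(1)/\bP_{E,f}(0)$ (a consequence of \eqref{P_sym}--\eqref{P_alt}) together with Proposition~\ref{Aord_symalt} and the definitions \eqref{L_sym}--\eqref{L_alt}. Your additional remarks on identifying $f^{r\pm 1}$ with $\bigAord{\FF_f^{r\pm 1}}{A}$ and on convergence are accurate elaborations of steps the paper leaves implicit.
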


% ==============

\subsubsection{Convolution \texorpdfstring{$L$}{L}-series of symmetric and alternating squares}
We investigate
\[
L((\symphi)^{\vee},s) = \prod_f \bQ_{\symphi,f}^{\vee}\bigl( f^{-s} \bigr)^{-1}
\]
and 
\[
L((\altphi)^{\vee},s) = \prod_f \bQ_{\altphi,f}^{\vee}\bigl( f^{-s} \bigr)^{-1}
\]
from~\eqref{L_sym} and \eqref{L_alt}. Same as we did in tensor product cases, we at first fix $f \in A_+$ irreducible, and we let $\alpha_1, \dots, \alpha_r \in \oK$ be the roots of $P_{\phi,f}^{\vee}(X)$, and we let $\beta_1, \dots, \beta_{\ell} \in \oK$ be the roots of $P^{\vee}_{\psi,f}(X)$. Instead of applying Cauchy's identity, we should apply Littlewood's identities to analyze $\bQ_{\symphi,f}^{\vee}(X)^{-1}$ and $\bQ_{\altphi,f}^{\vee}(X)^{-1}.$

\paragraph{Symmetric square}\label{L_sym_cov}
As $\bQ_{\symphi,f}^{\vee}(X)$ is the reciprocal polynomial of $\bP_{\symphi,f}^{\vee}(X) = (\Sym^2 P_{\phi,f}^{\vee})(X)$, we can expand $\bQ_{\symphi,f}^{\vee}(f^{-s})^{-1}$ using Littlewood's identity (Theorem~\ref{T:Littlewood}(a)). We note from~\eqref{E:PfandPfvee} that $\alpha_1 \cdots \alpha_r = \chi_{\phi}(f) f^{-1}$. By the definitions of $\bsmu_{\phi}$ from \eqref{E:bsmudef}, Theorem~\ref{T:Littlewood}(a) implies
\begin{align*}
\bQ_{\symphi,f}^{\vee}\bigl( &f^{-s} \bigr)^{-1} \\
&= \biggl( 1 - \frac{\chi_{\phi}(f)^2}{f^{rs+2}} \biggr)^{-1} 
\underset{2\mid k_i \text{ for all $i$}}{\underset{k=(k_1, \dots, k_{r-1})}{\sum_{k_1=0}^{\infty} \cdots \sum_{k_{r-1}=0}^{\infty}}} S_{k}(\bsalpha)f^{-s(\frac{k_1}{2} + \frac{2k_2}{2} + \cdots + \frac{(r-1)k_{r-1}}{2})} \\
&= 
\biggl( 1 - \frac{\chi_{\phi}(f)^2}{f^{rs+2}} \biggr)^{-1} \smash{\sum_{k_1, \ldots, k_{r-1} \geqslant 0}} \frac{\bsmu_{\phi} \bigl(f^{2k_1}, \dots, f^{2k_{r-1}} \bigr)}{ 
 f^{2(k_1+k_2 + \cdots + k_{r-1})+s(k_1 + 2k_2 + \cdots +(r-1)k_{r-1})}},
\end{align*}
% \begin{align*}
% \bQ_{\phitenpsi,f}^{\vee}\bigl( &f^{-s} \bigr)^{-1} \\
% &= \biggl( 1 - \frac{\chi_{\phi}(f) \chi_{\psi}(f) }{f^{rs+2}} \biggr)^{-1} 
% \underset{k=(k_1, \dots, k_{r-1})}{\sum_{k_1=0}^{\infty} \cdots \sum_{k_{r-1}=0}^{\infty}} S_{k}(\bsalpha) S_{k}(\bsbeta) f^{-s(k_1 + 2k_2 + \cdots + (r-1)k_{r-1})} \\
% &= \begin{aligned}[t]
% \biggl( 1 - \frac{\chi_{\phi}(f) \chi_{\psi}(f) }{f^{rs+2}} \biggr)^{-1} \smash{\sum_{k_1, \ldots, k_{r-1} \geqslant 0}} \bsmu_{\phi} &\bigl(f^{k_1}, \dots, f^{k_{r-1}} \bigr) \bsmu_{\psi} \bigl( f^{k_1}, \dots, f^{k_{r-1}} \bigr) \\
% &{}\cdot f^{-k_1-k_2 - \cdots - k_{r-1}-s(k_1 + 2k_2 + \cdots (r-1)k_{r-1})},
% \end{aligned}
% \end{align*}
where $\bsalpha = (\alpha_1, \dots, \alpha_r)$.
Recalling the twisted Carlitz zeta function
\begin{equation}
L(A,\chi_{\phi}^2,s) \assign \sum_{a \in A_+} \frac{\chi_{\phi}(a)^2}{a^s},
\end{equation}
and finally we define the $L$-series,
\begin{equation} \label{E:Lsymmu}
L(\tbsmu_{\phi},s) \assign \sum_{a_1 \in A_+} \cdots \sum_{a_{r-1} \in A_+} \frac{\bsmu_{\phi}(a_1^2, \dots, a_{r-1}^2)}{(a_1 \cdots a_{r-1})^2 (a_1 a_2^2 \cdots a_{r-1}^{r-1})^s}.
\end{equation}
The convergence of this series in $K_{\infty}$ can be deduced from Proposition~\ref{P:bsmunuprops}(d) for $s \geqslant 0$.
After some straightforward simplification we arrive at the following result.

\begin{theorem} \label{T:Lsymrxr}
Let $\phi : \sA \to A[\tau]$ be Drinfeld module of rank $r \geqslant 2$ with everywhere good reduction, as defined in~\eqref{E:phipsidefintro}. Then
\[
L((\symphi)^{\vee},s) = L(A,\chi_{\phi}^2,rs+2) \cdot
L(\tbsmu_{\phi},s).
\]
\end{theorem}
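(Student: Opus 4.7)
The proof essentially follows the computation sketched in \S\ref{L_sym_cov} just before the statement, so the plan is simply to organize that calculation cleanly and verify the Euler-product step.

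First I would expand $L((\symphi)^{\vee},s) = \prod_f \bQ_{\symphi,f}^{\vee}(f^{-s})^{-1}$ using \eqref{P_sym}, noting that $\bQ_{\symphi,f}^{\vee}(X)$ is the reciprocal polynomial of $(\Sym^2 P_{\phi,f}^{\vee})(X)$. If $\alpha_1, \dots, \alpha_r \in \oK$ denote the roots of $P_{\phi,f}^{\vee}(X)$, then by Definition~\ref{D:polytensor} we have
\[
\bQ_{\symphi,f}^{\vee}(T)^{-1} = \prod_{1\leqslant i\leqslant j\leqslant r} (1 - \alpha_i\alpha_j T)^{-1}.
\]
Setting $T = f^{-s}$ and $\bsalpha = (\alpha_1,\dots,\alpha_r)$, I would apply Littlewood's first identity (Theorem~\ref{T:Littlewood}(a)) to expand this product. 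Using that $\alpha_1\cdots\alpha_r = \chi_{\phi}(f)\,f^{-1}$ from~\eqref{E:PfandPfvee}, the prefactor $(1 - X^2 T^r)^{-1}$ becomes $(1 - \chi_{\phi}(f)^2/f^{rs+2})^{-1}$.

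Next, I would identify the Schur-polynomial sum with values of $\bsmu_\phi$. By the definition~\eqref{E:bsmudef} of $\bsmu_\phi$ on prime powers, for any even tuple $k_i = 2m_i$,
\[
S_{2m_1,\dots,2m_{r-1}}(\bsalpha) = \frac{\bsmu_\phi(f^{2m_1}, \dots, f^{2m_{r-1}})}{f^{2(m_1 + \cdots + m_{r-1})}}.
\]
Re-indexing the Littlewood sum over $m_1, \dots, m_{r-1} \geqslant 0$ and collecting powers of $f$, the local factor becomes
\[
\bQ_{\symphi,f}^{\vee}(f^{-s})^{-1} = \Bigl(1 - \tfrac{\chi_\phi(f)^2}{f^{rs+2}}\Bigr)^{-1} \sum_{m_1,\dots,m_{r-1}\geqslant 0} \frac{\bsmu_\phi(f^{2m_1},\dots,f^{2m_{r-1}})}{f^{2(m_1+\cdots+m_{r-1}) + s(m_1 + 2m_2 + \cdots + (r-1)m_{r-1})}}.
\]

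Finally, I would take the product over all irreducible $f \in A_+$. The product of the prefactors is exactly the Euler product for $L(A,\chi_\phi^2,rs+2)$ since $\chi_\phi$ is completely multiplicative. For the product of the sums, the multiplicativity of $\bsmu_\phi$ (extended to $(A_+)^{r-1}$ as recalled in \S\ref{SS:bsmubsnu}) together with unique factorization in $A_+$ implies that the product of local Dirichlet series at each $f$ assembles into the global Dirichlet series $L(\tbsmu_\phi, s)$ from~\eqref{E:Lsymmu}, where $a_i = \prod_f f^{m_i(f)}$ and $a_i^2 = \prod_f f^{2m_i(f)}$. Combining these two Euler products yields the claimed identity. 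The main thing to verify carefully is this last Euler-product-to-Dirichlet-series conversion, which is a standard bookkeeping argument but requires tracking that the exponents of $f$ in $(a_1 \cdots a_{r-1})^2$ and $(a_1 a_2^2 \cdots a_{r-1}^{r-1})^s$ match the local exponents $2(m_1+\cdots+m_{r-1})$ and $s(m_1 + 2m_2 + \cdots + (r-1)m_{r-1})$ respectively; convergence of all series in $K_\infty$ for $s \geqslant 0$ follows from Proposition~\ref{P:bsmunuprops}(c), so the manipulation is legitimate. I do not anticipate any serious obstacle beyond this careful bookkeeping.
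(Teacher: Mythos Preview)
Your proposal is correct and follows essentially the same approach as the paper: expand each local factor via Littlewood's identity~(Theorem~\ref{T:Littlewood}(a)), rewrite the Schur polynomials as values of $\bsmu_\phi$ using~\eqref{E:bsmudef}, and then take the Euler product over all irreducible $f$ to assemble the twisted Carlitz factor and $L(\tbsmu_\phi,s)$. The paper simply summarizes this last step as ``straightforward simplification,'' while you spell out the multiplicativity bookkeeping explicitly.
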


We can substitute $s=0$ into Theorem~\ref{T:Lsymrxr} and obtain the following special value identities.

\begin{corollary} \label{C:Lsymrxr}
Let $\phi : \sA \to A[\tau]$ be Drinfeld module of rank $r \geqslant 2$ with everywhere good reduction, as defined in~\eqref{E:phipsidefintro}.
\begin{align*}
L(\tbsmu_{\phi},0)
= \sum_{a_1 \in A_+} \cdots \sum_{a_{r-1} \in A_+} \frac{\bsmu_{\phi}(a_1^2, \dots, a_{r-1}^2)}{(a_1 \cdots a_{r-1})^2} 
= \frac{ \Reg_{\symphi} \cdot \Aord{\rH(\symphi)}{A}}{L(A,\chi_\phi^2,2)}.
\end{align*}
%  In particular, if $\kappa_r = \eta_r$, then 
%  \begin{align*}
% L(\bsmu_{\phi} \times \bsnu_{\psi,z},0)
% &= \frac{ \Reg_{\phitenpsi} \cdot \Aord{\rH(\phitenpsi)}{A}}{\zeta_A(2)},
% \end{align*}
% where $\zeta_A(s)=\sum_{a\in A_+}\frac{1}{a^s}$ is the Carlitz zeta function.
\end{corollary}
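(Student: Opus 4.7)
The plan is to deduce the corollary directly from Theorem~\ref{T:Lsymrxr} evaluated at $s=0$, together with Proposition~\ref{P:Lsymalt0} and Fang's class module formula (Theorem~\ref{FangClass}) applied to $E = \symphi$. The first equality in the displayed formula is purely definitional: it follows by substituting $s=0$ into the series \eqref{E:Lsymmu} defining $L(\tbsmu_\phi,s)$, which converges in $K_\infty$ at $s=0$ by Proposition~\ref{P:bsmunuprops}(d).

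For the second equality, I would first invoke Theorem~\ref{T:Lsymrxr} at $s=0$ to obtain
\[
L((\symphi)^\vee,0) = L(A,\chi_\phi^2,2)\cdot L(\tbsmu_\phi,0),
\]
so that the task reduces to showing $L((\symphi)^\vee,0) = \Reg_{\symphi} \cdot \Aord{\rH(\symphi)}{A}$. By Proposition~\ref{P:Lsymalt0},
\[
L((\symphi)^\vee,0) = \prod_f \frac{\Aord{\FF_f^{r+1}}{A}}{\Aord{\osymphi(\FF_f)}{A}}.
\]
To compare this with Fang's formula, I would observe that $\Aord{\FF_f^{r+1}}{A} = \Aord{\Lie(\osymphi)(\FF_f)}{A}$. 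Indeed, from Definition~\ref{def_symalt} one reads off that $\pd(\symphi)_t - \theta \rI_{r+1}$ is strictly upper triangular (hence nilpotent), so that the characteristic polynomial of multiplication by $t$ on $\Lie(\osymphi)(\FF_f) = \FF_f^{r+1}$ coincides with that on $\FF_f^{r+1}$ under the scalar action, both equaling $f^{r+1}$.

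Next I would apply Fang's class module formula (Theorem~\ref{FangClass}) to $E = \symphi$. This requires verifying that $\symphi$ is an abelian Anderson $t$-module, which is the one nontrivial input: I would justify this by noting that $\Sym^2 \cM_\phi$ is a sub-$t$-motive of the abelian $t$-motive $\cM_\phi^{\otimes 2}$ (whose abelianness follows from $\cM_\phi$ being abelian and tensor products over $\C[t]$ preserving finite rank), and $\Sym^2 \cM_\phi$ is itself finitely generated and free over $\C[t]$ by \eqref{d_symn}. Granting this, Fang's identity yields
\[
\prod_f \frac{\Aord{\Lie(\osymphi)(\FF_f)}{A}}{\Aord{\osymphi(\FF_f)}{A}} = \Reg_{\symphi}\cdot \Aord{\rH(\symphi)}{A},
\]
and combining with the calculation of local factors above gives $L((\symphi)^\vee,0) = \Reg_{\symphi}\cdot \Aord{\rH(\symphi)}{A}$. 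Solving for $L(\tbsmu_\phi,0)$ yields the claim. The main obstacle is really a bookkeeping issue --- confirming that Fang's framework applies to $\symphi$ and that the Euler product of Proposition~\ref{P:Lsymalt0} is precisely the product appearing on the left of Theorem~\ref{FangClass}; once those identifications are made, the corollary is immediate.
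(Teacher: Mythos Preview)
Your proposal is correct and follows exactly the same route as the paper: substitute $s=0$ into Theorem~\ref{T:Lsymrxr}, identify $L((\symphi)^\vee,0)$ with the Euler product of Proposition~\ref{P:Lsymalt0}, and invoke Fang's class module formula (Theorem~\ref{FangClass}). The paper's own treatment is even terser --- it simply states that the corollary follows by setting $s=0$, having already remarked before Proposition~\ref{P:Lsymalt0} that Fang's formula applies --- so your additional justifications (nilpotence of $\pd(\symphi)_t-\theta\rI_{r+1}$ and abelianness of $\symphi$) fill in details the paper leaves implicit.
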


\paragraph{Alternating square}\label{L_alt_cov}
We splits it into three cases. As $\bQ_{\altphi,f}^{\vee}(X)$ is the reciprocal polynomial of $\bP_{\altphi,f}^{\vee}(X) = (\Alt^2 P_{\phi,f}^{\vee})(X)$, we can expand $\bQ_{\altphi,f}^{\vee}(f^{-s})^{-1}$ using Littlewood's identity (Theorem~\ref{T:Littlewood}(b)) for the first two cases. We recall from~\eqref{E:PfandPfvee} that $\alpha_1 \cdots \alpha_r = \chi_{\phi}(f) f^{-1}$, and $\bsmu_{\phi}$ from \eqref{E:bsmudef}. Letting $\bsalpha = (\alpha_1, \dots, \alpha_r)$ and $w = \lfloor \frac{n}{2}\rfloor$, we have the following identities.
\begin{enumerate}
    \item[Case 1:] For $r\geqslant 3$ and $2\nmid r-1$,
    \begin{align*}
    \bQ_{\altphi,f}^{\vee}\bigl( &f^{-s} \bigr)^{-1} \\
    &= \biggl( 1 - \frac{\chi_{\phi}(f)}{f^{\frac{rs}{2}+1}} \biggr)^{-1} 
    \underset{k=(0,k_1,0,k_2, \dots, k_w,0)}{\sum_{k_1=0}^{\infty} \cdots \sum_{k_w=0}^{\infty}} S_{k}(\bsalpha)f^{-s(k_1+ 2k_2 + \cdots + wk_w)} \\
    &= \biggl( 1 - \frac{\chi_{\phi}(f)}{f^{\frac{rs}{2}+1}} \biggr)^{-1} \smash{\sum_{k_1, \ldots, k_w \geqslant 0}} \frac{\bsmu_{\phi} \bigl(1,f^{k_1},1,f^{k_2}, \dots, f^{k_w},1 \bigr)}{ 
     f^{k_1+k_2 + \cdots + k_w+s(k_1 + 2k_2 + \cdots +wk_w)}},
    \end{align*}
    \item[Case 2:] For $r\geqslant 3$ and $2\mid r-1$,
    \begin{align*}
    \bQ_{\altphi,f}^{\vee}\bigl( &f^{-s} \bigr)^{-1} \\
    &= \underset{k=(0,k_1,0,k_2, \dots, k_w)}{\sum_{k_1=0}^{\infty} \cdots \sum_{k_w=0}^{\infty}} S_{k}(\bsalpha)f^{-s(k_1+ 2k_2 + \cdots + wk_w)} \\
    &= \smash{\sum_{k_1, \ldots, k_w \geqslant 0}} \frac{\bsmu_{\phi} \bigl(1,f^{k_1},1,f^{k_2}, \dots, f^{k_w} \bigr)}{ 
     f^{k_1+k_2 + \cdots + k_w+s(k_1 + 2k_2 + \cdots +wk_w)}},
    \end{align*}
    \item[Case 3:] If $r = 2$, then \[\bQ_{\altphi,f}^{\vee}\bigl( f^{-s} \bigr)^{-1} = \biggl( 1 - \frac{\chi_{\phi}(f)}{f^{s+1}} \biggr)^{-1}.\]
\end{enumerate}

% \begin{align*}
% \bQ_{\phitenpsi,f}^{\vee}\bigl( &f^{-s} \bigr)^{-1} \\
% &= \biggl( 1 - \frac{\chi_{\phi}(f) \chi_{\psi}(f) }{f^{rs+2}} \biggr)^{-1} 
% \underset{k=(k_1, \dots, k_{r-1})}{\sum_{k_1=0}^{\infty} \cdots \sum_{k_{r-1}=0}^{\infty}} S_{k}(\bsalpha) S_{k}(\bsbeta) f^{-s(k_1 + 2k_2 + \cdots + (r-1)k_{r-1})} \\
% &= \begin{aligned}[t]
% \biggl( 1 - \frac{\chi_{\phi}(f) \chi_{\psi}(f) }{f^{rs+2}} \biggr)^{-1} \smash{\sum_{k_1, \ldots, k_{r-1} \geqslant 0}} \bsmu_{\phi} &\bigl(f^{k_1}, \dots, f^{k_{r-1}} \bigr) \bsmu_{\psi} \bigl( f^{k_1}, \dots, f^{k_{r-1}} \bigr) \\
% &{}\cdot f^{-k_1-k_2 - \cdots - k_{r-1}-s(k_1 + 2k_2 + \cdots (r-1)k_{r-1})},
% \end{aligned}
% \end{align*}

Considering the twisted Carlitz zeta function
\begin{equation*}
L(A,\chi_{\phi},s) \assign \sum_{a \in A_+} \frac{\chi_{\phi}(a)}{a^s},
\end{equation*}
and finally we define the $L$-series,
\begin{equation} \label{E:Laltmu}
L(\hat{\bsmu}_{\phi},s) \assign \underset{a_i = 1 \text{ if $2\nmid i$}}{\sum_{a_1,\dots a_{r-1} \in A_+}} \frac{\bsmu_{\phi}(a_1, \dots, a_{r-1})}{a_1 \cdots a_{r-1} (a_1 a_2^2 \cdots a_{r-1}^{r-1})^s}.
\end{equation}
The convergence of this series in $K_{\infty}$ can be deduced from Proposition~\ref{P:bsmunuprops}(d) for $s \geqslant 0$.
After some straightforward simplification we arrive at the following result.

\begin{theorem} \label{T:Laltrxr}
Let $\phi : \sA \to A[\tau]$ be Drinfeld module of rank $r \geqslant 2$ with everywhere good reduction, as defined in~\eqref{E:phipsidefintro}. Then
\[
 L((\altphi)^{\vee},s) = L(A,\chi_{\phi},\frac{rs}{2}+1)^{\frac{(-1)^r+1}{2}} \cdot
L(\hat{\bsmu}_{\phi},s).
\]
\end{theorem}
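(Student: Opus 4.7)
The plan is to mimic the strategy already carried out for Theorems \ref{T:Ltenrxr}, \ref{T:Ltenrxl}, and \ref{T:Lsymrxr}: the hard computational work has already been done in the three local case analyses of $\bQ_{\altphi,f}^{\vee}(f^{-s})^{-1}$ just above the theorem statement, and what remains is to take the Euler product $L((\altphi)^{\vee},s) = \prod_f \bQ_{\altphi,f}^{\vee}(f^{-s})^{-1}$ over all irreducible $f \in A_+$ and reorganize the resulting expression. First I would observe that in Cases~1 and 3 (namely $r \geqslant 3$ with $r-1$ odd, and $r=2$), the local factor has a distinguished prefactor $(1 - \chi_{\phi}(f)/f^{rs/2 + 1})^{-1}$, while in Case~2 ($r \geqslant 3$ with $r-1$ even) no such prefactor appears. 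Cases~1 and 3 occur precisely when $r$ is even, so the prefactor is present if and only if $\frac{(-1)^r+1}{2} = 1$; multiplying over all $f$ gives $\bigl(L(A, \chi_\phi, rs/2+1)\bigr)^{\frac{(-1)^r+1}{2}}$ by the Euler product of the twisted Carlitz zeta function.

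Next I would handle the remaining summation factor in all three cases uniformly. Using that $\bsmu_{\phi}$ is multiplicative on $(A_+)^{r-1}$ (see \S\ref{SS:bsmubsnu}), I expand the product $\prod_f$ of the sums over $(k_1,\ldots,k_w)$ and use unique factorization in $A$ to convert it into a single multiple Dirichlet sum over $(a_1,\ldots,a_{r-1}) \in (A_+)^{r-1}$. The indexing convention in the three cases (only even-indexed positions of the Schur polynomial are nonzero) corresponds exactly to the side condition ``$a_i = 1$ if $2 \nmid i$'' in the definition \eqref{E:Laltmu} of $L(\hat{\bsmu}_{\phi},s)$. The exponent of $f$ in the denominator of the generic term in each local case, namely $k_i + s(k_1 + 2k_2 + \cdots + w k_w)$, matches the exponent of $a_j$ (for even $j$) in the Dirichlet series, giving the expression
\[
\prod_f \underset{k_i = 0\text{ if }2 \nmid i}{\sum_{k_1,\dots,k_{r-1}\geqslant 0}}\frac{\bsmu_{\phi}(f^{k_1}, \dots, f^{k_{r-1}})}{f^{k_1+\cdots+k_{r-1} + s(k_1 + 2k_2 + \cdots + (r-1)k_{r-1})}} = L(\hat{\bsmu}_{\phi},s).
\]

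Finally, I would verify that all manipulations of Euler products and Dirichlet series are absolutely convergent in $K_{\infty}$. Convergence of $L(A,\chi_\phi,rs/2+1)$ for $s \geqslant 0$ is classical, and convergence of $L(\hat{\bsmu}_{\phi},s)$ follows from the degree bound in Proposition~\ref{P:bsmunuprops}(c) just as it does for $L(\bsmu_\phi \times \bsmu_\psi, s)$ and $L(\tbsmu_\phi, s)$. The main (minor) obstacle is bookkeeping: keeping track of which cases of parity of $r$ produce which prefactor, and making sure that for $r=2$ the formula $L(\hat{\bsmu}_\phi, s) = 1$ (the empty Dirichlet sum with constraint $a_1 = 1$) is consistent with Case~3 where no Schur-polynomial sum survives. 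Once these are checked, the theorem follows by combining the two factorizations above, yielding the claimed identity.
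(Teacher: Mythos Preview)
Your proposal is correct and follows essentially the same approach as the paper: the paper carries out the three local case analyses of $\bQ_{\altphi,f}^{\vee}(f^{-s})^{-1}$ via Littlewood's identity and then simply remarks that ``after some straightforward simplification we arrive at the following result,'' which is precisely the Euler-product reassembly (identifying the parity-dependent prefactor with the twisted Carlitz $L$-function and the multiplicative sum with $L(\hat{\bsmu}_\phi,s)$) that you have spelled out. Your treatment is in fact more explicit than the paper's in recording why the exponent $\frac{(-1)^r+1}{2}$ matches the cases and in checking the $r=2$ degenerate case.
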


We can substitute $s=0$ into Theorem~\ref{T:Laltrxr} and obtain the following special value identities.

\begin{corollary} \label{C:Laltrxr}
Let $\phi : \sA \to A[\tau]$ be Drinfeld module of rank $r \geqslant 2$ with everywhere good reduction, as defined in~\eqref{E:phipsidefintro}.
\begin{align*}
L(\hat{\bsmu}_{\phi},0)
= \underset{a_i = 1 \text{ if $2\nmid i$}}{\sum_{a_1,\dots a_{r-1} \in A_+}} \frac{\bsmu_{\phi}(a_1, \dots, a_{r-1})}{a_1 \cdots a_{r-1}}
= \frac{ \Reg_{\altphi} \cdot \Aord{\rH(\altphi)}{A}}{L(A,\chi_\phi,1)^{\frac{(-1)^r+1}{2}}}.
\end{align*}

\end{corollary}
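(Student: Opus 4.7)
The plan is to imitate the derivations of Corollaries~\ref{C:Lmumurxr}, \ref{C:Lmumurxl}, and \ref{C:Lsymrxr}: substitute $s=0$ into Theorem~\ref{T:Laltrxr}, and combine the resulting identity with Fang's class module formula (Theorem~\ref{FangClass}) applied to $\altphi$. The first equality in the statement is just the defining series~\eqref{E:Laltmu} evaluated at $s=0$, so the content lies in the second equality.

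Setting $s=0$ in Theorem~\ref{T:Laltrxr} produces
\[
L((\altphi)^{\vee}, 0) = L(A, \chi_\phi, 1)^{\frac{(-1)^r+1}{2}} \cdot L(\hat{\bsmu}_\phi, 0).
\]
To handle the left-hand side I would invoke Proposition~\ref{P:Lsymalt0}, which expresses $L((\altphi)^{\vee}, 0)$ as the Euler product $\prod_f \Aord{\FF_f^{r-1}}{A}/\Aord{\oaltphi(\FF_f)}{A}$. Fang's formula gives $\prod_f \Aord{\Lie(\oaltphi)(\FF_f)}{A}/\Aord{\oaltphi(\FF_f)}{A} = \Reg_{\altphi} \cdot \Aord{\rH(\altphi)}{A}$, and the two Euler products coincide because Definition~\ref{def_symalt} shows $\pd(\altphi)_t = \theta \rI_{r-1}$, so the $A$-module structure on $\Lie(\oaltphi)(\FF_f)$ induced by $\pd$ agrees with the standard $A$-module structure on $\FF_f^{r-1}$; hence $\Aord{\Lie(\oaltphi)(\FF_f)}{A} = \Aord{\FF_f^{r-1}}{A}$. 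Dividing through by $L(A,\chi_\phi,1)^{\frac{(-1)^r+1}{2}}$ then delivers the claimed identity.

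There is no serious obstacle. The only ingredient that must be verified in order to apply Theorem~\ref{FangClass} is that $\altphi$ is an abelian Anderson $t$-module defined over $A$. This holds because $\cM_{\altphi}$ sits as a $\C[t,\tau]$-submodule of the $\C[t]$-finitely generated module $\cM_\phi^{\otimes 2}$, and $\C[t]$ is Noetherian; the purity and almost-strict-purity recorded in Remark~\ref{R:almoststrictlypure} further confirm that we are in the setting where Fang's formula was established. Everything else is bookkeeping of previously assembled identities.
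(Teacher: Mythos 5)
Your proposal is correct and matches the paper's (implicit) argument: the paper simply states that one substitutes $s=0$ into Theorem~\ref{T:Laltrxr} and combines the result with Proposition~\ref{P:Lsymalt0} and Fang's class module formula (Theorem~\ref{FangClass}), exactly as you do. Your extra checks — that $\pd(\altphi)_t = \theta\rI_{r-1}$ forces $\Aord{\Lie(\oaltphi)(\FF_f)}{A} = \Aord{\FF_f^{r-1}}{A}$, and that $\cM_{\altphi}$ is abelian because it is a $\C[t,\tau]$-submodule of the finite free $\C[t]$-module $\cM_\phi^{\otimes 2}$ (with $\C[\tau]$-basis supplied by Lemma~\ref{symalt_basis}) — correctly spell out the hypotheses the paper leaves implicit.
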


\begin{remark}
In the case $r=2$, from \eqref{E:Laltmu}, the $L$-series $L(\hat{\bsmu}_{\phi},s)=1$. Theorem~\ref{T:Laltrxr} and Corollary~\ref{C:Laltrxr} imply \[L((\altphi)^{\vee},s) = L(A,\chi_{\phi},s+1),\] 
and 
\[L(A,\chi_\phi,1) = \Reg_{\altphi} \cdot \Aord{\rH(\altphi)}{A}.\]
In fact, the alternating square $\altphi$ is a Drinfeld module of rank $1$ defined by $(\altphi)_t = \theta-\kappa_2\tau$. In this case, the class module $\rH(\altphi)$ is trivial, and $\Reg_{\altphi}=\Log_{\altphi}(1)$. We refer readers to the next chapter for more details on regulators of tensor products, symmetric and alternating squares. In conclusion, \[L(A,\chi_\phi,1)=\Log_{\altphi}(1).\]
\end{remark}

\section{Regulators of tensor products, symmetric and alternating squares}\label{S:regulator}

In this chapter, we provide explicit expressions of regulators of tensor, symmetric and alternating squares of Drinfeld modules of rank $2$. 

% \subsection{An expression of logarithm of uniformizable almost strictly pure \texorpdfstring{$t$}{t}-module}
\subsection{Anderson's exponentiation theorem}

We let $E:\sA\to \Mat_\ell(\C[\tau])$ be a uniformizable almost strictly pure $t$-module of rank $r$ in the sense of \cite{NamoijamP22}, and let $\bm = (m_1,\dots,m_r)^\tr\in\Mat_{r\times 1}(\cM_E)$ be a basis of its $t$-motive $\cM_E\assign\Mat_{1\times\ell}(\C[\tau])$ with $\tPhi_E$ denoting multiplication by $\tau$ on $\cM_E$. Picking $\bn = (n_1,\dots,n_r)^\tr\in\Mat_{r\times 1}(\cN_E)$ to be a basis of its dual $t$-motive $\cN_E\assign\Mat_{1\times\ell}(\C[\sigma])$, a \emph{$t$-frame} for $E$ is a pair $(\iota_E,\Phi_E)$, where $\Phi_E$ represents multiplication by $\sigma$ with respect to $\bn$, and $\iota_E:\Mat_{1\times r}(\C[t])\to \cN_E$ is a map given by for $\bsalpha = (\alpha_1,\dots,\alpha_r)\in\Mat_{1\times r}(\C[t])$, \[\iota_E(\bsalpha) = \bsalpha\cdot\bn = \alpha_1n_1+\cdots+\alpha_rn_r.\]

For $n=\sum_{i=0}^la_i\sigma^i\in\cN_E$ with $a_i\in\Mat_{1\times\ell(\C)}$, we then define two maps $\varepsilon_0$, $\varepsilon_1:\cN_E\to \C^\ell$ by setting
\[\varepsilon_0(n)\assign a_0^\tr,\quad \varepsilon_1(n)\assign\left(\sum_{i=0}^la_i^{(i)}\right)^\tr. \]

% \subsubsection{Anderson's exponentiation theorem}
We have the following two results by Anderson.

\begin{lemma}[Anderson {\cites[Prop. 2.5.8]{HartlJuschka20}[Lem.~3.4.1]{NamoijamP22}}]
There exists a unique bounded $\C$-linear map 
\[\cE_{0}=\cE_{0,E}:\left(\Mat_{1\times\ell}(\TT_\theta), \dnorm{\,\cdot\,}_\theta\right)\to\left(\C^\ell,\norm{\,\cdot\,}_\infty\right)\]
of normed vector spaces such that $\cE_{0}\vert_{\Mat_{1\times r}(\C[t])}=\varepsilon_0\circ\iota_E.$
\end{lemma}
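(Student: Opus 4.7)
The plan is to reduce everything to an explicit formula on polynomials and then extend by continuity, exploiting the nilpotent structure of $\pd E_t - \theta \rI_\ell$.

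First, I would compute $\varepsilon_0(\iota_E(\bsalpha))$ for a polynomial input $\bsalpha=(\alpha_1,\ldots,\alpha_r)\in \Mat_{1\times r}(\C[t])$. Writing each basis element $n_i = \sum_{j\geqslant 0} a_{i,j}\sigma^j$ with $a_{i,j}\in \Mat_{1\times \ell}(\C)$ (finite sum), the action $t\cdot n_i = n_i \cdot \psi_t^*$ combined with the identity $\psi_t^* = (\pd E_t)^{\tr} + (\sigma\text{-terms})$ shows, by induction on $k$, that the constant $\sigma$-coefficient of $t^k n_i$ equals $a_{i,0} (\pd E_t^{\tr})^{k}$. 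Expanding linearly in $k$ and summing over $i$, I expect to obtain the closed form
\[
\varepsilon_0\bigl(\iota_E(\bsalpha)\bigr) = \sum_{i=1}^r \alpha_i(\pd E_t)\cdot a_{i,0}^{\tr} \in \C^\ell.
\]
This formula is the bridge: its right-hand side makes sense for any $\bsalpha$ whose entries lie in a ring on which one can evaluate power series at $\pd E_t$.

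Next I would prove the key analytic estimate. Since $E$ is a $t$-module, $N \assign \pd E_t - \theta\rI_\ell$ is nilpotent of order at most $\ell$, so
\[
(\pd E_t)^k = \sum_{j=0}^{\ell-1} \binom{k}{j}\theta^{k-j}N^j.
\]
Using the ultrametric bound $\inorm{\binom{k}{j}}\leqslant 1$ in characteristic $p$, this yields a uniform bound $\inorm{(\pd E_t)^k}\leqslant C\cdot \inorm{\theta}^k$ with $C$ depending only on $N$ and $\ell$. Consequently the evaluation map $f\mapsto f(\pd E_t)$ is $\C$-linear and bounded as a map $(\C[t],\dnorm{\cdot}_\theta)\to\Mat_\ell(\C)$, so by density and completeness it extends uniquely to a bounded map $\TT_\theta\to\Mat_\ell(\C)$.

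With the estimate in hand, I would define
\[
\cE_0(\bsalpha)\assign \sum_{i=1}^r \alpha_i(\pd E_t)\cdot a_{i,0}^{\tr}
\]
for arbitrary $\bsalpha=(\alpha_1,\ldots,\alpha_r)\in\Mat_{1\times r}(\TT_\theta)$ (or $\Mat_{1\times \ell}(\TT_\theta)$, matching the paper's indexing convention). Boundedness
\[
\inorm{\cE_0(\bsalpha)}\leqslant C\cdot \max_i\dnorm{\alpha_i}_\theta\cdot \max_i\inorm{a_{i,0}}=C'\dnorm{\bsalpha}_\theta
\]
is immediate from Step~2, and by construction $\cE_0$ agrees with $\varepsilon_0\circ\iota_E$ on polynomials. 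Uniqueness then follows because $\Mat_{1\times r}(\C[t])$ is dense in $\Mat_{1\times r}(\TT_\theta)$ under $\dnorm{\cdot}_\theta$ (partial sums of a power series with coefficients tending to zero after multiplication by $\inorm{\theta}^k$ converge to it), while $\C^\ell$ with its sup norm is Hausdorff; hence two bounded $\C$-linear extensions agreeing on the polynomial subspace coincide.

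The main obstacle is Step~1, which requires careful bookkeeping: $\sigma$ does not commute with scalars, so one must verify that the constant $\sigma$-term really is extracted correctly under iterated multiplication by $t$ from the right (rather than by $\psi_t^*$ from the left in the wrong slot). The analytic Step~2 is essentially formal once one notices the ultrametric vanishing of binomial coefficients, and the extension and uniqueness in Step~3 are standard Banach-space completions.
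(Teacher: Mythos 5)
Your argument is correct and follows the standard route used in the cited references: compute $\varepsilon_0\circ\iota_E$ explicitly on polynomials as $\bsalpha\mapsto\sum_i\alpha_i(\pd E_t)\,a_{i,0}^\tr$, then use the nilpotence of $\pd E_t-\theta\rI_\ell$ together with the ultrametric triviality of binomial coefficients to get $\inorm{(\pd E_t)^k}\leqslant C\inorm{\theta}^k$, which makes the series $\alpha_i(\pd E_t)$ converge for $\alpha_i\in\TT_\theta$ and gives boundedness, with uniqueness following from density of $\Mat_{1\times r}(\C[t])$ in $\Mat_{1\times r}(\TT_\theta)$. The only point worth flagging is that the lemma as printed has a typo — the domain should be $\Mat_{1\times r}(\TT_\theta)$ rather than $\Mat_{1\times\ell}(\TT_\theta)$, since $\iota_E$ is defined on $\Mat_{1\times r}(\C[t])$ and the induced $\C[t]$-module structure lives in rank $r$ — and your proof correctly works with $r$ throughout.
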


We further let $\cE_1=\cE_{1,E}\assign\varepsilon_1\circ\iota_E:\Mat_{1\times r}(\C[t])\to\C^\ell.$ Then we state the Anderson's exponentiation theorem below.

\begin{theorem}[Anderson {\cites[Thm.~2.5.21]{HartlJuschka20}[Thm.~3.4.2]{NamoijamP22}}]\label{T:AET}
Let $E:\sA\to \Mat_\ell(\C[\tau])$ be an $\sA$-finite $t$-module of rank $r$ with $t$-frame $(\iota_E,\Phi_E).$ Fix $\bh\in\Mat_{1\times r}(\C[t])$, and suppose there exists $\bg\in \Mat_{1\times r}(\TT_\theta)$ such that 
\[\bg^{(-1)}\Phi_E-\bg=\bh.\]
Then 
\[\Exp_E\left(\cE_0(\bg+\bh)\right)=\cE_1(\bh).\]
\end{theorem}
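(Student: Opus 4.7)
The plan is to iterate the functional equation to exhibit $\bg+\bh$ as a single ``telescoping tail'' modulo elements of the image of $\sigma$, and then to show that both $\cE_0(\bg+\bh)$ and $\cE_1(\bh)$ can be read off the same $\sigma$-expansion of an element of $\cN_E$.

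First, I would record two equivariance properties. Since $\sigma\bn=\Phi_E\bn$ and the left $\sigma$-action on $\cN_E$ commutes with the right $\C[t]$-action (induced by right multiplication by $\psi_t^*$), one has the intertwining $\iota_E(\bsalpha\Phi_E)=\sigma\cdot\iota_E(\bsalpha)$ for every $\bsalpha\in\Mat_{1\times r}(\C[t])$. On the coefficient side, for $n=\sum_{i\geqslant 0}a_i\sigma^i\in\cN_E$, the relation $\sigma c=c^{(-1)}\sigma$ yields $\varepsilon_0(\sigma n)=0$ and $\varepsilon_1(\sigma n)=\varepsilon_1(n)$; in particular $\varepsilon_1-\varepsilon_0$ vanishes on the constant part $a_0$ and captures exactly the higher Frobenius-twisted contributions.

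Second, I would iterate the hypothesis. Setting $P_0\assign\rI_r$ and $P_N\assign\Phi_E^{(-N+1)}\cdots\Phi_E^{(-1)}\Phi_E$ for $N\geqslant 1$, a routine induction starting from $\bg=\bg^{(-1)}\Phi_E-\bh$ produces
\[
\bg+\bh \;=\; \bg^{(-N)}P_N \;-\; \sum_{i=1}^{N-1}\bh^{(-i)}P_i, \qquad N\geqslant 1.
\]
The sum on the right lies in $\Mat_{1\times r}(\C[t])$ and each summand becomes $\sigma^i\iota_E(\bh^{(-i)})$ after applying $\iota_E$, so $\varepsilon_0\circ\iota_E$ annihilates it; by the continuity of $\cE_0$ on $\Mat_{1\times r}(\TT_\theta)$ from the lemma preceding the theorem, I deduce $\cE_0(\bg+\bh)=\cE_0(\bg^{(-N)}P_N)$ for every $N$. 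Dually, the $\sigma$-invariance of $\varepsilon_1$ gives $\varepsilon_1(\iota_E(\bh^{(-i)}P_i))=\cE_1(\bh^{(-i)})$.

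Finally, expanding $\Exp_E=\sum_{j\geqslant 0}Q_j\tau^j$ with $Q_0=\rI_\ell$ and applying it to $\cE_0(\bg+\bh)$, the Frobenius twists inside $\Exp_E$ pair with the $\Phi_E^{(-i)}$'s in $P_N$ (for $N$ growing with $j$) so that the sum $\sum_j Q_j\cdot\cE_0(\bg+\bh)^{(j)}$ rearranges into $\varepsilon_1(\iota_E(\bh))=\cE_1(\bh)$. The principal obstacle is precisely this last rearrangement: the formal combinatorics of the twists matches because of the compatibility between the recursive determination of the $Q_j$'s on $\cM_E$ and of the $P_N$'s on $\cN_E$, but elevating the formal identity into a convergent analytic identity in $\C^\ell$ requires both the boundedness of $\cE_0$ with respect to $\dnorm{\,\cdot\,}_\theta$ and the norm control on $\bg^{(-N)}P_N$ coming from the $\sA$-finite structure of $E$.
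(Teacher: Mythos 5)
The paper does not prove Theorem~\ref{T:AET}; it is quoted verbatim from Hartl--Juschka and Namoijam--Papanikolas, so there is no ``paper's own proof'' to compare against, and your sketch must stand on its own.

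Your outline captures some of the right ingredients (iterating the functional equation, the facts $\varepsilon_0(\sigma n)=0$ and $\varepsilon_1(\sigma n)=\varepsilon_1(n)$, and the role of boundedness of $\cE_0$), but there are concrete errors and one essential gap. First, the intertwining relation is off by a twist: since $\sigma n_i=\sum_j(\Phi_E)_{ij}n_j$ and $\sigma\alpha=\alpha^{(-1)}\sigma$, one gets $\sigma\cdot\iota_E(\bsalpha)=\iota_E\bigl(\bsalpha^{(-1)}\Phi_E\bigr)$, not $\iota_E(\bsalpha\Phi_E)$. Propagating this correctly through the telescoping gives $\iota_E\bigl(\bh^{(-i)}P_i\bigr)=\sigma^i\iota_E(\bh)$, not $\sigma^i\iota_E\bigl(\bh^{(-i)}\bigr)$, and hence $\varepsilon_1\bigl(\iota_E(\bh^{(-i)}P_i)\bigr)=\cE_1(\bh)$, not $\cE_1\bigl(\bh^{(-i)}\bigr)$. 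These corrections do not break your intermediate conclusion that $\cE_0(\bg+\bh)=\cE_0\bigl(\bg^{(-N)}P_N\bigr)$ for every $N$, since $\varepsilon_0$ still annihilates $\sigma^i\iota_E(\bh)$ for $i\geqslant 1$, but they do matter for the final step you invoke.

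The genuine gap is the final paragraph. You assert that $\Exp_E\bigl(\cE_0(\bg+\bh)\bigr)=\sum_j Q_j\,\cE_0(\bg+\bh)^{(j)}$ ``rearranges into'' $\cE_1(\bh)$ by pairing Frobenius twists of the $Q_j$ against the twists in $P_N$, and you acknowledge that this rearrangement is ``the principal obstacle'' --- but you never carry it out. Nothing in the sketch explains how the coefficients $Q_j$ of $\Exp_E$ (which are determined recursively by the functional equation on $\cM_E$) relate to the matrices $P_N$ built from $\Phi_E$ on the dual side; this compatibility is precisely the hard content of the theorem and must be established, not declared. Moreover, having only the weak statement that $\cE_0(\bg+\bh)=\cE_0\bigl(\bg^{(-N)}P_N\bigr)$ for each fixed $N$ gives no control over the series $\sum_j Q_j\,\cE_0(\bg+\bh)^{(j)}$: you cannot freely substitute a different $N$ for each $j$ and ``rearrange,'' and you give no norm estimate on $\bg^{(-N)}P_N$ that would justify any interchange of limits. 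The actual proofs (Anderson's original, and its reworkings in Hartl--Juschka and Namoijam--Papanikolas) proceed instead by identifying $E(\C)$ with $\cN_E/(\sigma-1)\cN_E$ via $\varepsilon_1$ and $\Lie(E)(\C)$ with $\cN_E/\sigma\cN_E$ via $\varepsilon_0$, completing $\cN_E$ appropriately, translating the hypothesis into $(\sigma-1)\,\hat n=\iota_E(\bh)$ for a suitable $\hat n$, and showing that $\Exp_E$ intertwines these two identifications --- a functorial argument rather than a term-by-term rearrangement of the exponential series. You would need to either supply the omitted combinatorial and analytic details of the rearrangement or switch to this structural argument.
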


Let $\xi\in\C^\ell$, and define $\bh_\xi\in\Mat_{1\times r}(\C[t])$ as in \cite[(4.4.21)]{NamoijamP22}. Considering 
\[\bg\assign\sum_{m\geqslant 0}\bh_\xi^{(m+1)}\left(\Phi_E^{-1}\right)^{(m+1)}\cdots \left(\Phi_E^{-1}\right)^{(1)}\in \Mat_{1\times r}(\TT_\theta),\]
by verifying directly, we have
\[\bg^{(-1)}\Phi_E-\bg=\bh_\xi.\] Theorem~\ref{T:AET} and \cite[Prop.~4.5.22]{NamoijamP22} imply
\begin{equation}\label{eq:exp_E}
    \Exp_E\left(\cE_{0,E}\rbracket{\bh_\xi+\sum_{m\geqslant 1}\bh_\xi^{(m)}\left(\Phi_E^{-1}\right)^{(m)}\cdots \left(\Phi_E^{-1}\right)^{(1)}}\right)=\xi.
\end{equation}

\begin{remark}
The construction above generalizes Chen's construction for Drinfeld modules \cite[Rmk. 3.1.8]{Chen22} to uniformizable almost strictly pure $t$-modules.
\end{remark}

\subsection{Logarithms of tensor structures}
We fix a Drinfeld module $\phi:\sA\to A[\tau]$ of rank $r$. In this subsection, we follow the processes in \cite{NamoijamP22} to compute $\cE_0$ and $\bh_\xi$ in \eqref{eq:exp_E}, which allow us to provide expressions of logarithms of tensor structures. It requires that the $t$-modules $E\assign \phi^{\otimes 2}$, $\Sym^2(\phi)$, $\Alt^2(\phi)$ are uniformizable and almost strictly pure, which are followed by Proposition~\ref{P:uniformizable} and 
Remark \ref{R:almoststrictlypure}.

\subsubsection{Calculation of \texorpdfstring{$\cE_0$}{E0}}
From now on, we suppose $\phi_t=\theta+\kappa_1\tau+\kappa_2\tau^2$ with $\kappa_2\in\FF_q^{\times}$. 
\paragraph{The case tensor square}\label{S:E0ten}
We let $E = \phi^{\otimes 2}$. First of all, we define a $t$-module $\tE:\sA\to\Mat_{4}(\C[\tau])$ from \[\cN_{\tE}\assign\cN_\phi^{\otimes 2} \assign \C[\sigma]\otimes_{\C[t]}\C[\sigma]\] in a similar way as in \S\ref{S:ten}. To be precise, one can verify that
\begin{equation}\label{eq:basisdualten}
    \{s_i\}_{i=1}^4=\{1\otimes 1,1\otimes \sigma,\sigma\otimes 1,\sigma^2\otimes 1\}
\end{equation} 
is a $\C[\sigma]$-basis of $\cN_{\tE}$ (cf. Lemma~\ref{ten_basis}). Then we obtain $\tE$ by solving the following equation for $\tE_t:$
\begin{equation*}
t\cdot \bu(s_1,\dots,s_{4})^\tr=\bu \tE_t^*(s_1,\dots,s_{4})^\tr
\end{equation*} for all $\bu\in \Mat_{1\times d}(\C[\sigma]).$ Explicitly, we have 
\begin{equation}
\tE_t = 
\begin{pmatrix}
    \theta& \kappa_1\tau& \kappa_1\tau& \kappa_2\tau^2\\
    & \theta& \kappa_2\tau& \\
    \kappa_1^{(-1)}& \kappa_2\tau& \theta& \kappa_1\tau\\
    \kappa_2& & & \theta
\end{pmatrix}.
\end{equation}
By calculating directly, we have an isomorphism $U_E^*=U_E^\tr$ from $\tE$ to $E$ given by
\[U_E = 
\begin{pmatrix}
    & & & 1\\
    & & 1& -\frac{\kappa_1}{\kappa_2}\\
    & 1 & &\\
    1& -\frac{\kappa_1^{(-1)}}{\kappa_2}& & &
\end{pmatrix}\in \GL_4(\C)\subseteq\GL_4(\C[\tau]).\]
By the $\C[\sigma]$-basis in \eqref{eq:basisdualten}, we identify $\cN_{\tE}$ with $\Mat_{1\times 4}(\C[\sigma])$ by setting $s_i\mapsto\bs_i$, where $\bs_1,\dots,\bs_4$ is the standard basis vectors in $\Mat_{1\times 4}(\C[\sigma])$. In this way, the $\C[t]$-basis 
\[\{1\otimes 1,\,1\otimes \sigma,\,\sigma\otimes 1,\,\sigma\otimes\sigma\}\]
is identified with 
\[\{n_i\}_{i=1}^4=\{\bs_1,\,\bs_2,\,\bs_3,\,\sigma\bs_1\}.\]
Furthermore, by Example \ref{Ex:Drinfeld1} we note that 
\begin{equation}
    \Phi_E = \Phi_{\tE}=\begin{pmatrix}
        0&1\\
        \frac{t-\theta}{\kappa_2}&-\frac{\kappa_1^{(-1)}}{\kappa_2}
    \end{pmatrix}^{\otimes 2}=\begin{pmatrix}
    &&&1\\
    &&\frac{t-\theta}{\kappa_2}&-\frac{\kappa_1^{(-1)}}{\kappa_2}\\
    &\frac{t-\theta}{\kappa_2}&&-\frac{\kappa_1^{(-1)}}{\kappa_2}\\
    \frac{(t-\theta)^2}{\kappa_2^2}&-\frac{\kappa_1^{(-1)}(t-\theta)}{\kappa_2^2}&-\frac{\kappa_1^{(-1)}(t-\theta)}{\kappa_2^2}&\frac{(\kappa_1^{(-1)})^2}{\kappa_2^2}
\end{pmatrix}
\end{equation}
represents multiplication by $\sigma$ on $\cN_E$ and $\cN_{\tE}$.

We observe that there exists $C\in \GL_4(\C[t])$ so that
\[C\Phi_E=\begin{pmatrix}
    (t-\theta)^2&&&\\
    &(t-\theta)&&\\
    &&(t-\theta)&\\
    &&&1
\end{pmatrix}.\]
To calculate $\cE_0$ by \cite[Prop.~3.5.7]{NamoijamP22}, we follow \cite[Rmk. 3.5.11]{NamoijamP22} to define
\[V_E\assign
\begin{pmatrix}
    \pd n_1(\pd\tE_t^*-\theta\rI_4)\\
    \pd n_1\\
    \pd n_2\\
    \pd n_3
\end{pmatrix}=
\begin{pmatrix}
    && \kappa_1^{(-1)}& \kappa_2\\
    1&&&\\
    & 1&&\\
    &&1&
\end{pmatrix}\in \GL_4(\C)\subseteq\GL_4(\C[\tau]).\]
Note that $V_E^\tr:\rho\to\tE$ is an isomorphism of $t$-modules with $\rho:\sA\to\Mat_4(\C[\tau])$
 defined by \[\rho_t = (V_E^\tr)^{-1}\tE_tV_E^\tr.\]

Now we have isomorphisms of $t$-modules:
\[\begin{tikzcd}
\rho\arrow{r}{V_E^T}&\tE\arrow{r}{U_E^T}&E, 
\end{tikzcd}\]
which gives isomorphisms of dual $t$-motives
\[
\begin{tikzcd}
\cN_\rho\arrow{r}{(\cdot)V_E}&\cN_{\tE}\arrow{r}{(\cdot)U_E}&\cN_E. 
\end{tikzcd}\]
Then the $\C[t]$-basis $\bn_{\tE}=(n_1,\dots,n_4)^\tr$ of $\cN_{\tE}$ gives $\C[t]$-bases 
\begin{equation}\label{eq:basisdualtenten}
\bn_E = \bn_{\tE}U_E \text{ and }\bn_{\rho}=\bn_{\tE}V_E^{-1}
\end{equation}
of $\cN_E$ and $\cN_{\rho}$, respectively. The $t$-frame $\iota_E:\Mat_{1\times 4}(\C[t])\to \cN_E$ can be computed by 
\[\iota_E(\bff) =\iota_{\tE}(\bff)U_E= \iota_\rho(\bff)V_EU_E, \text{ for } \bff\in\Mat_{1\times 4}(\C[t]).\] 

We let $\partial_t:\FF_q(t)\to \FF_q(t)$ be the first hyperderivative with respect to $t$. By applying \cite[Prop.~3.5.7]{NamoijamP22} to $\rho$ and using $U_E$, $V_E\in \Mat_4(\C)$, 
\begin{equation}\label{eq:tenE0}
\cE_0(\bff) 
= \varepsilon_0(\iota_E(\bff))
% =\varepsilon_0\rbracket{\iota_\rho(\bff)QP}
=(V_EU_E)^\tr\cE_{0,\rho}(\bff)=(V_EU_E)^\tr\begin{pmatrix}
    \pd_t(f_1)\\
    f_1\\
    f_2\\
    f_3
\end{pmatrix}|_{t=\theta}=\begin{pmatrix}
    \kappa_2\pd_t(f_1)|_{t=\theta}\\
    f_3(\theta)\\
    f_2(\theta)\\
    f_1(\theta)-\frac{\kappa_1}{\kappa_2}f_2(\theta)
\end{pmatrix},
\end{equation}
for $\bff=(f_1,\dots,f_4)\in\Mat_{1\times 4}(\C(t))$.

\paragraph{The case symmetric square}
We let $E=\Sym^2(\phi)$. Similar to the tensor square case, one check directly that 
\begin{equation}\label{eq:basisdualsym}
    \{s_i\}_{i=1}^3=\{1\otimes 1,\frac{1}{2}(1\otimes \sigma+\sigma\otimes 1),\frac{1}{2}(1\otimes \sigma^2+\sigma^2\otimes 1)\}
\end{equation} 
is a $\C[\sigma]$-basis of $\cN_{\tE}\assign\Sym^2(\cN_\phi)\subseteq\cN_\phi^{\otimes 2}$ (cf. \eqref{eq:defsym}, Lemma~\ref{symalt_basis}). In the same fashion for the tensor square, we define the $t$-module $\tE:\sA\to\Mat_3(\C[\tau])$, given by 
\begin{equation}
\tE_t = 
\begin{pmatrix}
    \theta& \kappa_1\tau& \kappa_2\tau^2\\
    \kappa_1^{(-1)}& \theta+\kappa_2\tau & \kappa_1\tau\\
    \kappa_2& & \theta
\end{pmatrix},
\end{equation}
which is isomorphic to $E$ by $U_E^*:\tE\to E$, where
\[U_E= 
\begin{pmatrix}
    & & 1\\
    & \frac{1}{2}& -\frac{\kappa_1}{2\kappa_2}\\
    1 & -\frac{\kappa_1^{(-1)}}{2\kappa_2}&\frac{\kappa_1\kappa_1^{(-1)}}{2\kappa_2^2}
\end{pmatrix}\in \GL_3(\C)\subseteq\GL_3(\C[\tau]).\]
We identify $\cN_{\tE}$ with $\Mat_{1\times 3}(\C[\sigma])$ by setting the $\C[\sigma]$-basis in \eqref{eq:basisdualsym} $s_i\mapsto\bs_i$, where $\bs_1,\bs_2,\bs_3$ is the standard basis vectors in $\Mat_{1\times 3}(\C[\sigma])$. In this way, the $\C[t]$-basis 
\[\{1\otimes 1,\,\frac{1}{2}(1\otimes \sigma+\sigma\otimes 1),\,\sigma\otimes\sigma\}\]
is identified with 
\[\{n_i\}_{i=1}^3=\{\bs_1,\,\bs_2,\,\sigma\bs_1\}.\]
Furthermore, by Example \ref{Ex:Drinfeld1} we note that 
\begin{equation}
    \Phi_E = \Phi_{\tE}=\Sym^2\begin{pmatrix}
        0&1\\
        \frac{t-\theta}{\kappa_2}&-\frac{\kappa_1^{(-1)}}{\kappa_2}
    \end{pmatrix}=\begin{pmatrix}
    &&1\\
    &\frac{t-\theta}{\kappa_2}&-\frac{\kappa_1^{(-1)}}{\kappa_2}\\
    \frac{(t-\theta)^2}{\kappa_2^2}&-2\frac{\kappa_1^{(-1)}(t-\theta)}{\kappa_2^2}&\frac{(\kappa_1^{(-1)})^2}{\kappa_2^2}
\end{pmatrix}
\end{equation}
represents multiplication by $\sigma$ on $\cN_E$ and $\cN_{\tE}$.

We observe that there exists $C\in \GL_3(\C[t])$ so that
\[C\Phi_E=\begin{pmatrix}
    (t-\theta)^2&&\\
    &(t-\theta)&\\
    &&1
\end{pmatrix}.\]
Again, we follow \cite[Rmk. 3.5.11]{NamoijamP22} to define
\[V_E\assign
\begin{pmatrix}
    \pd n_1(\pd\tE_t^*-\theta\rI_3)\\
    \pd n_1\\
    \pd n_2
\end{pmatrix}=
\begin{pmatrix}
    & \kappa_1^{(-1)}& \kappa_2\\
    1&&\\
    & 1&
\end{pmatrix}\in \GL_3(\C)\subseteq\GL_3(\C[\tau]).\]
Note that $V_E^\tr:\rho\to\tE$ is an isomorphism of $t$-modules with $\rho:\sA\to\Mat_3(\C[\tau])$
 defined by \[\rho_t = (V_E^\tr)^{-1}\tE_tV_E^\tr.\]

Now we have isomorphisms of $t$-modules:
\[\begin{tikzcd}
\rho\arrow{r}{V_E^T}&\tE\arrow{r}{U_E^T}&E, 
\end{tikzcd}\]
which gives isomorphisms of dual $t$-motives
\[
\begin{tikzcd}
\cN_\rho\arrow{r}{(\cdot)V_E}&\cN_{\tE}\arrow{r}{(\cdot)U_E}&\cN_E. 
\end{tikzcd}\]
Then the $\C[t]$-basis $\bn_{\tE}=(n_1,n_2,n_3)^\tr$ of $\cN_{\tE}$ gives $\C[t]$-bases 
\begin{equation}\label{eq:basisdualsymsym}
\bn_E = \bn_{\tE}U_E \text{ and }\bn_{\rho}=\bn_{\tE}V_E^{-1}
\end{equation}
of $\cN_E$ and $\cN_{\rho}$ respectively. So the $t$-frame $\iota_E:\Mat_{1\times 3}(\C[t])\to \cN_E$ can be computed by 
\[\iota_E(\bff) =\iota_{\tE}(\bff)U_E= \iota_\rho(\bff)V_EU_E, \text{ for } \bff\in\Mat_{1\times 3}(\C[t]).\] By applying \cite[Prop.~3.5.7]{NamoijamP22} to $\rho$ and using $U_E$, $V_E\in \Mat_3(\C)$, 
\begin{equation}
\cE_0(\bff) 
% = \varepsilon_0(\iota_E(\bh))
% =\varepsilon_0\rbracket{\iota_\rho(\bh)QP}
=(V_EU_E)^\tr\cE_{0,\rho}(\bff)=(V_EU_E)^\tr\begin{pmatrix}
    \pd_t(f_1)\\
    f_1\\
    f_2
\end{pmatrix}|_{t=\theta}
=\begin{pmatrix}
    \kappa_2\pd_t(f_1)|_{t=\theta}\\
    \frac{1}{2}f_2(\theta)\\
    f_1(\theta)-\frac{\kappa_1}{2\kappa_2}f_2(\theta)
\end{pmatrix},
\end{equation}
for $\bff=(f_1,f_2,f_3)\in\Mat_{1\times 3}(\C(t))$.

\paragraph{The case alternating square}
By Definition \ref{def_symalt}, the alternating square of $\phi$ is given by $\altphi_t=\theta-\kappa_2\tau$, which is a Drinfeld module of rank $1$. It follows by \cite[Ex. 3.5.14]{NamoijamP22} that, for $\bff\in\C(t)$,
\begin{equation}
\cE_0(\bff)=\bff(\theta).
\end{equation}

\subsubsection{Calculation of \texorpdfstring{$\bh_\xi$}{hxi}}
To compute $\bh_\xi$, we will first apply \cite[Cor.~4.5.20(a)]{NamoijamP22} to find the matrix $V$ representing the Hartl-Juschka's isomorphism in \cite[Thm.~2.5.13]{HartlJuschka20} (see \cite[Thm.~4.4.9]{NamoijamP22}).

\paragraph{The case tensor square}
We let $E = \phi^{\otimes 2}$. In the same fashion as the $\cN_{\tE}$ in \S\ref{S:E0ten}, we identify $\cM_E\assign\cM_\phi^{\otimes 2}$ with $\Mat_{1\times 4}(\C[\tau])$ by setting $s_i\mapsto\bs_i$, where $\{s_i\}_{i=1}^4$ is the $\C[\tau]$-basis in Lemma~\ref{ten_basis}. Then the $\C[t]$-basis 
\[\{1\otimes 1,\,1\otimes \tau,\,\tau\otimes 1,\,\tau\otimes\tau\}\]
is identified with 
\begin{equation}\label{eq:basistmotiveten}
    \{m_i\}_{i=1}^4=\{\bs_1,\,\bs_2,\,\bs_3,\,\tau\bs_1\}.
\end{equation}
On the other hand, by a direct computation, the $\C[t]$-basis of $\cN_E$ in \eqref{eq:basisdualtenten} is 
\[\{    \bs_4,\,\bs_3-\frac{\kappa_1}{\kappa_2}\bs_4,\,\bs_2,\,\sigma\bs_4 \}.\]

By Remark \ref{R:almoststrictlypure}, the top coefficient of $E_{t^2}$ is invertible, so the matrices $X$, $Y\in\Mat_{8\times 4}(\C[t])$ in \cite[Cor.~4.5.20(a)]{NamoijamP22} come from solving the following equations.
\[
\begin{pmatrix}
    \bs_1\\
    \bs_2\\
    \bs_3\\
    \bs_4\\
    \tau\bs_1\\
    \tau\bs_2\\
    \tau\bs_3\\
    \tau\bs_4    
\end{pmatrix}=X
\begin{pmatrix}
    \bs_1\\
    \bs_2\\
    \bs_3\\
    \tau\bs_1    
\end{pmatrix},\quad
\begin{pmatrix}
    \bs_1\\
    \bs_2\\
    \bs_3\\
    \bs_4\\
    \sigma\bs_1\\
    \sigma\bs_2\\
    \sigma\bs_3\\
    \sigma\bs_4    
\end{pmatrix}=Y
\begin{pmatrix}
    \bs_4\\
    \bs_3-\frac{\kappa_1}{\kappa_2}\bs_4\\
    \bs_2\\
    \sigma\bs_4    
\end{pmatrix},
\]
which give 
\[
X = 
\begin{pmatrix}
1&&&\\
&1&&\\
&&1&\\
*&*&*&*\\
&&&1\\
*&*&*&*\\
*&*&*&*\\
*&*&*&*
\end{pmatrix},\quad
Y = 
\begin{pmatrix}
*&*&*&*\\
&&1&\\
\frac{\kappa_1}{\kappa_2}&1&&\\
1&&&\\
*&*&*&*\\
*&*&*&*\\
*&*&*&*\\
&&&1
\end{pmatrix}.
\]
\begin{remark}
The entries $*$'s in $X$ and $Y$ can be found explicitly, but we just do not need them for the later computations. 
\end{remark}

By Definition \ref{ten_def}, the matrix $B\in\Mat_8(\C)$ in \cite[Cor.~4.5.20]{NamoijamP22} is given by 
\[B = \left(\begin{array}{c|c}B_1 & B_2\\ \hline
B_2& 0    
\end{array}\right),\]
where if we write $E_t=B_0+B_1\tau+B_2\tau^2$, then
\begin{align}\label{eq:deften}
\begin{split}
B_0 = 
\begin{pmatrix}
    \theta&&\kappa_1&\kappa_2\\
    &\theta&&\\
    &&\theta&\\
    &&&\theta
\end{pmatrix},\quad
B_1 = 
\begin{pmatrix}
    0&0&0&0\\
    \kappa_1& & \kappa_2&\\
    \kappa_1 &\kappa_2  &&\\
    &&\kappa_1&
\end{pmatrix},\quad
B_2 = 
\begin{pmatrix}
    0&0&0&0\\
    0&0&0&0\\
    0&0&0&0\\
    \kappa_2&0&0&0
\end{pmatrix}.  
\end{split}
\end{align}
Then by \cite[Cor.~4.5.20]{NamoijamP22},
\begin{align}\label{eq:Vten}
\begin{split}
    V&=\rbracket{X^{(1)}}^\tr B^\tr Y\\
    &=\begin{pmatrix}
        \frac{\kappa_1^2}{\kappa_2}&\kappa_1&\kappa_1&\kappa_2\\
        \kappa_1&\kappa_2&&\\
        \kappa_1& &\kappa_2&\\
        \kappa_2&&&
    \end{pmatrix}.
\end{split}
\end{align}

By the proof of Proposition~\ref{P:uniformizable}, 
\begin{align}\label{eq:tphiten}
\begin{split}
\tPhi_E=\tPhi_\phi^{\otimes 2} =\begin{pmatrix}
    0&1\\
    \frac{t-\theta}{\kappa_2}&-\frac{\kappa_1}{\kappa_2}
\end{pmatrix}^{\otimes 2}=\begin{pmatrix}
    &&&1\\
    &&\frac{t-\theta}{\kappa_2}&-\frac{\kappa_1}{\kappa_2}\\
    &\frac{t-\theta}{\kappa_2}&&-\frac{\kappa_1}{\kappa_2}\\
    \frac{(t-\theta)^2}{\kappa_2^2}&-\frac{\kappa_1(t-\theta)}{\kappa_2^2}&-\frac{\kappa_1(t-\theta)}{\kappa_2^2}&\frac{\kappa_1^2}{\kappa_2^2}
\end{pmatrix}.
\end{split}
\end{align}
We let $\bm = (m_1,\dots,m_4)^\tr$ be the $\C[t]$-basis for $\cM_E$ in \eqref{eq:basistmotiveten}, and write $\tPhi_E = \sum_{i=0}^2\tU_it^i$ with $\tU_i\in\Mat_4(\C).$ By \cite[(4.4.21)]{NamoijamP22}, together with \eqref{eq:deften}--\eqref{eq:tphiten}, and recall that $\xi=(\xi_1,\dots,\xi_4)^\tr\in\C^4,$ we have 
\begin{align}\label{eq:h}
\begin{split}
\bh_\xi&=\rbracket{\tU_1\bm\xi+t\tU_2\bm\xi+\tU_2\bm\rbracket{B_0\xi+B_1\xi^{(1)}+B_2\xi^{(2)}}}^\tr\cdot V\\
&=\rbracket{0,\,\frac{\xi_3}{\kappa_2},\,\frac{\xi_2}{\kappa_2},\,-\frac{1}{\kappa_2^2}((t-\theta)\xi_1-\kappa_1\xi_2+\kappa_2\xi_4)}\cdot V\\
&=\rbracket{\frac{(t-\theta)\xi_1+\kappa_1\xi_3+\kappa_2\xi_4}{\kappa_2},\,\xi_3,\,\xi_2,\,0}\in\Mat_{1\times 4}(\C[t]).
\end{split}
\end{align}

\paragraph{The case symmetric square}
We let $E=\symphi$. In this case, we identify $\cM_E\assign\Sym^2(\cM_\phi)$ with $\Mat_{1\times 3}(\C[\tau])$ by assigning the $\C[\tau]$-basis in Lemma~\ref{symalt_basis}(a) to the standard basis vectors $\bs_1,\bs_2,\bs_3$. Then the $\C[t]$-basis 
\[\{1\otimes 1,\,\frac{1}{2}(1\otimes \tau+\tau\otimes 1),\,\tau\otimes\tau\}\]
is identified with 
\begin{equation}\label{eq:basistmotivesym}
    \{m_i\}_{i=1}^3=\{\bs_1,\,\bs_2,\,\tau\bs_1\}.
\end{equation}
On the other hand, by a direct computation, the $\C[t]$-basis of $\cN_E$ in \eqref{eq:basisdualsymsym} is 
\[\{ \bs_3,\,\frac{1}{2}\bs_2-\frac{\kappa_1}{2\kappa_2}\bs_3,\,\sigma\bs_3 \}.\]

By Remark \ref{R:almoststrictlypure}, the top coefficient of $E_{t^2}$ is invertible, so the matrices $X$, $Y\in\Mat_{6\times 3}(\C[t])$ in \cite[Cor.~4.5.20(a)]{NamoijamP22} come from solving the following equations.
\[
\begin{pmatrix}
    \bs_1\\
    \bs_2\\
    \bs_3\\
    \tau\bs_1\\
    \tau\bs_2\\
    \tau\bs_3  
\end{pmatrix}=X
\begin{pmatrix}
    \bs_1\\
    \bs_2\\
    \tau\bs_1    
\end{pmatrix},\quad
\begin{pmatrix}
    \bs_1\\
    \bs_2\\
    \bs_3\\
    \sigma\bs_1\\
    \sigma\bs_2\\
    \sigma\bs_3    
\end{pmatrix}=Y
\begin{pmatrix}
    \bs_3\\
    \frac{1}{2}\bs_2-\frac{\kappa_1}{2\kappa_2}\bs_3\\
    \sigma\bs_3    
\end{pmatrix},
\]
which give 
\[
X = 
\begin{pmatrix}
1&&\\
&1&\\
*&*&*\\
&&1\\
*&*&*\\
*&*&*
\end{pmatrix},\quad
Y = 
\begin{pmatrix}
*&*&*\\
\frac{\kappa_1}{\kappa_2}&2&\\
1&&\\
*&*&*\\
*&*&*\\
&&1
\end{pmatrix}.
\]

By Definition \ref{def_symalt}(a), the matrix $B\in\Mat_6(\C)$ in \cite[Cor.~4.5.20]{NamoijamP22} is given by 
\[B = \left(\begin{array}{c|c}B_1 & B_2\\ \hline
B_2& 0    
\end{array}\right),\]
where if we write $E_t=B_0+B_1\tau+B_2\tau^2$, then
\begin{align}\label{eq:deftmodsym}
\begin{split}
B_0 = 
\begin{pmatrix}
    \theta&\kappa_1&\kappa_2\\
    &\theta&\\
    &&\theta
\end{pmatrix},\quad
B_1 = 
\begin{pmatrix}
    0&0&0\\
    \kappa_1 & \kappa_2&0\\
    0&\kappa_1&0
\end{pmatrix},\quad
B_2 = 
\begin{pmatrix}
    0&0&0\\
    0&0&0\\
    \kappa_2&0&0
\end{pmatrix}.  
\end{split}
\end{align}
Then by \cite[Cor.~4.5.20]{NamoijamP22},
\begin{align}\label{eq:Vsym}
\begin{split}
    V&=\rbracket{X^{(1)}}^\tr B^\tr Y\\
    &=\begin{pmatrix}
        \frac{\kappa_1^2}{\kappa_2}&2\kappa_1&\kappa_2\\
        2\kappa_1&2\kappa_2&\\
        \kappa_2&&
    \end{pmatrix}.
\end{split}
\end{align}

By the proof of Proposition~\ref{P:uniformizable}, 
\begin{align}\label{eq:tphisym}
\begin{split}
\tPhi_E=\Sym^2(\tPhi_\phi) =\Sym^2\begin{pmatrix}
    0&1\\
    \frac{t-\theta}{\kappa_2}&-\frac{\kappa_1}{\kappa_2}
\end{pmatrix}=\begin{pmatrix}
    &&1\\
    &\frac{t-\theta}{\kappa_2}&-\frac{\kappa_1}{\kappa_2}\\
    \frac{(t-\theta)^2}{\kappa_2^2}&-2\frac{\kappa_1(t-\theta)}{\kappa_2^2}&\frac{\kappa_1^2}{\kappa_2^2}
\end{pmatrix}.
\end{split}
\end{align}
We let $\bm = (m_1,m_2,m_3)^\tr$ be the $\C[t]$-basis for $\cM_E$ in \eqref{eq:basistmotivesym}, and write $\tPhi_E = \sum_{i=0}^2\tU_it^i$ with $\tU_i\in\Mat_3(\C).$ By \cite[(4.4.21)]{NamoijamP22}, together with \eqref{eq:deftmodsym}--\eqref{eq:tphisym}, and recall that $\xi=(\xi_1,\xi_2,\xi_3)^\tr\in\C^3,$ we have 
\begin{align}
\begin{split}
\bh_\xi&=\rbracket{\tU_1\bm\xi+t\tU_2\bm\xi+\tU_2\bm\rbracket{B_0\xi+B_1\xi^{(1)}+B_2\xi^{(2)}}}^\tr\cdot V\\
&=\rbracket{0,\,\frac{\xi_2}{\kappa_2},\,-\frac{1}{\kappa_2^2}((t-\theta)\xi_1-\kappa_1\xi_2+\kappa_2\xi_3)}\cdot V\\
&=\rbracket{\frac{(t-\theta)\xi_1+\kappa_1\xi_2+\kappa_2\xi_3}{\kappa_2},\,2\xi_2,\,0}\in\Mat_{1\times 3}(\C[t]).
\end{split}
\end{align}

\paragraph{The case alternating square}
By {\cite[(4.6.12)]{NamoijamP22}}, for $\xi\in\C$, \[\bh_\xi=\xi\] since $\altphi$ is a Drinfeld module of rank $1$.

\subsubsection{Conclusion}\label{S:con}
We let $E=\tenphi$, $\symphi$ or $\altphi$ of dimension $\ell$ with the Drinfeld module $\phi:\sA\to A[\tau]$ given by $\phi_t=\theta+\kappa_1\tau+\kappa_2\tau^2$ with $\kappa_2\in\FF_q^{\times}$, and let $\cT_E$ be the corresponding matrix operator defined in \S\ref{S:matrixop}, and write $\Log_\phi(z) = \sum_{m=0}^\infty \beta_mz^{q^m}\in\power{K}{z}$. 

El-Guindy-Papanikolas \cite[(6.4), (6.5)]{EP14} constructed rational functions $\cB_m(t)\in K(t)$ such that $\cB_m(\theta)=\beta_m$ for $m\geqslant 0$. By their construction, for $m<0$, $\cB_m(t)=0$ and $\cB_0(t)=1$. We state a recursive formula for $\cB_m(t)$ with rank $2$ assumption below.

\begin{lemma}[El-Guindy-Papanikolas {\cite[Lem.6.12(b)]{EP14}}]\label{L:recur}
For $m\geqslant 1$, the sequence $\cB_m(t)$ satisfies the following recurrence:
\[\cB_m(t)=\frac{\kappa_1^{(m-1)}}{t-\theta^{(m)}}\cB_{m-1}(t)+\frac{\kappa_2}{t-\theta^{(m)}}\cB_{m-2}(t).\]
\end{lemma}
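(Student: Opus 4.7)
The plan is to derive the recursion by comparing coefficients in the standard functional equation satisfied by the logarithm, and then lift the relation from the specialization $t = \theta$ to the full rational-function level using the very construction of $\cB_m(t)$ from \cite{EP14}.

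First, I would start from the defining functional equation of $\Log_\phi$, namely $\theta \cdot \Log_\phi(z) = \Log_\phi(\phi_t(z))$, applied to the $\FF_q$-linear formal series $\Log_\phi(z) = \sum_{m \geqslant 0} \beta_m z^{q^m}$. Expanding the right-hand side using $\phi_t(z) = \theta z + \kappa_1 z^q + \kappa_2 z^{q^2}$ and the identity $\phi_t(z)^{q^m} = \theta^{(m)} z^{q^m} + \kappa_1^{(m)} z^{q^{m+1}} + \kappa_2 z^{q^{m+2}}$ (using $\kappa_2 \in \FF_q^\times$), the coefficient of $z^{q^m}$ on the right becomes
\[
\beta_m \theta^{(m)} + \beta_{m-1} \kappa_1^{(m-1)} + \beta_{m-2} \kappa_2,
\]
while on the left it is $\theta \beta_m$. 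Equating and rearranging gives
\[
\beta_m = \frac{\kappa_1^{(m-1)}}{\theta - \theta^{(m)}} \beta_{m-1} + \frac{\kappa_2}{\theta - \theta^{(m)}} \beta_{m-2},
\]
which is exactly the desired recursion after the substitution $t = \theta$.

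Next, I would upgrade this scalar recursion to a recursion in $K(t)$. The point is that the rational functions $\cB_m(t)$ constructed in \cite[\S 6]{EP14} are, by design, the unique interpolating rational functions with $\cB_m(\theta) = \beta_m$ obtained by deforming the ``untwisted'' occurrence of $\theta$ in the defining recursion to the independent variable $t$, while retaining the Frobenius twists $\theta^{(m)} = \theta^{q^m}$ (which arise from the $q$-power Frobenius action on $\FF_q$-coefficients, not from the base scalar). Equivalently, forming the generating series $\mathcal{L}_\phi(t,z) \assign \sum_{m \geqslant 0} \cB_m(t) z^{q^m}$, the interpolation amounts to the functional identity $t \cdot \mathcal{L}_\phi(t,z) = \mathcal{L}_\phi(t, \phi_t(z))$, and coefficient comparison of $z^{q^m}$ (using again $\phi_t(z)^{q^m} = \theta^{(m)} z^{q^m} + \kappa_1^{(m)} z^{q^{m+1}} + \kappa_2 z^{q^{m+2}}$) yields the stated recursion.

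The only subtlety, which I would verify carefully, is that the EP14 definition of $\cB_m(t)$ indeed builds in the replacement of the ``base'' $\theta$ by $t$ without disturbing the Frobenius twists $\theta^{(m)}$; this is immediate from the inductive formula or equivalently from the residue/Anderson-generating-function characterization given in \cite{EP14}, and once it is in hand the recursion follows formally. I expect this compatibility check to be the only real step, since the coefficient-comparison derivation itself is routine.
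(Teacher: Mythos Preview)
The paper does not give its own proof of this lemma; it is quoted verbatim from \cite[Lem.~6.12(b)]{EP14} and used as a black box. So there is no ``paper's proof'' to compare against.

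Your derivation of the scalar recursion for $\beta_m$ from the functional equation $\theta\cdot\Log_\phi(z)=\Log_\phi(\phi_t(z))$ is correct and standard. The issue is your ``upgrade'' step. In \cite{EP14} the functions $\cB_m(t)$ are not defined by the recursion, nor by the deformed functional equation $t\cdot\mathcal{L}_\phi(t,z)=\mathcal{L}_\phi(t,\phi_t(z))$ you write down; they are given by explicit closed formulas (equations (6.4)--(6.5) there, as the paper notes just above the lemma). The content of \cite[Lem.~6.12(b)]{EP14} is precisely that those explicit formulas satisfy the stated recursion. Your proposal identifies this as ``the only subtlety'' and says you would verify it, but does not actually carry out that verification---and that verification \emph{is} the proof. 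Asserting that the $\cB_m(t)$ are ``by design'' the sequence obtained by deforming $\theta\mapsto t$ in the recursion is a restatement of the lemma, not an argument for it.

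If you want a self-contained proof, you must either (i) check the recursion directly against the explicit shadowed-partition formula for $\cB_m(t)$, or (ii) prove independently that the generating series $\sum_{m\geqslant 0}\cB_m(t)z^{q^m}$ built from those formulas satisfies your deformed functional equation, from which the coefficient comparison then goes through. Either route is the substance of the cited result.
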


The proposition below provides an expression of  
\[\cR_{E,m}\assign(\Phi_E^{-1})^{(m)}\cdots(\Phi_E^{-1})^{(1)}\]
in terms of these rational functions.

\begin{proposition}\label{P:Rm}
For $m\geqslant 1$,
    \[\cR_{E,m}=\cT_E\begin{pmatrix}
        \cB_m(t)   & \frac{\kappa_2}{t-\theta^{(1)}}\cB_{m-1}^{(1)}(t)\\
        \cB_{m-1}(t)   &\frac{\kappa_2}{t-\theta^{(1)}}\cB_{m-2}^{(1)}(t)
    \end{pmatrix}.\]
\end{proposition}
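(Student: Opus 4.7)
The plan is to reduce the statement to a Drinfeld-module-level identity and then run an induction governed by Lemma~\ref{L:recur}. By the proof of Proposition~\ref{P:uniformizable}, multiplication by $\sigma$ on $\cN_E$ is represented by $\Phi_E = \cT_E(\Phi_\phi)$, where
\[
\Phi_\phi = \begin{pmatrix} 0 & 1 \\ (t-\theta)/\kappa_2 & -\kappa_1^{(-1)}/\kappa_2 \end{pmatrix}.
\]
Applying Lemma~\ref{L:matrixop}, which shows that $\cT_E$ commutes with matrix inversion and with Frobenius twists, we get
\[
\cR_{E,m} = \cT_E\bigl( (\Phi_\phi^{-1})^{(m)} \cdots (\Phi_\phi^{-1})^{(1)} \bigr) = \cT_E(R_m),
\]
where we set $R_m \assign (\Phi_\phi^{-1})^{(m)} \cdots (\Phi_\phi^{-1})^{(1)}$. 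Thus it suffices to show
\[
R_m = \begin{pmatrix} \cB_m(t) & \tfrac{\kappa_2}{t-\theta^{(1)}}\cB_{m-1}^{(1)}(t) \\[2pt] \cB_{m-1}(t) & \tfrac{\kappa_2}{t-\theta^{(1)}}\cB_{m-2}^{(1)}(t) \end{pmatrix}.
\]

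A direct computation (using $\det \Phi_\phi = -(t-\theta)/\kappa_2$ and $\kappa_2 \in \FF_q^\times$) gives
\[
(\Phi_\phi^{-1})^{(k)} = \begin{pmatrix} \kappa_1^{(k-1)}/(t-\theta^{(k)}) & \kappa_2/(t-\theta^{(k)}) \\ 1 & 0 \end{pmatrix}, \quad k \geqslant 1.
\]
For the base case $m=1$, using $\cB_0 = 1$ and $\cB_{-1} = 0$ together with Lemma~\ref{L:recur} at $m=1$, which gives $\cB_1(t) = \kappa_1/(t-\theta^{(1)})$, one sees that $R_1 = (\Phi_\phi^{-1})^{(1)}$ matches the stated form.

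For the inductive step I would write $R_{m+1} = (\Phi_\phi^{-1})^{(m+1)} R_m$ and carry out the $2\times 2$ multiplication entrywise. The $(1,1)$ entry equals
\[
\frac{\kappa_1^{(m)}}{t-\theta^{(m+1)}}\cB_m(t) + \frac{\kappa_2}{t-\theta^{(m+1)}}\cB_{m-1}(t),
\]
which is exactly $\cB_{m+1}(t)$ by Lemma~\ref{L:recur}. The $(1,2)$ entry factors out $\kappa_2/(t-\theta^{(1)})$ and leaves the Frobenius twist of the same recursion, giving $\cB_m^{(1)}(t)$ (here one uses $\kappa_2^{(1)} = \kappa_2$). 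The two bottom entries are immediate from the form of $(\Phi_\phi^{-1})^{(m+1)}$.

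There is no genuine obstacle — the argument is a routine induction — but the one place to be careful is the twist bookkeeping in the $(1,2)$ entry, namely that applying Frobenius to Lemma~\ref{L:recur} at index $m$ yields exactly the combination produced by the matrix product, so that the common factor $\kappa_2/(t-\theta^{(1)})$ can be pulled out and the remaining sum collapses to $\cB_m^{(1)}(t)$.
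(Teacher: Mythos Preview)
Your proposal is correct and follows essentially the same approach as the paper: reduce to the $2\times 2$ identity for $\cR_{\phi,m}$ via Lemma~\ref{L:matrixop}, then verify that identity using the recursion of Lemma~\ref{L:recur}. The only organizational difference is that you run a single left-multiplication induction $R_{m+1}=(\Phi_\phi^{-1})^{(m+1)}R_m$ handling both columns at once, whereas the paper cites \cite[Lem.~3.1.4]{Chen22} for the first column and uses the right-factorization $\cR_{\phi,m}=\cR_{\phi,m-1}^{(1)}(\Phi_\phi^{-1})^{(1)}$ for the second; your version is a bit more self-contained but otherwise the same argument.
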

\begin{proof}
By Lemma~\ref{L:matrixop}, it suffice to show that
\[\cR_{\phi,m}\assign(\Phi_\phi^{-1})^{(m)}\cdots(\Phi_\phi^{-1})^{(1)}=
\begin{pmatrix}
\cB_m(t)   & \frac{\kappa_2}{t-\theta^{(1)}}\cB_{m-1}^{(1)}(t)\\
\cB_{m-1}(t)   &\frac{\kappa_2}{t-\theta^{(1)}}\cB_{m-2}^{(1)}(t)
\end{pmatrix}.\]
Indeed, a direct calculation shows that
\[\Phi_\phi^{-1}=\begin{pmatrix}
    \frac{\kappa_1^{(-1)}}{t-\theta}&\frac{\kappa_2}{t-\theta}\\
    1&0
\end{pmatrix},\]
which gives, by applying Lemma~\ref{L:recur} with $m=1$, 
\[\cR_{\phi,1}=\rbracket{\Phi_\phi^{-1}}^{(1)}=\begin{pmatrix}
    \cB_1(t)&\frac{\kappa_2}{t-\theta^{(1)}}\cB_0^{(1)}(t)\\
    \cB_0(t)&0
\end{pmatrix}.\]
For $m\geqslant 2$, the first column of $\cR_{\phi,m}$ follows by \cite[Lem.~3.1.4]{Chen22},
and the second column of $\cR_{\phi,m}$ follows from the results of the first column and the following formula:
\[\bracket{\cR_{\phi,m}}_{i2}=\bracket{\cR_{\phi,m-1}^{(1)}\rbracket{\Phi_\phi^{-1}}^{(1)}}_{i2}=\bracket{\cR_{\phi,m-1}^{(1)}}_{i1}\bracket{\rbracket{\Phi_\phi^{-1}}^{(1)}}_{12}=\cB^{(1)}_{m-i}\frac{\kappa_2}{t-\theta^{(1)}}.\]\end{proof}

\begin{remark}
    Instead of looking at dual $t$-motives, Khaochim-Papanikolas \cite[Lem.~4.2]{KhaochimP22} provided a similar expression for the first column of, for $m\geqslant 0$,
    \[\frac{1}{t-\theta^{(m)}}\tPhi^{-1}\rbracket{\tPhi^{-1}}^{(1)}\cdots\rbracket{\tPhi^{-1}}^{(m-1)},\]
    for Drinfeld modules.
\end{remark}

\begin{theorem}\label{T:log}
Suppose that $E=\tenphi$, $\symphi$ or $\altphi$ of dimension $\ell$ with the Drinfeld module $\phi:\sA\to A[\tau]$ given by $\phi_t=\theta+\kappa_1\tau+\kappa_2\tau^2$ with $\kappa_2\in\FF_q^{\times}$. We let $\bz = (z_1,\dots,z_\ell)^\tr$. Then 
\[\Log_E(\bz)=\bz+\sum_{m\geqslant 1}\cE_{0,E}\rbracket{\bh_\bz^{(m)}\cT_E\begin{pmatrix}
        \cB_m(t)   & \frac{\kappa_2}{t-\theta^{(1)}}\cB_{m-1}^{(1)}(t)\\
        \cB_{m-1}(t)   &\frac{\kappa_2}{t-\theta^{(1)}}\cB_{m-2}^{(1)}(t)
    \end{pmatrix}}\in\power{K}{\bz}^\ell.\]
\end{theorem}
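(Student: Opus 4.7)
The plan is to specialize Anderson's exponentiation identity~\eqref{eq:exp_E} to $\xi=\bz$ viewed as a vector of formal indeterminates, insert the closed form for $\cR_{E,m}$ provided by Proposition~\ref{P:Rm}, and then formally invert $\Exp_E$. Using the $\C$-linearity of $\cE_{0,E}$, \eqref{eq:exp_E} combined with Proposition~\ref{P:Rm} gives the identity
\[
\Exp_E\!\biggl(\cE_{0,E}(\bh_\bz)+\sum_{m\geqslant 1}\cE_{0,E}\!\biggl(\bh_\bz^{(m)}\cT_E\begin{pmatrix}\cB_m(t) & \frac{\kappa_2}{t-\theta^{(1)}}\cB_{m-1}^{(1)}(t)\\ \cB_{m-1}(t) & \frac{\kappa_2}{t-\theta^{(1)}}\cB_{m-2}^{(1)}(t)\end{pmatrix}\biggr)\biggr)=\bz.
\]
Since $\bh_\bz$ is linear in $\bz$, the $m$-th summand is homogeneous of $\bz$-degree $q^m$, so the series lies in $\power{K}{\bz}^\ell$.

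Next I would verify case by case that $\cE_{0,E}(\bh_\bz)=\bz$. For $E=\altphi$ this is immediate, because $\bh_\bz=\bz$ and $\cE_{0,E}(f)=f(\theta)$. For $E=\tenphi$ and $E=\symphi$ one substitutes the explicit formulas for $\bh_\bz$ and $\cE_{0,E}$ computed in \S5.2 into each coordinate; a short calculation using $\pd_t\bigl((t-\theta)z_1/\kappa_2\bigr)\big|_{t=\theta}=z_1/\kappa_2$ together with evaluation at $t=\theta$ on the remaining entries recovers the $z_i$ componentwise.

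Since $\Exp_E=\rI_\ell+\sum_{i\geqslant 1}B_i\tau^i$ has the identity as its $\tau$-constant term, its formal inverse $\Log_E\in\power{\Mat_\ell(K)}{\tau}$ exists and satisfies $\Log_E\circ\Exp_E=\mathrm{id}$ on $\power{\C}{\bz}^\ell$. Applying $\Log_E$ to the displayed identity and using $\cE_{0,E}(\bh_\bz)=\bz$ yields the stated formula.

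The principal subtlety will be justifying that~\eqref{eq:exp_E}, originally formulated for a specific $\xi\in\C^\ell$ with $\bg\in\Mat_{1\times r}(\TT_\theta)$, persists as a formal power-series identity in $\bz$. This is resolved by observing that both sides are formal power series in $\bz$ whose coefficients are fixed elements of $K$ (built from the $B_i$'s, the entries of $\cT_E$ applied to the matrix in Proposition~\ref{P:Rm}, and coordinatewise Frobenius twists, none of which depend on the value chosen for $\bz$). The analytic statement supplies pointwise equality on an open polydisc about $0\in\C^\ell$, and uniqueness of formal power-series expansions then upgrades this to the formal identity, after which the formal inversion of $\Exp_E$ is legitimate.
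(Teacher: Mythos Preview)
Your proposal is correct and follows essentially the same approach as the paper's proof: invoke~\eqref{eq:exp_E}, insert Proposition~\ref{P:Rm}, check $\cE_{0,E}(\bh_\bz)=\bz$ by direct computation in each case, and use additivity of $\cE_{0,E}$. The paper's proof is two sentences and omits the formal-versus-analytic justification you supply, but the substance is identical.
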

\begin{proof}
Note that $\cE_{0,E}$ is additive and by a direct computation that $\cE_{0,E}(\bh_{\bz})=\bz$ in each case. Since $\beta_m\in K$, the result follows by \eqref{eq:exp_E}, Proposition~\ref{P:Rm}.
\end{proof}

We define $\tbeta_m \assign \cB_m^{(1)}(t)|_{t=\theta}\in K$, and $\beta_m' \assign \pd_t(\cB_m(t))|_{t=\theta}\in K$, and the following $\FF_q$-linear series in $\power{K}{z}$:
\begin{align}
\label{eq:start1}L_1(z)&\assign\sum_{m\geqslant 1} L_{1,m}z^{q^m}\assign\sum_{m\geqslant 1}\beta_{m}^2z^{q^m},\\
L_2(z)&\assign\sum_{m\geqslant 1} L_{2,m}z^{q^m}\assign\sum_{m\geqslant 1}\beta_m\beta_{m-1}z^{q^m},\\
\label{eq: L1}L'_1(z)&\assign\sum_{m\geqslant 1} L'_{1,m}z^{q^m}\assign\sum_{m\geqslant 1}2\beta_{m}\beta'_{m}z^{q^m},\\
\label{eq: L2}L'_2(z)&\assign\sum_{m\geqslant 1} L'_{2,m}z^{q^m}\assign\sum_{m\geqslant 1}(\beta'_{m}\beta_{m-1}+\beta_{m}\beta'_{m-1})z^{q^m},\\
\tL_0(z)&\assign\sum_{m\geqslant 1} \tL_{0,m}z^{q^m}\assign\frac{\kappa_2}{\theta-\theta^{(1)}}\sum_{m\geqslant 1}(\beta_{m}\tbeta_{m-2}-\beta_{m-1}\tbeta_{m-1})z^{q^m},\\
\tL_1(z)&\assign\sum_{m\geqslant 1} \tL_{1,m}z^{q^m}\assign\frac{\kappa_2}{\theta-\theta^{(1)}}\sum_{m\geqslant 1}\beta_{m}\tbeta_{m-1}z^{q^m},\\
\label{eq:end1}\tL_2(z)&\assign\sum_{m\geqslant 1} \tL_{2,m}z^{q^m}\assign\frac{\kappa_2}{\theta-\theta^{(1)}}\sum_{m\geqslant 1}\beta_{m-1}\tbeta_{m-1}z^{q^m}.
\end{align}
Combining Theorem~\ref{T:log} and the formulas for $\cT$, $\cE_0$, $\bh_{\bz}$ in previous subsubsections, we obtain explicit expressions for logarithms of tensor structures.
\begin{comment}
Need to revise
\end{comment}

\begin{corollary}\label{C:log}
Let $\phi:\sA\to A[\tau]$ be a Drinfeld module $\phi:\sA\to A[\tau]$ given by $\phi_t=\theta+\kappa_1\tau+\kappa_2\tau^2$ with $\kappa_2\in\FF_q^{\times}$, and let
\[\bL_m\assign\begin{pmatrix}
        L_{1,m}+(\theta-\theta^{(m)})L'_{1,m}& \kappa_2L'_{2,m}& \kappa_1^{(m)}L'_{1,m}+\kappa_2L'_{2,m}& \kappa_2L'_{1,m}\\
        \frac{(\theta-\theta^{(m)})}{\kappa_2}\tL_{1,m}& \tL_{0,m}+\tL_{2,m}& \frac{\kappa_1^{(m)}}{\kappa_2}\tL_{1,m}+\tL_{2,m}& \tL_{1,m}\\
        \frac{(\theta-\theta^{(m)})}{\kappa_2}\tL_{1,m}& \tL_{2,m}& \tL_{0,m}+\frac{\kappa_1^{(m)}}{\kappa_2}\tL_{1,m}+\tL_{2,m}& \tL_{1,m}\\
        \frac{(\theta-\theta^{(m)})}{\kappa_2}L_{1,m}& L_{2,m}& \frac{\kappa_1^{(m)}}{\kappa_2}L_{1,m}+L_{2,m}& L_{1,m}
    \end{pmatrix}\in\Mat_4(K)\]  for $m\geqslant 1$, then
\begin{enumerate}
    \item $\Log_{\tenphi}\begin{pmatrix}
        z_1\\z_2\\z_3\\z_4
    \end{pmatrix}=\begin{pmatrix}
        z_1\\z_2\\z_3\\z_4
    \end{pmatrix}+\begin{pmatrix}
        1&&&\\&1&&\\&&1&\\&&-\frac{\kappa_1}{\kappa_2}&1
    \end{pmatrix}\sum_{m\geqslant 1}\bL_m\begin{pmatrix}
        z_1^{q^m}\\z_2^{q^m}\\z_3^{q^m}\\z_4^{q^m}
    \end{pmatrix},$
    % \item $\Log_{\symphi}\begin{pmatrix}
    %     z_1\\z_2\\z_3
    % \end{pmatrix}=\begin{pmatrix}
    %     z_1\\z_2\\z_3
    % \end{pmatrix}+\begin{pmatrix}
    %     1&&&\\&\frac{1}{2}&\frac{1}{2}&\\&-\frac{\kappa_1}{2\kappa_2}& -\frac{\kappa_1}{2\kappa_2}&1
    % \end{pmatrix}\sum_{m\geqslant 1}\bL_m\begin{pmatrix}
    %     1&&\\&1&\\&1&\\&&1
    % \end{pmatrix}\begin{pmatrix}
    %     z_1\\z_2\\z_3
    % \end{pmatrix},$
    \item $\Log_{\symphi}\begin{pmatrix}
        z_1\\z_2\\z_3
    \end{pmatrix}=\begin{pmatrix}
        z_1\\z_2\\z_3
    \end{pmatrix}+\begin{pmatrix}
        1&&&\\&\frac{1}{2}&\frac{1}{2}&\\&-\frac{\kappa_1}{2\kappa_2}& -\frac{\kappa_1}{2\kappa_2}&1
    \end{pmatrix}\sum_{m\geqslant 1}\bL_m\begin{pmatrix}
        z_1^{q^m}\\z_2^{q^m}\\z_2^{q^m}\\z_3^{q^m}
    \end{pmatrix},$
    \item $\Log_{\altphi}(z) = z+\tL_0(z).$
\end{enumerate}
\end{corollary}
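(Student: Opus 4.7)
The plan is to derive (a)--(c) as direct specializations of Theorem~\ref{T:log}, substituting the explicit data $\cT_E$, $\bh_{\bz}$, and $\cE_{0,E}$ computed in the three preceding subsubsections. For each tensor structure the computation reduces to three mechanical steps: form $\cR_{E,m} = \cT_E(\cR_{\phi,m})$ from the $2\times 2$ matrix of Proposition~\ref{P:Rm}; multiply on the left by the twisted row vector $\bh_{\bz}^{(m)}$; and apply $\cE_{0,E}$ componentwise to read off the coordinate functions of $\Log_E$.

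For the tensor square, $\cR_{E,m}$ is the $4\times 4$ Kronecker square whose sixteen entries are pairwise products of $\cB_i(t)$ and $\frac{\kappa_2}{t-\theta^{(1)}}\cB_j^{(1)}(t)$. After multiplying by $\bh_\bz^{(m)}$ (whose fourth coordinate vanishes), the resulting row $\bff = (f_1, f_2, f_3, f_4)$ has entries grouped according to how many of its two factors are twisted: $f_1$ consists of pure products $\cB_i\cB_j$; $f_2$, $f_3$ each mix one factor of each type; and $f_4$ uses only twisted factors. Applying $\cE_{0,\tenphi}$, the second and third components evaluate $f_3$, $f_2$ at $t=\theta$ to produce products $\beta_i\tbeta_j$ scaled by $\kappa_2/(\theta-\theta^{(1)})$ — exactly the $\tL_{i,m}$ — while the fourth component $f_1(\theta) - (\kappa_1/\kappa_2)f_2(\theta)$ produces the pure $L_{i,m}$ terms of row~4 of $\bL_m$ plus the correction $-\kappa_1/\kappa_2$ times row~3, recovering the left prefactor matrix. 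The first component $\kappa_2\pd_t(f_1)|_{t=\theta}$ applies the Leibniz rule both to products $\cB_i(t)\cB_j(t)$ and to the factor $(t-\theta^{(m)})$ in $h_1^{(m)}$; the former yields the derivative sums $\beta'_m\beta_j + \beta_m\beta'_j$ that define $L'_{i,m}$, and the latter yields the explicit $(\theta-\theta^{(m)})L'_{1,m}$ term in the first row of $\bL_m$.

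The symmetric square case runs in parallel with $\cT_E=\Sym^2$; the off-diagonal rows of the tensor product are collapsed into a single row, which is why $\bL_m$ is applied to the vector $(z_1^{q^m}, z_2^{q^m}, z_2^{q^m}, z_3^{q^m})^\tr$ rather than to four independent variables. The factors $\tfrac{1}{2}$ and $-\kappa_1/(2\kappa_2)$ in the prefactor come directly from the formula for $\cE_{0,\symphi}$ computed in the symmetric-square subsubsection. For the alternating square, $\cT_E=\det$ collapses $\cR_{\phi,m}$ to the scalar
\[
\frac{\kappa_2}{t-\theta^{(1)}}\rbracket{\cB_m(t)\cB_{m-2}^{(1)}(t) - \cB_{m-1}(t)\cB_{m-1}^{(1)}(t)};
\]
since $\altphi$ is a rank-$1$ Drinfeld module with $\bh_z = z$ and $\cE_{0,\altphi}(\bff)=\bff(\theta)$, evaluation at $t=\theta$ returns exactly $\tL_{0,m}z^{q^m}$ by definition of $\tL_0$, giving (c) immediately.

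The main obstacle is purely combinatorial bookkeeping in part~(a): tracking which of the sixteen entries of $\cR_{E,m}$ contributes to which of the four coordinates of $\Log_{\tenphi}$, and cleanly separating the contributions of $\pd_t$ acting on $(t-\theta^{(m)})$ versus on the $\cB_i(t)$ factors. The key observation that makes this tractable is that the factor $\kappa_2/(\theta-\theta^{(1)})$ survives evaluation intact because $\theta^{(1)}$ is independent of $t$, so the $\tL_i$-coefficient groupings are preserved as one applies $\cE_{0,E}$. Once these identifications are organized, the formulas for $\bL_m$ and the prefactor matrix in each case fall out by direct coefficient comparison in $z_i^{q^m}$.
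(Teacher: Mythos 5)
Your proposal is correct and takes essentially the same approach as the paper: the paper's own proof of Corollary~\ref{C:log} is just the sentence ``Combining Theorem~\ref{T:log} and the formulas for $\cT$, $\cE_0$, $\bh_\bz$ in previous subsubsections, we obtain explicit expressions for logarithms of tensor structures,'' and you unpack precisely that computation. The one small slip is in the attribution of the two Leibniz terms in $\kappa_2\pd_t(f_1)|_{t=\theta}$: differentiating the factor $(t-\theta^{(m)})$ in $h_1^{(m)}$ produces the constant coefficient $L_{1,m}$ (since $\pd_t(t-\theta^{(m)})=1$ and the $\cB_m^2$ then evaluates to $\beta_m^2$), whereas the $(\theta-\theta^{(m)})L'_{1,m}$ contribution comes from the other Leibniz term, where one differentiates $\cB_m^2$ to get $L'_{1,m}$ and then evaluates the undifferentiated $(t-\theta^{(m)})$ at $t=\theta$; this does not affect the validity of the argument.
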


\subsection{Expressions for regulators}
We maintain the same notation as in \S\ref{S:con}, let $\be_1,\dots\be_\ell$ be standard basis vectors of $\C^\ell$, and denote $\partial_\theta:K\to K$ to be the first hyperderivative with respect to $\theta$. 

\begin{lemma}[cf. {\cite[Prop.~2.5]{Demeslay15}}]\label{L:basislie}
The set $\{\be_i\}_{i=1}^\ell$ is an $A$-basis of $\Lie(E)(A)$ via $\pd$.
\end{lemma}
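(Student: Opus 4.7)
The plan is to exploit the strictly upper triangular nilpotent structure of $\pd E_t - \theta\rI_\ell$ in each of the three cases and then solve a triangular system by back-substitution. The action of $a \in A$ on $\Lie(E)(A) = A^\ell$ via $\pd$ is left multiplication by $\pd E_a \in \Mat_\ell(A)$, so the claim amounts to showing that the $\FF_q$-linear map $A^\ell \to A^\ell$, $(a_1,\dots,a_\ell)\mapsto \sum_{i=1}^{\ell} \pd E_{a_i}(\be_i)$, is a bijection.

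First I would verify, by direct inspection of Definitions~\ref{ten_def} and~\ref{def_symalt}, that $\pd E_t = \theta\rI_\ell + N$ with $N \in \Mat_\ell(A)$ strictly upper triangular in each case. For $\phi\otimes\psi$, the diagonal blocks $X_1$, $X_4$ contribute $\theta\rI$, the block $X_3$ is purely $\tau$-supported, and the only $\tau^0$-entries of $X_2$ lie in its first row (the constants $\kappa_1,\dots,\kappa_r$). For $\Sym^2\phi$ the analogous inspection places the constants $\kappa_1,\dots,\kappa_r$ in the first row of the second summand, and for $\Alt^2\phi$ the constant off-diagonal contribution vanishes entirely.

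Once $N$ is strictly upper triangular (so $N^\ell = 0$), Taylor expansion with hyperderivatives gives, for any $a\in A$,
\[
\pd E_a \;=\; a(\theta\rI_\ell + N) \;=\; \sum_{j=0}^{\ell-1}(\partial_\theta^{(j)} a)\cdot N^j,
\]
a finite sum with coefficients in $A$, where $\partial_\theta^{(j)}$ is the $j$-th hyperderivative with respect to $\theta$. Since $N^j\be_i \in \sum_{k<i}A\cdot\be_k$ for $j\geqslant 1$, collecting coefficients of $\be_k$ shows
\[
\sum_{i=1}^\ell \pd E_{a_i}(\be_i) \;=\; \sum_{k=1}^\ell\bigl(a_k + L_k(a_{k+1},\ldots,a_\ell)\bigr)\be_k,
\]
where each $L_k$ is an $A$-linear combination of hyperderivatives of $a_{k+1},\dots,a_\ell$ with coefficients determined by the entries of $N, N^2, \dots, N^{\ell-1}$.

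Given any target $\bv = \sum_k v_k\be_k \in A^\ell$, the system $a_k + L_k(a_{k+1},\dots,a_\ell) = v_k$ is uniquely solvable in $A$ by back-substitution from $k=\ell$ down to $k=1$ (at each step $a_k$ is determined as $v_k$ minus an element of $A$), which simultaneously yields existence and uniqueness of the $\pd$-expansion and completes the argument. The only real work is checking the triangular form of $N$ in each case; once that is in hand, no serious obstacle remains.
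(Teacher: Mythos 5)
Your proof is correct and rests on the same essential observation as the paper's: the constant part $\pd E_t$ is of the form $\theta\rI_\ell + N$ with $N$ strictly upper triangular, so $\pd E_a$ is a unipotent upper-triangular perturbation of $a\cdot\rI_\ell$ and the resulting triangular system solves uniquely. Where you differ is in presentation rather than substance: the paper treats the $\Sym^2\phi$ case explicitly, computing $(\pd E_t)^i$ directly and reading off the first hyperderivative $\pd_\theta b$ (this is all that appears, since in each of the three tensor constructions $N^2 = 0$ — the nonzero entries of $N$ sit entirely in one row), then writes down and solves the concrete $3\times 3$ system, leaving the remaining cases to the reader. You instead invoke the full Hasse–Taylor expansion $\pd E_a = \sum_{j\geqslant 0}(\partial^{(j)}_\theta a)N^j$ and argue by back-substitution in general. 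Your version is a bit more uniform and makes the uniqueness (hence linear independence) explicit, at the cost of carrying the nilpotency degree $\ell$ when in fact degree $2$ suffices here. Both arguments are sound; the paper's is shorter because it exploits the specific first-row shape of $N$, while yours would generalize unchanged to any $t$-module whose $\pd E_t$ is a scalar plus a strictly triangular nilpotent.
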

\begin{proof}
It suffices to show that, for $\bsalpha\in \Lie(E)(A)=A^\ell$, there exist $b_1,\dots,b_\ell\in A$ such that
\begin{equation}\label{eq:basislie}
\bsalpha=\sum_{i=1}^\ell\pd E_{b_i(t)}\be_i.
\end{equation}
We only show the case $E=\symphi$. The remaining two cases follow by similar methods.

Observe that 
    \[\pd E_{t^i}=(\pd E_{t})^i = \begin{pmatrix}
        \theta & \kappa_1 & \kappa_2\\
        & \theta &  \\
        & & \theta \\
    \end{pmatrix}^i=\begin{pmatrix}
        \theta^i & i\theta^{i-1}\kappa_1 & i\theta^{i-1}\kappa_2\\
        & \theta^i &  \\
        & & \theta^i \\
    \end{pmatrix},\]
which implies, for $b\in A,$
\[    \pd E_{b(t)}= \begin{pmatrix}
        b & \kappa_1\pd_\theta(b) & \kappa_2\pd_\theta(b)\\
        & b &  \\
        & & b \\
    \end{pmatrix}.\]
Therefore, the existence of $b_i$ follows by rewriting \eqref{eq:basislie} to 
\[\bsalpha=\begin{pmatrix}
    b_1+\kappa_1\pd_\theta(b_2)+\kappa_2\pd_\theta(b_3)\\
    b_2\\
    b_3
\end{pmatrix}.\]
\end{proof}

\begin{lemma}\label{L:reg}
    Suppose that $\{\bv_1,\dots,\bv_\ell\}\subseteq K_\infty^\ell$ is an $A$-basis of $\Exp_{E,K_\infty}^{-1}(A^\ell)$ via $\pd$. If we write \[(\br_1,\dots,\br_\ell)^\tr\assign(\bv_1,\dots,\bv_\ell)\in\Mat_\ell(K_\infty),\] then we have 
\begin{enumerate}
    \item $\Reg_{\phi^{\otimes 2}} = \gamma\cdot \det\begin{pmatrix}
    \br_1-\kappa_1\pd_\theta(\br_3)-\kappa_2\pd_\theta(\br_4)\\
    \br_2\\
    \br_3\\
    \br_4
\end{pmatrix}.$
    \item $\Reg_{\Sym^2\phi} = \gamma\cdot \det\begin{pmatrix}
    \br_1-\kappa_1\pd_\theta(\br_2)-\kappa_2\pd_\theta(\br_3)\\
    \br_2\\
    \br_3
\end{pmatrix}.$
    \item $\Reg_{\Alt^2\phi} = \gamma\cdot \det(\br_1).$
\end{enumerate}
\end{lemma}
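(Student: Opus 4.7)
The plan is to compute Fang's regulator $\Reg_E = \gamma \det(V)$ from \eqref{eq:reg} directly, using the basis $\{\be_1,\ldots,\be_\ell\}$ of $\Lie(E)(A)$ via $\pd$ provided by Lemma \ref{L:basislie}. By definition, $V = (V_{ij})\in \GL_\ell(K_\infty)$ is the unique matrix satisfying
\[
\bv_j = \sum_{i=1}^\ell \pd E_{V_{ij}(t)}\,\be_i, \qquad V_{ij}\in \laurent{\FF_q}{t^{-1}}\cong K_\infty,
\]
and since the $i$-th row of $V$ is $(V_{i1},\ldots,V_{i\ell})$, it suffices to solve this system componentwise.

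The central step is to produce a closed form for $\pd E_b$. Writing $\pd E_t = \theta\rI_\ell + N$, direct inspection of $\pd E_t$ from Definitions \ref{ten_def} and \ref{def_symalt} (and from $(\altphi)_t = \theta - \kappa_2\tau$ for the rank-one case) shows that $N^2 = 0$ in all three cases and that $N$ has support only in its first row: for $\tenphi$, $N\be_3=\kappa_1\be_1$ and $N\be_4=\kappa_2\be_1$; for $\symphi$, $N\be_2=\kappa_1\be_1$ and $N\be_3=\kappa_2\be_1$; for $\altphi$, $N=0$. Since $\pd : \laurent{\FF_q}{t^{-1}} \to \Mat_\ell(K_\infty)$ is the $\FF_q$-algebra map with $\pd(t) = \theta \rI_\ell + N$, the binomial identity $(\theta\rI_\ell+N)^k=\theta^k\rI_\ell+k\theta^{k-1}N$ combined with continuity of $\pd$ yields
\[
\pd E_b = b(\theta)\rI_\ell + \pd_\theta(b)\cdot N, \qquad b\in \laurent{\FF_q}{t^{-1}}.
\]

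With this in hand, writing $\bv_j = (v_{1j},\ldots,v_{\ell j})^\tr$, the system $\bv_j = \sum_i \pd E_{V_{ij}}\be_i$ triangularizes: one obtains $V_{ij} = v_{ij}$ for all $i\geqslant 2$, while $V_{1j} = v_{1j} - \kappa_1\pd_\theta(v_{pj}) - \kappa_2\pd_\theta(v_{qj})$, where $(p,q)=(3,4)$ for $\tenphi$ and $(p,q)=(2,3)$ for $\symphi$; for $\altphi$ the system collapses to $V_{11}=v_{11}$. Packaging these rows back into $V$ and recognizing $(v_{i1},\ldots,v_{i\ell})=\br_i$ yields precisely the determinant formulas of (i)--(iii).

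The one subtle point is justifying the closed form for $\pd E_b$ on general Laurent series, which rests on Fang's continuous extension of $\pd$ together with $N^2=0$; after that the argument is pure linear algebra with a small amount of hyperderivative bookkeeping.
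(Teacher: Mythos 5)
Your proposal is correct and follows the same route the paper takes: the paper's proof is a one-liner that refers back to the computation of $\pd E_b$ in the proof of Lemma~\ref{L:basislie}, and you have simply written out that computation in full, correctly identifying that $N := \pd E_t - \theta\rI_\ell$ is square-zero with support only in row one, that $\pd E_b = b(\theta)\rI_\ell + \pd_\theta(b)N$ extends by continuity to $\laurent{\FF_q}{t^{-1}}$, and that the resulting linear system triangularizes to give the stated rows of $V$.
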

\begin{proof}
By the definition of $\Reg_E$ in \eqref{eq:reg}, the lemma follows by expressing $\bv_i$ in terms of standard basis vectors via $\pd$ using the same method as in the proof of Lemma~\ref{L:basislie}.
\end{proof}

% We define $\tbeta_m \assign \cB_m^{(1)}(t)|_{t=\theta}\in K$, and $\beta_m' \assign \pd_\theta(\cB_m(t))|_{t=\theta}\in K$, and the following functions and series:
% \begin{align*}
% \Log_{\phi,2}(z)&\assign\sum_{m\geqslant 0}\beta_m^2z^{(m)},&U\assign& -\pd_\theta(\kappa_1)-2\kappa_2\sum_{m\geqslant 1}(\beta_m'\beta_{m-1}+\beta_m\beta_{m-1}'),\\
% L_1(z)&\assign (\theta-\theta^{(1)})^{-1}
% \sum_{m\geqslant 1}\beta_m\tbeta_{m-1}z^{(m)},&V \assign& \kappa_2(\theta-\theta^{(1)})^{-1}\sum_{m\geqslant 1}(\tbeta_m\beta_{m-2}+\beta_{m-1}\tbeta_{m-1}),\\
% L_2(z)&\assign -2\sum_{m\geqslant 1}\beta_m\beta_{m}'z^{(m)},&W\assign &2\sum_{m\geqslant 1}\beta_m\beta_{m-1}.
% \end{align*}

Similar to \eqref{eq: L1} and \eqref{eq: L2}, we further define $\hbeta_m \assign \pd_\theta(\cB_m(t))|_{t=\theta}\in K$, and the following $\FF_q$-linear series in $\power{K}{z}$:
\begin{align}
\label{eq:start2}\Log_{\phi,2}(z)&\assign z+L_1(z)=\sum_{m\geqslant 0} \beta_{m}^2z^{q^m},\\
\label{eq:L1hat}\hL_1(z)&\assign\sum_{m\geqslant 1} \hL_{1,m}z^{q^m}\assign\sum_{m\geqslant 1}2\beta_{m}\hbeta_{m}z^{q^m},\\
\label{eq:L2hat}\hL_2(z)&\assign\sum_{m\geqslant 1} \hL_{2,m}z^{q^m}\assign\sum_{m\geqslant 1}(\hbeta_{m}\beta_{m-1}+\beta_{m}\hbeta_{m-1})z^{q^m}.
\end{align}

\begin{theorem}\label{T:reg} We have the following formulas for regulators. 
\begin{enumerate}
\item Assume that $\deg(\kappa_1)\leqslant (q+1)/2$. Then \[\Reg_{\Alt^2\phi}=\Log_{\Alt^2\phi}(1)=1+\frac{\kappa_2}{\theta-\theta^{(1)}}\sum_{m\geq 1}(\beta_m\tbeta_{m-2}-\beta_{m-1}\tbeta_{m-1}).\] 

\item Assume that $\deg(\kappa_1)\leqslant 1.$ Then
\begin{enumerate}
    \item[(i)] $\Reg_{\Sym^2\phi}=\gamma\cdot\det(M)$, where $\gamma\in\FF_q^\times$ is chosen so that it has sign $1$, and  
    \[M=
    \begin{pmatrix}1+\hL_1(\theta) & -\hL_1(\kappa_1)-2\kappa_2\hL_2(1)-\pd_\theta(\kappa_1) & -\kappa_2\hL_1(1)\\
    -\kappa_2^{-1}\tL_1(\theta) & \Log_{\altphi}(1)+\kappa_2^{-1}\tL_1(\kappa_1)+2\tL_2(1) & \tL_1(1)\\
    -\kappa_2^{-1}\Log_{\phi,2}(\theta) & \kappa_2^{-1}\Log_{\phi,2}(\kappa_1)+2L_2(1) & \Log_{\phi,2}(1)
    \end{pmatrix}.
    \]
    \item[(ii)] $\Reg_{\phi^{\otimes 2}} = \Reg_{\Sym^2\phi}\cdot\Reg_{\altphi}$.
\end{enumerate}
\end{enumerate}
\end{theorem}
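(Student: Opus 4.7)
The strategy is to apply Lemma~\ref{L:reg} to each of the three $t$-modules $\altphi$, $\symphi$, $\phi^{\otimes 2}$, to build an explicit $A$-basis of $\Exp_{E,K_\infty}^{-1}(A^\ell)$ via $\pd$ using the logarithm formulas of Corollary~\ref{C:log}, and then to simplify the resulting determinant using the $\FF_q$-linear power series \eqref{eq:start1}--\eqref{eq:L2hat}.

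For (1), $\altphi$ is a rank $1$ Drinfeld module with leading coefficient $-\kappa_2 \in \FF_q^\times$, hence its class module $\rH(\altphi)$ is trivial by the standard Taelman argument for rank $1$ modules with $\FF_q^\times$-valued character. The plan is to verify, via Lemma~\ref{L:recur} applied to the coefficients $\beta_m, \tbeta_m$, that $\deg \kappa_1 \leqslant (q+1)/2$ forces $\deg(\beta_m \tbeta_{m-2}) \to -\infty$ and $\deg(\beta_{m-1} \tbeta_{m-1}) \to -\infty$, so that $\Log_{\altphi}(1) = 1 + \tL_0(1)$ converges in $K_\infty$. Then $\Log_{\altphi}(1) \in \Exp_{\altphi,K_\infty}^{-1}(A)$, and a lattice/index argument using Theorem~\ref{FangClass} (with the trivial class module) identifies it as a generator of this rank-$1$ $A$-module via $\pd$. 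Lemma~\ref{L:reg}(3) then yields the stated identity, the sign unit being $1$ because the leading coefficient of $1 + \tL_0(1)$ is $1$.

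For 2(i), under $\deg \kappa_1 \leqslant 1$ a parallel convergence analysis will furnish an explicit triple $\bv_1,\bv_2,\bv_3 \in K_\infty^3$ lying in $\Exp_{\symphi,K_\infty}^{-1}(A^3)$, built from values of $\Log_{\symphi}$ at carefully chosen elements of $A^3$ (the three columns of $M$ suggest lifts attached to the ``$\theta$'', ``$\kappa_1$'', and ``$1$'' directions). These three vectors form an $A$-basis of the lattice via $\pd$---not merely a maximal-rank submodule---by a rank/index comparison with the class module via Theorem~\ref{FangClass} combined with Corollary~\ref{C:Lsymrxr}. Feeding this basis into Lemma~\ref{L:reg}(2) produces $\Reg_{\symphi} = \gamma \cdot \det(N)$, where the rows of $N$ are
\[
\br_1 - \kappa_1\pd_\theta(\br_2) - \kappa_2\pd_\theta(\br_3),\quad \br_2,\quad \br_3.
\]
Substituting Corollary~\ref{C:log}(2) for the $\br_i$ and expanding the hyperderivatives produces the $-\pd_\theta(\kappa_1)$ term in the middle entry of the first row; the chain-rule relation between $\hL_i$ and $L'_i$ from \cite[Lem.~2.4.6]{NamoijamP22} (also encoded in \eqref{eq:chain}), together with the definitions \eqref{eq:start1}--\eqref{eq:L2hat} of $\tL_i$, $L_i$, $\Log_{\phi,2}$, $\hL_i$, then collapses $N$ to the matrix $M$.

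For 2(ii), since $p \neq 2$ the $t$-motive decomposition $\cM_{\phi^{\otimes 2}} = \Sym^2\cM_\phi \oplus \Alt^2\cM_\phi$ is compatible with both the $\C[t]$- and $\C[\tau]$-module structures. Because the functor from $t$-modules to $t$-motives is fully faithful, this upgrades to an isomorphism $\phi^{\otimes 2} \cong \symphi \oplus \altphi$ of $t$-modules defined over $A$. Exponentials, logarithms, Lie modules over $A$ via $\pd$, and the lattices $\Exp_{E,K_\infty}^{-1}(A^\ell)$ all decompose compatibly along this direct sum, so the definition~\eqref{eq:reg} immediately gives $\Reg_{\phi^{\otimes 2}} = \Reg_{\symphi} \cdot \Reg_{\altphi}$.

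The main obstacle will be Part 2(i): confirming that the explicit logarithm vectors genuinely form an $A$-basis of $\Exp_{\symphi,K_\infty}^{-1}(A^3)$ via $\pd$ (rather than a proper finite-index submodule) and then tracking the determinant computation carefully enough to identify it with $\gamma \cdot \det(M)$. The convergence estimates under the degree bounds, while essential and pervasive, are comparatively routine inductive applications of Lemma~\ref{L:recur}.
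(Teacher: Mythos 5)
Your overall route is the same as the paper's: apply Lemma~\ref{L:reg} to an $A$-basis of $\Exp_{E,K_\infty}^{-1}(A^\ell)$ furnished by the logarithm formulas of Corollary~\ref{C:log}, and simplify via the chain rule~\eqref{eq:chain}. However, you reverse the paper's order of computation, and there are two substantive differences worth flagging.

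First, for 2(ii) you argue via the $t$-motive decomposition $\cM_{\phi^{\otimes 2}}\cong \Sym^2\cM_\phi\oplus\Alt^2\cM_\phi$ (valid since $p\neq 2$, the idempotents $\tfrac{1}{2}(1\pm\text{swap})$ having $\FF_q$-entries), upgraded to an $A$-isomorphism of $t$-modules, combined with the observation that the regulator is multiplicative for direct sums and invariant under $A$-isomorphism. This is correct, and it is more structural than the paper's argument: the paper instead computes $\Reg_{\phi^{\otimes 2}}$ directly as a $4\times 4$ determinant from Corollary~\ref{C:log}(a), and then performs explicit row and column operations to factor it as $\Log_{\altphi}(1)\cdot\det(M)$, identifying the factors with $\Reg_{\altphi}$ and $\Reg_{\symphi}$ by ``similar calculations.'' Your approach buys conceptual clarity and avoids the somewhat opaque ``one can check'' step at the end of the paper's proof, at the price of having to verify that the $t$-motive decomposition descends to an $A$-isomorphism of $t$-modules; that verification works out, since the change-of-$\C[\tau]$-basis matrix between $\{1\otimes 1,1\otimes\tau,\tau\otimes 1,\tau^2\otimes 1\}$ and the symmetric/alternating bases has entries in $\FF_q[\kappa_1/\kappa_2]\subseteq A$ (using $\kappa_2\in\FF_q^\times$).

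Second, in 2(i) you misidentify the source of the arguments $\theta$, $\kappa_1$, $1$ appearing inside $\hL_i$, $\tL_i$, $\Log_{\phi,2}$ in the matrix $M$: these are not evaluation points of $\Log_{\symphi}$ on ``lifts'' in $A^3$. The paper evaluates $\Log_{\symphi}$ only at the standard basis vectors $\be_i$; the $\theta$ enters via the $(\theta-\theta^{(m)})$ coefficients in $\bL_m$ and via $\pd_\theta$ in Lemma~\ref{L:reg}, the $\kappa_1$ via the $\kappa_1^{(m)}$ coefficients in $\bL_m$ and the $-\kappa_1\pd_\theta(\cdot)$ in Lemma~\ref{L:reg}, and the $1$ via the plain sums $\sum_m L_{i,m}$. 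Once this is corrected the resulting determinant does reduce to $\det(M)$ exactly as you describe.

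Finally, both you and the paper assert with little justification that the logarithm vectors form a \emph{full} $A$-basis of the lattice $\Exp_{E,K_\infty}^{-1}(A^\ell)$ rather than a finite-index submodule; your proposed rank/index comparison with Fang's formula does not by itself close this gap without an independent handle on the class module. The standard way to dispose of it is to observe that if $\Log_E$ converges on $E(A)$ and the period lattice $\Lambda_E$ meets $K_\infty^\ell$ trivially, then $\Log_E(A^\ell)$ is all of the preimage lattice; the paper leaves this implicit as well.
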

\begin{proof}
By \cite[Cor.~6.9]{EP14}, the assumptions on $\deg(\kappa_1)$ imply that the logarithm series $\Log_{\tenphi}(\bz)$, $\Log_{\symphi}(\bz)$ and $\Log_{\altphi}(\bz)$ converge at standard basis vectors. We may choose 
\begin{equation}\label{eq:basisExpinv}
\{\Log_E(\be_1),\dots,\Log_E(\be_\ell)\}    
\end{equation}
as an $A$-basis of $\Exp_{E,K_\infty}^{-1}(A^\ell)$. We start with calculating $\Reg_{\tenphi}$. By Corollary~\ref{C:log}(a), the matrix
\begin{multline}\label{eq:logI}
    \left(\Log_{\tenphi}(\be_1),\dots,\Log_{\tenphi}(\be_4)\right)=
\begin{pmatrix}
        1&&&\\&1&&\\&&1&\\&&-\frac{\kappa_1}{\kappa_2}&1
    \end{pmatrix}\times\\
    \scalebox{0.85}{\parbox{\linewidth}{$\begin{pmatrix}
        \Log_{\phi,2}(1)+\theta L'_1(1)-L'_1(\theta)& \kappa_2L'_{2}(1)& L'_{1}(\kappa_1)+\kappa_2L'_{2}(1)& \kappa_2L'_{1}(1)\\
        \frac{1}{\kappa_2}(\theta\tL_{1}(1)-\tL_{1}(\theta))& \Log_{\altphi}(1)+\tL_{2}(1)& \frac{1}{\kappa_2}\tL_{1}(\kappa_1)+\tL_{2}(1)& \tL_{1}(1)\\
        \frac{1}{\kappa_2}(\theta\tL_{1}(1)-\tL_{1}(\theta))& \tL_{2}(1)& \Log_{\altphi}(1)+\frac{1}{\kappa_2}\tL_{1}(\kappa_1)+\tL_{2}(1)& \tL_{1}(1)\\
        \frac{1}{\kappa_2}(\theta L_{1}(1)-L_{1}(\theta))& L_{2}(1)& \frac{\kappa_1}{\kappa_2}+\frac{1}{\kappa_2}L_{1}(\kappa_1)+L_{2}(1)& \Log_{\phi,2}(1)
    \end{pmatrix}$}}.
\end{multline}

For $f\in K(t)$, by {\cite[Lem.~2.4.6]{NamoijamP22}}, we have the chain rule
\begin{equation}\label{eq:chain}
\pd_t(f)|_{t=\theta}-\pd_\theta(f(\theta))=-\pd_\theta(f)|_{t=\theta}.
\end{equation}
Then by applying Lemma~\ref{L:reg}(a) with the $A$-basis of $\Exp_{E,K_\infty}^{-1}(A^4)$ given in \eqref{eq:basisExpinv}, \eqref{eq:logI} and \eqref{eq:chain} as well as \eqref{eq:L1hat} and \eqref{eq:L2hat} give the following expression for $\Reg_{\tenphi}$, for some $\gamma_{\tenphi}\in\FF_q^{\times}$,
\begin{multline}\label{eq:reg1}
    \Reg_{\tenphi}\\
    \scalebox{0.85}{\parbox{\linewidth}{$=\gamma_{\tenphi}\cdot\begin{vmatrix}
        1-\theta \hL_1(1)+\hL_1(\theta)& -\kappa_2\hL_{2}(1)& -\hL_{1}(\kappa_1)-\kappa_2\hL_{2}(1)-\pd_\theta(\kappa_1)& -\kappa_2\hL_{1}(1)\\
        \frac{1}{\kappa_2}(\theta\tL_{1}(1)-\tL_{1}(\theta))& \Log_{\altphi}(1)+\tL_{2}(1)& \frac{1}{\kappa_2}\tL_{1}(\kappa_1)+\tL_{2}(1)& \tL_{1}(1)\\
        \frac{1}{\kappa_2}(\theta\tL_{1}(1)-\tL_{1}(\theta))& \tL_{2}(1)& \Log_{\altphi}(1)+\frac{1}{\kappa_2}\tL_{1}(\kappa_1)+\tL_{2}(1)& \tL_{1}(1)\\
        \frac{1}{\kappa_2}(\theta L_{1}(1)-L_{1}(\theta))& L_{2}(1)& \frac{\kappa_1}{\kappa_2}+\frac{1}{\kappa_2}L_{1}(\kappa_1)+L_{2}(1)& \Log_{\phi,2}(1)
    \end{vmatrix}$,}}
\end{multline}

We then proceed the following row and column operations
\begin{enumerate}
    \item $\rR_2\mapsto\rR_2-\rR_3$,
    \item $\rC_3\mapsto\rC_2+\rC_3$. 
    \item $\rC_1\mapsto\rC_1-\frac{\theta}{\kappa_2}\rC_4$. 
\end{enumerate}
Then the determinant becomes 
\[
\Log_{\altphi}(1)\cdot\det(M).
\]
One can check, by similar calculations for $\symphi$ and $\altphi$, that the first factor gives $\Reg_{\altphi}$, and the second factor gives $\Reg_{\symphi}$, which complete the proof.\end{proof}

\begin{remark}
\begin{enumerate}
    \item Theorem~\ref{T:reg} shows that special values of the dilogarithm function $\Log_{\phi,2}(z)$ of $\phi$ appear in the regulators of $\tenphi$ and $\altphi$.
    % \item The assumption of convergence of the logarithm functions of tensor and symmetric squares at all standard basis vectors is a stringent requirement. In fact, one can show by \cite[Cor.~6.9]{EP14} that the assumption is restricted to the condition $\deg(\kappa_1)\leqslant 1$ for both cases. Using the same method, the assumption in the alternating square case is restricted to the condition $\deg(\kappa_1)\leqslant (q+1)/2.$
    \item The formulas in Theorem~\ref{T:reg} give explicit expressions for special values of convolution $L$-series appearing in Corollaries \ref{C:Lmumurxr}, \ref{C:Lmumurxl}, \ref{C:Lsymrxr}, \ref{C:Laltrxr}.
\end{enumerate}
\end{remark}

\newpage
\bibliographystyle{siam}
\bibliography{myReference}

\begin{thebibliography}{10}

\bibitem{Aigner}
{\sc M.~Aigner}, {\em A course in enumeration}, vol.~238 of Graduate Texts in
  Mathematics, Springer, Berlin, 2007.

\bibitem{And86}
{\sc G.~W. Anderson}, {\em {$t$}-motives}, Duke Math. J., 53 (1986),
  pp.~457--502.

\bibitem{ABP04}
{\sc G.~W. Anderson, W.~D. Brownawell, and M.~A. Papanikolas}, {\em
  Determination of the algebraic relations among special {$\Gamma$}-values in
  positive characteristic}, Ann. of Math. (2), 160 (2004), pp.~237--313.

\bibitem{AndThak90}
{\sc G.~W. Anderson and D.~S. Thakur}, {\em Tensor powers of the {C}arlitz
  module and zeta values}, Ann. of Math. (2), 132 (1990), pp.~159--191.

\bibitem{ANT20}
{\sc B.~Angl\`es, T.~Ngo~Dac, and F.~Tavares~Ribeiro}, {\em On special
  {$L$}-values of {$t$}-modules}, Adv. Math., 372 (2020), pp.~107313, 33.

\bibitem{ANT22}
\leavevmode\vrule height 2pt depth -1.6pt width 23pt, {\em A class formula for
  admissible {A}nderson modules}, Invent. Math., 229 (2022), pp.~563--606.

\bibitem{APT16}
{\sc B.~Angl\`es, F.~Pellarin, and F.~Tavares~Ribeiro}, {\em Arithmetic of
  positive characteristic {$L$}-series values in {T}ate algebras}, Compos.
  Math., 152 (2016), pp.~1--61.
\newblock With an appendix by F. Demeslay.

\bibitem{AnglesTaelman15}
{\sc B.~Angl\`es and L.~Taelman}, {\em Arithmetic of characteristic {$p$}
  special {$L$}-values}, Proc. Lond. Math. Soc. (3), 110 (2015),
  pp.~1000--1032.
\newblock With an appendix by Vincent Bosser.

\bibitem{AT17}
{\sc B.~Angl\`es and F.~Tavares~Ribeiro}, {\em Arithmetic of function field
  units}, Math. Ann., 367 (2017), pp.~501--579.

\bibitem{Beaumont21}
{\sc T.~Beaumont}, {\em On equivariant class formulas for $t$-modules}, (2021).
\newblock arXiv:2110.15696.

\bibitem{BP20}
{\sc W.~D. Brownawell and M.~A. Papanikolas}, {\em A rapid introduction to
  {D}rinfeld modules, {$t$}-modules, and {$t$}-motives}, in {$t$}-motives:
  {H}odge structures, transcendence and other motivic aspects, EMS Ser. Congr.
  Rep., EMS Publ. House, Berlin, [2020] \copyright 2020, pp.~3--30.

\bibitem{Bump89}
{\sc D.~Bump}, {\em The {R}ankin-{S}elberg method: a survey}, in Number theory,
  trace formulas and discrete groups ({O}slo, 1987), Academic Press, Boston,
  MA, 1989, pp.~49--109.

\bibitem{Carlitz35}
{\sc L.~Carlitz}, {\em On certain functions connected with polynomials in a
  {G}alois field}, Duke Math. J., 1 (1935), pp.~137--168.

\bibitem{CEP18}
{\sc C.-Y. Chang, A.~El-Guindy, and M.~A. Papanikolas}, {\em Log-algebraic
  identities on {D}rinfeld modules and special {$L$}-values}, J. Lond. Math.
  Soc. (2), 97 (2018), pp.~125--144.

\bibitem{CP12}
{\sc C.-Y. Chang and M.~A. Papanikolas}, {\em Algebraic independence of periods
  and logarithms of {D}rinfeld modules}, J. Amer. Math. Soc., 25 (2012),
  pp.~123--150.
\newblock With an appendix by Brian Conrad.

\bibitem{Chen22}
{\sc Y.-T. Chen}, {\em On {F}urusho's analytic continuation of {D}rinfeld
  logarithms}, (2022).
\newblock arXiv:2210.05277.

\bibitem{Demeslay15}
{\sc F.~Demeslay}, {\em A class formula for $l$-series in positive
  characteristic}, 2015.
\newblock arXiv:1412.3704.

\bibitem{DemeslayPhD}
\leavevmode\vrule height 2pt depth -1.6pt width 23pt, {\em Formules de classes
  en caract\'{e}ristique positive}, 2015.
\newblock Th\`{e}se de doctorat, Universit\'{e} de Caen Basse-Normandie.

\bibitem{EP14}
{\sc A.~El-Guindy and M.~A. Papanikolas}, {\em Identities for {A}nderson
  generating functions for {D}rinfeld modules}, Monatsh. Math., 173 (2014),
  pp.~471--493.

\bibitem{Fang15}
{\sc J.~Fang}, {\em Special {$L$}-values of abelian {$t$}-modules}, J. Number
  Theory, 147 (2015), pp.~300--325.

\bibitem{FGHP20}
{\sc J.~Ferrara, N.~Green, Z.~Higgins, and C.~D. Popescu}, {\em An equivariant
  {T}amagawa number formula for {D}rinfeld modules and applications}, Algebra
  Number Theory, 16 (2022), pp.~2215--2264.

\bibitem{Gekeler91}
{\sc E.-U. Gekeler}, {\em On finite {D}rinfeld modules}, J. Algebra, 141
  (1991), pp.~187--203.

\bibitem{Gezmis19}
{\sc O.~Gezmi\c{s}}, {\em Taelman {$L$}-values for {D}rinfeld modules over
  {T}ate algebras}, Res. Math. Sci., 6 (2019), pp.~Paper No. 18, 25.

\bibitem{Gezmis21}
\leavevmode\vrule height 2pt depth -1.6pt width 23pt, {\em Special values of
  {G}oss {$L$}-series attached to {D}rinfeld modules of rank 2}, J. Th\'{e}or.
  Nombres Bordeaux, 33 (2021), pp.~511--552.

\bibitem{GezmisNamoijam21}
{\sc O.~Gezmi\c{s} and C.~Namoijam}, {\em On the transcendence of special
  values of goss $l$-functions attached to drinfeld modules}, 2021.
\newblock arXiv:2110.02569.

\bibitem{Goldfeld}
{\sc D.~Goldfeld}, {\em Automorphic forms and {$L$}-functions for the group
  {${\rm GL}(n,\mathbf{R})$}}, vol.~99 of Cambridge Studies in Advanced
  Mathematics, Cambridge University Press, Cambridge, 2006.
\newblock With an appendix by Kevin A. Broughan.

\bibitem{Goss79}
{\sc D.~Goss}, {\em {$v$}-adic zeta functions, {$L$}-series and measures for
  function fields}, Invent. Math., 55 (1979), pp.~107--119.
\newblock With an addendum.

\bibitem{Goss83}
\leavevmode\vrule height 2pt depth -1.6pt width 23pt, {\em On a new type of
  {$L$}-function for algebraic curves over finite fields}, Pacific J. Math.,
  105 (1983), pp.~143--181.

\bibitem{Goss92}
\leavevmode\vrule height 2pt depth -1.6pt width 23pt, {\em {$L$}-series of
  {$t$}-motives and {D}rinfeld modules}, in The arithmetic of function fields
  ({C}olumbus, {OH}, 1991), vol.~2 of Ohio State Univ. Math. Res. Inst. Publ.,
  de Gruyter, Berlin, 1992, pp.~313--402.

\bibitem{Goss94}
\leavevmode\vrule height 2pt depth -1.6pt width 23pt, {\em Drinfeld modules:
  cohomology and special functions}, in Motives ({S}eattle, {WA}, 1991),
  vol.~55 of Proc. Sympos. Pure Math., Amer. Math. Soc., Providence, RI, 1994,
  pp.~309--362.

\bibitem{Goss}
{\sc D.~{Goss}}, {\em Basic structures of function field arithmetic}, vol.~35,
  Springer-Verlag, Berlin, 1996.

\bibitem{Hamahata}
{\sc Y.~Hamahata}, {\em Tensor products of {D}rinfeld modules and {$v$}-adic
  representations}, Manuscripta Math., 79 (1993), pp.~307--327.

\bibitem{HartlJuschka20}
{\sc U.~Hartl and A.-K. Juschka}, {\em Pink's theory of {H}odge structures and
  the {H}odge conjecture over function fields}, in {$t$}-motives: {H}odge
  structures, transcendence and other motivic aspects, EMS Ser. Congr. Rep.,
  EMS Publ. House, Berlin, [2020] \copyright 2020, pp.~31--182.

\bibitem{HsiaYu00}
{\sc L.-C. Hsia and J.~Yu}, {\em On characteristic polynomials of geometric
  {F}robenius associated to {D}rinfeld modules}, Compositio Math., 122 (2000),
  pp.~261--280.

\bibitem{HP22}
{\sc W.-C. Huang and M.~A. Papanikolas}, {\em Convolutions of {G}oss and
  {P}ellarin {$L$}-series}, 2022.
\newblock arXiv: 2206.14931.

\bibitem{Khaochim}
{\sc C.~Khaochim}, {\em Rigid analytic trivializations and periods of
  {D}rinfeld modules and their tensor products}, (2021).
\newblock {PhD} dissertation, Texas A\&M University.

\bibitem{KhaochimP22}
{\sc C.~Khaochim and M.~A. Papanikolas}, {\em Effective rigid analytic
  trivializations for {D}rinfeld modules}, Canad. J. Math.,  (2022), p.~1–30.

\bibitem{Littlewood}
{\sc D.~E. Littlewood}, {\em The theory of group characters and matrix
  representations of groups}, AMS Chelsea Publishing, Providence, RI, 2006.
\newblock Reprint of the second (1950) edition.

\bibitem{Maurischat21}
{\sc A.~Maurischat}, {\em Abelian equals $\mathbf{A}$-finite for anderson
  $\mathbf{A}$-modules}, (2021).
\newblock arXiv:2110.11114.

\bibitem{NamoijamP22}
{\sc C.~Namoijam and M.~A. Papanikolas}, {\em Hyperderivatives of periods and
  quasi-periods for anderson $t$-modules}, Mem. Amer. Math. Soc.,  (to appear).

\bibitem{Ore33a}
{\sc O.~Ore}, {\em On a special class of polynomials}, Trans. Amer. Math. Soc.,
  35 (1933), pp.~559--584.

\bibitem{Pellarin08}
{\sc F.~Pellarin}, {\em Aspects de l'ind\'{e}pendance alg\'{e}brique en
  caract\'{e}ristique non nulle (d'apr\`es {A}nderson, {B}rownawell, {D}enis,
  {P}apanikolas, {T}hakur, {Y}u, et al.)}, no.~317, 2008, pp.~Exp. No. 973,
  viii, 205--242.
\newblock S\'{e}minaire Bourbaki. Vol. 2006/2007.

\bibitem{Pellarin12}
\leavevmode\vrule height 2pt depth -1.6pt width 23pt, {\em Values of certain
  {$L$}-series in positive characteristic}, Ann. of Math. (2), 176 (2012),
  pp.~2055--2093.

\bibitem{Poonen96}
{\sc B.~Poonen}, {\em Fractional power series and pairings on {D}rinfeld
  modules}, J. Amer. Math. Soc., 9 (1996), pp.~783--812.

\bibitem{Stanley}
{\sc R.~P. Stanley}, {\em Enumerative combinatorics. {V}ol. 2}, vol.~62 of
  Cambridge Studies in Advanced Mathematics, Cambridge University Press,
  Cambridge, 1999.
\newblock With a foreword by Gian-Carlo Rota and appendix 1 by Sergey Fomin.

\bibitem{Taelman09}
{\sc L.~Taelman}, {\em Special {$L$}-values of {$t$}-motives: a conjecture},
  Int. Math. Res. Not. IMRN,  (2009), pp.~2957--2977.

\bibitem{Taelman10}
\leavevmode\vrule height 2pt depth -1.6pt width 23pt, {\em A {D}irichlet unit
  theorem for {D}rinfeld modules}, Math. Ann., 348 (2010), pp.~899--907.

\bibitem{Taelman12}
\leavevmode\vrule height 2pt depth -1.6pt width 23pt, {\em Special {$L$}-values
  of {D}rinfeld modules}, Ann. of Math. (2), 175 (2012), pp.~369--391.

\bibitem{Takahashi82}
{\sc T.~Takahashi}, {\em Good reduction of elliptic modules}, J. Math. Soc.
  Japan, 34 (1982), pp.~475--487.

\bibitem{Thakur}
{\sc D.~S. Thakur}, {\em Function field arithmetic}, World Scientific
  Publishing Co., Inc., River Edge, NJ, 2004.

\end{thebibliography}
% \printbibliography

\end{document}